\newcommand{\op}{\mathbf{o_{P_o}}}
\newcommand{\Op}{\mathbf{O_{P_o}}}
\newcommand{\od}{\mathbf{o}}
\newcommand{\Od}{\mathbf{O}}
\newcommand{\fr}{\boldsymbol{|}}
\newcommand{\R}{\mathbb{R}}
\newcommand{\Z}{\mathbb{Z}}
\newcommand{\N}{\mathbb{N}}
\newcommand{\1}{\mathbb{I}}
\newcommand{\tj}{\theta_j}
\newcommand{\tr}{\textrm{tr}}
\newcommand{\bd}{\bar \theta_{d,k}}
\newcommand{\bz}{\bar \theta_{d_o,k}}
\newcommand{\tlo}{\Theta_k(\beta,L)}
\newcommand{\tlt}{\Theta_k(\beta-\frac{1}{2},L)}
\newcommand{\bn}{B_k(\bar \theta_{d,k},2l_0  \delta_n)}
\newcommand{\jkl}{\sum_{j=0}^k}
\newcommand{\xn}{X}             
\newcommand{\kp}{k_{n}^{'}}
\newcommand{\vb}{\bar v_n}
\newcommand{\az}{\alpha}
\newcommand{\be}{\beta}
\newcommand{\dn}{\delta_n}
\newcommand{\an}{\alpha_n}
\newcommand{\fn}{\frac{1}{n}}
\newcommand{\ft}{\frac{1}{2}}
\newcommand{\e}{\epsilon}
\newcommand{\s}{\sigma}
\newcommand{\lb}{\lambda}
\newcommand{\ty}{\theta}
\newcommand{\en}{\epsilon_n}
\newcommand{\te}{\epsilon_n}
\newcommand{\Sto}{\mathcal{S}}
\newcommand{\Det}{\mathcal{D}}
\numberwithin{equation}{section}
\theoremstyle{plain}
\newtheorem{thm}{Theorem}[section]
\newtheorem{lem}{Lemma}[section]
\newtheorem{cor}{Corollary}[section]
\newtheorem{rem}{Remark}[section]
\begin{document}

\title{Bayesian semi-parametric estimation of the long-memory parameter under FEXP-priors.} 

\author{Willem Kruijer and Judith Rousseau}

\maketitle

\begin{abstract}
For a Gaussian time series with long-memory behavior,
we use the FEXP-model for semi-parametric estimation of the long-memory parameter $d$. The true spectral density $f_o$
is assumed to have long-memory parameter $d_o$ and a FEXP-expansion of Sobolev-regularity $\be > 1$.
We prove that when $k$ follows a Poisson or geometric prior, or a sieve prior increasing at rate $n^{\frac{1}{1+2\be}}$, $d$ converges to $d_o$ at a suboptimal rate. When the sieve prior increases at rate $n^{\frac{1}{2\be}}$ however, the minimax rate is almost obtained.
Our results can be seen as a Bayesian equivalent of the result which
Moulines and Soulier obtained for some frequentist estimators.
\end{abstract}




\section{Introduction}

Let $X_t$, $t \in \Z$, be a stationary Gaussian time series with zero mean  and spectral density $f_o(x)$, $x \in [-\pi,\pi]$, which takes the form
\begin{equation} \label{longmem}
|1-e^{i x}|^{-2 d_o} M_o (x), \qquad x \in [-\pi,\pi],
\end{equation} \noindent
where $d_o \in (-\ft,\ft)$ is called the long-memory parameter, and $M$ is a slowly-varying bounded function that describes the short-memory behavior of the series.
If $d_o$ is positive, this makes the autocorrelation function $\rho(h)$ decay polynomially, at rate $h^{-(1-2 d_o)}$, and the time series is said to have long-memory. When $d_o=0$, $X_t$ has short memory,
and the case $d_o < 0$ is referred to as intermediate memory. Long memory time series models are used in a wide range of applications, such as hydrological or financial time series; see for example  \citet{beran94} or \citet{robinson94a}. In parametric approaches, a finite dimensional model is used for the short memory part $M_o$; the most well known example is the ARFIMA(p,d,q) model.
The asymptotic properties of maximum likelihood estimators (\citet{d1} or \citet{lrz1}) and Bayesian estimators (\citet{pr1})
have been established in such models and these estimators are consistent and asymptotically normal with a convergence rate of order $\sqrt{n}$. However when the model for the short memory part is misspecified, the estimator for $d$ can be inconsistent, calling for semi-parametric methods for the estimation of $d$.
A key feature of semi-parametric estimators of the long-memory parameter is that they converge at a rate which depends on the smoothness of the short-memory part, and apart from the case where $M_o$ is infinitely smooth, the convergence rate is smaller than $\sqrt{n}$. The estimation of the long-memory parameter $d$ can thus be considered as a non-regular semi-parametric problem. In \citet{ms1} (p. 274) it is shown that when $f_o$ satisfies \eqref{fo}, the minimax rate for $d$ is $n^{-\frac{2\be-1}{4\be}}$. There are frequentist estimators for $d$ based on the periodogram that achieve this rate (see \citet{hcms02} and \citet{ms1}).

Although Bayesian methods in long-memory models have been widely used (see for instance \citet{ko:qu:vannucci:09}, \citet{Jensen:04} or \citet{holan:mcelroy:chakra:09}), the literature on convergence properties of non- and semi-parametric estimators is sparse. \citet{rcl}  (RCL hereafter) obtain consistency and rates for the $L_2$-norm of the log-spectral densities (Theorems 3.1 and 3.2), but for $d$ they only show consistency (Corollary 1).
No results exist on the posterior concentration rate on $d$, and thus on the convergence rates of Bayesian semi-parametric estimators of $d$. In this paper we aim to fill this gap for a specific family of semi-parametric priors.

We study Bayesian estimation of $d$ within the FEXP-model (\citet{beran93}, \citet{robinson95a}), that contains densities of the form
\begin{equation} \label{fexp}
f_{d,k,\ty}(x) = |1-e^{i x}|^{-2 d} \exp\left\{ \jkl \ty_j \cos(j x)\right\},
\end{equation} \noindent
where $d \in (-\ft,\ft)$, $k$ is a nonnegative integer and $\ty \in \R^{k+1}$. The factor $\exp\{ \jkl \ty_j \cos(j x)\}$ models the function $M_o$ in \eqref{longmem}.
In contrast to the original finite-dimensional FEXP-model (\citet{beran93}), where $k$ was supposed to be known, or at least bounded, $f_o$ may have an infinite FEXP-expansion,
and we allow $k$ to increase with the number of observations to obtain approximations $f$ that are increasingly close to $f_o$. Note that the case where the true spectral density  satisfies $f_o = f_{d_o,k_o, \ty_o}$, is considered in \citet{holan:mcelroy:chakra:09}.
In this paper we will pursue a fully Bayesian semi-parametric estimation of $d$, the short memory parameter being considered as an infinite-dimensional nuisance parameter.
We obtain results on the convergence rate and asymptotic distribution of the posterior distribution for $d$, which we summarize below in section \ref{resultsSummary}.
These are to our knowledge the first of this kind in the Bayesian literature on semi-parametric time series.  First we state the most important assumptions.

\subsection{Asymptotic framework}

For observations $\xn=(X_1,\ldots,X_n)$ from a Gaussian stationary time series with spectral density $f$, let $T_n(f)$ denote the associated covariance matrix and $l_n(f)$ denote the log-likelihood
\begin{equation*}\label{logLikelihood}
l_n(f) = -\frac{n}{2} \log (2\pi) -\frac{1}{2} \log \det(T_n(f)) - \ft  X^t T_n^{-1}(f) X.
\end{equation*} \noindent

We consider semi-parametric priors on $f$ based on the FEXP-model defined by \eqref{fexp}, inducing a parametrization of $f$ in terms of $(d,k,\ty)$.  
Assuming priors $\pi_d$ for $d$, and, independent of $d$, $\pi_k$ for $k$ and $\pi_{\ty|k}$ for $\ty|k$, we study the (marginal) posterior for $d$, given by
\begin{equation}\label{marginalPosterior}
\Pi(d \in D|\xn) =
\frac{\sum_{k=0}^{\infty} \pi_k(k) \int_{D} \int_{\R^{k+1}}
e^{l_n(d,k,\ty)} d\pi_{\ty|k}(\ty) d\pi_d(d)}
{\sum_{k=0}^{\infty} \pi_k(k) \int_{-\ft}^\ft
\int_{\R^{k+1}} e^{l_n(d,k,\ty)} d\pi_{\ty|k}(\ty) d\pi_d(d)}.
\end{equation} \noindent
The posterior mean or median  can be taken as point-estimates for $d$, but we will focuss on the posterior $\Pi(d |\xn)$ itself.

It is assumed that the true spectral density is of the form
\begin{equation} \label{fo}
\begin{split}
f_o(x) &= |1-e^{i x}|^{-2 d_o} \exp\left\{ \sum_{j=0}^\infty \ty_{o,j} \cos(j x)\right\}, \\
\ty_o &\in \Theta(\be,L_o) = \{\ty \in l_2(\N) : \sum_{j=0}^\infty \ty_j^2 (1+j)^{2\be} \leq L_o \},
\end{split}
\end{equation}
for some known $\be>1$.

In particular, we derive bounds on the rate at which $\Pi(d \in D|\xn)$ concentrates at $d_o$, together with a
Bernstein - von -Mises (BVM) property of this distribution.
The posterior concentration rate  for $d$ is defined as the
fastest sequence $\alpha_n$ converging to zero such that
\begin{equation} \label{convRateDefinition}
\Pi(|d-d_o| < K \alpha_n |\xn) \overset{P_o}{\rightarrow} 0, \quad \mbox{ for a given fixed K. }
\end{equation}

\subsection{Summary of the results} \label{resultsSummary}

Under the above assumptions we obtain several results for the asymptotic distribution of $\Pi(d \in D|\xn)$.
Our first main result (Theorem \ref{BVMtheorem}) states that under the sieve prior $k_n \sim (n/\log n)^{1/(2\be)}$, $\Pi(d \in D|\xn)$
is asymptotically Gaussian, and we give expressions for the posterior mean and the posterior variance. A consequence (Corollary \ref{cor2}) of this result is that the convergence rate for $d$ under this prior is at least $\dn = (n/\log n)^{-\frac{2\be - 1}{4\be}}$, i.e. in \eqref{convRateDefinition} $\alpha_n$ is bounded by $\dn$. Up to a $\log n$ term, this is the minimax rate.

By our second main result (Theorem \ref{suboptimalTheorem}), the rate for $d$ is suboptimal when $k$ is given a
a Poisson or a Geometric distribution, or a sieve prior $\kp \sim (n / \log n)^{\frac{1}{1+2\be}}$. More precisely,
there exists $f_o$ such that the posterior concentration rate $\alpha_n$ is greater than $n^{-(\be - 1/2)/(2\be + 1)}$, and thus suboptimal.
Consequently, despite having good frequentist properties for the estimation of the spectral density $f$ itself (see RCL), these priors are much less suitable for the estimation of $d$.
This is not a unique phenomenon in (Bayesian) semi-parametric estimation and is encountered for instance in the estimation of a linear functional of the signal in white-noise models, see \citet{li:zhao:02} or \citet{arbel:10}.

The BVM property means that asymptotically the posterior distribution of $d$ behaves like $\alpha_n^{-1} (d - \hat{d}) \sim \mathcal N(0,1)$, where $\hat{d}$ is an estimate whose frequentist distribution (associated to the parameter $d$) is $\mathcal N(d_o,\alpha_n^2)$. We prove such a property on the posterior distribution of $d$ given $k=k_n$.
In regular parametric long-memory models, the BVM property has been established by \citet{pr1}. It is however much more difficult to establish BVM theorems in infinite dimensional setups, even for independent and identically distributed models; see for instance \citet{freedman:99}, \citet{castillo:10} and \citet{riro10}. In particular it has been proved that the BVM property may not be valid, even for reasonable priors. The BVM property is however very useful since it induces a strong connection between frequentist and Bayesian methods. In particular, it implies that Bayesian credible regions are asymptotically also frequentist confidence regions with the same nominal level. In section \ref{sec2} we discuss this  issue in more detail.

\subsection{Overview of the paper}

In section \ref{sec2}, we present three families of priors based on the sieve model defined by (\ref{fexp}) with either  $k$ increasing at the rate $(n/\log n)^{1/(2\be)}$, $k$ increasing at the rate $(n/\log n)^{1/(2\be+1)}$ or with random $k$. We study the behavior of the posterior distribution of $d$ in each case and prove that the former leads to optimal frequentist procedures while the latter two lead to suboptimal procedures.
In section \ref{sec3} we give a decomposition of $\Pi(d \in D|\xn)$ defined in \eqref{marginalPosterior}, and obtain bounds for the terms in this decomposition in sections \ref{sec3d} and \ref{sec3c}. Using these results we prove Theorems \ref{BVMtheorem} and \ref{suboptimalTheorem} in respectively sections \ref{sec4} and \ref{sec5}.
Conclusions are  given in section \ref{sec6}. In the appendices we give the proofs of the lemmas in section \ref{sec3}, as well as some additional results on the derivatives of the log-likelihood. The proofs of various technical results can be found in the supplementary material. We conclude this introduction with an overview of the notation.

\subsection{Notation}
The $m$-dimensional identity matrix is denoted $I_m$.
We write $\fr A\fr$ for the Frobenius or Hilbert-Schmidt norm of a matrix $A$, i.e. $\fr A\fr = \sqrt{\tr A A^t}$, where $A^t$ denotes the transpose of $A$.
The operator or spectral norm is denoted $\|A\|^2 = \sup_{\|x\|=1} x^tA^t A x$. We also use $\|\cdot\|$ for the
Euclidean norm on $\R^k$ or $l^2(\N)$. The inner-product is denoted $|\cdot|$.
We make frequent use of the  relations
\begin{equation} \label{matrixCalculus}
\begin{split}
& \fr A B \fr = \fr BA \fr \leq \|A\| \cdot \fr B \fr , \quad \|A B\| \leq \|A\|\cdot \|B\|, \quad \|A\| \leq \fr A \fr \leq \sqrt{n} \|A\|, \\
& |\tr(A B)| = |\tr(BA)| \leq \fr A\fr \cdot \fr B\fr, \quad |x^t A x| \leq x^t x \|A\| ,
\end{split}
\end{equation}
see \citet{d1}, p. 1754.
For any function $h \in L_1([-\pi,\pi])$, $T_n(h)$ is the matrix with entries  $\int_{-\pi}^{\pi} e^{i|l-m|x} h(x)dx$, $l,m=1,\ldots,n$.
For example, $T_n(f)$ is the covariance matrix of observations $\xn=(X_1,\ldots,X_n)$ from a time series with spectral density $f$. If $h$ is square integrable on $[-\pi,\pi]$ we note
 $$\|h\|_2 = \int_{-\pi}^{\pi} h^2(x) dx.$$

The norm $l$ between spectral densities $f$ and $g$ is defined as
\begin{equation*}
l(f,g) = \frac{1}{2\pi} \int_{-\pi}^{\pi} (\log f(x) - \log g(x))^2 dx.
\end{equation*} \noindent
Unless stated otherwise, all expectations and probabilities are with respect to $P_o$, the law associated with the true spectral density $f_o$.
To avoid ambiguous notation (e.g. $\ty_0$ versus $\ty_{0,0}$) we write $\ty_o$ instead of $\ty_0$. Related quantities such as $f_o$ and $d_o$ are also denoted with the $o$-subscript.

The symbols $o_P$ and $O_P$ have their usual meaning. We use boldface when they are
uniform over a certain parameter range. Given a probability law $P$, a family of random variables $\{W_d\}_{d \in A}$
and a positive sequence $a_n$, $W_d = \mathbf{o_P}(a_n,A)$ means that
\begin{equation*}
P\left( \sup_{d\in A} |W_d|/a_n > \e \right) \rightarrow 0, (n\rightarrow \infty).
\end{equation*}
When the parameter set is clear from the context we simply write $\mathbf{o_P}(a_n)$.
In a similar fashion, we write $\od(a_n)$ when the sequence is deterministic.
In conjunction with the $o_P$ and $O_P$ notation we use the letters $\delta$ and $\e$ as follows.
When, for some $\tau>0$ and a probability $P$ we write $Z = O_P(n^{\tau -\e})$, this means that $Z = O(n^{\tau +\e})$ \emph{for all} $\e>0$. When, on the other hand, $Z = O_P(n^{\tau -\delta})$, we mean that this is true for  \emph{some } $\delta>0$. If the value of $\delta$ is of  importance it is given a name, for example $\delta_1$ in Lemma \ref{delta1terms}.

The true spectral density of the process is denoted $f_o$.
We denote $k$-dimensional Sobolev-balls by
\begin{eqnarray}
\tlo &=& \left\{\ty \in \R^{k+1} : \jkl \ty_j^2 (1+j)^{2 \be} \leq L\right\} \subset \R^{k+1}.
\end{eqnarray} \noindent
For any real number $x$, let $x_+$ denote $\max(0,x)$.
The number $r_k$ denotes the sum $\sum_{j\geq k+1} j^{-2}$.
Let $\eta$ be the sequence defined by $\eta_j = -2/j$, $j \geq 1$ and $\eta_0=0$. For an infinite sequence $u = (u_j)_{j\geq 0}$, let $u_{[k]}$ denote the vector of the first $k+1$ elements. In particular,
$\eta_{[k]} = (\eta_0,\ldots,\eta_k)$. The letter $C$ denotes
any generic constant independent of $L_o$ and $L$, which are the constants appearing in the assumptions on $f_o$ and the definition of the prior.

\section{Main results} \label{sec2}

Before stating Theorems \ref{BVMtheorem} and \ref{suboptimalTheorem} in section \ref{mainresults}, we state the assumptions on $f_o$ and the prior, and give examples of priors satisfying these assumptions.

\subsection{Assumptions on the prior and the true spectral density} 
We assume observations $\xn=(X_1,\ldots,X_n)$ from a stationary Gaussian time series with law $P_o$, which is a zero mean Gaussian distribution, whose covariance structure is defined by a spectral density $f_o$ satisfying (\ref{fo}), for known $\be > 1$. It is assumed that for a small constant $t>0$, $d_o \in [-\ft+t,\ft-t]$. 

\textbf{Assumptions on $\Pi$}.
We consider different priors, and first state the assumptions that are common to all these priors.
The prior on the space of spectral densities consists of independent priors $\pi_d$, $\pi_k$ and,
conditional on $k$, $\pi_{\ty|k}$. The prior for $d$ has density $\pi_d$ which is strictly positive on $[-\ft+t, \ft-t]$,
the interval which is assumed to contain $d_o$, and zero elsewhere.
The prior for $\ty$ given $k$ has a density $\pi_{\ty|k}$ with  respect to Lebesgue
measure. This density satisfies condition $\rm{Hyp}( \mathcal K,c_0,\beta,L_o)$, by which we mean that for a subset $\mathcal K$ of $\N$,
 \begin{equation*} \label{hyp:pitheta}
\min_{k \in \mathcal K} \inf_{\ty \in \Theta_k(\be,L_o) } e^{c_0 k \log k}  \pi_{\ty | k}(\ty) >1,
\end{equation*}
where $L_o$ is as in \eqref{fo}.
The choice of $\mathcal K$ depends on the prior for $k$ and $\ty|k$. We consider the following classes of priors.
 \begin{itemize}
\item \textbf{Prior A: } $k$ is deterministic and increasing at rate
\begin{equation} \label{kpktDef1}
k_n = \lfloor k_A  (n/\log n)^{\frac{1}{2\be}} \rfloor,
\end{equation} \noindent
for a constant $k_A>0$.
The prior density for $\ty |k$ satisfies $\rm{Hyp}(\{k_n\}, c_0,\beta-\ft, L_o)$ for some $c_0>0$ and has support $\tlt$.
In addition, for all $\ty , \ty' \in  \Theta_k(\be - \ft,L)$ such that $\| \ty - \ty'\| \leq L (n/\log n)^{-\frac{2\be-1}{4\be}}$,
\begin{equation} \label{priorThetaK}
\log  \pi_{\ty|k}(\ty)  - \log  \pi_{\ty|k}(\ty' ) = h_k^t(\ty - \ty') + o(1),
\end{equation}
for constants $C,\rho_0>0$ and vectors $h_k$ satisfying $\| h_k \| \leq C (n/k)^{1 - \rho_0}$.
Finally, it is assumed that $L$ is  sufficiently large compared to $L_o$.
\item \textbf{Prior B: }
$k$ is deterministic and increasing at rate
\begin{equation*} \label{kpktDef2}
\kp = \lfloor k_B (n / \log n)^{\frac{1}{1+2\be}} \rfloor,
\end{equation*} \noindent
where $k_B$ is such that $\kp < k_n$ for all $n$. The prior for $\ty |k$ has density $\pi_{\ty|k}$ with  respect to Lebesgue
measure which satisfies condition $\rm{Hyp}(\{\kp\}, c_0,\beta, L_o)$ for some $c_0>0$ and is assumed to have support $\tlo$. The density also satisfies
\[\log \pi_{\ty|k}(\ty)-\log \pi_{\ty|k}(\ty')=o(1),
\]
for all $\ty , \ty' \in  \Theta_k(\be,L)$ such that $\| \ty - \ty'\| \leq L (n/\log n)^{-\frac{\be}{2\be+1}}$.
This condition is similar to \eqref{priorThetaK}, but with $h_k=0$, and support $\tlo$.

\item \textbf{Prior C: }
$k \sim \pi_k$ on $\N$ with $e^{-c_1 k\log k} \leq \pi_k(k) \leq e^{- c_2 k \log k}$ for $k$ large enough, where $0<c_1<c_2 < + \infty$.
There exists $\be_s> 1$ such that for all $\be \geq \be_s$, the  prior for $\ty |k$ has density $\pi_{\ty|k}$ with  respect to Lebesgue
measure which satisfies condition $\rm{Hyp}(\{k \leq k_0(n/\log n)^{1/(2\be + 1)}\}, c_0,\beta, L_o)$, for all $k_0>0$ and some $c_0>0$, as soon as $n$ is large enough. It has support included in $\tlo$ and satisfies
\[\log \pi_{\ty|k}(\ty)-\log \pi_{\ty|k}(\ty')=o(1),\]
for all $\ty , \ty' \in  \Theta_k(\be,L)$ such that $\| \ty - \ty'\|
\leq L (n/\log n)^{-\frac{\be}{2\be+1}}$.
\end{itemize}
Note that 
\textbf{prior A} is obtained when we take $\be' = \be-\ft$ in prior B. \newline

\subsection{Examples of priors} \label{examples}

The Lipschitz conditions on $\log \pi_{\ty|k}$ considered for the three types of priors are satisfied for instance for the uniform prior on $\Theta_k(\be-\ft,L)$ (resp. $\Theta_k(\be,L)$), and for the truncated Gaussian prior, where, for some constants $A$ and $\alpha>0$,
$$
\pi_{\ty|k}(\ty) \propto \1_{\tlt}(\ty)\exp \left( - A \sum_{j=0}^k j^{\az} \ty_j^2 \right).
$$
In the case of \textbf{Prior A}, the conditions on $\log  \pi_{\ty|k}$ and $h_k$ in \eqref{priorThetaK} are satisfied for $\alpha < 4\be - 2$. To see this, note that
for all $\ty , \ty^{'} \in \Theta_h(\be - 1/2,L)$,
$$ \sum_{j=0}^k j^\alpha | \theta_j^2 - (\theta_j^{'})^2| \leq L^{1/2}\|\theta - \theta'\| k^{\alpha - \be +1/2} = o((n/k)^{1-\delta}).$$
In the case of \textbf{Prior B} and and \textbf{Prior C} we may choose $\alpha < 2 \be$, since for some positive $k_0$
$$ \sum_{j=0}^k j^\alpha | \theta_j^2 - (\theta_j^{'})^2| \leq L^{1/2}\|\ty  - \ty^{'}\| k^{\alpha - \be} = o(1), $$
for all $k \leq k_0(n/\log n)^{1/(2\be + 1)}$ and all $\ty , \ty^{'} \in \Theta_k(\beta, L)$ such that $\| \ty - \ty^{'}\| \leq (n/\log n)^{-\be/(2\be + 1)}$.

Also a truncated Laplace distribution is possible, in which case
$$
\pi_{\ty|k}(\ty) \propto \1_{\tlt}(\ty)\exp \left( - a \sum_{j=0}^k  |\ty_j| \right).
$$
The condition on $\pi_k$ in \textbf{Prior C } is satisfied for instance by Poisson distributions.

The restriction of the prior to Sobolev balls is required to obtain a proper concentration rate or even consistency of the posterior of the spectral density $f$ itself, which is a necessary step in the proof of our results.
This is discussed in more detail in section \ref{prelim}.

\subsection{Convergence rates and BVM-results under different priors} \label{mainresults}

Assuming a Poisson prior for $k$, RCL (Theorem 4.2) obtain a near-optimal convergence rate for $l(f,f_o)$.
In Corollary \ref{cor1} below, we show that the optimal rate for $l$ implies that we have at least a suboptimal rate for $|d-d_o|$. Whether this can be improved to the optimal rate
critically depends on the prior on $k$. By our first main result the answer is positive under \textbf{prior A}.
The proof is given in section \ref{sec4}.
\begin{thm} \label{BVMtheorem}
Under \textbf{prior A}, the posterior distribution has the asymptotic expansion
\begin{eqnarray} \label{post:norma:th1}
\Pi\left[ \sqrt{\frac{n r_{k_n}}{2}} (d-d_o- b_n(d_o)) \leq z |X\right]  = \Phi(z) + o_{P_o}(1),
\end{eqnarray}
where, for $r_{k_n} = \sum_{j\geq k_n+1} \eta_j^{2}$ and some small enough $\delta>0$,
\begin{equation*}
 b_n(d_o) =  \frac{1}{r_{k_n}} \sum_{j=k_n+1}^\infty \eta_j \ty_{o,j}  + Y_n + o( n^{-1/2-\delta} k_n^{1/2} ), \quad Y_n = \frac{ \sqrt{2}}{ \sqrt{nr_{k_n}} } Z_n,
\end{equation*}
$Z_n$ being a sequence of random variables converging weakly to a Gaussian variable with mean zero and variance 1.
\end{thm}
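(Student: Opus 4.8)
The theorem is a Bernstein–von Mises (BVM) result for the marginal posterior of the long-memory parameter $d$ under Prior A, where $k$ is deterministic at rate $k_n \sim (n/\log n)^{1/(2\beta)}$. The claim is that the posterior of $\sqrt{nr_{k_n}/2}(d - d_o - b_n(d_o))$ converges to a standard normal, with an explicit bias/centering $b_n(d_o)$ that decomposes into a deterministic bias term $r_{k_n}^{-1}\sum_{j>k_n}\eta_j\theta_{o,j}$ (the approximation bias from truncating the FEXP expansion at $k_n$), a stochastic term $Y_n$ (the efficient score contribution), and a negligible remainder.

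**Key structural facts.** Let me recall the setup. The spectral density is $f_{d,k,\theta}(x) = |1-e^{ix}|^{-2d}\exp\{\sum_{j=0}^k\theta_j\cos(jx)\}$. The crucial observation is that $\log|1-e^{ix}|^{-2d}$ relates to the sequence $\eta_j = -2/j$: formally $\log|1-e^{ix}|^{-2} = \sum_{j\geq 1}\eta_j\cos(jx) \cdot(-1)$... more precisely the Fourier coefficients of $-2\log|1-e^{ix}|$ are $2/j$, so varying $d$ moves along the direction $\eta$. This means the $d$-direction and the $\theta$-directions interact in a specific way, and $r_{k_n} = \sum_{j>k_n}\eta_j^2 = \sum_{j>k_n}4/j^2 \approx 4/k_n$ measures the part of the $d$-direction \emph{not captured} by the first $k_n$ FEXP coefficients. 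The Fisher information for $d$ at fixed $k=k_n$, after profiling out $\theta$, is essentially $nr_{k_n}/2$ — hence the normalization.

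**My proof plan.** The natural strategy is to work conditionally on $k=k_n$ (Prior A makes $k$ deterministic, so the sum over $k$ in the posterior \eqref{marginalPosterior} collapses to a single term), and then analyze the posterior on $(d,\theta)\in[-\ft+t,\ft-t]\times\Theta_{k_n}(\beta-\ft,L)$.

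\begin{enumerate}
\item \emph{Localization.} First I would use the rate results (RCL, and the concentration results to be established in Section~\ref{sec3}) to restrict attention to a shrinking neighborhood of $(d_o,\theta_{o,[k_n]})$: the posterior puts mass $1-o_{P_o}(1)$ on a set where $|d-d_o|\leq C\delta_n$ and $\|\theta-\theta_{o,[k_n]}\|\leq C\epsilon_n$ for suitable rates. This is where the restriction to Sobolev balls and the $\mathrm{Hyp}$ condition on $\pi_{\theta|k}$ enter. The key point is that on this neighborhood the Lipschitz condition \eqref{priorThetaK} applies, so the log-prior is linear up to $o(1)$ with gradient $h_{k_n}$ satisfying $\|h_{k_n}\|\leq C(n/k_n)^{1-\rho_0}$.

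\item \emph{Taylor expansion of the log-likelihood (LAN).} On this neighborhood I would Taylor-expand $l_n(d,k_n,\theta)$ around $(d_o,\theta_{o,[k_n]})$ to second order. I expect a local asymptotic normality (LAN) expansion
\[
l_n(d,k_n,\theta) - l_n(d_o,k_n,\theta_{o,[k_n]}) = (\Delta_n)^t\begin{pmatrix}d-d_o\\\theta-\theta_{o,[k_n]}\end{pmatrix} - \tfrac12\begin{pmatrix}d-d_o\\\theta-\theta_{o,[k_n]}\end{pmatrix}^t J_n\begin{pmatrix}d-d_o\\\theta-\theta_{o,[k_n]}\end{pmatrix} + R_n,
\]
where $\Delta_n$ is the score, $J_n$ the Fisher information matrix (block form, $d$ versus $\theta$), and $R_n$ a remainder. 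The score and information here are computed from derivatives of $\log\det T_n(f)$ and the quadratic form $X^tT_n^{-1}(f)X$; the derivative structure (provided in the appendices on derivatives of the log-likelihood) is what makes $\eta_{[k_n]}$ and $r_{k_n}$ appear. Controlling $R_n$ uniformly over the localization neighborhood, using the matrix-norm inequalities \eqref{matrixCalculus}, is the technical heart.

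\item \emph{Profiling out $\theta$ and integrating.} Next I would integrate the Gaussian-like posterior over $\theta$ (including the linear log-prior perturbation $h_{k_n}$). Because the $\theta$-prior is locally log-linear, the Gaussian integral in $\theta$ can be performed explicitly, producing the \emph{efficient} information for $d$, namely $nr_{k_n}/2$ (the $\theta$-adjusted, i.e. Schur-complement, information), and shifting the center by a term that combines the efficient score and the prior gradient $h_{k_n}$. The condition $\|h_{k_n}\|\leq C(n/k_n)^{1-\rho_0}$ is exactly what guarantees the prior contribution to the centering is $o(n^{-1/2-\delta}k_n^{1/2})$, i.e. absorbed into the remainder of $b_n(d_o)$.

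\item \emph{Identifying the bias and the CLT.} The deterministic bias $r_{k_n}^{-1}\sum_{j>k_n}\eta_j\theta_{o,j}$ arises because the true $\theta_o$ has infinitely many nonzero coefficients, but the model at $k=k_n$ only fits the first $k_n+1$; the mismatch projects onto the $d$-direction through the overlap between $\eta$ and the tail of $\theta_o$. I would show the efficient score, properly normalized, is $Y_n = \sqrt{2}/\sqrt{nr_{k_n}}\,Z_n$ with $Z_n$ asymptotically $\mathcal N(0,1)$ — this is a CLT for a quadratic form in the Gaussian vector $X$, handled via the trace formulas and \eqref{matrixCalculus} (e.g. a martingale or Lindeberg argument on $\tr$ of products of Toeplitz matrices). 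Combining the explicit Gaussian posterior for $d$ with these identifications yields \eqref{post:norma:th1}.
\end{enumerate}

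**Where the difficulty lies.** The main obstacle is Step~2–3: obtaining the LAN expansion \emph{with a sharp enough remainder} and correctly computing the efficient (profile) information. Because $k_n\to\infty$, the $\theta$-block of the Fisher information is a growing matrix, and the Schur complement $I_{dd} - I_{d\theta}I_{\theta\theta}^{-1}I_{\theta d}$ must be shown to equal $nr_{k_n}/2(1+o(1))$; this requires delicate estimates on Toeplitz/Frobenius norms of the matrices $\partial_d T_n$ and $\partial_{\theta_j}T_n$ and their cross products, uniformly in the growing dimension. The remainder $R_n$ must be controlled to order $o(n^{-1/2-\delta}k_n^{1/2})$ relative to the $\sqrt{nr_{k_n}}\approx\sqrt{n/k_n}$ scaling, which is a genuinely non-regular (dimension-growing) LAN analysis — far more delicate than the fixed-dimensional BVM of \citet{pr1}. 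I would therefore expect the bulk of the work, and most of the lemmas in Section~\ref{sec3} and the appendices, to be devoted to these uniform matrix estimates and the quadratic-form CLT for $Z_n$.
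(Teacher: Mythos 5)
Your high-level picture is the right one: localization via the rate results, an effective information $nr_{k_n}/2$ for $d$, a deterministic bias $r_{k_n}^{-1}\sum_{j>k_n}\eta_j\ty_{o,j}$ produced by truncating the FEXP expansion, and a CLT for a Gaussian quadratic form. Your reading of $nr_{k_n}/2$ as the Schur complement $I_{dd}-I_{d\ty}I_{\ty\ty}^{-1}I_{\ty d}$ is also correct in spirit. The paper, however, is organized differently: it never performs a joint expansion around the fixed point $(d_o,\ty_{o[k_n]})$, but instead expands along the least-favorable curve $d\mapsto\bd=\ty_{o[k]}+(d_o-d)\eta_{[k]}$, splits the posterior into the profile factor $\exp\{l_n(d,k)-l_n(d_o,k)\}$ (a one-dimensional Taylor expansion in $d$, Lemma \ref{delta1terms}) and the fluctuation integral $I_n(d,k)$ of \eqref{InDef}, and then proves only that $I_n(d,k)/I_n(d_o,k)=\exp\{S_n(d)\}$ with $S_n(d)$ negligible (Lemma \ref{lem:Idk:th1}); the $\ty$-integral is never evaluated.

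The genuine gap is in your Steps 2--3. Under prior A the theorem is claimed for all $\be>1$, with $k_n\sim(n/\log n)^{1/(2\be)}$ and localization radius $\|\ty-\bd\|\leq 2l_0\dn$, so $\sqrt{k_n}\,\dn\asymp(n/\log n)^{-(\be-1)/(2\be)}$ decays only polynomially slowly as $\be\downarrow 1$. The derivatives $\nabla^j l_n$ carry trace terms of order $n^{1+\e}$, so the order-$(J+1)$ remainder of the $\ty$-expansion is only controlled when $(J+1)(1-1/\be)>2$ (this is exactly the condition in the proof of Lemma \ref{taylorLemma}); with your second-order expansion, $J=2$, this forces $\be>3$, far short of the claimed range $\be>1$. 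In other words, for $\be$ near $1$ the conditional posterior of $\ty$ given $(d,k_n)$ cannot be shown asymptotically Gaussian by a Laplace argument, and the ``explicit Gaussian integral'' producing the Schur complement is unavailable; the paper itself notes that the case $1<\be\leq 3/2$ is the source of most of its technicalities, and reserves the simple Laplace/quadratic route for the setting of Theorem \ref{suboptimalTheorem} ($\be>5/2$, smaller $k$, $J=2$). What replaces your Step 3 in the paper is a two-part device: a Taylor expansion to high order $J$, and then the change of variables $v=\psi(u)$ of Appendix \ref{lem:Idk:th1:Details}, which maps the integrand of $I_n(d,k)$ onto that of $I_n(d_o,k)$ up to $S_n(d)$, thereby disposing of the $d$-dependent higher-order terms $g_{n,j}$ without ever integrating them. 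To make your plan work over the whole range $\be>1$ you would need this (or some comparable) argument establishing $d$-invariance of the $\ty$-integral without computing it.
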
 \noindent
\begin{cor} \label{cor2}
Under \textbf{prior A}, the convergence rate for $d$ is $\dn = (n/\log n)^{-\frac{2\be - 1}{4\be}}$, i.e.
\begin{eqnarray*}
\lim_{n \rightarrow \infty} E_0^n \left[\Pi(d : |d-d_o| >  \dn | \xn)\right]  = 0. 
\end{eqnarray*} \noindent
\end{cor}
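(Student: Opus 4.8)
The plan is to read the corollary off Theorem \ref{BVMtheorem}, which says that, conditionally on $X$, the posterior of $d$ is asymptotically Gaussian with mean $d_o+b_n(d_o)$ and standard deviation $\sigma_n:=(2/(nr_{k_n}))^{1/2}$. Since $\sqrt{nr_{k_n}/2}=1/\sigma_n$, rewriting the event $\{|d-d_o|\le\dn\}$ in terms of the standardised variable of \eqref{post:norma:th1} gives, using that the convergence there is uniform in $z$ (the limit $\Phi$ being continuous),
\[
\Pi(|d-d_o|\le\dn\mid X)=\Phi\!\Big(\tfrac{\dn-b_n(d_o)}{\sigma_n}\Big)-\Phi\!\Big(\tfrac{-\dn-b_n(d_o)}{\sigma_n}\Big)+o_{P_o}(1).
\]
It therefore suffices to show $\sigma_n=o(\dn)$ and $b_n(d_o)=o_{P_o}(\dn)$: then $(\pm\dn-b_n(d_o))/\sigma_n=(\dn/\sigma_n)(\pm1-b_n(d_o)/\dn)$ tends to $\pm\infty$ in $P_o$-probability, the right-hand side converges to $1$ in probability, and, the posterior mass being bounded by $1$, bounded convergence yields $E_0^n[\Pi(|d-d_o|>\dn\mid X)]\to0$.

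For the spread I would use $r_{k_n}=\sum_{j>k_n}\eta_j^2=4\sum_{j>k_n}j^{-2}\asymp k_n^{-1}$ and the rate $k_n\asymp(n/\log n)^{1/(2\be)}$ of \textbf{prior A}, so that
\[
\sigma_n\asymp\sqrt{k_n/n}\asymp(n/\log n)^{1/(4\be)}n^{-1/2}=\dn/\sqrt{\log n},
\]
whence $\sigma_n=o(\dn)$ and $\dn/\sigma_n\asymp\sqrt{\log n}\to\infty$.

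The crux is the bias bound, which I would establish termwise from the expression for $b_n(d_o)$ in Theorem \ref{BVMtheorem}. The random contribution is $Y_n=\sigma_n Z_n$ with $Z_n=O_{P_o}(1)$, hence $Y_n=O_{P_o}(\sigma_n)=o_{P_o}(\dn)$, while the explicit remainder $o(n^{-1/2-\delta}k_n^{1/2})=o(n^{-\delta}\sigma_n)$ is also $o(\dn)$. For the deterministic leading term I would apply Cauchy--Schwarz,
\[
\Big|\sum_{j>k_n}\eta_j\ty_{o,j}\Big|\le\Big(\sum_{j>k_n}\eta_j^2(1+j)^{-2\be}\Big)^{1/2}\Big(\sum_{j>k_n}\ty_{o,j}^2(1+j)^{2\be}\Big)^{1/2},
\]
bound the first factor by $Ck_n^{-(2\be+1)/2}$ (as $\eta_j^2(1+j)^{-2\be}\asymp j^{-2-2\be}$ and $\sum_{j>k_n}j^{-2-2\be}\asymp k_n^{-1-2\be}$), and denote by $\varrho_{k_n}:=(\sum_{j>k_n}\ty_{o,j}^2(1+j)^{2\be})^{1/2}$ the second factor. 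Multiplying by $1/r_{k_n}\asymp k_n$ gives a leading term of order $k_n^{-(2\be-1)/2}\varrho_{k_n}\asymp\dn\,\varrho_{k_n}$.

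The main obstacle is exactly this last estimate. A crude bound gives only $\varrho_{k_n}\le\sqrt{L_o}$, hence $b_n(d_o)=O(\dn)$, which would leave the posterior centred at distance of order $\dn$ from $d_o$ and would \emph{not} give $\Pi(|d-d_o|>\dn\mid X)\to0$. The required improvement comes from observing that $\varrho_{k_n}^2=\sum_{j>k_n}\ty_{o,j}^2(1+j)^{2\be}$ is the tail of the convergent Sobolev series in \eqref{fo}, so $\varrho_{k_n}\to0$; this turns the $O(\dn)$ bias into an $o(\dn)$ bias and is what makes $[d_o-\dn,d_o+\dn]$ asymptotically capture all the posterior mass.
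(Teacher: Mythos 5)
Your proof is correct and follows essentially the same route as the paper: it reads the rate off Theorem \ref{BVMtheorem}, notes that the posterior spread $\sqrt{2/(nr_{k_n})}\asymp \dn/\sqrt{\log n}$ is $o(\dn)$, and controls the bias exactly as in \eqref{bnBound1}, by Cauchy--Schwarz combined with the vanishing Sobolev tail $\sum_{l>k_n}\ty_{o,l}^2\,l^{2\be}\to 0$. Your explicit observation that the crude $O(\dn)$ bound on the bias would not suffice, and that the small-$o$ from the tail is what closes the argument, is precisely the point the paper makes (more tersely, and with an inessential condition on $k_A$) via \eqref{bnBound1} and \eqref{bnBound2}.
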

Equation \eqref{post:norma:th1} is a Bernstein-von Mises type of result: the posterior distribution is asymptotically normal, centered at a  point $d_o + b_n(d_o)$, whose distribution is normal with mean $d_o$ and variance $2 / (n r_{k_n})$. The expressions for the posterior mean and variance give more insight in how the prior for $k$ affects the posterior rate for $d$.
The standard deviation of the limiting normal distribution \eqref{post:norma:th1} is $\sqrt{2/(nr_{k_n})}=O(n^{-\frac{2\be-1}{4\be}} (\log n)^{\frac{1}{4\be}})$ 
and $b_n(d_o)$ equals
\begin{equation*}
\frac{1}{r_{k_n}} \sum_{j=k_n+1}^\infty \eta_j \ty_{o,j}  + O_{P_o}(k_n^{\ft} n^{\ft}) + o( n^{-1/2-\delta_1} k_n^{1/2} ). 
\end{equation*}
From the definition of $\eta_j$, $k_n$ and $r_{k_n}$ and the assumption on $\ty_o$, it follows that
\begin{equation} \label{bnBound1}
\frac{1}{r_{k_n}} \left| \sum_{j=k_n+1}^\infty \eta_j \ty_{o,j} \right| \leq \frac{1}{r_{k_n}}  \sqrt{\sum_{l>k_n} \ty_{o,l}^2 j^{2\be}} \sqrt{\sum_{l>k_n} j^{-2\be-2}} = o(k_n^{-\be+\ft}).
\end{equation}
See also (1.9) in the supplement.
Hence, when the constant $k_A$ in \eqref{kpktDef1} is small enough,
\begin{equation}\label{bnBound2}
|b_n(d_o)|\leq \dn,
\end{equation}
and we obtain the $\dn$-rate of Corollary \ref{cor2}. For smaller $k$, the standard deviation is smaller but the bias $b_n(d_o)$ is larger. In Theorem \ref{suboptimalTheorem} below it is shown that this indeed leads to a suboptimal rate.

An important consequence of the BVM-result is that posterior credible regions for $d$ (HPD or equal-tails for instance) will also be asymptotic frequentist confidence regions. Consider for instance one-sided credible intervals for $d$ defined by
$P^\pi(d \leq z_n(\alpha) |\xn) =\alpha$, so that $z_n(\alpha) $ is the $\alpha$-th quantile of the posterior distribution of $d$. Equation (\ref{post:norma:th1}) in Theorem \ref{BVMtheorem} then implies that
\[z_n(\alpha) = d_o + b_n(d_o)  + \sqrt{\frac{2k_n}{n}}\Phi^{-1}(\alpha)(1 + \op(1)).\]
As soon as $\sum_{j\geq k_n} j^{2\be }\theta_{o,j}^2   = o((\log n)^{-1})$, we have that \[z_n(\alpha) = d_o +  \sqrt{2/(nr_{k_n})} Z_n + \sqrt{2/(nr_{k_n})}\Phi^{-1}(\alpha)(1 + \op(1))\] and
\[P_o^n \left(  d_o \leq z_n(\alpha) \right) = P\left(Z_n  \leq \Phi^{-1}(\alpha) (1+ o(1))\right) =  \alpha+ o(1).\]
Similar computations can be made on equal - tail credible intervals or HPD regions for $d$.

Note that in this paper we assume that the smoothness $\be $ of $f_o$ is greater than 1 instead of $1/2$, as is required in \citet{ms1}. This condition is used throughout the proof. Actually had we only assumed that $\be >3/2$, the proof of Theorem \ref{BVMtheorem} would have been greatly simplified as many technicalities in the paper come from controlling terms when $1<\be \leq 3/2$. We do not believe that it is possible to weaken this constraint to $\be >1/2$ in our setup.

Our second main result states that if $k$ is increasing at a slower rate than $k_n$, the posterior on $d$ concentrates at a suboptimal rate. The proof is given in section \ref{sec5}.
\begin{thm} \label{suboptimalTheorem}
Given $\be>5/2$, there exists $\ty_o \in \Theta(\be,L_o)$ and a constant $k_v>0$ such that under prior B and C defined above,
\begin{equation*} 
\Pi(|d - d_o| > k_v w_n  (\log n)^{-1}| \xn) \overset{P_o}{\rightarrow} 1.
\end{equation*} \noindent
with $w_n=  C_w (n/\log n)^{-\frac{2\be-1}{4\be + 2}}$ and $C_w=C_1 (L+L_o)^{\frac{1}{4\be}} l_0^{\frac{2\be-1}{2\be}}$. %
\end{thm}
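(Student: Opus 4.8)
The plan is to prove an \emph{anti}-concentration statement: since $\Pi(|d-d_o|>k_v w_n(\log n)^{-1}|X)\to 1$ is equivalent to $\Pi(|d-d_o|\leq\rho_n|X)\to 0$ with $\rho_n=k_v w_n(\log n)^{-1}$, it suffices to exhibit one fixed $\ty_o\in\Theta(\be,L_o)$ for which the posterior for $d$ concentrates not at $d_o$ but at a biased value $\bar d_k$, the bias being forced by truncating the FEXP-expansion at the too-small level $\kp$ (prior B) or at the level $k\lesssim(n/\log n)^{1/(2\be+1)}$ at which prior C places its mass. Concretely I would show that the integrated likelihood over $\{|d-d_o|\leq\rho_n\}$ is asymptotically negligible against the full normalising integral.

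First I would record the bias. Writing $\log|1-e^{ix}|^{-2d}=-d\sum_{j\geq1}\eta_j\cos(jx)$, the $\cos(jx)$-coefficients of $\log f_{d,k,\ty}$ can be matched to those of $\log f_o$ for $j\leq k$ at the best-fitting $\bd$, while for $j>k$ the residual is $(d-d_o)\eta_j+\ty_{o,j}$. Minimising $\sum_{j>k}((d-d_o)\eta_j+\ty_{o,j})^2$ over $d$ gives the pseudo-true long-memory parameter
\begin{equation*}
\bar d_k - d_o=-\frac{1}{r_k}\sum_{j>k}\eta_j\ty_{o,j},\qquad r_k=\sum_{j>k}\eta_j^2\asymp k^{-1},
\end{equation*}
whose modulus is the magnitude of the leading bias $b_n(d_o)$ of Theorem \ref{BVMtheorem}. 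The worst-case truth is obtained by making the high-frequency coefficients as large as the Sobolev constraint allows: I would set $\ty_{o,j}=c\,j^{-\be-1/2}(\log j)^{-1}$ for $j\geq2$, with $c$ small enough that $\ty_o\in\Theta(\be,L_o)$ (the series $\sum_j\ty_{o,j}^2 j^{2\be}\asymp\sum_j j^{-1}(\log j)^{-2}$ converges). Then $\sum_{j>k}\eta_j\ty_{o,j}\asymp k^{-\be-1/2}(\log k)^{-1}$, so $|\bar d_k-d_o|\asymp k^{-\be+1/2}(\log k)^{-1}$, which at $k\asymp(n/\log n)^{1/(2\be+1)}$ is of order $w_n(\log n)^{-1}$; this fixes $k_v$ below the implied constant. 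The extra logarithmic factor is precisely the price of keeping a \emph{fixed} $\ty_o$ inside $\Theta(\be,L_o)$ while maximising the high-frequency content, and it is what produces the $(\log n)^{-1}$ in the statement.

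Next I would turn this into an upper bound on the posterior mass near $d_o$. Restricting first, via the $l$-concentration of the spectral density (the near-optimal rate of RCL, Theorem 4.2, together with Corollary \ref{cor1}), to the event that $k$ lies in the range supporting the posterior on $k$ and that $f$ stays in an $l$-neighbourhood of $f_o$ of radius $l_0\en$, I would use the decomposition of $\Pi(d\in D|X)$ from section \ref{sec3} and the quadratic expansion of the profile log-likelihood in $d$. This expansion has curvature of order $n r_k$ and is centred at $\bar d_k$, so on $\{|d-d_o|\leq\rho_n\}$ the integrated likelihood is smaller than its value near $\bar d_k$ by a factor
\begin{equation*}
\exp\left\{-\tfrac{n r_k}{4}(\bar d_k-d_o)^2(1+o(1))\right\}\leq\exp\left\{-c\,n r_k\,(w_n/\log n)^2\right\}.
\end{equation*}
Since $n r_k(w_n/\log n)^2\asymp n^{1/(2\be+1)}(\log n)^{-c'}$ for some $c'>0$, this tends to infinity, so the penalty decays faster than any power of $n$, while the normalising integral is bounded below (again by the $l$-concentration argument) by the contribution of a neighbourhood of $\bar d_k$; the ratio therefore tends to zero. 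For prior B the level $k=\kp$ is fixed and this is immediate; for prior C I would carry the bound uniformly over the effectively contributing values $k\lesssim(n/\log n)^{1/(2\be+1)}$, using that $|\bar d_k-d_o|\asymp k^{-\be+1/2}(\log k)^{-1}$ is \emph{decreasing} in $k$, so that the smallest bias, attained at the top of the range, still dominates the Gaussian width $\sqrt{2/(nr_k)}$ by a polynomial factor.

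The main obstacle will be making the quadratic expansion rigorous uniformly over the whole ball $\{|d-d_o|\leq\rho_n\}$, over the $\ty$-integration against $\pi_{\ty|k}$, and (for prior C) over $k$: one must show that the remainder terms of the profile log-likelihood — the stochastic score term, the error in replacing $\bd$ by its linearisation in $d$, and the third-order terms — are all $o\big(nr_k(w_n/\log n)^2\big)$. This is the same type of estimate performed for Theorem \ref{BVMtheorem} in sections \ref{sec3d}–\ref{sec3c}, but here the relevant $k$ is only $(n/\log n)^{1/(2\be+1)}$ and the bias is larger, so the bookkeeping differs; it is to keep these remainders negligible against the bias-induced penalty, uniformly in $k$, that one needs the stronger smoothness $\be>5/2$ rather than merely $\be>1$.
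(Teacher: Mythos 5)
Your proposal follows essentially the same route as the paper's own proof: the same near-boundary truth $\ty_{o,j}=c_0 j^{-(\be+\ft)}(\log j)^{-1}$, the same reduction via Lemma \ref{rateLemma1}/Corollary \ref{cor1} together with the concentration of the posterior on $k\lesssim (n/\log n)^{1/(2\be+1)}$ (plus, in the paper, a lower bound $k>k_n''$ forced by the approximation error), the same conditional-on-$k$ quadratic (Laplace/Gaussian) approximation with curvature $nr_k$ centred at the biased value $d_o+b_n(d_o)$, $|b_n(d_o)|\asymp k^{-\be+\ft}(\log k)^{-1}$, and the same Gaussian tail-ratio conclusion after choosing $k_v$ small. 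The only differences are bookkeeping ones — you phrase the centre as the deterministic pseudo-true $\bar d_k$ and treat the stochastic score shift as a remainder, exactly as the paper does via Lemmas \ref{delta1terms} and \ref{lem:Idk:th1}, and $\be>5/2$ plays the same role in both arguments of making the control of the $\ty$-integration $I_n(d,k)$ and the Taylor remainders straightforward.
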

 The constant $C_w$ comes from the suboptimal rate for $|d-d_o|$ derived in Corollary \ref{cor1}.
Theorem \ref{suboptimalTheorem} is proved by considering the vector $\ty_o$ defined by $\ty_{o,j} = c_0 j^{-(\be+\ft)} (\log j)^{-1}$, for $j \geq 2$. This vector is close to the boundary of the Sobolev-ball $\Theta(\be,L_o)$, in the sense that for all $\be' > \be$,
$ \sum_j j^{2\be'} \ty_{o,j}^2 = + \infty$. The proof consists in showing that conditionally on $k$, the posterior distribution is asymptotically normal as in (\ref{post:norma:th1}), with $k$ replacing $k_n$, and that the posterior distribution concentrates on values of $k$ smaller than $O(n^{1/(2\be + 1)})$, so that the bias $b_n(d_o)$ becomes of order $w_n(\log n)^{-1}$. The constraint $\be > 5/2$ is used to simplify the computations and is not sharp.

It is interesting to note that similar to the frequentist approach, a key issue is a  bias-variance trade-off, which is optimized when $k \sim n^{1/(2\be)}$. This choice of $k$ depends on the smoothness parameter $\be$, and since it is not of the same order as the \textit{optimal } values of $k$ for the loss $l(f,f')$ on the spectral densities, the adaptive (near) minimax Bayesian nonparametric procedure proposed in \citet{kruijer:rousseau:11} does not lead to optimal posterior concentration rate for $d$. While it is quite natural to obtain an adaptive (nearly) minimax Bayesian procedure under the loss $l(.,.)$ by choosing a random $k$, obtaining an adaptive minimax procedure for $d$ remains an open problem. This dichotomy is found in other semi-parametric Bayesian problems, see for instance \citet{arbel:10} in the case of the white noise model or \citet{riro10} for BVM properties.

\section{Decomposing the posterior for $d$} \label{sec3}
To prove Theorems \ref{BVMtheorem} and \ref{suboptimalTheorem} we need to take a closer look at \eqref{marginalPosterior},
to understand how the integration over $\Theta_k$ affects the posterior for $d$. We develop
$\ty \rightarrow l_n(d,k,\ty)$ in a point $\bd$ defined below and decompose the likelihood as
\begin{equation*}
\exp\{l_n(d,k,\ty)\} = \exp\{l_n(d,k)\} \exp\{l_n(d,k,\ty)-l_n(d,k)\},
\end{equation*}
where $l_n(d,k)$ is short-hand notation for $l_n(d,k,\bd)$.
Define
\begin{equation} \label{InDef}
I_n(d,k) = \int_{\Theta_k} e^{l_n(d,k,\ty)-l_n(d,k)} d\pi_{\ty|k}(\ty),
\end{equation}
where $\Theta_k$ is the generic notation for $\tlt$ under \textbf{prior A} and $\tlo$ for priors B and C.
The posterior for $d$ given in \eqref{marginalPosterior} can be written as
\begin{equation} \label{marginalPosterior3}
\Pi(d \in D|\xn) =
\frac{\sum_{k=0}^{\infty} \pi_k(k) \int_{D} e^{l_n(d,k)- l_n(d_o,k)} I_n(d,k) d\pi_d(d)}
{\sum_{k=0}^{\infty} \pi_k(k) \int_{-\ft+t}^{\ft-t} e^{l_n(d,k)- l_n(d_o,k)}
I_n(d,k) d\pi_d(d)}.
\end{equation}

The factor $\exp\{l_n(d,k)-l_n(d_o,k)\}$ is independent of $\ty$, and will under certain conditions dominate the marginal likelihood. In section \ref{sec3d} we give a Taylor-approximation which, for given $k$, allows for a normal approximation to the marginal posterior. However, to obtain the convergence rates in Theorems \ref{BVMtheorem} and \ref{suboptimalTheorem},
it also needs to be shown
that the integrals $I_n(d,k)$ with respect to $\ty$ do not vary too much with $d$. This is the most difficult part of the proof of Theorem \ref{BVMtheorem} and the argument is presented in section \ref{sec3c}. Since Theorem \ref{suboptimalTheorem} is essentially a counter-example and it is not aimed to be as general as Theorem \ref{BVMtheorem}, as far as the range of $\be$ is concerned, we can restrict attention to larger $\be$'s, i.e. $\be > 5/2$, for which controlling $I_n(d,k)$ is much easier.

\subsection{Preliminaries} \label{prelim}

First we define the point $\bd$ in which we develop $\ty \rightarrow l_n(d,k,\ty)$.
Since the function $\log(2-2\cos(x))$ has Fourier coefficients against $\cos j x$, $ j\in \N$ equal to $0,2,\frac{2}{2},\frac{2}{3},\ldots$,
FEXP-spectral densities can be written as
\begin{equation*} \label{fexpDef}
|1-e^{i x}|^{-2 d} \exp\left\{ \sum_{j=0}^\infty \ty_j \cos(j x) \right\}
= \exp\left\{ \sum_{j=0}^{\infty} (\tj + d \eta_j) \cos (j x) \right\}.
\end{equation*} \noindent
Given $f=f_{d,k,\ty}$ and $f'=f_{d',k',\ty'}$ we can therefore express the norm
$l(f,f')$ in terms of $(\ty-\ty')$ and $(d-d')$:
\begin{equation} \label{fexpl}
l(f,f') = \ft \sum_{j=0}^{\infty} ((\ty_j-\ty_j') + \eta_j(d-d'))^2,
\end{equation} \noindent
where $\ty_j$ and $\ty_j'$ are understood to be zero when $j$ is larger than $k$ respectively $k'$.
Equation \eqref{fexpl} implies that for given $d$ and $k$,
$l(f_o,f_{d,k,\ty})$ is minimized by
\begin{equation*} \label{projDef}
\bd := 
\mbox{argmin}_{\ty \in \R^{k+1}} \sum_{j=0}^\infty (\ty_j -\ty_{o,j} + (d-d_o)\eta_j)^2  = \ty_{o[k]} + (d_o-d)\eta_{[k]}.
\end{equation*} \noindent
In particular, $\ty=\ty_{o[k]}$ minimizes $l(f_o,f_{d,k,\ty})$ only when $d=d_o$; when $d\neq d_o$ we need to add
$(d_o-d)\eta_{[k]}$. The following lemma shows that an upper bound on  $l(f_o,f_{d,k,\ty})$ leads to upper bounds on
 $|d-d_o|$ and $\|\ty-\ty_o\|$.
\begin{lem} \label{lem1}
Suppose that $\ty \in \Theta_k(\gamma,L)$ and $\ty_o \in \Theta_k(\be,L_o)$, where $\gamma \leq \be$.
Also suppose that for a sequence $\an \rightarrow 0$, $l(f_o,f_{d,k,\ty}) \leq \an^2$ for all $n$.
Then there are universal constants $C_1, C_2>0$ such that for all $n$,
\begin{eqnarray*} 
|d-d_o| \leq C_1 (L+L_o)^{\frac{1}{4\gamma}} \an^{\frac{2\gamma-1}{2\gamma}}, \qquad \|\ty-\ty_o\| \leq C_2 (L+L_o)^{\frac{1}{4\gamma}} \an^{\frac{2\gamma-1}{2\gamma}}.
\end{eqnarray*} \noindent
\end{lem}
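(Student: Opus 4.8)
The plan is to read everything off the exact identity \eqref{fexpl} for $l(f_o,f_{d,k,\ty})$. Writing $u := d-d_o$ and $a_j := \ty_j - \ty_{o,j}$ (with the convention $\ty_j=0$ for $j>k$, so that $a_j=-\ty_{o,j}$ there), the identity and the hypothesis $l(f_o,f_{d,k,\ty})\le \an^2$ give
\begin{equation}\label{eq:resid}
\sum_{j\ge 0}\bigl(a_j + \eta_j u\bigr)^2 \le 2\an^2 .
\end{equation}
The key observation is that $u$ is identified only through the high frequencies: there $\eta_j u = -2u/j$ is small, but the Sobolev regularity forces $a_j$ to be even smaller, so the two cannot cancel. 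I would first record a global Sobolev bound on $a$. By the triangle inequality for the Sobolev norm of exponent $\gamma$, together with $\gamma\le\be$ (which gives $\sum_{j}\ty_{o,j}^2(1+j)^{2\gamma}\le\sum_{j}\ty_{o,j}^2(1+j)^{2\be}\le L_o$), one obtains
\begin{equation}\label{eq:sobolev}
\sum_{j\ge 0} a_j^2 (1+j)^{2\gamma} \le C(L+L_o).
\end{equation}

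Next I would localise to a block of high frequencies. Fix an integer $N\ge1$ and apply the elementary inequality $(x+y)^2\ge \ft y^2 - x^2$ with $x=a_j$, $y=\eta_j u$; summing \eqref{eq:resid} over $N\le j\le 2N$ yields
\begin{equation}\label{eq:block}
2\an^2 \ge \ft u^2 \sum_{j=N}^{2N}\eta_j^2 - \sum_{j=N}^{2N} a_j^2 .
\end{equation}
Since $\eta_j^2 = 4/j^2$ we have $\sum_{j=N}^{2N}\eta_j^2\ge c/N$, while \eqref{eq:sobolev} gives $\sum_{j=N}^{2N}a_j^2 \le (1+N)^{-2\gamma}\sum_{j} a_j^2(1+j)^{2\gamma}\le C N^{-2\gamma}(L+L_o)$. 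Rearranging \eqref{eq:block} produces the bias--variance estimate
\begin{equation}\label{eq:bv}
u^2 \le C\,N \an^2 + C\,N^{1-2\gamma}(L+L_o).
\end{equation}

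I would then optimise over $N$. The two terms balance at $N\asymp (L+L_o)^{1/(2\gamma)}\an^{-1/\gamma}$, which tends to infinity (so an admissible integer exists once $\an$ is small), and gives $u^2\le C(L+L_o)^{1/(2\gamma)}\an^{2-1/\gamma}$, that is
\begin{equation*}
|d-d_o| = |u| \le C_1(L+L_o)^{\frac{1}{4\gamma}}\,\an^{\frac{2\gamma-1}{2\gamma}},
\end{equation*}
which is the first assertion; the remaining large-$\an$ range is absorbed into $C_1$ using $|d-d_o|\le 1$. The second assertion is then immediate from the triangle inequality $\|\ty-\ty_o\| = \|a\|\le \|a+\eta u\| + |u|\,\|\eta\|$, where $\|a+\eta u\|\le\sqrt2\,\an$ by \eqref{eq:resid} and $\|\eta\|^2 = \sum_{j\ge1}4/j^2<\infty$; substituting the bound on $|u|$ and using $\an\le\an^{(2\gamma-1)/(2\gamma)}$ for $\an\le1$ gives the stated bound with a universal $C_2$.

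The main obstacle is the correct choice of frequency scale. The loss $l$ controls only the full residual $a+\eta u$ in $\ell^2$, and at low frequencies $a_j$ can absorb $\eta_j u$ freely, so no information on $u$ is available there; it is precisely at frequencies around $N\asymp\an^{-1/\gamma}$ that the Sobolev constraint \eqref{eq:sobolev} bites hard enough to pin down $u$ while the noise level $\an$ is not yet dominant. Obtaining the exponent $(2\gamma-1)/(2\gamma)$ therefore rests entirely on the optimisation in \eqref{eq:bv}; everything else reduces to triangle inequalities and elementary summation.
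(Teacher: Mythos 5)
Your proof is correct and takes essentially the same route as the paper's: both start from the exact $\ell_2$ identity \eqref{fexpl}, use the Sobolev constraint $\sum_j (\ty_j-\ty_{o,j})^2(1+j)^{2\gamma}\lesssim L+L_o$ to show that at high frequencies the coefficient differences cannot cancel $\eta_j(d-d_o)$, and pick a frequency cutoff of order $\an^{-1/\gamma}$ (the paper sums over the full tail $j\geq m_n$ with $m_n=\an^{-1/\gamma}$ and a reverse triangle inequality, where you use a dyadic block $[N,2N]$ and the bound $(x+y)^2\geq \ft y^2-x^2$, a purely cosmetic difference). If anything, your explicit balancing of the two terms recovers the stated constant $(L+L_o)^{\frac{1}{4\gamma}}$ slightly more directly than the paper's fixed choice of cutoff.
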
 \noindent
%
%
\begin{proof}
For all $(d,k,\ty)$ such that $l(f_{d,k,\ty},f_o) \leq \an$, we have, using \eqref{fexpl},
\begin{equation*} 
\begin{split}
2 \an^2  & \geq  2 l(f_{d,k,\ty},f_o) = 2 (\ty_{o,0} - \ty_0)^2 + \sum_{j \geq 1} \left((\ty_{o,j} - \tj) +
\eta_j (d_o-d) \right)^2 \\& \geq \sum_{j \geq 1} (\ty_{o,j} - \tj)^2  + (d-d_o)^2 \sum_{j \geq 1} \eta_j^2
- 2 |d-d_o| \sqrt{\sum_{j \geq 1} \eta_j^2} \sqrt{\sum_{j \geq 1} (\ty_{o,j} - \tj)^2} \\
&= \left(\|\ty - \ty_o\| -|d-d_o| \|\eta\|\right)^2.
\end{split}
\end{equation*} \noindent
The inequalities remain true if we replace all sums over $j\geq 1$ by sums over $j\geq m_n$, for any nondecreasing sequence $m_n$.
Since $\|(\eta_j 1_{j>m_n})_{j\geq 1}\|^2$  is of order $m_n^{-1}$ and $\|(\ty-\ty_o\|_j1_{j>m_n})_{j\geq 1} \|^2 \leq m_n^{-2\gamma} \sum_{j > m_n} (1+j)^{2\be} (\ty_j - \ty_{o,j})^2 < 2(L+L_o) m_n^{-2\gamma}$,
setting $m_n =\an^{-\frac{1}{\gamma}}$ gives the desired rate for $|d-d_o|$ as well as for $\|\ty-\ty_o\|$. 
\end{proof} \noindent
The convergence rate for $l(f_o,f_{d,k,\ty})$ required in Lemma \ref{lem1} can be found in \cite{kruijer:rousseau:11}. For easy reference we restate it here. Compared to a similar result in RCL, the $\log n$ factor is improved.
\begin{lem} \label{rateLemma1}
Under \textbf{prior A}, there exists a constant $l_0$ depending only on $L_o$ and $k_A$ (and not on $L$) such that
\begin{eqnarray*}
\Pi((d,k,\ty) : l(f_{d,k,\ty},f_o) \geq l_0^2 \dn^2 | \xn) \overset{P_o}{\rightarrow} 0,
\end{eqnarray*} \noindent
where $\dn = (n/\log n)^{-\frac{2\be-1}{4\be}}$.
Under \textbf{priors B} and {\bf C}, this statement holds with $\en = (n/\log n)^{-\frac{\be}{2\be + 1}}$ replacing $\dn$.
\end{lem}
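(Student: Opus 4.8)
The plan is to deduce the statement from the general posterior contraction machinery for stationary Gaussian time series developed in RCL and \citet{kruijer:rousseau:11}, verifying its hypotheses in the FEXP parametrisation. Such a theorem requires three ingredients, all expressed through the loss $l(\cdot,\cdot)$: a Kullback--Leibler-type lower bound on the prior mass of a neighbourhood of $f_o$, an exponentially small bound on the prior mass outside a sieve, and a bound on the metric entropy of that sieve. The two rates arise from the same bias--variance balance. By \eqref{fexpl} and the Sobolev constraint \eqref{fo}, the squared approximation error of a $k$-term expansion, $l(f_o,f_{d_o,k,\ty_{o[k]}})=\ft\sum_{j>k}\ty_{o,j}^2$, is of order $k^{-2\be}$, while the effective dimensional cost of a $(k{+}1)$-parameter model is of order $k\log n/n$. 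Equating $k^{-2\be}\asymp k\log n/n$ gives $k\asymp\kp$ and the rate $\en$ for priors B and C; prior A instead takes the larger $k_n\asymp(n/\log n)^{1/(2\be)}$ of \eqref{kpktDef1}, for which the approximation error $k_n^{-2\be}\asymp(n/\log n)^{-1}$ is negligible compared with the variance term $k_n\log n/n\asymp\dn^2$, producing the slower, variance-driven rate $\dn$.

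For the prior-mass condition I would centre the neighbourhood at the projection $\bd=\ty_{o[k]}+(d_o-d)\eta_{[k]}$ from the Preliminaries, taking $k=k_n$ under prior A and an appropriate $k\asymp\kp$ (with enough mass $\pi_k(k)$) under priors B and C. Since $\be-\ft<\be$ one has $\ty_{o[k]}\in\Theta_k(\be-\ft,L_o)$, and for $|d-d_o|\lesssim\dn$ the correction $(d_o-d)\eta_{[k]}$ has $\Theta_k(\be-\ft,\cdot)$-norm of order $|d-d_o|\,k^{\be-1}=o(1)$, so that $\bd$ lies well inside the prior support $\tlt$ as soon as $L$ exceeds a fixed multiple of $L_o$; this is exactly where the assumption that $L$ be large compared with $L_o$ is used, and, because the condition $\rm{Hyp}$ is imposed at the true radius $L_o$, it is also what lets the final constant $l_0$ be chosen independently of $L$. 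On a Euclidean ball of radius $\asymp\dn$ (resp.\ $\en$) around $\bd$, $\rm{Hyp}$ gives $\pi_{\ty|k}\ge e^{-c_0 k\log k}$, and with the positive lower bound on $\pi_d$ and, for prior C, $\pi_k(k)\ge e^{-c_1k\log k}$, the prior mass of this ball is at least $e^{-Ck\log k}$. Since $k\log k\lesssim n\dn^2$ (resp.\ $n\en^2$) by the choice of $k$, it remains to turn this $l$-neighbourhood into a genuine Kullback--Leibler neighbourhood of $P_o$, which uses \eqref{fexpl} together with the Toeplitz comparisons in \eqref{matrixCalculus} and \citet{d1}.

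The sieve and entropy conditions are comparatively routine. Under priors A and B the parameter $k$ is deterministic, so the sieve is the single slice $\{k=k_n\}$ (resp.\ $\{k=\kp\}$); by \eqref{fexpl} the loss $l$ there coincides, up to a constant and the one-dimensional $d$-direction, with the Euclidean metric on a bounded subset of $\R^{k+1}$, so its log-covering number at scale $\dn$ (resp.\ $\en$) is of order $k\log(\sqrt{L}/\dn)\asymp k\log n\asymp n\dn^2$ (resp.\ $n\en^2$). Under prior C, where $k$ is random, one takes the sieve $\{k\le\bar k_n\}$ with $\bar k_n\asymp(n/\log n)^{1/(2\be+1)}$ and uses the tail bound $\pi_k(k)\le e^{-c_2k\log k}$ to make $\Pi(k>\bar k_n)\le e^{-C'n\en^2}$, controlling the entropy slice by slice as before. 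Assembling the three conditions yields the convergence to zero of the posterior mass of $\{l(f_{d,k,\ty},f_o)\ge l_0^2\dn^2\}$ under prior A, and of $\{l(f_{d,k,\ty},f_o)\ge l_0^2\en^2\}$ under priors B and C.

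The main obstacle is the passage between the spectral loss $l$ and the genuine statistical distances for dependent Gaussian data: one must show both that an $l$-ball is contained in a Kullback--Leibler ball of comparable radius and that tests with exponentially small errors separate $f_o$ from $l$-distant alternatives on the sieve. The difficulty is that the covariance matrices $T_n(f)$ are not uniformly well conditioned, because of the long-memory singularity of $f_o$ at the origin, so the elementary inequalities \eqref{matrixCalculus} must be supplemented by the sharper Toeplitz estimates of \citet{d1} (and the arguments of RCL) in order to keep all constants, and in particular $l_0$, free of $L$. Since these quantitative estimates are carried out in full in \citet{kruijer:rousseau:11}, I would cite that reference for them and reproduce in detail only the verifications of the prior-mass and entropy conditions specific to priors A, B and C.
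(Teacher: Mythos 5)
The paper offers no proof of this lemma at all: it is imported from \citet{kruijer:rousseau:11} (``for easy reference we restate it here''), so its entire justification is that citation. Your proposal --- the standard prior-mass/sieve/entropy posterior-contraction scheme with the correct bias--variance balances for priors A, B and C ($k_n^{-2\be}$ negligible against $k_n\log n/n\asymp\dn^2$ under prior A, and $k^{-2\be}\asymp k\log n/n$ at $k\asymp\kp$ giving $\en$ under priors B and C), deferring the hard Toeplitz/Kullback--Leibler/testing estimates to RCL and \citet{kruijer:rousseau:11} --- is precisely the route of that reference, so you are taking essentially the same approach as the paper, only spelling out more of the skeleton.
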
 \noindent
In the proof of Theorem \ref{BVMtheorem}  (resp. \ref{suboptimalTheorem}), this
result allows us to restrict attention to the set of spectral densities $f$ such that $l(f,f_o) \leq l_0^2\dn^2 $ (resp. $l_0^2 \epsilon_n^2$).
In addition, by combination with Lemma \ref{lem1} we can now
deduce bounds on $|d-d_o|$ and $\|\ty-\bd\|$.
These bounds, although suboptimal,  will be important in the sequel for obtaining the near-optimal rate in Theorem \ref{BVMtheorem}.
\begin{cor} \label{cor1}
Under the result of Lemma \ref{rateLemma1} and \textbf{prior A}, we can apply Lemma \ref{lem1} with $\an^2= l_0^2\dn^2$ and $\gamma=\be-\ft$, and obtain
\begin{equation*}\label{vnconvergence3}
\Pi_d(d : |d-d_o| \geq \vb | \xn) \overset{P_o}{\rightarrow} 0, \quad
\Pi(\|\ty-\bd\| \geq 2l_0\dn | \xn) \overset{P_o}{\rightarrow} 0,
\end{equation*} \noindent
where $\vb = C_1 (L+L_o)^{\frac{1}{4\be-2}} l_0^{\frac{2\be-2}{2\be-1}} (n / \log n)^{-\frac{\be-1}{2\be}}$.
Under \textbf{priors B} and {\bf C} we have $\gamma=\be$; the rate for $|d-d_o|$ is then
$w_n = C_w (n / \log n)^{-\frac{2\be-1}{4\be+2}}$ and the rate for $\|\ty-\bd\|$ is $2 l_0 \en$.
The constant $C_w= C_1 (L+L_o)^{\frac{1}{4\be}} l_0^{\frac{2\be-1}{2\be}}$ is as in Theorem \ref{suboptimalTheorem}.
\end{cor}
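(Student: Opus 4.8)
The plan is to combine the two preceding lemmas through a purely deterministic set inclusion, so that all the stochastic content is carried by Lemma \ref{rateLemma1}. First I would fix the high-probability event on which the posterior concentrates: by Lemma \ref{rateLemma1}, under \textbf{prior A} the posterior mass of
\[
A_n := \{(d,k,\ty) : l(f_{d,k,\ty},f_o) < l_0^2\dn^2\}
\]
tends to $1$ in $P_o$-probability, and under \textbf{priors B} and \textbf{C} the same holds with $\en$ in place of $\dn$. It then suffices to establish the two deterministic inclusions
\[
A_n \subseteq \{|d-d_o| \le \vb\} \quad\text{and}\quad A_n \subseteq \{\|\ty-\bd\| \le 2l_0\dn\}
\]
(together with the analogues using $w_n$ and $\en$ for priors B and C). Indeed, the posterior probability of each complementary event, such as $\{|d-d_o|\ge\vb\}$, is then bounded by $\Pi(A_n^c|\xn)$, which tends to $0$.

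For the bound on $|d-d_o|$ I would invoke Lemma \ref{lem1} pointwise on $A_n$. The support of $\pi_{\ty|k}$ places $\ty$ in $\Theta_k(\be-\ft,L)$ for \textbf{prior A} and in $\Theta_k(\be,L)$ for \textbf{priors B} and \textbf{C}, while $\ty_o\in\Theta(\be,L_o)$ gives $\ty_{o[k]}\in\Theta_k(\be,L_o)$; in either case the hypothesis $\gamma\le\be$ of Lemma \ref{lem1} is met with $\gamma=\be-\ft$, resp.\ $\gamma=\be$. Applying Lemma \ref{lem1} with $\an=l_0\dn$ (resp.\ $\an=l_0\en$) yields $|d-d_o|\le C_1(L+L_o)^{1/(4\gamma)}\an^{(2\gamma-1)/(2\gamma)}$, and the only remaining work is to simplify exponents: with $\gamma=\be-\ft$ one has $(2\gamma-1)/(2\gamma)=(2\be-2)/(2\be-1)$ and $\dn^{(2\be-2)/(2\be-1)}=(n/\log n)^{-(\be-1)/(2\be)}$, which reproduces $\vb$; the identical computation with $\gamma=\be$ produces the exponent of $w_n$ and the constant $C_w$.

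The bound on $\|\ty-\bd\|$ I would derive directly from \eqref{fexpl} rather than from Lemma \ref{lem1}, and this is the one point deserving care. Since $\bd$ is by definition the $\ty$-minimiser of $l(f_o,f_{d,k,\cdot})$, restricting the decomposition \eqref{fexpl} to indices $j\le k$ gives $\|\ty-\bd\|^2=\sum_{j=0}^k\big((\ty_{o,j}-\ty_j)+\eta_j(d_o-d)\big)^2 \le 2\,l(f_o,f_{d,k,\ty})$, so on $A_n$ we obtain $\|\ty-\bd\|\le\sqrt2\,l_0\dn<2l_0\dn$ (resp.\ $<2l_0\en$). It is essential not to route this estimate through the $\|\ty-\ty_o\|$ bound of Lemma \ref{lem1}: that bound carries the slower rate $\an^{(2\gamma-1)/(2\gamma)}$, which actually \emph{exceeds} $\dn$, because $\|\ty-\ty_o\|$ also absorbs the bias along the $\eta$-direction, whereas $\|\ty-\bd\|$ measures only the residual after projection and is therefore controlled at the sharp rate $\dn$. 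No step here is a genuine obstacle; the corollary is essentially a bookkeeping combination of Lemmas \ref{rateLemma1} and \ref{lem1} with \eqref{fexpl}, the sole subtlety being the choice of reference point ($\bd$ versus $\ty_o$) that separates the two displayed rates.
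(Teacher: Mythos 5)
Your proposal is correct, and its overall architecture is the paper's: restrict to the event $\{l(f_{d,k,\ty},f_o) < l_0^2\dn^2\}$ (resp.\ $l_0^2\en^2$) supplied by Lemma \ref{rateLemma1}, note that the prior support puts $\ty$ in $\tlt$ (prior A) resp.\ $\tlo$ (priors B, C), and apply Lemma \ref{lem1} pointwise with $\gamma=\be-\ft$ resp.\ $\gamma=\be$; your exponent bookkeeping reproducing $\vb$, $w_n$ and $C_w$ is exact. The one place you genuinely deviate is the $\|\ty-\bd\|$ bound. The paper argues via the minimizing property of $\bd$ together with the triangle inequality for $\sqrt{l}$: since $l(f_o,f_{d,k,\ty})\le \an^2$ and $\bd$ minimizes $\ty\mapsto l(f_o,f_{d,k,\ty})$ over $\R^{k+1}$, also $l(f_o,f_{d,k,\bd})\le\an^2$, and because the two densities share the same $d$ and $k$ this yields $\|\ty-\bd\|\le 2\an$. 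You instead exploit the explicit projection formula $\bd=\ty_{o[k]}+(d_o-d)\eta_{[k]}$ and observe that $\|\ty-\bd\|^2$ is exactly the sum of the $j\le k$ terms in the expansion \eqref{fexpl} of $2\,l(f_o,f_{d,k,\ty})$, hence at most $2\an^2$. Both arguments are two lines and both are valid; yours is marginally sharper (constant $\sqrt2$ rather than $2$, so comfortably inside the required $2l_0\dn$ ball) and it makes structurally transparent the point the paper only states as a remark: $\ty-\bd$ is the residual left after the bias in the $\eta_{[k]}$-direction has been absorbed into $\bd$, which is why it concentrates at the fast rate $\an$ while $\|\ty-\ty_o\|$ need only satisfy the slower bound $\an^{(2\gamma-1)/(2\gamma)}$ of Lemma \ref{lem1}.
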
 \noindent
\begin{proof}
The rate for $|d-d_o|$  follows directly from Lemma \ref{lem1}. To obtain the rate for $\|\ty-\bd\|$,
let $\an$ denote either $l_0 \dn$
(the rate for $l(f_o,f)$ under prior A) or $l_0\en$ (the rate under priors B and C).
Although Lemma \ref{lem1} suggests that the Euclidean distance from $\ty_o$ to $\ty$
(contained in $\tlo$ or $\tlt$) may be larger
than $\an$, the distance from $\ty$ to $\bd$ is certainly of order $\an$.
To see this, note that Lemma \ref{rateLemma1} implies the existence of $d,k,\ty$ in the model with
$l(f_o, f_{d,k,\ty}) \leq \an^2$.  From the definition of $\bd$ it follows that $l(f_o,f_{d,k,\bd}) \leq \an^2$. The triangle inequality gives $\|\ty-\bd\|^2 = l(f_{d,k,\ty},f_{d,k,\bd}) \leq 4 \an^2$.
\end{proof}

The rates $\vb$ and $w_n$ obtained in Corollary \ref{cor1} are clearly suboptimal; their importance however lies in the fact that they narrow down the set for which we need to prove Theorems \ref{BVMtheorem} and \ref{suboptimalTheorem}.
To prove Theorem \ref{suboptimalTheorem} for example it suffices to show that the posterior mass
on $k_v w_n (\log n)^{-1}< |d-d_o| < w_n$ tends to zero. Note that the lower and the upper bound differ only by a factor $(\log n)$. Hence under priors B and C, the combination of Corollary \ref{cor1} and Theorem \ref{suboptimalTheorem} characterizes the posterior concentration rate (up to a $\log n$ term) for the given $\ty_o$. Another consequence of Corollary \ref{cor1} is that we may neglect the posterior mass on all
$(d,k,\ty)$ for which $\|\ty -\bd\|$ is larger than $2l_0\dn$ (under prior A) or $2l_0\en$ (under priors B and C).

We conclude this section with a result on $\bd$ and $\tlo$.
In the definition of $\bd$ we minimize over $\R^{k+1}$, whereas the support of priors A-C is the Sobolev ball
$\tlo$ or $\tlt$. Under the assumptions of Theorems \ref{BVMtheorem} and \ref{suboptimalTheorem} however,
$\bd$ is contained in $\tlt$ respectively $\tlo$. Also the $l_2$-ball of radius $2l_0\dn$ (or $2l_0\en$) is contained in these Sobolev-balls.
\begin{lem} \label{bdLemma}
Under the assumptions of Theorem \ref{BVMtheorem}, $B_{k}(\bd,2l_0\dn)$ is contained in $\Theta_{k}(\be-\frac{1}{2},L)$
for all $d\in [d_o-\vb,d_o+\vb]$, if $L$ is large enough. In particular, $\bd \in \Theta_{k}(\be-\frac{1}{2},L)$. Similarly, under the assumptions of Theorem \ref{suboptimalTheorem}, $B_{k}(\bd,2l_0\te) \subset \tlo$, for all $d\in [d_o-w_n,d_o+w_n]$.
\end{lem}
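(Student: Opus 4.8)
The statement asserts a containment of balls in Sobolev balls. The key object is $\bd = \ty_{o[k]} + (d_o-d)\eta_{[k]}$, so the plan is to control the Sobolev norm of $\bd$ and then add a slack for the radius of the ball. First I would recall that $\ty_o \in \Theta(\be,L_o)$ by assumption \eqref{fo}, so a fortiori $\ty_{o[k]} \in \Theta_k(\be,L_o) \subset \Theta_k(\be-\ft,L_o)$, since truncation only drops nonnegative terms and lowering the smoothness index only weakens the weights. The real work is to bound the perturbation $(d_o-d)\eta_{[k]}$ in the $\Theta_k(\be-\ft,\cdot)$-norm. Writing $\eta_j = -2/j$ for $j\geq 1$ and $\eta_0=0$, one computes
\begin{equation*}
\sum_{j=0}^k \eta_j^2 (1+j)^{2\be-1} = \sum_{j=1}^k \frac{4}{j^2}(1+j)^{2\be-1} \leq C k^{2\be-1},
\end{equation*}
since $\be > 1$ makes the exponent $2\be-3 > -1$ so the sum grows like its largest term. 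Hence the $\Theta_k(\be-\ft,\cdot)$-norm of $\eta_{[k]}$ is $O(k^{\be-\ft})$.

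Next I would assemble the bound on $\bd$ itself. Using the triangle inequality for the weighted $l_2$-norm defining $\Theta_k(\be-\ft,\cdot)$ and the constraint $|d-d_o|\leq \vb$,
\begin{equation*}
\Big(\sum_{j=0}^k (\bar\theta_{d,k,j})^2(1+j)^{2\be-1}\Big)^{1/2} \leq L_o^{1/2} + |d-d_o|\cdot C k^{\be-\ft} \leq L_o^{1/2} + C\,\vb\, k_n^{\be-\ft}.
\end{equation*}
Plugging in $\vb = O((n/\log n)^{-\frac{\be-1}{2\be}})$ and $k_n = O((n/\log n)^{\frac{1}{2\be}})$ from \eqref{kpktDef1}, the product $\vb k_n^{\be-\ft}$ is of order $(n/\log n)^{-\frac{\be-1}{2\be}+\frac{2\be-1}{4\be}} = (n/\log n)^{1/(4\be)} \to \infty$, which looks alarming. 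The resolution is that this diverging term is still dominated once $L$ is taken large relative to $L_o$: more carefully, I expect the factor $C_1(L+L_o)^{1/(4\be-2)}$ inside $\vb$ to be the one that must be absorbed, and the claim "if $L$ is large enough" is exactly what makes the whole right-hand side stay below $L^{1/2}$. This is the main obstacle, and I would handle it by keeping track of the $L$-dependence of every constant and checking that the leading term scales like $(L_o/L)^{\text{something}} \cdot L^{1/2}$, so that enlarging $L$ restores the inequality — the honest check being that $\bd$ sits well inside $\Theta_k(\be-\ft,L)$ with room to spare.

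Finally I would account for the ball radius: any $\ty \in B_k(\bd,2l_0\dn)$ satisfies, by the triangle inequality again,
\begin{equation*}
\Big(\sum_{j=0}^k \ty_j^2(1+j)^{2\be-1}\Big)^{1/2} \leq \Big(\sum_{j=0}^k (\bar\theta_{d,k,j})^2(1+j)^{2\be-1}\Big)^{1/2} + (1+k_n)^{\be-\ft}\,2l_0\dn,
\end{equation*}
where the crude bound $(1+j)^{2\be-1}\leq (1+k_n)^{2\be-1}$ converts the Euclidean radius into a weighted one. The extra term is $O(k_n^{\be-\ft}\dn) = O((n/\log n)^{\frac{2\be-1}{4\be}-\frac{2\be-1}{4\be}}) = O(1)$, a bounded quantity, again absorbed by choosing $L$ large. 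Taking the whole right-hand side below $L^{1/2}$ gives $B_k(\bd,2l_0\dn)\subset \Theta_k(\be-\ft,L)$, and setting the radius to zero gives $\bd\in\Theta_k(\be-\ft,L)$ as the stated special case. The argument under Theorem \ref{suboptimalTheorem} is identical with $\gamma=\be$ in place of $\be-\ft$, radius $2l_0\te$, and the rate $w_n$ for $|d-d_o|$; the relevant sum $\sum_{j=1}^k \eta_j^2(1+j)^{2\be}$ is of order $k^{2\be}$, and the same large-$L$ absorption closes the containment in $\tlo$.
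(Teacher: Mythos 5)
Your overall strategy coincides with the paper's: split the weighted norm of a point of $B_k(\bd,2l_0\dn)$ into the contribution of $\ty_{o[k]}$, the contribution of $(d_o-d)\eta_{[k]}$, and the radius term, and absorb everything into $L$ using the fact that the $L$-dependence entering through $\vb$ is sublinear. However, there is a genuine error at the decisive computation, and the repair you propose for its consequence is invalid. Since $\eta_j^2(1+j)^{2\be-1}\asymp 4j^{2\be-3}$ and $2\be-3>-1$, one has
\begin{equation*}
\sum_{j=1}^k \eta_j^2(1+j)^{2\be-1} \;=\; O\!\left(k^{2\be-2}\right),
\end{equation*}
i.e.\ $k$ times the largest term, not $k^{2\be-1}$ as you state (your inequality is true but loose by a factor $k$, and that factor is fatal). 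The $\Theta_k(\be-\ft,\cdot)$-norm of $\eta_{[k]}$ is therefore $O(k^{\be-1})$, not $O(k^{\be-\ft})$. With the correct exponent the ``alarming'' divergence never appears:
\begin{equation*}
\vb\, k_n^{\be-1} \;=\; C_1 (L+L_o)^{\frac{1}{4\be-2}}\, l_0^{\frac{2\be-2}{2\be-1}}\, k_A^{\be-1},
\end{equation*}
which is bounded in $n$, and its square contributes $O\big((L+L_o)^{\frac{1}{2\be-1}}\big)$ to the squared norm; since $2\be-1>1$ this is sublinear in $L$ and hence below $L/2$ once $L$ is large. This is exactly the paper's computation, which uses $\sum_{j=1}^{k_n} j^{2\be-3}=O(k_n^{2\be-2})$.

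By contrast, your claim that the term of order $(n/\log n)^{1/(4\be)}\to\infty$ produced by your bound ``is still dominated once $L$ is taken large relative to $L_o$'' cannot be correct: $L$ is a fixed constant, independent of $n$, so no choice of $L$ absorbs a quantity diverging with $n$; as written, your argument does not close, and it only closes after the exponent is corrected. The same slip recurs in your last paragraph for priors B and C: $\sum_{j=1}^k\eta_j^2(1+j)^{2\be}=O(k^{2\be-1})$, not $O(k^{2\be})$, and with the correct bound $w_n^2 (k_n')^{2\be-1}$ is bounded in $n$ and grows only like $(L+L_o)^{\frac{1}{2\be}}$ in $L$, so the absorption works. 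Your treatment of the other two pieces is fine and matches the paper: $\ty_{o[k]}\in\Theta_k(\be-\ft,L_o)$ contributes a quantity depending only on $L_o$, and converting the Euclidean radius via $(1+k_n)^{2\be-1}$ gives $k_n^{2\be-1}\dn^2 l_0^2=O(1)$, again depending only on $L_o$.
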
 \noindent
\begin{proof}
Since the constant $l_0$ is independent of $L$, $\ty \in B_{k}(\bd,2l_0\dn)$ implies that for $n$ large enough ,
\begin{equation*} 
\begin{split}
& \sum_{j=0}^{k} \ty_j^2 (j+1)^{2\be-1} \leq 2\sum_{j=0}^{k} (\ty-\bd)_j^2 (j+1)^{2\be-1} + 2 \sum_{j=0}^{k} (\bd)_j^2 (j+1)^{2\be-1} \\
&\quad \leq 8 \delta^2(L_o) (n/\log n)^{\frac{2\be-1}{2\be}} (k_n+1)^{2\be-1} + 4 \sum_{j=0}^{k_n} \ty_{o,j}^2 (j+1)^{2\be-1} \\ &\qquad + 16 (d-d_o)^2 \sum_{j=1}^{k_n} j^{2\be-3}.
\end{split}
\end{equation*}
The first two terms on the right only depend on $L_o$, and are smaller than $L/4$ when $L$ is chosen sufficiently large.
Because $\vb = C_1 (L+L_o)^{\frac{1}{4\be-2}} l_0^{\frac{2\be-2}{2\be-1}} (n / \log n)^{-\frac{\be-1}{2\be}}$, the last
term in the preceding display is at most 
\begin{equation*} \label{inclusion3}
C_1^2 (L+L_o)^{\frac{1}{2\be-1}} l_0^{\frac{4\be-4}{2\be-1}} (n / \log n)^{-\frac{\be-1}{\be}} k_A^{2\be-2} (n/\log n)^{\frac{\be-1}{\be}},
\end{equation*}
which, since $\be>1$, is smaller than $L/2$ when $L$ is large enough. We conclude that $B_{k}(\bd,2l_0\dn)$
is contained in $\Theta_{k}(\be-\frac{1}{2},L)$ provided $L$ is chosen sufficiently large.
The second statement can be proved similarly.
\end{proof}

\subsection{A Taylor approximation for $l_n(d,k)$} \label{sec3d}
Provided that the integrals $I_n(d,k)$ have negligible impact on the posterior for $d$,
the conditional distribution of $d$ given $k$ will only depend on $\exp\{l_n(d,k)-l_n(d_o,k)\}$.
Let $l_n^{(1)}(d,k)$, $l_n^{(2)}(d,k)$ denote the first two derivatives of the map $d \mapsto l_n(d,k)$.
There exists a $\bar d$ between $d$ and $d_o$ such that
\begin{eqnarray} \label{D1development}
l_n(d,k) &=&  l_n(d_o,k) + (d-d_o)l_n^{(1)}(d_o,k) + \frac{(d-d_o)^2}{2} l_n^{(2)}(\bar{d},k).
\end{eqnarray}
Defining
\begin{equation*}
b_n(d) = - \frac{l_n^{(1)}(d_o,k)}{l_n^{(2)}(d,k)},
\end{equation*}
which is the $b_n$ used in Theorem \ref{BVMtheorem}, we can rewrite \eqref{D1development} as
\begin{equation} \label{normalAppr}
\begin{split}
l_n(d,k) - l_n(d_o,k) &=  - \ft \frac{(l_n^{(1)}(d_o,k))^2 }{l_n^{(2)}(\bar d,k)}   + \ft l_n^{(2)}(\bar d,k) \left(d-d_o - b_n(\bar d)\right)^2.
\end{split}
\end{equation} \noindent
Note that each derivative $l_n^{(i)}(d,k)$, $i = 1, 2$, can be decomposed into a centered quadratic form denoted $\mathcal S(l_n^{(i)}(d,k))$ and a deterministic term
$\mathcal D (l_n^{(i)}(d,k))$.
In the following lemma we give expressions for $l_n^{(1)}(d_o,k)$, $l_n^{(2)}(d,k)$ and $b_n$, making explicit their dependence on $k$ and $\ty_o$.
Since $\kp \leq k_n$ and $w_n < \vb$ (see Corollary \ref{cor1}) the result is valid for all priors under consideration.
The proof is given in appendix \ref{appD}.
%
%
\begin{lem} \label{delta1terms}
Given $\be > 1$, let $\ty_o \in \Theta(\be,L_o)$. If $k \leq k_n$ and $|d-d_o|\leq \vb$, then there exists  $\delta_1>0$  such that
\begin{eqnarray*} 
l_n^{(1)}(d_o,k) &:=& \mathcal S(l_n^{(1)}(d_o,k)) + \mathcal D( l_n^{(1)}(d_o,k)) \\
 &=&   \mathcal S(l_n^{(1)}(d_o,k)) + \frac{n}{2}\sum_{j={k+1}}^\infty \ty_{o,j}\eta_j  + o(n^{\e}(  k^{-\be+3/2} + n^{-1/(2\be)})),  \nonumber \\ 
l_n^{(2)}(d,k)  &=& l_n^{(2)}(d_o,k)   \left( 1 + \frac{k^{1/2}}{ n^{1/2+\e}} + \frac{k^{-2\be+1+\e}}{n}   \right)
= - \ft n r_k \left(1+ \op(n^{-\delta_1}) \right), 
\end{eqnarray*} \noindent
where $ \mathcal S(l_n^{(1)}(d_o,k))$ is a   centered quadratic form with variance
\begin{equation*}
Var( \mathcal S(l_n^{(1)}(d_o,k)))  =  \frac{n}{2} \sum_{j>k} \eta_j^2( 1 +o(1)) = \frac{n r_k}{ 2}(1 + o(1)) =  O(n k^{-1}).
\end{equation*} \noindent
Consequently, 
\begin{equation} \label{Bdef}
\begin{split}
b_n(d) &=  - \frac{l_n^{(1)}(d_o,k)}{l_n^{(2)}(d,k)} = \frac{1}{r_k} \sum_{j={k+1}}^\infty \ty_{o,j}\eta_j  ( 1+\op(n^{-\delta}))  \\
& \qquad + \frac{2 \mathcal S(l_n^{(1)}(d_o,k))( 1+\op(n^{-\delta})) }{ nr_k} + \op(n^{\e-1} k^{-\be+ 5/2} + n^{\e -1}),
\end{split}
\end{equation}
with
 $$\frac{2 \mathcal S(l_n^{(1)}(d_o,k)) }{ nr_k}  = \mathbf{O_{P_o}}(n^{-\ft} k^{\ft}).$$

\end{lem}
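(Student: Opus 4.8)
The plan is to linearize the map $d\mapsto l_n(d,k)=l_n(f_{d,k,\bd})$ and differentiate the Gaussian log-likelihood explicitly. From \eqref{fexpl} and the definition $\bd=\ty_{o[k]}+(d_o-d)\eta_{[k]}$, the log-spectral density of $f_{d,k,\bd}$ is
\[
\log f_{d,k,\bd}(x)=\log f_o(x)-\jk \ty_{o,j}\cos(jx)+(d-d_o)\jk \eta_j\cos(jx),
\]
so that, setting $\phi_k(x):=\jk \eta_j\cos(jx)$, we obtain the key identity $\partial_d\log f_{d,k,\bd}=\phi_k$, \emph{independent of $d$}: the point $\bd$ is constructed precisely to cancel the low-frequency ($j\le k$) dependence of the likelihood on $d$. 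Consequently $\partial_d f_{d,k,\bd}=f_{d,k,\bd}\,\phi_k$ and, by linearity of $h\mapsto T_n(h)$, $\partial_d T_n(f_{d,k,\bd})=T_n(f_{d,k,\bd}\phi_k)$. Applying $\partial\log\det T=\tr(T^{-1}\partial T)$ and $\partial T^{-1}=-T^{-1}(\partial T)T^{-1}$ then gives closed expressions for $l_n^{(1)}(d,k)$ and $l_n^{(2)}(d,k)$ as traces of products of $T_n^{-1}(f_{d,k,\bd})$ with $T_n(f_{d,k,\bd}\phi_k)$ and $T_n(f_{d,k,\bd}\phi_k^2)$, together with quadratic forms $X^t(\cdots)X$ in the data.

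I would then split each derivative into its deterministic part $\mathcal D$ (its $P_o$-expectation) and its centered part $\mathcal S$. Since $\esp XX^t=T_n(f_o)$, a quadratic form $\tfrac12 X^tAX$ contributes $\tfrac12\tr(AT_n(f_o))$ to $\mathcal D$ and $\tfrac12(X^tAX-\tr(AT_n(f_o)))$ to $\mathcal S$. For $l_n^{(1)}(d_o,k)$, evaluated at $f_k:=f_{d_o,k,\bz}$, one has $A=T_n^{-1}(f_k)T_n(f_k\phi_k)T_n^{-1}(f_k)$, so the computation reduces to evaluating traces of products of Toeplitz matrices by Szeg\H{o}/Toeplitz asymptotics. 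The ratio $f_o/f_k=\exp(\psi_k)$, with $\psi_k(x):=\jk \ty_{o,j}\cos(jx)$, enters here; expanding $e^{\psi_k}=1+\psi_k+O(\psi_k^2)$ and using $\int_{-\pi}^{\pi}\cos(jx)\cos(lx)\,dx=\pi\delta_{jl}$ ($j,l\ge1$) isolates the leading deterministic term $\tfrac n2\jk \ty_{o,j}\eta_j$, the precise constant being fixed by the careful trace asymptotics. The $\psi_k^2$ contribution and the Toeplitz trace errors are bounded using $\jk \ty_{o,j}^2(1+j)^{2\be}\le L_o$ (whence $\sum_{j>k}\ty_{o,j}^2\le L_o k^{-2\be}$) and $r_k=\jk \eta_j^2\asymp k^{-1}$, producing the stated remainder $o(n^{\e}(k^{-\be+3/2}+n^{-1/(2\be)}))$. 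The same asymptotics give the deterministic part of $l_n^{(2)}(d_o,k)$ as $-\tfrac12 n r_k$.

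For the stochastic part, $\mathcal S(l_n^{(1)}(d_o,k))=\tfrac12(X^tAX-\esp X^tAX)$ is a centered quadratic form in the Gaussian vector $X$, whose variance is $\tfrac12\tr\big((AT_n(f_o))^2\big)$ by the identity $\mathrm{Var}(X^tAX)=2\tr((AT_n(f_o))^2)$. Approximating $AT_n(f_o)$ by the Toeplitz matrix with symbol $\phi_k e^{\psi_k}\approx\phi_k$ and invoking the same trace asymptotics evaluates this variance to $\tfrac n2\jk \eta_j^2(1+o(1))=\tfrac{nr_k}2(1+o(1))=O(nk^{-1})$. Since the normalized term $\tfrac{2\mathcal S(l_n^{(1)}(d_o,k))}{nr_k}$ has standard deviation of order $\sqrt{nr_k}/(nr_k)=(nr_k)^{-1/2}\asymp\sqrt{k/n}$, it is $\Op(n^{-1/2}k^{1/2})$.

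It remains to control the $d$-dependence of $l_n^{(2)}$ and to assemble $b_n$. As the direction $\phi_k$ is fixed, $l_n^{(2)}(d,k)$ differs from $l_n^{(2)}(d_o,k)$ only through the factors $T_n^{-1}(f_{d,k,\bd})$, $T_n(f_{d,k,\bd}\phi_k)$ and $T_n(f_{d,k,\bd}\phi_k^2)$; bounding their variation over $|d-d_o|\le\vb$ by the operator- and Frobenius-norm inequalities \eqref{matrixCalculus}, together with Lemma \ref{bdLemma} (which keeps $\bd$ inside the Sobolev ball), yields the multiplicative correction $1+k^{1/2}n^{-1/2-\e}+k^{-2\be+1+\e}n^{-1}$. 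Combining the leading deterministic value $-\tfrac12 nr_k$ with the fact that both $\mathcal S(l_n^{(2)})$ and the deterministic corrections are of smaller order (using $k\le k_n$) gives $l_n^{(2)}(d,k)=-\tfrac12 nr_k(1+\op(n^{-\delta_1}))$ for some $\delta_1>0$, and substituting the expansions of $l_n^{(1)}$ and $l_n^{(2)}$ into $b_n(d)=-l_n^{(1)}(d_o,k)/l_n^{(2)}(d,k)$ yields \eqref{Bdef}. The main obstacle throughout is the precise evaluation of the Toeplitz traces for the \emph{long-memory} density $f_o$, which is singular at $x=0$: the naive integral approximation $\tfrac1n\tr(T_n^{-1}(f)T_n(g))\to\tfrac1{2\pi}\int g/f$ is not sharp enough, and one must quantify its error \emph{uniformly} while the symbols $\phi_k,\psi_k$ concentrate at the high frequencies $j>k$ with $k\to\infty$. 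Controlling these errors down to the level $o(n^{\e}(k^{-\be+3/2}+n^{-1/(2\be)}))$, rather than merely at leading order, is the technical heart of the argument and is where the auxiliary trace estimates of \citet{d1} and of the supplement, together with the assumption $\be>1$, are required.
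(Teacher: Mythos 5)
Your overall route is the paper's own: you exploit the structural identity $\partial_d \log f_{d,k,\bd}=H_k$ (independent of $d$), split each derivative of $l_n(d,k)$ into a deterministic part $\mathcal D$ and a centered quadratic form $\mathcal S$, evaluate $\mathcal D$ and $\mathrm{Var}(\mathcal S)$ by Toeplitz-trace asymptotics after expanding $f_o/f_{d_o,k}=e^{\Delta_{d_o,k}}$, and that part of the proposal is sound. The genuine gap is in your treatment of $l_n^{(2)}(d,k)$ for $d\neq d_o$: the lemma's $\op$-statements are (per the paper's notation) uniform over $|d-d_o|\le\vb$, so in particular the centered part of $l_n^{(2)}(d,k)$ must be shown to be $\op(n^{1/2+\e}k^{-1/2})$ uniformly in $d$. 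You propose to control the $d$-dependence by the norm inequalities \eqref{matrixCalculus} applied to the variation of $T_n^{-1}(f_{d,k})$, $T_n(H_kf_{d,k})$ and $T_n(H_k^2f_{d,k})$. That is too crude: for the stochastic terms the mean-value argument gives at best $|\mathcal S_1(d)-\mathcal S_1(d_o)|\lesssim |d-d_o|\,(n^{\e}Z^tZ+n)=O_{P_o}(|d-d_o|\,n^{1+\e})$, so over the whole interval the fluctuation is only bounded by $\vb\, n^{1+\e}\asymp n^{1/2+1/(2\be)+\e}$, which exceeds the allowed $n^{1/2+\e}k^{-1/2}\asymp n^{1/2-1/(4\be)+\e}$ (at $k=k_n$) for every $\be$; even for the weaker final claim $l_n^{(2)}(d,k)=-\ft nr_k(1+\op(n^{-\delta_1}))$ it fails whenever $1<\be\le 2$, which is precisely the range the paper insists on covering.

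The paper closes exactly this hole with a chaining argument, see \eqref{w12}--\eqref{chaining} in Appendix A: it covers $(d_o-\vb,d_o+\vb)$ by $J_n\le 2\vb n$ balls of radius $n^{-1}$; at each center it applies the exponential concentration inequality for Gaussian quadratic forms (Lemma 1.3 of the supplement), where the Frobenius-norm bound $\fr T_n^{\ft}(f_o)A_{1,d_j}T_n^{\ft}(f_o)\fr^2 = \mathbf{O}(n/k)$ yields the sharp pointwise rate $\gamma_n=n^{1/2+\e'}k^{-1/2}$ with failure probability $e^{-n^{\alpha}/8}$; the crude derivative bound is used only for increments within each radius-$n^{-1}$ ball, where the factor $|d-d'|\le n^{-1}$ reduces it to $O(n^{\e})=o(\gamma_n)$; a union bound over the polynomially many centers finishes. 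Without this discretization-plus-concentration step (or an equivalent empirical-process device), your argument cannot reach the stated uniform rates. A smaller point: for the deterministic part you likewise need a quantitative first-order Taylor expansion in $d$, with $\sup_{|d'-d_o|\le\vb}|\mathcal D_2'(d')|=O(n^{1+\e}/k)$ obtained from the explicit derivative \eqref{A1der} and the trace lemmas; here the crude route does work, but it must be spelled out at this level of precision rather than asserted from the norm inequalities alone.
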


\noindent
\begin{rem} \label{rem2}
Recall from \eqref{bnBound1} that  $r_k^{-1} \sum_{j={k+1}}^\infty \ty_{o,j}\eta_j$  is $O(k^{-\be + 1/2})$. The term $2 \mathcal S(l_n^{(1)}(d_o,k))/(nr_k)$ is $O_{P_o}(k^{-\be + 1/2})$  whenever $k \sim n^{1/(2\be)}$, which is the case under all priors under consideration.
\end{rem}

Substituting the above results on $l_n^{(1)}$, $l_n^{(2)}$ and $b_n$ in \eqref{normalAppr}, we can give the following informal argument leading to Theorems \ref{BVMtheorem} and Theorem \ref{suboptimalTheorem}.
If we consider $k$ to be fixed and $I_n(d,k)$ constant in $d$, then \eqref{normalAppr} implies that the posterior distribution for $d$ is asymptotically normal with mean $d_o + b_n(d_o)$ and variance of order $k/n$.


\subsection{Integration of the short memory parameter} \label{sec3c}

A key ingredient in the proofs of both Theorems \ref{BVMtheorem} and \ref{suboptimalTheorem} is the control of the integral $I_n(d,k)$ appearing in \eqref{marginalPosterior},
whose dependence on $d$ should be negligible with respect to $\exp\{l_n(d,k)-l_n(d_o,k)\}$. In Lemma \ref{lem:Idk:th1} below we prove this to be the case under the assumptions of Theorems \ref{BVMtheorem} and \ref{suboptimalTheorem}.
For the case of Theorem \ref{suboptimalTheorem} this is fairly simple: the conditional posterior distribution of $\ty$ given $(d,k)$ can be proved to be asymptotically Gaussian by a Laplace-approximation. For smaller $\be$ and larger $k$ the control is technically more demanding.
In both cases the proof is based on the following Taylor expansion of $l_n(d,k,\ty)$ around $\bd$:
\begin{equation} \label{taylor1}
l_n(d,k,\ty) - l_n(d,k) = \sum_{j=1}^J \frac{(\ty - \bd)^{(j)} \nabla^j l_n(d,k) }{ j ! } + R_{J+1,d}(\ty),
\end{equation} \noindent
where
\begin{equation*} 
(\ty - \bd)^{(j)} \nabla^j l_n(d,k) = \sum_{l_1,\ldots,l_j=0}^k (\ty - \bd)_{l_1}\ldots(\ty - \bd)_{l_j} \frac{\partial^j l_n(d,k,\bd) }{ \partial \ty_{l_1}\ldots\partial \ty_{l_j}},
\end{equation*} \noindent
\begin{equation} \label{Rexpression}
R_{J+1,d}(\ty) = \frac{ 1}{(J+1)!} \sum_{l_1,\ldots,l_{J+1}=0}^k (\ty - \bd)_{l_1}\ldots(\ty - \bd)_{l_{J+1}} \frac{\partial^{J+1} l_n(d,k,\tilde \theta) }{ \partial \ty_{l_1}\ldots\partial \ty_{l_{J+1}}}.
\end{equation} \noindent
%
%
The above expressions are used to derive the following lemma, which gives control of the term $I_n(d,k)$.
\begin{lem} \label{lem:Idk:th1}
Under the conditions of Theorem \ref{BVMtheorem}, the integral $I_n(d,k)$ defined in \eqref{InDef} equals
\begin{equation*}
I_n(d_o,k) \exp\left\{\op(1) + \op\left(\frac{|d-d_o| n^{\ft-\delta_2}}{ \sqrt{k}}\right) + \op\left((d-d_o)^2 \frac{ n^{1-\delta_2} }{k}\right)\right\},
\end{equation*}
for some $\delta_2 > 0$. Under the conditions of Theorem \ref{suboptimalTheorem},
\begin{equation*}
I_n(d,k)  =  I_n(d_o,k) \exp\left\{\op(1)\right\}.
\end{equation*}
\end{lem}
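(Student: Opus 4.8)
The plan is to compare $I_n(d,k)$ with $I_n(d_o,k)$ by expanding the integrand around $\bd$ and $\bz$ respectively, and to show that the entire $d$-dependence of $\log I_n(d,k)$ is a factor $n^{-\delta_2}$ smaller than the $d$-dependence of $l_n(d,k)-l_n(d_o,k)$ displayed in \eqref{normalAppr}. Writing $u=\ty-\bd$ and using \eqref{taylor1}, set $G_d:=\nabla l_n(d,k,\bd)$ and $H_d:=\nabla^2 l_n(d,k,\bd)$. Because $\bd$ minimises $l(f_o,\cdot)$ rather than the likelihood, $G_d\neq 0$; as in Lemma \ref{delta1terms} it splits as $G_d=\Sto(G_d)+\Det(G_d)$, a centered quadratic form plus a deterministic bias caused by the mismatch between $f_o$ and $f_{d,k,\bd}$ at frequencies $j>k$, while $H_d$ is negative definite with $\|H_d^{-1}\|=O(n^{-1})$. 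By Corollary \ref{cor1} and Lemma \ref{bdLemma} the effective domain of integration is $B_k(\bd,2l_0\dn)\subset\tlt$ (resp. $B_k(\bd,2l_0\en)\subset\tlo$), the complement contributing negligibly, and on this ball the remainder $R_{J+1,d}$ of \eqref{Rexpression} is shown to be $\op(1)$ uniformly in $d$ by bounding the $(J+1)$-st derivatives, the required order $J$ growing as $\be\downarrow 1$.

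On this ball the quadratic term dominates. Completing the square, $u^tG_d+\ft u^tH_du=\ft(u-\mu_d)^tH_d(u-\mu_d)-\ft G_d^tH_d^{-1}G_d$ with $\mu_d=-H_d^{-1}G_d$, and pulling the nearly constant prior density out of the Gaussian integral (legitimate by \eqref{priorThetaK}, $\mu_d$ being small) yields
\begin{equation*}
\log I_n(d,k)=\log\pi_{\ty|k}(\bd+\mu_d)-\ft\log\det(-H_d)-\ft G_d^tH_d^{-1}G_d+\tfrac{k+1}{2}\log(2\pi)+\op(1),
\end{equation*}
uniformly over the relevant $d$. Under the conditions of Theorem \ref{suboptimalTheorem} ($\be>5/2$ and the smaller $k\sim n^{1/(2\be+1)}$) this is a routine Laplace approximation with $J=2$, and the right-hand side varies by only $\op(1)$ as $d$ ranges over $[d_o-w_n,d_o+w_n]$; this gives the second assertion. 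The first assertion is where the work lies.

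For it, subtract the same expression at $d_o$ and bound the three differences. The key fact, from the definition of $\bd$ and \eqref{fexpl}, is that $\log f_{d,k,\bd}$ and $\log f_{d_o,k,\bz}$ both equal $\log f_o$ on the frequencies $j\le k$ and differ only for $j>k$, by $(d-d_o)\eta_j$; hence $l(f_{d,k,\bd},f_{d_o,k,\bz})=\ft(d-d_o)^2 r_k=O((d-d_o)^2/k)$ and $\bd-\bz=(d_o-d)\eta_{[k]}$. The determinant difference and the prior difference (which by \eqref{priorThetaK} equals $(d_o-d)h_k^t\eta_{[k]}+o(1)$, of size $\op(|d-d_o|n^{1/2-\delta_2}/\sqrt k)$ once $\|h_k\|\le C(n/k)^{1-\rho_0}$ and $k\sim n^{1/(2\be)}$ are used) are both absorbed into the two stated error terms. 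The hardest term is $G_d^tH_d^{-1}G_d-G_{d_o}^tH_{d_o}^{-1}G_{d_o}$: writing $G_d=G_{d_o}+\Delta_d$, with $\Delta_d$ the change of the score produced by the high-frequency shift of the base density, one separates the part of $\Delta_d$ linear in $(d-d_o)$ from the quadratic remainder and, after contraction with $H^{-1}\sim n^{-1}$, obtains precisely $\op(|d-d_o|n^{1/2-\delta_2}/\sqrt k)$ and $\op((d-d_o)^2 n^{1-\delta_2}/k)$.

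The main obstacle is this last step: each error term must carry a genuine gain $n^{-\delta_2}$ over the natural scales $\sqrt{n/k}$ and $n/k$ of the linear and quadratic parts of $l_n(d,k)-l_n(d_o,k)$ in \eqref{normalAppr}; absent this gain, $I_n(d,k)$ would perturb the posterior mean and variance of $d$ at leading order and the expansion \eqref{post:norma:th1} would break. Extracting $n^{-\delta_2}$ forces one to exploit that the perturbation of the base density lives only at frequencies $j>k$, together with the decay $\eta_j=-2/j$ and the membership $\ty_o\in\Theta(\be,L_o)$: this makes the variance of the stochastic part of $\Delta_d$ small and controls its deterministic part by the tail $\sum_{j>k}\ty_{o,j}^2$. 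This is the delicate regime $1<\be\le 3/2$ noted after Theorem \ref{BVMtheorem}, and is where the hypotheses $\be>1$ and the affine form of \eqref{priorThetaK} are essential.
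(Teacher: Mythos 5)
Your handling of the second statement (priors B and C, $\be>5/2$, $k\leq \kp$) is essentially the paper's argument and is fine: there one may take $J=2$, the cubic remainder is uniformly negligible, the resulting exponent does not depend on $d$, and the prior condition (with $h_k=0$) finishes the job. The gap is in the first statement, and it sits exactly at the step you flag as the ``main obstacle''. Your central device is the formula
\[
\log I_n(d,k)=\log\pi_{\ty|k}(\bd+\mu_d)-\ft\log\det(-H_d)-\ft G_d^tH_d^{-1}G_d+\tfrac{k+1}{2}\log(2\pi)+\op(1),
\]
i.e.\ a Laplace expansion of $\log I_n(d,k)$ that retains only the linear and quadratic Taylor terms, with an error that is $\op(1)$ uniformly over $|d-d_o|\leq\vb$. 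This is asserted, not proved, and it is not obtainable from the available remainder bounds in the relevant regime. The remainder after the quadratic term must be controlled \emph{uniformly over the integration ball} $\|\ty-\bd\|\leq 2l_0\dn$, and the bound \eqref{Rbound3} (equivalently the paper's criterion $(J+1)(1-1/\be)>2$ in the proof of Lemma \ref{taylorLemma}) shows that a $J=2$ expansion has controllable remainder only when $\be>3$. For $1<\be\leq 3$ --- which contains the whole delicate regime $1<\be\le 3/2$ you yourself single out --- the terms of order $3\le j\le J$ are not uniformly negligible: the deterministic part of the $j$-th term is of size $k(\sqrt k\,\|u\|)^{j-2}$ on the ball, e.g.\ of order $n^{(2-\be)/(2\be)}\to\infty$ for $j=3$, $k=k_n$, $\be<2$. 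Their contribution to $\log I_n(d,k)$ can only be tamed through cancellations (the resonance structure of $\int\prod_i\cos(l_ix)dx$ and sign cancellation under the near-Gaussian measure), and, more importantly for the lemma, through cancellation \emph{between} $\log I_n(d,k)$ and $\log I_n(d_o,k)$. Your differencing plan compares only the Gaussian-level quantities ($\log\det$, the prior value, and $G^tH^{-1}G$), so even granting everything else it never addresses the $d$-dependence of these higher-order contributions; in addition, controlling $\log\det(-H_d)-\log\det(-H_{d_o})$ to the stated accuracy in growing dimension $k_n\sim(n/\log n)^{1/(2\be)}$ is itself a nontrivial random-matrix estimate you do not supply.

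The paper's proof is built precisely to avoid evaluating the integral. It sandwiches $I_n(d,k)$ between two integrals (\eqref{integralBounds}) whose exponents retain \emph{all} $J$ Taylor terms evaluated at $(d_o,\bz)$, and it absorbs the $d$-dependent correction $(d-d_o)\sum_{j\geq2}g_{n,j}(u)/j!$ by the nonlinear change of variables $v=\psi(u)=(I_{k+1}-(d-d_o)D(u))u$ of \eqref{psiDef}, proving that the Jacobian is $\exp(O(S_n(d)))$ and that the leftover score term $(v-u)^t\nabla l_n(d_o,k)$ is $O(S_n(d))$ (equation \eqref{vu_score}, via \eqref{step1}--\eqref{step3}). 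The uncomputed higher-order contributions then cancel identically between $I_n(d,k)$ and $I_n(d_o,k)$, which is what makes the $n^{-\delta_2}$ gain possible for all $\be>1$. To repair your route you would have to show that the order-$j$ contributions ($3\le j\le J$) to $\log I_n(d,k)$ and $\log I_n(d_o,k)$ agree up to $O(S_n(d))$ uniformly in $d$ --- which is essentially equivalent to constructing the paper's change of variables, so the Laplace formula cannot serve as a shortcut.
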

The proof is given in Appendix \ref{lem:Idk:th1:Details}, and relies on the expressions for the derivatives $\nabla^j l_n$ given in Appendix \ref{app:deriv:theta}.
Lemma \ref{lem:Idk:th1} should be seen in relation to Lemma \ref{delta1terms} and the expressions for $\Pi(d|X)$ and $l_n(d,k)-l_n(d_o,k)$ in equations \eqref{marginalPosterior3} and \eqref{D1development}.
Lemma \ref{lem:Idk:th1} then shows that the dependence on the integrals $I_n(d,k)$ on $d$ is asymptotically negligible with respect to $l_n(d,k)-l_n(d_o,k)$. This is made rigorous in the following section. 

\section{Proof of Theorem \ref{BVMtheorem}} \label{sec4}
By Lemma \ref{rateLemma1} we may assume posterior convergence of $l(f_o,f_{d,k,\ty})$ at rate $l_0^2\dn^2$, and,
by Corollary \ref{cor1}, also convergence of $|d-d_o|$ at rate $\vb$. By Lemma \ref{bdLemma}, we may restrict the integration over $\ty$ to $\bn$. Let $\Gamma_n(z) = \{d: \sqrt{\frac{n r_k}{2}}(d-d_o-b_n(d_o)) \leq z\}$. Under \textbf{prior A}, it suffices to show that for $k=k_n$,
\begin{equation} \label{posterior}
\begin{split}
\frac{N_n}{D_n} & : =  \frac{\int_{\Gamma_n(z)}
e^{l_n(d,k) - l_n(d_o,k)} \int_{\bn}  
e^{l_n(d,k,\ty) - l_n(d,k)} d\pi_{\ty|k}(\ty) d\pi_d(d)}
{\int_{|d-d_o| < \vb} e^{l_n(d,k) - l_n(d_o,k)} \int_{\bn} 
e^{l_n(d,k,\ty) - l_n(d,k)} d\pi_{\ty|k}(\ty) d\pi_d(d)} \\
&=  \frac{\int_{\Gamma_n(z)} \exp\{l_n(d,k) - l_n(d_o,k) + \log I_n(d,k)\} d\pi_d(d)}
{\int_{|d-d_o| < \vb} \exp\{l_n(d,k) - l_n(d_o,k) + \log I_n(d,k)\} d\pi_d(d)} = \Phi(z) + \op(1).
\end{split}
\end{equation} \noindent
Using the results for $l_n(d,k)-l_n(d_o,k)$ and $I_n(d,k)$ given by Lemmas \ref{delta1terms} and \ref{lem:Idk:th1}, we show that for $A_n \subset \R^n$ defined below such that $P_o^n(A_n) \rightarrow 1$,
\begin{equation} \label{bvm_upper_bound}
\frac{N_n}{D_n} \leq \Phi(z) + o(1), \quad \frac{N_n}{D_n} \geq \Phi(z) + o(1), \quad \forall X \in A_n.
\end{equation}
Since $P_o^n(A_n) \rightarrow 1$ this implies the last equality in \eqref{posterior}.

Note that Lemmas \ref{delta1terms} and \ref{lem:Idk:th1} also hold for all $\delta_1' < \delta_1$ and $\delta_2' < \delta_2$. In the remainder of the proof, let $0< \delta\leq \min(\delta_1,\delta_2)$. For notational simplicity, let $\mathcal D = \mathcal D(l_n^{(1)}(d_o,k)$, the deterministic part of $l_n^{(1)}(d_o,k)$.
For a sufficiently large constant $C_1$ and arbitrary $\e_1>0$,
let $A_n$ be the set of $X \in \R^n$ such that
\begin{equation*} 
\left.
\begin{array}{r}
\left|\log I_n(d,k) - \log I_n(d_o,k)\right| \leq \e_1 + (d-d_o)^2 k^{-1} n^{1-\delta_{}} + |d-d_o| k^{-\ft} n^{\ft-\delta_{}} \\
  \left| l_n^{(1)}(d_o,k) - \mathcal D \right| \leq C_1n^{\ft} k^{-\ft}\sqrt{\log n},
  \qquad \left| l_n^{(2)}(d,k) + \ft n r_k \right| \leq n^{1-\delta_{}} k^{-1}
\end{array} 
\right\}
\end{equation*}
for all $|d-d_o|\leq \vb$.
Since $k=k_n$ and $\be> 1$, 
Lemmas \ref{delta1terms} and \ref{lem:Idk:th1} imply  that $P_o^n(A_n^c) \rightarrow 0$.
We prove the first inequality in \eqref{bvm_upper_bound};
the second one can be obtained in the same  way.
Using \eqref{D1development} and the definition of $A_n$, it follows that for all $X \in A_n$,
\begin{equation} \label{upperBound}
\begin{split}
& l_n(d,k) - l_n(d_o,k) + \log I_n(d,k) - \log I_n(d_o,k)
 \leq \e_1 + (d-d_o)^2 \frac{ n^{1-\delta_{}} }{ k} \\
& \qquad + |d-d_o| \frac{ n^{\ft-\delta_{}} }{ k^{\ft} } + (d-d_o) l_n^{(1)}(d_o,k) -\frac{n r_k}{4}(d-d_o)^2 (1  - n^{-\delta_{}}) \\
& \leq 2\e_1 - \frac{n r_k}{4} \left(1-\frac{2}{n^{\delta_{}}}\right) \left(d-d_o -\frac{2 l_n^{(1)}(d_o,k)}{\left(1-\frac{2}{n^{\delta_{}}}\right)n r_k} \right)^2 + \left|d-d_o\right|  \frac{ n^{\ft-\delta_{}} }{ k^{\ft} } +\frac{(l_n^{(1)}(d_o,k))^2}{\left(1-\frac{2}{n^{\delta_{}}}\right)n r_k} \\
& \leq 3\e_1 - \frac{n r_k}{4} \left(1-\frac{2}{n^{\delta_{}}}\right) \left(d-d_o -\frac{b_n(d_o,k)}{1-\frac{2}{n^{\delta_{}}}} \right)^2   \\
& \qquad + \left|d-d_o -\frac{b_n(d_o,k)}{1-\frac{2}{n^{\delta_{}}}} \right| \frac{ n^{\ft-\delta_{}}}{k^{\ft}} + \frac{(l_n^{(1)}(d_o,k))^2}{\left(1-\frac{2}{n^{\delta_{}}}\right)n r_k},
\end{split}
\end{equation}
 The third inequality follows from \eqref{bnBound2} and Remark \ref{rem2}, by which $b_n(d_o)=O(k^{-\beta+\ft})=O(\dn)$. This implies that $|b_n(d_o)| k^{-\ft} n^{\ft-\delta_{}}  < \e_1$, again for large enough $n$.
Similar to the preceding display, we have the lower-bound
\begin{equation} \label{lowerBound}
\begin{split}
& l_n(d,k) - l_n(d_o,k) + \log I_n(d,k) - \log I_n(d_o,k) \\
& \quad \geq -3\e_1 - \frac{n r_k}{4} (1+2n^{-\delta_{}}) \left(d-d_o -\frac{b_n(d_o,k)}{(1+2n^{-\delta_{}})} \right)^2 \\
& \qquad  - \left|d-d_o-\frac{b_n(d_o,k)}{(1+2n^{-\delta_{}})} \right| k^{-\ft} n^{\ft-\delta_{}} +\frac{(l_n^{(1)}(d_o,k))^2}{(1+2n^{-\delta_{}})nr_k}.
\end{split}
\end{equation}
Note that
\begin{equation}\label{constantTerm}
\exp\left\{\frac{(l_n^{(1)}(d_o,k))^2}{(1-2n^{-\delta_{}})nr_k} -\frac{(l_n^{(1)}(d_o,k))^2}{(1+2n^{-\delta_{}})nr_k} \right\}=
\exp\{o(1)\},
\end{equation}
which follows from the expression for $l_n^{(1)}(d_o,k)$ in Lemma \ref{delta1terms}, the definition of $A_n$ and the assumption that $X \in A_n$.
Therefore,  substituting  \eqref{upperBound} in $N_n$ and \eqref{lowerBound} in $D_n$,  the terms $\frac{(l_n^{(1)}(d_o,k))^2}{4nr_k}$  cancel out and by \eqref{constantTerm} we can neglect the difference between $\frac{(l_n^{(1)}(d_o,k))^2}{(1\pm 2n^{-\delta_{}})nr_k}$ and $\frac{(l_n^{(1)}(d_o,k))^2}{nr_k}$.

To conclude the proof that $N_n/D_n \leq \Phi(z) + o(1)$ for each $X \in A_n$,
we make the change of variables
\begin{equation*}
\begin{split}
u &= \sqrt{\frac{n r_k}{2}(1\pm 2 n^{-\delta_{}})} \left(d-d_o -  \frac{b_n(d_o)}{ 1 \pm 2 n^{-\delta_{}}} \right),
\end{split}
\end{equation*}
where we take $+$ in the lower bound for $D_n$ and $-$ in the upper-bound for $N_n$.
Using once more that $b_n(d_o)=O(\dn)$, we find that for large enough $n$, $|u| \leq \frac{\vb}{4} \sqrt{n r_k}$ implies $|d-d_o| \leq \vb$. Hence we may integrate over $|u| \leq \frac{\vb}{4} \sqrt{n r_k}$ in the lower-bound for $D_n$.
In the upper-bound for $N_n$ we may integrate over $u \leq z + \e_1$.

Combining \eqref{upperBound}-\eqref{constantTerm}, it follows that for all $\epsilon_1$ and all $X \in A_n$, 
\begin{equation*} \label{posterior3}
\begin{split}
&\frac{N_n}{D_n} \leq e^{7 \e_1} \left(\frac{1 + 2n^{-\delta_{}}}{1 - 2n^{-\delta_{}}}\right)^{\ft} \frac{\int_{u<z+\e_1} \exp\{-\ft u^2 + C n^{-\delta_{}} |u|\} du }{\int_{|u| \leq \frac{\vb}{4} \sqrt{n r_k}}\exp\{-\ft u^2 - C n^{-\delta_{}}|u|\}du } \\
 & \leq e^{8\e_1 } \frac{\int_{u<z+\e_1} \exp\{-\ft u^2 + C n^{-\delta_{}} |u|\} du }{\int_{|u| \leq \frac{\vb}{4} \sqrt{n r_k}}\exp\{-\ft u^2 - C n^{-\delta_{}}|u|\}du } \rightarrow \Phi(z+\e_1)e^{8 \e_1 }.
\end{split}
\end{equation*}
Similarly we prove that for all $\epsilon_1$, $N_n /D_n \geq \Phi(z-\e_1)e^{-8 \e_1 }$, when $n$ is large enough,
which terminates the proof of Theorem \ref{BVMtheorem}.

\section{Proof of Theorem \ref{suboptimalTheorem}} \label{sec5}
Let $\be > 5/2$ and $\ty_{o,j} = c_0 j^{-(\be+\ft)} (\log j)^{-1}$.
When the constant $c_0$ is chosen small enough, $\ty_o \in \Theta(\be,L_o)$.
In view of Corollary \ref{cor1}, the posterior mass on the events $\{(d,k,\ty) : \|\ty - \bd\| \geq 2l_0 \te\}$ and $\{(d,k,\ty) : |d-d_o| \geq w_n\}$
tends to zero in probability, and may be neglected. Moreover Lemma \ref{lem1} implies that with posterior probability going to 1, $\| \ty - \ty_0\| \lesssim (n / \log n)^{-(\be -1/2)/(2 \be + 1)}$.
However, within the $(k+1)$-dimensional FEXP-model, $\|\ty - \ty_o\|$ is minimized by setting $\ty_j=\ty_{o,j}$ ($j=0,\ldots,k$), and for this choice of $\ty$ we have
$$\|\ty - \ty_o\|^2 = \sum_{l>k} \ty_{o,l}^2 \gtrsim k^{-2\be} (\log k)^{-2}.$$
Consequently, the fact that $\| \ty - \ty_0\| \lesssim (n / \log n)^{-(\be -1/2)/(2 \be + 1)}$ implies that
$k > k_n'' := k_l (n / \log n)^{ (\be - 1/2)/ ( \be(2 \be + 1))} ( \log n)^{-1/\be}$, for some constant $k_l$. We conclude that
$$ \Pi \left( k \leq k_n''|\xn\right) = \op(1),$$
and we can restrict our attention to $k> k_n''$.

We decompose $\Pi_d(|d-d_o| \leq k_v w_n (\log n)^{-1}, k> k_n''|\xn)$ as
\begin{equation*} \label{posteriorKdecomp}
\begin{split}
& \sum_{m> k_n''} \Pi (|d-d_o| \leq k_vw_n(\log n)^{-1},k=m|\xn) \\
& \quad = \sum_{m> k_n''} \Pi(k=m|\xn) \Pi_m(|d-d_o| \leq k_vw_n(\log n)^{-1}|\xn),
\end{split}
\end{equation*} \noindent
where $\Pi_m(|d-d_o| \leq k_vw_n(\log n)^{-1}|\xn)$ is the posterior for $d$ within the FEXP-model of dimension $m+1$, i.e. $\Pi_m(|d-d_o| \leq k_vw_n(\log n)^{-1}|\xn) := \Pi(|d-d_o| \leq k_vw_n(\log n)^{-1}|k=m,\xn)$.

To prove Theorem \ref{suboptimalTheorem} it now suffices to show that
\begin{eqnarray}
& & \sum_{k_n'' \leq m\leq\kp} \Pi(k=m|\xn) = \Pi(k_n'' \leq k\leq\kp|\xn) \overset{P_o}{\rightarrow} 1, \label{priorK} \\
& & E_0^n \Pi_k(|d-d_o| \leq k_v w_n(\log n)^{-1}|\xn) \overset{P_o}{\rightarrow} 0, \qquad \forall k_n'' \leq k\leq \kp. \label{priorM}
\end{eqnarray} \noindent
The convergence in \eqref{priorK} is a by-product of Theorem 1 in \citet{kruijer:rousseau:11}. 
In the remainder we prove \eqref{priorM}. For every $k\leq\kp$ we can write, using the notation of \eqref{posterior},
\begin{equation} \label{posterior2}
\begin{split}
& \Pi_k(|d-d_o| < k_vw_n(\log n)^{-1} | \xn) \leq \frac{N_{n,k}}{D_{n,k}} \\ \quad &:=
\frac{\int_{|d-d_o| < k_vw_n(\log n)^{-1}} \exp\{l_n(d,k) - l_n(d_o,k) + \log I_n(d,k)\} d\pi_d(d)}
{\int_{|d-d_o| < w_n} \exp\{l_n(d,k) - l_n(d_o,k) + \log I_n(d,k)\} d\pi_d(d)}.
\end{split}
\end{equation} \noindent
Let $\delta_2 >0$ and $A_n$ be the set of $X \in \R^n$ such that
\begin{equation*} \label{thetaBound2}
\left.
\begin{array}{r}
\left|\log I_n(d,k) - \log I_n(d_o,k)\right| \leq \e_1, \\
  \left| l_n^{(1)}(d_o,k) -  \mathcal D(l_n^{(1)}(d_o,k)) \right| \leq n^{\ft} k^{-\ft}\sqrt{\log n},\\
  \left| l_n^{(2)}(d,k) - \mathcal D(l_n^{(2)}(d_o,k)) \right| \leq  \epsilon_1n^{-(2+\delta_2)/(2\be + 1)}
\end{array} 
\right\}
\end{equation*}
for all $|d-d_o|\leq w_n$ and $k_n'' \leq k \leq k_n'$.
Compared to the definition of $A_n$ in the proof of Theorem \ref{BVMtheorem}, the constraints on $l_n^{(2)}(d,k)$ and $I_n$ are different. For the latter, recall from Lemma \ref{lem:Idk:th1} that $\log I_n(d,k)  =  \log I_n(d_o,k)+\op(1)$, uniformly over $d \in (d_o-w_n,d_o+w_n)$.
As in the proof of Theorem \ref{BVMtheorem}, it now follows from Lemmas \ref{delta1terms} and \ref{lem:Idk:th1} that $P_o^n(A_n^c) \rightarrow 0$.
We can write
\begin{equation*} \label{Esplit2}
E_0^n \left[\frac{N_{n,k}}{D_{n,k}}\right] \leq P_o^n(A_n^c) + E_0^n\left[\frac{N_{n,k}}{D_{n,k}} 1_{A_n} \right],
\end{equation*}
and bound $N_{n,k}/D_{n,k}$ pointwise for $X \in A_n$.
Since when $k \in (k_n'', k_n^{'})$,
 $$\frac{(l_n^{(1)}(d_o,k))^2}{2|l_n^{(2)}(d_o,k)|} n^{-(2+\delta_2)/(2\be + 1)} = o(1)$$
 on $A_n$, for all $\delta_2>0$,
analogous to \eqref{upperBound} and \eqref{lowerBound}, we find that for all $X \in A_n$, by definition of $b_n(d_o)$,
\begin{equation*}
\begin{split}
 l_n(d,k) - l_n(d_o,k) + \log I_n(d,k) &\leq 2\e_1 - \frac{|l_n^{(2)}(d_o,k)|}{2} \left(d-d_o -b_n(d_o) \right)^2 + \frac{(l_n^{(1)}(d_o,k))^2}{2|l_n^{(2)}(d_o,k)|}\\
 l_n(d,k) - l_n(d_o,k) + \log I_n(d,k) & \geq  -2\e_1 -  \frac{|l_n^{(2)}(d_o,k)|}{2} \left(d-d_o -b_n(d_o) \right)^2 + \frac{(l_n^{(1)}(d_o,k))^2}{2|l_n^{(2)}(d_o,k)|},
\end{split}
\end{equation*}
when $n$ is large enough since $k > k_n''$.
We now lower-bound $b_n(d_o)$ by bounding the terms on the right in \eqref{Bdef} in Lemma \ref{delta1terms}. By construction of $\ty_o$ it follows that
\begin{equation*}\label{scoreBound1}
r_k^{-1} \sum_{j>k} j^{-1} \ty_{o,j} = c_0 r_k^{-1} \sum_{j>k} j^{-\be-\frac{3}{2}} / (\log j) \geq c k^{-\be +\ft} (\log k)^{-1},
\end{equation*}
for some $c>0$. Since $X \in A_n$, $2 \mathcal S(l_n^{(1)}(d_o,k))/(n r_k) \leq 2\sqrt{k/n} \sqrt{\log n}$. Since $k \leq k_n'$, this bound is $o(k^{-\be +\ft} (\log k)^{-1})$.
The last term in \eqref{Bdef} is $o(n^{\e-1})$ when $\be > 5/2$, and hence this term is also $o(k^{-\be -\ft} (\log k)^{-1})$.
Therefore, the last two terms in \eqref{Bdef} are negligible with respect to $r_k^{-1} \sum_{j>k} j^{-1} \ty_{o,j}$. We deduce that $b_n(d_o) \geq  c k^{-\be+ \ft}(\log k)^{-1} \geq  c n^{-(2\be -1)/(4\be + 2)} (\log n)^{-(2\be + 3)/(4\be + 2)}$ for $n$ large enough.

Consequently, when the constant $k_v$ is chosen sufficiently small, $\sqrt{nr_{k_n'}}(b_n(d_o) - k_vw_n(\log n)^{-1} ) \geq (c-k_v)n^{1/(4\be + 2)} (\log n)^{-(\be +1)/(2\be + 1)} := z_n \rightarrow \infty$.
We now substitute the above bounds on $l_n(d,k) - l_n(d_o,k) + \log I_n(d,k)$ in the right hand side of \eqref{posterior2}, make the change of variables $u = d-d_o -  b_n(d_o)$ and obtain
\begin{equation*} 
\begin{split}
\frac{N_{n,k}}{D_{n,k}} &\leq e^{5 \e_1 } \frac{ \int_{u\leq - k_vw_n(\log n)^{-1} -b_n(d_o) } \exp\{-\frac{n r_k u^2}{4}\} du }{ \int_{|u|<w_n / 2}\exp\{-\frac{n r_k u^2 }{4} \}du } \\ &\leq e^{5 \e_1 } \frac{ \int_{v > z_n } \exp\{-\frac{ v^2}{2 }\} dv }{\int_{|v|<w_n \sqrt{n r_k / 8}} \exp\{-\frac{ v^2}{2 } \} dv } =\op(1).
\end{split}
\end{equation*}
This achieves the proof of Theorem \ref{suboptimalTheorem}.

\section{Conclusion} \label{sec6}

In this paper we have derived conditions  leading to a BVM type of result for the long memory parameter $d\in (-\ft, \ft)$ of a stationary Gaussian process, for the class of FEXP-priors.  To our knowledge such a result has not been obtained before. The result implies in particular that asymptotically credible intervals for $d$ have good frequentist coverage.

A by-product of our results is that the \textit{most natural prior } (Prior C) from a Bayesian perspective, which is also the prior leading to adaptive minimax rates under the loss function $l$ on $f$, leads to sub-optimal estimators in terms of $d$. Prior A leads to optimal estimators for $d$ however it is not adaptive.  An interesting direction for future work would be to define an adaptive- minimax  estimation procedure for $d$.

More broadly speaking, the approach considered here to derive the asymptotic posterior distribution of a finite dimensional parameter of interest in a semi-parametric problems could be used in other non - regular models, hence completing (not exhaustively) the recent works of \citet{castillo:10} and \citet{kleijn:bickel:10}.

\section{Acknowledgements}
This work was supported by the 800-20072010 grant ANR-07-BLAN-0237-01 SP Bayes.

\bibliographystyle{apalike} %
\bibliography{memlong}%

\appendix

\section{Proof of Lemma 3.4} \label{appD}

We decompose the first derivative of $l_n(d,k)$ as $l_n^{(1)}(d,k)= \mathcal S (l_n^{(1)}(d,k)) + \mathcal D (l_n^{(1)}(d,k))$, $\mathcal S (l_n^{(1)}(d,k))$ being a centered quadratic form and $\mathcal D (l_n^{(1)}(d,k))$ the remaining deterministic term.  To simplify notations, in this proof we write $\mathcal S =\mathcal S(l_n^{(1)}(d_o,k))$ and $\mathcal D = \mathcal D (l_n^{(1)}(d_o,k))$.
 Using (1.6) (supplement) 
and defining $A = T_n^{-1}(f_{d_o,k}) T_n(H_k f_{d_o,k}) T_n^{-1}(f_{d_o,k})$, we find that
\begin{equation*} 
\begin{split}
\mathcal D = -\ft \tr\left[(T_n(f_{d_o,k}) - T_n(f_o)) A \right], \quad \mathcal S = \ft \left(X^t A X - \tr\left[T_n(f_o) A\right]\right).
\end{split}
\end{equation*}
 From (1.4) and (1.8) in the supplement it follows that
\begin{equation}\label{f0fd0bound}
\begin{split}
f_o - f_{d_o,k} &= f_{d_o,k} (e^{\Delta_{d_o,k}} - 1) = (\Delta_{d_o,k} + \ft e^\xi \Delta_{d_o,k}^2) f_{d_o,k} \\
&   = f_{d_o,k} O(k^{-\be +1/2}), \quad \xi \in (0, (\Delta_{d_o,k})_+)).
\end{split}
\end{equation}
Consequently, we have
\begin{equation*} 
\begin{split}
& \mathcal D = \ft \tr\left[ T_n(f_{d_o,k}(\Delta_{d_o,k} + O(\Delta_{d_o,k}^2))) T_n^{-1}(f_{d_o,k}) T_n(H_k f_{d_o,k}) T_n^{-1}(f_{d_o,k}) \right] \\ & \; = \frac{n}{4 \pi}\int_{-\pi}^{\pi} H_k(x) (\Delta_{d_o,k}(x) + O(\Delta_{d_o,k}^2(x))) dx + \mbox{error}  \\
& \; =  \frac{n}{2} \sum_{j={k+1}}^\infty \eta_j \ty_{o,j} + O(nk^{-2\be -1}) + \mbox{error}.
\end{split}
\end{equation*}
The last equality follows from (1.9) and  (1.11) in the supplement. 
We bound the error term using Lemma 2.4 (supplement) applied to $H_k f_{d_o,k}$ and $f_{d_o,k}$, whose Lipschitz constants are bounded by $O(k)$ and $O(k^{(3/2-\be)_+}$, respectively (see Lemma 3.1 in the supplement). Using that $\| \Delta_{d_o,k}\|_\infty = O(k^{-\be + 1/2})$ (see (1.8) in the the supplement) we then find that the error is $O(k^{3/2- \be} n^\e)$.

The term $\mathcal S$ is a centered quadratic form with variance $\frac{1}{2}\fr T_n^{\ft}(f_o) A  T_n^{\ft}(f_o)\fr^2$. Applying once more \eqref{f0fd0bound}, we find that
\begin{equation*}
\begin{split}
\tr\left[ (T_n(f_o)A )^2\right] &= \tr\left[ \left(T_n^{-1}(f_{d_o,k}) T_n(H_k f_{d_o,k}) \right)^2 \right] (  1 + O(\| \Delta_{d_o,k}\|_\infty )) \\
 &= \frac{ n }{ 2\pi } \int_{-\pi}^\pi H_k^2(x) dx + O(n^\e k+ \| \Delta_{d_o,k}\|_\infty) = nr_k( 1 + o(n^{-\delta})),
\end{split}
\end{equation*}
where the term $n^\e k$ comes from Lemma 2.4 in the supplement, associated to $f_{d_o,k}$ and $f_{d_o,k}H_k$. This proves the first equality in Lemma \ref{delta1terms}.

Similar to the decomposition of $l_n^{(1)}(d_o,k)$, we decompose the second derivative as $l_n^{(2)}(d,k) = \mathcal D(l_n^{(2)}(d,k)) - 2 \mathcal S_1(l_n^{(2)}(d,k)) +  \mathcal S_2(l_n^{(2)}(d,k))$, where
\begin{equation*}
\mathcal S_1(d)  =  X^t A_{1,d} X - \tr[T_n(f_o) A_{1,d}], \quad
\mathcal S_2(d) = X^t A_{2,d} X - \tr[T_n(f_o) A_{2,d}],
\end{equation*}
\begin{equation}\label{Dlabel}
\begin{split}
\mathcal D_2(d)  &:=  \mathcal D(l_n^{(2)}(d,k)) \\
&= - \frac{1}{2} \tr\left[ T_n(f_{d,k})A_{1,d} \right] + \tr\left[  (T_n(f_{d,k})  - T_n(f_{d_o,k})) \left(A_{1,d} -\ft A_{2,d}\right)\right] \\
& \quad + \tr\left[  (T_n(f_{d_o,k})  - T_n(f_o)) \left(A_{1,d} -\ft A_{2,d}\right)\right] ,
\end{split}
\end{equation}
\begin{equation*}
\begin{split}
A_{1,d} &= T_n^{-1}(f_{d,k})( T_n(H_k f_{d,k})  T_n^{-1}(f_{d,k}))^2, \; A_{2,d}  = T_n^{-1}(f_{d,k})T_n(H_k^2f_{d,k})  T_n^{-1}(f_{d,k}).
\end{split}
\end{equation*}

To control $\mathcal D_2(d)$ we use a first order Taylor expansion around $d_o$, implying that $\mathcal D_2(d) = \mathcal D_2(d_o)  + O(|d-d_o) \sup_{|d'-d_o| \leq \vb }|\mathcal D_2'(d')|$.
First we study $\mathcal D_2(d_o)$.
At $d = d_o$,  the right-hand side of \eqref{Dlabel} equals
\begin{equation} \label{dd}
\begin{split}
& -\frac{n}{4\pi} \int_{-\pi}^{\pi} H_k^2(x)\left( 1 + (e^{\Delta_{d_o,k}}-1) \right)dx +  O( kn^\e) \\
& \qquad = - \frac{ -nr_k}{ 2 } \left( 1 +  O(k^{-\be +1/2} + k^2/n^{1-\e}) \right).
\end{split}
\end{equation}
The $O( kn^\e)$  term is obtained from Lemma 2.4 (supplement), applied to $f_{2j} = H_k f_{d_o,k}$  and $f_{2j-1} = f_{d_o,k}$, with Lipschitz constants $O(k)$ for the former and $O(k^{(3/2 - \be)_+} )$ for the latter, together with the bound $\| \Delta_{d_o,k}\|_\infty = O(k^{-\be +1/2})$.
Using
\begin{equation} \label{A1der}
\begin{split}
A_{1,d}'   &= - 3 \left(T_n^{-1}(f_{d,k})T_n(H_kf_{d,k})\right)^3  T_n^{-1}(f_{d,k}) \\
&\quad + 2 T_n^{-1}(f_{d,k}) T_n(H_k^2 f_{d,k})  T_n^{-1}(f_{d,k})T_n(H_k f_{d,k}) T_n^{-1}(f_{d,k})
\end{split}
\end{equation}
and a similar expression for the derivative of $d \rightarrow A_{2,d}$, it follows that
\begin{equation*}
\begin{split}
& | \mathcal D_2'(d')| \lesssim \tr\left[(T_n(f_{d_o,k}) T_n^{-1}(f_{d',k}) + I_n) (T_n(|H_k|f_{d',k})T_n^{-1}(f_{d',k}))^3\right]  \\
& \; +  \tr\left[(T_n(f_{d_o,k}) T_n^{-1}(f_{d',k}) + I_n)(T_n(|H_k|f_{d',k})T_n^{-1}(f_{d',k})) T_n(H_k^2f_{d',k}) T_n^{-1}(f_{d',k})\right]  \\
 & \; + \tr\left[(T_n(f_{d_o,k}) T_n^{-1}(f_{d',k}) + I_n)T_n(|H_k|^3f_{d',k})T_n^{-1}(f_{d',k})\right]
 \end{split}
 \end{equation*}
 We control the first term of the right hand side of the above inequality, the second and third terms are controlled similarly.  Note first that
 \begin{equation} \label{Dprime}
 \begin{split}
&  \tr\left[T_n(f_{d_o,k}) T_n^{-1}(f_{d',k}) (T_n(|H_k|f_{d',k})T_n^{-1}(f_{d',k}))^3\right] \\
 &= \fr T_n^{\ft}(f_{d_o,k}) T_n^{-1}(f_{d',k}) T_n(|H_k|f_{d',k})T_n^{-1}(f_{d',k}) T_n^{\ft}(|H_k|f_{d',k})\fr^2  \\
 &\leq \| T_n^{\ft}(f_{d_o,k})T_n^{-\ft}(f_{d',k}) \|^2 \\
 & \qquad \times \|T_n^{-\ft}(f_{d',k}) T_n^{\ft}(|H_k|f_{d',k})\|^2 \fr T_n^{-\ft}(f_{d',k}) T_n(|H_k|f_{d',k})T_n^{-\ft}(f_{d',k})\fr^2 \\
 &\lesssim n^{\e} \fr T_n^{-\ft}(f_{d',k}) T_n(|H_k|f_{d',k}) T_n^{-\ft}(f_{d',k})\fr^2 ,
 \end{split}
 \end{equation}
 where the last inequality comes from Lemma 2.3 in the supplement. Note also that
  $$|x|^{-2d'} \lesssim f_{d'}(x) \lesssim |x|^{-2d'} \quad \mbox{ and }  T_n(|H_k|f_{d',k}) \lesssim T_n(|H_k| |x|^{-2d'}) , \quad T_n(|f_{d',k})\gtrsim  T_n(|x|^{-2d'})$$
  and replace $f_{d'}$ by $|x|^{-2d'}$ in \eqref{Dprime}, then
$$\fr T_n^{-1/2}(f_{d',k}) T_n(|H_k|f_{d',k})T_n^{-1/2}(f_{d',k})\fr^2\lesssim \left( \frac{ n }{ k } + O(k)\right)$$
using Lemma 2.4 in the supplement associated to $|H_k| |x|^{-2d'}$ which has Lipschitz constant $k$. This leads to
$
\mathcal D_2(d') = O\left(n^\e \frac{ n }{ k } \right),
$
which implies that for all $\be > 1$,
\begin{equation*}
\mathcal D_2(d) = \mathcal D_2(d_o) + \od(|d-d_o|n^{\e+1}k^{-1}) = - \frac{ n r_k }{ 2} ( 1 + \od(n^{-\delta})).
\end{equation*}
For the stochastic terms in $l_n^{(2)}(d,k)$ we need a chaining argument to control the supremum over $d \in (d_o-\vb,d_o+\vb)$.
We show that for all $\e'>0$ and $\gamma_n = n^{\ft+\e'} k^{-\ft}$,
\begin{eqnarray}
P_o^n\left( \sup_{|d-d_o| \leq \vb} |\mathcal S_1(d)| > \gamma_n \right) &=& o(1), \label{w12}
\end{eqnarray}
i.e. that $\mathcal S_1(d)=\op(\gamma_n)$.
The same can be shown for $\mathcal S_2(d)$ using exactly the same arguments. Consider a covering of $(d_o-\vb, d_o + \vb)$ by balls of radius $n^{-1}$
centered at $d_j$, $j=1,\ldots, J_n$ with $J_n \leq 2\vb n$. Then
\begin{equation*}
\sup_{|d- d_o|<\vb} |\mathcal S_1(d)| \leq \max_j |\mathcal S_1(d_j)| +  \sup_{|d-d'|\leq n^{-1}} |\mathcal S_1(d) - \mathcal S_1(d)|,
\end{equation*}
and
\begin{equation} \label{chaining}
\begin{split}
P_o^n\left( \sup_{|d-d_o| \leq \vb} |\mathcal S_1(d)| > \gamma_n \right) & \leq
P_o^n\left( \sup_{|d-d'|\leq n^{-1}} |\mathcal S_1(d) - \mathcal S_1(d')| > \ft \gamma_n \right) \\
& \quad + J_n \max_{1 \leq j\leq J_n}P_o^n\left(|\mathcal S_1(d_j)|  > \ft \gamma_n \right).
\end{split}
\end{equation}
To control the first term on the right in \eqref{chaining}, note that for a standard normal vector $Z$ and some $d^*\in (d,d')$,
\begin{equation*}
\begin{split}
\mathcal S_1(d)-\mathcal S_1(d')
&= (d-d') \left( Z^t T_n^\ft(f_o) A_{1,d^*}' T_n^\ft(f_o) Z - \tr\left[T_n(f_o) A_{1,d^*}' \right] \right),
\end{split}
\end{equation*} \noindent
with $A_{1,d}'$ as in \eqref{A1der}.
Using Lemma 2.3 (supplement) and the fact that $\|AB\|\leq \|A\| \|B\|$ for all matrices $A$ and $B$,
it follows that $\|T_n^\ft(f_o) A_{1,d^*}' T_n^\ft(f_o)\| = O(n^\e )$, and hence $Z^t T_n^\ft(f_o) A_{1,d^*}' T_n^\ft(f_o) Z \leq Z^t Z \|T_n^\ft(f_o) A_{1,d^*}' T_n^\ft(f_o)\| = O(n^\e ) Z^t Z$. Similarly, it follows that $\tr\left[T_n(f_o) A_{1,d^*}' \right] \lesssim n$.
Consequently, when $\e = \e'/2$ we have
\begin{equation*}
| \mathcal S_1(d)-\mathcal S_1(d') | \lesssim n^{-1} \left(  Z^t Z n^{\e} + n\right),
\end{equation*} \noindent
uniformly over all $d,d'$ such that $|d-d'| \leq n^{-1}$.
Since $1 = o(\gamma_n)$,
\begin{equation*}
P_o^n\left( \sup_{|d-d'|\leq n^{-1}} |\mathcal S_1(d) - \mathcal S_1(d')| \geq \ft \gamma_n \right) \leq P\left( Z^t Z >  n^{1-\e}\gamma_n /4 \right) = o(1).
\end{equation*} \noindent

To bound the last term in \eqref{chaining}, 
we apply Lemma 1.3 (supplement) to $( Z^t A Z - \tr[ A]) \fr A \fr^{-1}$, with
$A = T_n^{\ft}(f_o)A_{1,d}T_n^{\ft}(f_o)$ since
as seen previously $\fr A \fr^2 = \mathbf{O}(n/k) = \mathbf{o}(\gamma_n^2 n^{-2\alpha})$ for $\alpha$ small enough,
it follows that
\begin{eqnarray*}
P_o^n\left(\mathcal S_1(d) \geq \ft \gamma_n \right) \leq  e^{-n^{\alpha}/8}.
\end{eqnarray*}
Since $J_n$ increases only polynomially with $n$, this finishes the proof of \eqref{w12}.

\section{Control of  the derivatives in $\theta$ on the log-likelihood }\label{app:deriv:theta}

Before stating Lemma \ref{taylorLemma} we first give  a general expression for the derivatives of $l_n(d,k,\ty)$ with respect to $\ty$.
For all $j \geq 1$ and $l = (l_1,\ldots,l_j) \in \{0,1,\ldots,k\}^j$, let  $\s = (\s(1),\ldots,\s(|\s|))$ be a partition of $\{1,\ldots,j\}$. Let $|\s|$ be the number of subsets in this partition and $\s(i)$ the $i$th subset of $\{1,\ldots,j\}$ in the partition $\s$. Denoting $l_{\s(i)}$  the vector $(l_t, t \in \s(i))$, we can write
\begin{eqnarray*} \label{deriv:theta:d}
\nabla_{l_{\s(i)}} f_{d,k,\ty}(x) = \prod_{t\in \s(i)} \cos (l_t x) f_{d,k,\ty}(x).
\end{eqnarray*}
For notational ease we write $\nabla_{\s(i)} f_{d,k,\ty} := \nabla_{l_{\s(i)}} f_{d,k,\ty}$.
The derivative $\frac{\partial^j l_n(d,k,\ty) }{ \partial \ty_{l_1}\ldots\partial \ty_{l_j}}$ can now be written in terms of the matrices
\begin{equation}\label{bsigma}
B_{\s}(d,\ty) = \prod_{i=1}^{|\s|} B_{\s(i)}(d,\ty), \quad B_{\s(i)}(d,\ty)=  T_n(\nabla_{\s(i)} f_{d,k,\ty}) T_n^{-1}(f_{d,k,\ty}).
\end{equation}
There exist constants $b_\s, c_\s$ and $d_\s$ such that
\begin{equation} \label{dThetaExpression}
\begin{split}
& \frac{\partial^j l_n(d,k,\ty) }{ \partial \ty_{l_1}\ldots\partial \ty_{l_j}} \\
&=\sum_{\s \in \mathcal S_j} b_{\s} \left(X^t T_n^{-1}(f_{d,k,\ty}) B_{\s}(d,\ty) X - \tr\left[ T_n(f_o) T_n^{-1}(f_{d,k,\ty})B_{\s}(d,\ty) \right] \right) \\
& \;
+ \sum_{\s \in \mathcal S_j} c_{\s} \tr\left[B_{\s}(d,\ty)\right] + \sum_{\s \in \mathcal S_j} d_{\s} \tr\left[( T_n(f_o) T_n^{-1}(f_{d,k,\ty}) - I_n)B_{\s}(d,\ty) \right],
\end{split}
\end{equation}
where $\mathcal S_j$ is the set of partitions of $\{1,\ldots,j\}$. For the first two derivatives ($j=1,2$) the values of the constants
$b_\s$, $c_\s$ and $d_\s$ are given below in Lemmas \ref{scoreLemma} and \ref{fisherLemma}. For the higher order derivatives
these values are not important for our purpose; we will only need that for any $j\geq 1$, the constant $c_\s$ is zero if $|\s|=1$.

The following lemma states that $l_n(d,k,\ty) - l_n(d,k)$ is the sum of a Taylor-approximation $\sum_{j=1}^J \frac{(\ty-\bd)^{(j)}\nabla^j l_n(d_o,k) }{j !}$ and terms whose dependence on $d$ can be negligible. Since the proof is involved, some of the technical details are treated in Lemmas \ref{mixedderiv} and \ref{lem:g1}.

\begin{lem} \label{taylorLemma}
Given $\be > 1$, let $k \leq k_n$ and let $d$ and $\ty$ be such that $l(f_o,f_{d,k,\ty}) \leq l_0^2\dn^2 $.
Then there exists an integer $J$ and a constant $\e>0$ such that uniformly over $d \in (d_o - \vb,d_o + \vb)$  and $\ty \in \bn$,
\begin{equation} \label{eq:taylorLem}
\begin{split}
l_n(d,k,\ty) - l_n(d,k) &= \sum_{j=1}^J \frac{(\ty-\bd)^{(j)}\nabla^j l_n(d_o,k) }{j !}  \\ & \quad + (d-d_o) \sum_{j=2}^J \frac{1}{j!} g_{n,j}(\ty-\bd) + S_n(d),
\end{split}
\end{equation}
where, for $u = \ty - \bd$,
\begin{equation} \label{gnDecomposition}
g_{n,j}(u) = \sum_{l_1,\ldots,l_j = 0}^k u_{l_1} \cdots u_{l_j} \sum_{\s \in \mathcal S_j}  \left(c_\s \tr\left[ T_{1,\s} (d_o,k) \right]  + d_\s \tr \left[ T_{2,\s}(d_o,k)\right] \right),
\end{equation}
\begin{eqnarray*}
T_{1,\s} (d_o,k) &=& \sum_{i=1}^{|\s|}\left(  \prod_{l < i} T_n(\nabla_{\s(l)} f_{d_o,k}) T_n^{-1}(f_{d_o,k}) \right) \times  \\
 & & \quad \left[  T_n(\nabla_{\s(i)}  f_{d_o,k}H_k) - T_n(H_k f_{d_o,k}) T_n^{-1}(f_{d_o,k}) T_n (\nabla_{\s(i)}  f_{d_o,k} )  \right] \times \\
  & & \quad  T_n^{-1}(f_{d_o,k}) \left(  \prod_{l > i} T_n(\nabla_{\s(l)} f_{d_o,k}) T_n^{-1}(f_{d_o,k}) \right),
\end{eqnarray*}
\begin{eqnarray*}
T_{2,\s} (d_o,k) &=& - T_n(H_k f_{d_o,k}) T_n^{-1}(f_{d_o,k}) B_{\s}(d_o,\bz), 
\end{eqnarray*}
and $S_n(d)$ denotes any term of order
\begin{equation} \label{Sproperty}
S_n(d) = \op(1)+\op\left(\frac{|d-d_o| n^{\ft-\delta}}{ \sqrt{k}}\right)+\op\left((d-d_o)^2 \frac{ n^{1-\delta} }{k}\right).
\end{equation}
When $\be > 5/2$ and $k \leq k_n'$, we can choose $J=2$, and \eqref{eq:taylorLem} simplifies to
\begin{equation} \label{eq:taylorLem2}
l_n(d,k,\ty) - l_n(d,k) = \sum_{j=1}^2 \frac{(\ty - \bd)^{(j)}\nabla^j l_n(d_o,k) }{j !} + \op(1).
\end{equation}

\end{lem}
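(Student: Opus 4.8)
The plan is to begin from the Taylor expansion \eqref{taylor1}, in which $l_n(d,k,\ty)-l_n(d,k)$ is developed about $\bd$ but with $\theta$-derivatives $\nabla^j l_n(d,k)=\nabla^j l_n(d,k,\bd)$ evaluated at the $d$-dependent point $\bd$, and to re-express each such derivative about the reference point $(d_o,k,\bz)$. The structural fact I would exploit is that, by the definitions of $\bd=\ty_{o[k]}+(d_o-d)\eta_{[k]}$ and of $\eta$, the quantity $\ty_j+d\eta_j$ at $\ty=\bd$ equals $\ty_{o,j}+d_o\eta_j$ for every $j\leq k$, so that the FEXP density at $\bd$ differs from $f_{d_o,k}$ only in its tail: $\log f_{d,k,\bd}=\log f_{d_o,k}+(d-d_o)H_k$ with $H_k(x)=\sum_{j>k}\eta_j\cos(jx)$, i.e. $f_{d,k,\bd}=f_{d_o,k}\,e^{(d-d_o)H_k}$. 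Hence $\partial_d f_{d,k,\bd}|_{d_o}=H_k f_{d_o,k}$, and the entire $d$-dependence of the derivative tensor $\nabla^j l_n(d,k,\bd)$ enters only through $f_{d,k,\bd}$ inside the Toeplitz matrices of \eqref{dThetaExpression}.

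Keeping the contraction vector $\ty-\bd$ (which appears identically on both sides of \eqref{eq:taylorLem}) aside, I would expand the tensor $\nabla^j l_n(d,k,\bd)=\nabla^j l_n(d_o,k)+(d-d_o)\,\partial_d\nabla^j l_n|_{d_o}+\ft(d-d_o)^2\,\partial_d^2\nabla^j l_n|_{\bar d}$. The zeroth-order tensors assemble, after contraction, into the first sum $\sum_{j}(\ty-\bd)^{(j)}\nabla^j l_n(d_o,k)/j!$. The first-order term $\partial_d\nabla^j l_n|_{d_o}$ is computed by the product rule applied to $B_\s=\prod_i B_{\s(i)}$ and to the factor $T_n(f_o)T_n^{-1}(f_{d,k,\bd})$, each differentiation replacing one occurrence of $f_{d_o,k}$ by $H_k f_{d_o,k}$; this produces, after using the cyclic invariance of the trace, exactly the matrices $T_{1,\s}$ and $T_{2,\s}$, so that the deterministic ($c_\s,d_\s$) part of its contraction equals $g_{n,j}(\ty-\bd)$ up to a correction of the form $\tr[(T_n(f_o)T_n^{-1}(f_{d_o,k})-I_n)(\cdots)]$, which by \eqref{f0fd0bound} carries a factor $O(k^{-\be+1/2})$ and is absorbed into $S_n$. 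The stochastic ($b_\s$) part of the first-order term and the whole second-order term are likewise pushed into $S_n$. For $j=1$ the single partition has $|\s|=1$, so $c_\s=0$, and the surviving $d_\s$-trace reduces to $\int_{-\pi}^\pi H_k(x)\cos(lx)\,dx$, which vanishes for every $l\leq k$ by orthogonality, leaving only a Toeplitz-approximation error; this is why the $g_{n,j}$ sum in \eqref{eq:taylorLem} starts at $j=2$.

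The main work, and the principal obstacle, is to verify the bound \eqref{Sproperty} on $S_n(d)$ uniformly over $d\in(d_o-\vb,d_o+\vb)$ and $\ty\in\bn$, and to choose $J$ so that the remainder $R_{J+1,d}(\ty)$ of \eqref{Rexpression} is negligible. I would bound the traces and quadratic forms entering $S_n$ by means of the norm inequalities \eqref{matrixCalculus} together with the Toeplitz estimates of Lemmas~2.3 and 2.4 of the supplement, applied to $f_{d',k}$, $H_kf_{d',k}$ and their products, whose Lipschitz constants and sup-norms are controlled through $\|\Delta_{d_o,k}\|_\infty=O(k^{-\be+1/2})$ and the Lipschitz bounds of Lemma~3.1. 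Since each factor of $\ty-\bd$ contributes $\|\ty-\bd\|\leq 2l_0\dn$, taking $J$ large enough forces $R_{J+1,d}$ below $\op(1)$; the centered first-order corrections, linear in $d-d_o$, yield the $\op(|d-d_o|n^{\ft-\delta}/\sqrt k)$ term, and the second-order corrections yield the $\op((d-d_o)^2 n^{1-\delta}/k)$ term. Uniformity in $d$ (the boldface $\op$) for the stochastic pieces I would obtain by the chaining device already used for $\mathcal S_1(d)$ in Appendix~\ref{appD}: cover $(d_o-\vb,d_o+\vb)$ by $O(\vb n)$ balls of radius $n^{-1}$, control the increments through the derivative $A_{1,d}'$ of \eqref{A1der}, and apply the concentration inequality for quadratic forms at each center.

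Finally, in the regime $\be>5/2$, $k\leq\kp$ relevant to Theorem~\ref{suboptimalTheorem}, the dimension $k$ is only of order $(n/\log n)^{1/(2\be+1)}$, which is small enough that both the $g_{n,j}$ terms with $j\geq 2$ and all Taylor terms of order $j\geq 3$ are already $\op(1)$ on $\bn$. Consequently one may take $J=2$ and discard the $g_{n,j}$ contributions, so the expansion collapses to \eqref{eq:taylorLem2}; this is the technically easy case anticipated in Section~\ref{sec3c}.
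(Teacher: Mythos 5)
Your overall architecture matches the paper's: you split $\nabla^j l_n(d,k)$ into its value at $(d_o,\bz)$ plus a difference, Taylor-expand that difference in $d$ using $f_{d,k,\bd}=f_{d_o,k}e^{(d-d_o)H_k}$, recover $T_{1,\s}$ and $T_{2,\s}$ by the product rule, kill the remainder $R_{J+1,d}$ by taking $J$ with $(J+1)(1-1/\be)>2$, and get uniformity in $d$ for the stochastic pieces by chaining. This is exactly the route of the paper (equations \eqref{taylor2}--\eqref{taylorDif} together with Lemma \ref{mixedderiv}).

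However, there is a genuine gap at the single hardest point: your treatment of the $j=1$ term. You note correctly that its limiting integral $\int_{-\pi}^{\pi}H_k(x)\cos(lx)\,dx$ vanishes, and then assert that what is left is ``only a Toeplitz-approximation error'' controllable by the norm inequalities and Lemmas 2.3 and 2.4 of the supplement. That error bound is $O(n^{\e}k)$, whereas membership in $S_n(d)$ requires $g_{n,1}(\ty-\bd)=\op(n^{\ft-\delta}k^{-\ft})$ uniformly over $\bn$; with $k=k_n\sim(n/\log n)^{1/(2\be)}$ the inequality $n^{\e}k \ll n^{\ft-\delta}k^{-\ft}$ holds if and only if $\be>3/2$. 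So your argument proves the lemma only for $\be>3/2$, not on the claimed range $\be>1$ (the paper explicitly warns that the technicalities of the proof come from $1<\be\leq 3/2$). Closing this gap is the content of Lemma \ref{lem:g1}: one must decompose $g_{n,1}$ so that the dangerous piece is $\tr\left[T_n(H_k)T_n(u^t\mathbf{cos})\right]$, which vanishes \emph{exactly} at the matrix level because the entries of $T_n(\cos(l\cdot))$, $l\leq k$, and of $T_n(H_k)$ live on disjoint diagonals, and then bound the two replacement errors ($T_n^{-1}(f_{d_o,k})$ versus $T_n(1/(4\pi^2 f_{d_o,k}))$, and weighted versus unweighted Toeplitz matrices) by the refined estimates of supplement Lemma 2.5, which exploit the boundedness of the trigonometric factors instead of their Lipschitz constants (these are $O(k)$ and would ruin the bound). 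A parallel issue arises for the second-order-in-$d$ deterministic terms in Lemma \ref{mixedderiv}: for $\be<2$ the generic bound \eqref{quotientResult2} of Lemma 2.4 is too large and the paper must invoke the sharper Lemma 2.6. Neither of these refined tools appears in your proposal, and without them the stated uniform bound \eqref{Sproperty} cannot be reached on the whole range $\be>1$. Your final paragraph on the $\be>5/2$, $k\leq\kp$ case is fine, since there all Lipschitz constants are $O(1)$ and the crude bounds do suffice.
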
 \noindent
\begin{proof}

Recall that by \eqref{taylor1},
\begin{equation} \label{taylor2}
\begin{split}
& l_n(d,k,\ty) - l_n(d,k) =
\sum_{j=1}^J \frac{(\ty-\bd)^{(j)}\nabla^j l_n(d_o,k) }{j !} \\
&\qquad + \sum_{j=1}^J \frac{(\ty-\bd)^{(j)}\nabla^j (l_n(d,k) - l_n(d_o,k)) }{j !}
+ R_{J+1,d}(\ty).
\end{split}
\end{equation} \noindent
To prove \eqref{eq:taylorLem} we first show that, writing $u=\ty-\bd$,
\begin{equation} \label{taylorDif}
\begin{split}
&\sum_{j=1}^J \frac{u^{(j)}\nabla^j (l_n(d,k) - l_n(d_o,k)) }{j !} \\
& \qquad =\sum_{j=1}^J \frac{1}{j!} \sum_{l_1,\ldots,l_j = 0}^k u_{l_1}\ldots u_{l_j} \left(   \frac{\partial^j l_n(d,k,\bd) }{ \partial \ty_{l_1}\ldots\partial \ty_{l_j}}
- \frac{\partial^j l_n(d_o,k,\bz) }{ \partial \ty_{l_1}\ldots\partial \ty_{l_j}} \right) \\
& \qquad = (d-d_o) \sum_{j=1}^J \frac{1}{j!} g_{n,j}(u) + O(S_n(d)).
\end{split}
\end{equation} \noindent
This result is combined with \eqref{taylor2} and Lemma \ref{lem:g1} below, by which $g_{n,1}(u) = O(S_n(d))$. It then follows that $l_n(d,k,\ty) - l_n(d,k)$ equals
\begin{equation*}
\begin{split}
\sum_{j=1}^J \frac{(\ty-\bd)^{(j)}\nabla^j l_n(d_o,k) }{j !}
+ (d-d_o) \sum_{j=2}^J \frac{1}{j!} g_{n,j}(u) + R_{J+1,d}(\ty) + O(S_n(d)) .
 \end{split}
\end{equation*}
The final step is to prove that $R_{J+1,d}(\ty)$ is $\op(1)$ and hence $O(S_n(d))$; to this end $J$ needs to be sufficiently large.

First we prove \eqref{taylorDif}. For the factors $\frac{\partial^j}{\partial \ty} l_n(d,k,\bd) - \frac{\partial^j}{\partial \ty} l_n(d,k,\bz)$ we substitute \eqref{dThetaExpression}. In Lemma \ref{mixedderiv} below we give expressions for each of the terms therein, which we substitute in \eqref{taylorDif}. The main terms are $(d-d_o) \tr[T_{1,\s}(d_o,k)]$ and $(d-d_o) \tr[T_{2,\s}(d_o,k)]$ in \eqref{detTerm1} and \eqref{detTerm2}, which after substitution in \eqref{taylorDif} give the term $(d-d_o) \sum_{j=1}^J \frac{1}{j!} g_{n,j}(u)$ on the right. The other terms in \eqref{stochTerm}-\eqref{detTerm2} that enter \eqref{taylorDif} through \eqref{dThetaExpression} are $O(S_n(d))$. This is due to the summation over $u_{l_1},\ldots,u_{l_j}$ in \eqref{taylorDif}, and
the Cauchy-Schwarz inequality by which
\begin{equation} \label{cauchySchwarzbound}
\left| \sum_{l_1,\ldots,l_j=0}^k u_{l_{1}}\ldots u_{l_{j }}\right| \leq (\sqrt{k}\|u\|)^j \leq (2 l_0 \sqrt{k} \dn)^j = o(n^{-\delta}),
\end{equation}
for some $\delta >0$, as $\|u\| \leq 2 l_0\dn$ and \eqref{taylorDif} is proved. We now control $R_{J+1,d}(\ty)$.

Combining \eqref{Rexpression} and the first inequality in \eqref{cauchySchwarzbound}, we obtain
\begin{equation*}
|R_{J+1,d}(\ty) | \leq \frac{ 1}{(J+1)!} (\sqrt{k}\dn)^{J+1} \max_{l_1,\ldots,l_{J+1}}\sup_{\|\tilde \ty-\bd\| \leq 2 l_0\dn} \left| \frac{ \partial^{J+1} l_n(d,k,\tilde \ty) }{ \partial \ty_{l_1}\ldots\partial \ty_{l_{J+1}}} (x)\right|.
\end{equation*} \noindent
We give a direct bound on this derivative using \eqref{dThetaExpression}.
For all partitions $\sigma$ of $\{l_1,\ldots,l_{J+1}\}$ and all $(l_1,\ldots,l_{J+1}) \in \{ 1,\ldots,k\}^{J+1}$,
we bound $\| B_{\s(i)}(d,\ty)\|$, using  $\| B_{\s(i)}(d,\ty)\| \leq  \|T_n^{\ft}(\nabla_{\s(i)} f_{d,k,\ty}) T_n^{-\ft}(f_{d,k,\ty})\|^2$  (see \eqref{matrixCalculus}). We bound $\|T_n^{\ft}(\nabla_{\s(i)} f_{d,k,\ty}) T_n^{-\ft}(f_{d,k,\ty})\|$  by application of Lemma  2.3 (supplement) with $f=f_{d,k,\ty}$ and $g=\nabla_{\s(i)} f_{d,k,\ty}$.
The constant $M$ in this lemma is bounded by
\begin{equation*}
\sum_{j=0}^k |\ty_j| \leq \sum_{j=0}^k |(\bd)_j| + \sum_{j=0}^k |\ty_j-(\bd)_j|  \leq 2\sqrt{L} + \sqrt{k} \| \ty  - \bd\| = O(1),
\end{equation*}
since  $\sum_{i=0}^k |(\bd)_i| \leq 2\sqrt{L}$ (by Lemma \ref{bdLemma}) and  $\| \ty  - \bd\| \leq \delta_n$. Consequently, Lemma  2.3 (supplement) implies that
\begin{equation} \label{BsBound}
\| B_{\s(i)}(d,\ty)\| \leq K,
\end{equation}
where $K $ depends only on $L, L_o$ and not on $n$, $d$ nor $\ty$.  From the relations in \eqref{matrixCalculus} and the definition of $B_\s$ it follows that for any $\s, d, \ty$,
\[|X^t B_{\s}(d,\ty) X| \leq X^t T_n^{-1}(f_o) X K^{|\s|} \| T_n^{\ft}(f_o)T_n^{-\ft}(f_{d,k})\|^2 \leq X^t T_n^{-1}(f_o) X K^{|\s|} n^{\e},
\]
\[| \tr\left[ B_{\s}(d,\ty) \right]| \leq n K^{|\s|}\| T_n^{\ft}(f_o)T_n^{-\ft}(f_{d,k})\|^2 \leq n^{1+\e} K^{|\s|}.
\]
Therefore we have the bound
\begin{equation}\label{Rbound3}
|R_{J+1}(d,\ty)| \leq C K^{J+1} n^{\e} (\sqrt{k}\|\ty-\bd\|)^{J+1}\left(X^t T_n^{-1}(f_o) X + n\right).
\end{equation}
Since $k \leq k_n$, $\|\ty-\bd\|\leq \dn$ and  the term $X^t T_n^{-1}(f_o) X$ in \eqref{Rbound3} is the sum of $n$ independent standard normal variables, there is a constant $c>0$ such that
\begin{equation*} \label{Rbound2}
P_o\left( \sup_{|d-d_o|\leq \vb}\sup_{\|\ty -\bd\|\leq 2l_0\dn} |R_{J+1}(d,\ty )| >n^{-\e} \right) \leq e^{-cn},
\end{equation*}
provided we choose $J$ such that $(J+1)(1-1/\be)>2$. This concludes the proof of \eqref{eq:taylorLem}.

To prove \eqref{eq:taylorLem2} we first show that for $J=2$, $|R_{J+1}(d,\ty )|=\op(1)$.
Since $k \leq k_n'$, $\be > 5/2$ and $\| \ty - \bd \| \leq 2l_0\en$, we can choose
$J+1=3 > (2\be+1)/(\be-\ft)$, and the preceding inequality becomes
$$P_o\left( \sup_{|d-d_o|\leq w_n}\sup_{\|\ty -\bd\|\leq 2l_0\en} |R_3 (d,\ty )| >n^{-\e} \right) \leq e^{-cn}.$$
Combining this result with \eqref{taylorDif}, it only remains to be shown that
$(d-d_o)g_{n,1}(u)$ and $(d-d_o)g_{n,2}(u)$ are $\op(1)$.
Recall from Corollary \ref{cor1} that $|d-d_o| = o(n^{\e -(\be - 1/2)/(2\be + 1)}) $ for all $\e>0$. Consequently,
$$ |d-d_o| n^{\ft}k^{-\ft} = o(n^{\e + 1/(2\be + 1)} k^{-\ft})  = o((\sqrt{k}\en)^{-1} ) , \quad (d-d_o)^2 \frac{n}{k} = o( (\sqrt{k}\en)^{-1} )$$
for all $\be >2$. This implies that $S_n(d)=\op(1)$ and that, by Lemma \ref{lem:g1}, $(d-d_o) g_{n, 1}(u)  = \mathbf{o}(1)$.
Also, for all  $ (l_1,l_2) \in \{ 1,\ldots,k\}^j$ and all partitions $\sigma$ of $(l_1,l_2)$,
 the limiting integral of $\tr\left[T_{1,\sigma}\right] $ is equal to 0. Since $\be > 5/2$ the Lipschitz constants of the functions $f_{d_o,k}$ or $f_o$ are $O(1)$, so that Lemma 2.4 (supplement) implies
 $\tr\left[T_{1,\sigma}(d_o,k)\right]  = O(n^\e k)$. Similarly,
 $$\tr\left[T_{2,\sigma}(d_o,k)\right] = \frac{ n}{ 2\pi } \int_{-\pi}^{\pi} H_k(x) \cos(l_1x) \cos(l_jx) dx + O(n^\e k).$$
Thus we have
\begin{equation*}
 (d-d_o) g_{n, 2}(u) = \frac{ n(d-d_o)}{ 2\pi } \sum_{l_1,l_2=0}^k u_{l_1}u_{l_2}\int_{-\pi}^\pi H_k(x) \cos(l_1x) \cos(l_jx) dx + \mathbf{o}(1),
\end{equation*}
which is $\mathbf{o}(1)$. This completes the proof of Lemma \ref{taylorLemma}.
\end{proof}

%

The proof of the following lemma is given in section 4 of the supplement.
\begin{lem} \label{mixedderiv}
Let $W_{\s}(d)$ denote any of the quadratic forms
\[  X^t T_n^{-1}(f_{d,k})B_{\s}(d,\bd) X - \tr\left[T_n(f_o) T_n^{-1}(f_{d,k})B_{\s}(d,\bd)\right]\] \noindent
in \eqref{dThetaExpression}.
For any $j \leq J$, $(l_1,\ldots,l_j)\in\{0,\ldots,k\}^j$ and $\s \in \mathcal S_j$, we have
\begin{equation} \label{stochTerm}
 |W_\s(d) - W_\s(d_o)| = \op(|d-d_o| n^{\ft +\e} k^{-\ft}),
\end{equation}
\begin{equation} \label{detTerm1}
\begin{split}
& \tr\left[ B_{\s}(d,\bd)\right] - \tr\left[ B_{\s}(d_o,\bar{\ty}_{d_o})\right] \\
&\quad = (d-d_o) \tr[T_{1,\s}(d_o,k)] + (d-d_o)^2 \od( n^{\e + \ft} k^{-\ft + (1 - \be /2)_+})\\
&\quad = (d-d_o) \tr[T_{1,\s}(d_o,k)] + (d-d_o)^2 \od( n^{1 -\delta} /k),
\end{split}
\end{equation}
\begin{equation}\label{detTerm2}
\begin{split}
&  \tr\left[( T_n(f_o) T_n^{-1}(f_{d,k}) - I_n)B_{\s}(d,\bd) \right]-\tr\left[( T_n(f_o) T_n^{-1}(f_{d_o,k}) - I_n)B_{\s}(d_o,\bz) \right] \\
&\quad= (d-d_o) \tr[T_{2,\s}(d_o,k)] + (d-d_o)^2 \od( n/k)+(d-d_o) \od( n^{\e + \ft}k^{-\ft}).
\end{split}
\end{equation}
\end{lem}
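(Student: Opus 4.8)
The plan is to derive all three estimates by differentiating the relevant matrices in $d$ along the curve $d\mapsto f_{d,k,\bd}$, whose logarithmic derivative is $H_k(x)=\sum_{j>k}\eta_j\cos(jx)$; indeed $\partial_d\log f_{d,k,\bd}=H_k$, so that $\partial_d f_{d,k,\bd}=H_kf_{d,k,\bd}$, $\partial_d\nabla_{\s(i)}f_{d,k,\bd}=H_k\nabla_{\s(i)}f_{d,k,\bd}$ and $\partial_d T_n^{-1}(f_{d,k,\bd})=-T_n^{-1}(f_{d,k,\bd})T_n(H_kf_{d,k,\bd})T_n^{-1}(f_{d,k,\bd})$. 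Before expanding I would record the uniform bounds that control every remainder: the spectral bound $\|T_n^\ft(f_o)T_n^{-\ft}(f_{d,k})\|=O(n^\e)$ (Lemma 2.3 in the supplement, uniform over $|d-d_o|\le\vb$), the factor bound $\|B_{\s(i)}(d,\bd)\|\le K$ from \eqref{BsBound}, the smallness $T_n(f_o)T_n^{-1}(f_{d_o,k})-I_n=O(k^{-\be+\ft})$ from \eqref{f0fd0bound}, the $L_2$-size $\int_{-\pi}^\pi H_k^2=\pi\sum_{j>k}\eta_j^2=O(1/k)$, and the Lipschitz constant $O(k)$ of $H_kf_{d,k}$ needed for the Toeplitz trace approximation of Lemma 2.4 in the supplement.

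For the deterministic increments \eqref{detTerm1} and \eqref{detTerm2} I would use a first-order Taylor expansion in $d$ about $d_o$. Applying the product rule to $B_\s(d,\bd)=\prod_iT_n(\nabla_{\s(i)}f_{d,k,\bd})T_n^{-1}(f_{d,k,\bd})$ together with the rules above identifies $T_{1,\s}(d_o,k)$ as the first $d$-derivative of $B_\s(d,\bd)$ at $d_o$ (the ordering of the Toeplitz factors under the trace being handled by cyclicity and their approximate commutation), giving the leading term $(d-d_o)\tr[T_{1,\s}(d_o,k)]$; the quadratic remainder is bounded by $(d-d_o)^2\sup_{|d^*-d_o|\le\vb}|\partial_d^2\tr[B_\s(d^*,\bar\theta_{d^*,k})]|$, estimated through the matrix inequalities \eqref{matrixCalculus} and Lemma 2.4, where the exponent $k^{(1-\be/2)_+}$ records the Lipschitz constant of $f_{d,k}$ in the range $\be\le2$, and $\vb$ together with $\be>1$ then yields the stated $\od(n^{1-\delta}/k)$. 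Estimate \eqref{detTerm2} is analogous: when differentiating $(T_n(f_o)T_n^{-1}(f_{d,k})-I_n)B_\s(d,\bd)$, the contribution in which $B_\s$ is differentiated carries the small prefactor $T_n(f_o)T_n^{-1}(f_{d_o,k})-I_n=O(k^{-\be+\ft})$ and is thus absorbed into the $(d-d_o)\od(n^{\e+\ft}k^{-\ft})$ error, leaving $-T_n(H_kf_{d_o,k})T_n^{-1}(f_{d_o,k})B_\s(d_o,\bz)=T_{2,\s}(d_o,k)$ as the leading derivative and an $\od(n/k)$ second-order remainder.

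For the stochastic increment \eqref{stochTerm} I would write $W_\s(d)-W_\s(d_o)=X^tC_dX-\tr[T_n(f_o)C_d]$ with $C_d=T_n^{-1}(f_{d,k})B_\s(d,\bd)-T_n^{-1}(f_{d_o,k})B_\s(d_o,\bz)$, again a centered quadratic form. A mean-value step gives $C_d=(d-d_o)\partial_dC_{d^*}$, so its variance is governed by $\fr T_n^\ft(f_o)\,\partial_dC_{d^*}\,T_n^\ft(f_o)\fr^2$; the norm bounds above together with $\int_{-\pi}^\pi H_k^2=O(1/k)$ make this Frobenius norm $O(n^{1+\e}/k)$, i.e. a standard deviation of order $n^{\ft+\e}k^{-\ft}$. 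To upgrade the pointwise quadratic-form tail bound (Lemma 1.3 in the supplement) to the uniform statement over $|d-d_o|\le\vb$, I would repeat the chaining argument used for \eqref{w12} in Appendix \ref{appD}: cover the interval by an $n^{-1}$-grid, bound the increments between neighbouring grid points by a crude $X^tX$-type estimate on $\partial_dC$, and take a union bound over the polynomially many grid points.

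I expect \eqref{stochTerm} to be the main obstacle, since it is the only one of the three that must hold uniformly in $d$ for a random quadratic form. The crux is verifying that the Frobenius norm of the differentiated kernel is genuinely $O(n^{\ft+\e}k^{-\ft})$ --- which relies on $H_k$ entering only through $\int_{-\pi}^\pi H_k^2=O(1/k)$ rather than through its divergent sup-norm --- and that the chaining costs at most a factor $n^\e$. The two deterministic estimates, though computationally heavier, are conceptually routine once $T_{1,\s}$ and $T_{2,\s}$ are recognized as the relevant first $d$-derivatives and Lemma 2.4 is applied with the correct Lipschitz constants.
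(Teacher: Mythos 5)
Your skeleton coincides with the paper's: both deterministic estimates are obtained from a Taylor/mean--value expansion in $d$, with $T_{1,\s}(d_o,k)$ identified as $B_\s'(d_o,\bz)$ and $T_{2,\s}(d_o,k)$ coming from differentiating the prefactor $T_n(f_o)T_n^{-1}(f_{d,k})$, and your treatment of \eqref{stochTerm} (mean value, Frobenius norm of order $n^{\ft+\e}k^{-\ft}$ through $\int_{-\pi}^\pi H_k^2=O(1/k)$, the Gaussian deviation bound of Lemma \ref{deviationBound}, then chaining over a polynomial grid) is essentially the paper's argument verbatim.

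The gap is in how you bound the deterministic second-order remainders, and it is twofold. First, you never invoke the fact that the \emph{limiting Toeplitz integrals vanish identically}: each building block of $B_\s'$ and $B_\s''$ is a difference such as $T_n(H_k\nabla_{\s(i)}f_{d,k})T_n^{-1}(f_{d,k})-T_n(\nabla_{\s(i)}f_{d,k})T_n^{-1}(f_{d,k})T_n(H_kf_{d,k})T_n^{-1}(f_{d,k})$, whose symbol $H_k\prod_{t\in\s(i)}\cos(l_t\cdot)-\prod_{t\in\s(i)}\cos(l_t\cdot)\,H_k$ is zero. Without this cancellation no argument can beat the leading term $n\int H_k^2\prod_t\cos(l_tx)\,dx\asymp n/k$ (and the crude norm bounds of \eqref{matrixCalculus} give only $O(n^{1+\e}/k)$), whereas \eqref{detTerm1} requires the strictly smaller $\od(n^{1-\delta}/k)$. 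Second, once only approximation errors remain, the tool you cite --- Lemma \ref{QuotientBound} --- is exactly the one the paper says \emph{cannot} be used here: its inverse-replacement error \eqref{quotientResult2} carries the factor $(LM/m)^{(p+1)/2}$ with $p=|\s|$ and $L=k^{(2-\be)_+}$ the Lipschitz constant of $f_{d,k}$, which exceeds the target $\od(n^{\e+\ft}k^{-\ft+(1-\be/2)_+})$ as soon as $1<\be<2$ and $p>(2\be-1)/(2-\be)$ (already $p\geq 2$ for $\be$ near $1$); such high-order blocks cannot be avoided, since the Taylor order $J$ must grow as $\be\downarrow 1$. The paper instead replaces every $T_n^{-1}(f_{d,k})$ by $T_n(1/(4\pi^2 f_{d,k}))$, bounds the resulting pure product by Lemma \ref{ProductBound} (yielding $O(kn^\e)$ after the cancellation above), and controls the replacement error by the refined Lemma \ref{alternative_quotient_bound}, whose bound involves $\sqrt{L}$ and $L_2$-norms of the ratios $f_{2j}/f$; the exponent $k^{(1-\be/2)_+}=k^{(2-\be)_+/2}$ in \eqref{detTerm1} is precisely that square root, not the Lipschitz constant itself as you assert. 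The same issue affects your claim for \eqref{detTerm2} that the differentiated-$B_\s$ term is ``absorbed'' because of the small prefactor: a norm bound gives only $O(n^{1+\e}k^{-\be})$ per unit $|d-d_o|$, which exceeds $n^{\ft+\e}k^{-\ft}$ for all $k\leq k_n$, so one again needs the explicit limiting-integral computation (the paper's $C_1$ term) carried out with Lemmas \ref{ProductBound} and \ref{alternative_quotient_bound}.
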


\begin{lem} \label{lem:g1}
For all $\be >1$ there exists a constant $\delta>0$ such that uniformly over $\|\ty - \bd\| \leq \delta_n $,
$$|g_{n,1}(\ty - \bd ) | = \mathbf{o}(n^{1/2-\delta} k^{-1/2}).$$
\end{lem}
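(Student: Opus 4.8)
The plan is to collapse the sum over $l$ in $g_{n,1}$ into a single trace and then to exploit an \emph{exact} Fourier orthogonality that makes the leading Toeplitz term vanish. Write $u=\ty-\bd$, with $\|u\|\leq\delta_n$. First I would simplify the coefficients. By \eqref{gnDecomposition},
\[ g_{n,1}(u)=\sum_{l=0}^k u_l\sum_{\s\in\mathcal S_1}\bigl(c_\s\,\tr[T_{1,\s}(d_o,k)]+d_\s\,\tr[T_{2,\s}(d_o,k)]\bigr). \]
For $j=1$ the set $\mathcal S_1$ contains only the one-block partition, so $|\s|=1$ and hence $c_\s=0$; the $T_{1,\s}$ term drops out and only the term carrying the fixed constant $d_\s$ (from Lemma \ref{scoreLemma}) survives. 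For this partition $B_\s(d_o,\bz)=T_n(\cos(l\cdot)f_{d_o,k})T_n^{-1}(f_{d_o,k})$, so that $\tr[T_{2,\s}(d_o,k)]=-\tr[T_n(H_kf_{d_o,k})T_n^{-1}(f_{d_o,k})T_n(\cos(l\cdot)f_{d_o,k})T_n^{-1}(f_{d_o,k})]$.

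Next I would use linearity of $h\mapsto T_n(h)$ to sum over $l$. Writing $U(x):=\sum_{l=0}^k u_l\cos(lx)$ we have $\sum_{l=0}^k u_l\cos(l\cdot)f_{d_o,k}=U f_{d_o,k}$, whence
\[ g_{n,1}(u)=-\frac{d_\s}{2}\,\tr\bigl[T_n(H_kf_{d_o,k})T_n^{-1}(f_{d_o,k})T_n(Uf_{d_o,k})T_n^{-1}(f_{d_o,k})\bigr]. \]
The key point is that the limiting integral of this trace, $\frac{n}{2\pi}\int_{-\pi}^\pi H_k(x)U(x)\,dx$, is exactly zero: $H_k=\sum_{j>k}\eta_j\cos(j\cdot)$ is supported on Fourier frequencies $j>k$, whereas $U$ is supported on $\{0,\dots,k\}$, so the two have disjoint spectra. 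Equivalently $\tr[T_n(H_k)T_n(U)]=\sum_m(n-|m|)\hat H_k(m)\hat U(m)=0$, since $\hat H_k(m)\hat U(m)\equiv0$. Consequently $g_{n,1}(u)$ reduces entirely to the Toeplitz-approximation error of this four-factor product.

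I would then bound that error with Lemma 2.4 of the supplement, applied to $H_kf_{d_o,k}$, $f_{d_o,k}$, $Uf_{d_o,k}$, $f_{d_o,k}$. Since the main term vanishes, $|g_{n,1}(u)|$ is at most $n^\e$ times the product of the Lipschitz constant of $H_kf_{d_o,k}$, which is $O(k)$ by Lemma 3.1 of the supplement, and the size of the factor $Uf_{d_o,k}$, which I would control through its $L^2(-\pi,\pi)$-norm $\bigl(\int_{-\pi}^\pi U^2\,dx\bigr)^{1/2}\lesssim\|u\|\leq\delta_n$ rather than through $\|U\|_\infty$. Keeping the dependence on $U$ in $L^2$ (not $L^\infty$) is precisely what prevents a spurious factor $\sqrt k$, and yields $|g_{n,1}(u)|\lesssim n^\e k\delta_n$, uniformly over $\|u\|\leq\delta_n$.

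Finally I would compare orders. Using $k\leq k_n\sim(n/\log n)^{1/(2\be)}$ and $\delta_n=(n/\log n)^{-(2\be-1)/(4\be)}$, a direct computation gives $k^{3/2}\delta_n\,n^{-1/2}\sim(n/\log n)^{(1-\be)/\be}$, so that $n^\e k\delta_n=o(n^{1/2-\delta}k^{-1/2})$ for every $\be>1$ once $\e,\delta$ are small enough; this is the asserted $\od(n^{1/2-\delta}k^{-1/2})$. The main obstacle is exactly the error accounting in the previous step: bounding the trace term by term in $l$ and then summing with the triangle and Cauchy--Schwarz inequalities only gives $n^\e k^{3/2}\delta_n$, because the $O(k)$ Lipschitz constant of $H_kf_{d_o,k}$ is present for every $l$ and passing to $\sum_l|u_l|$ costs an extra $\sqrt k$; this suffices only for $\be>5/4$. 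Obtaining the bound for all $\be>1$ therefore hinges on aggregating into $U$ first, so that the exact spectral cancellation of $H_k$ against $U$ can be invoked and the dependence on $u$ stays linear through the $L^2$-norm of $U$.
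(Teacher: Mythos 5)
Your reduction of $g_{n,1}(u)$ to a single trace, $-\ft\tr[T_n(H_kf_{d_o,k})T_n^{-1}(f_{d_o,k})T_n(Uf_{d_o,k})T_n^{-1}(f_{d_o,k})]$ with $U=u^t\mathbf{cos}$, the observation that the leading term vanishes exactly because $H_k$ and $U$ have disjoint Fourier supports, and your order arithmetic (a bound linear in $\|u\|$ through $\|U\|_2$, of size $n^\e k\|u\|$, suffices for all $\be>1$, while the cruder $n^\e k^{3/2}\|u\|$ coming from $\|U\|_\infty\le\sqrt k\|u\|$ only covers $\be>5/4$) all agree with the paper's proof, where the exact cancellation appears as the term $\Delta_2=\tr[T_n(H_k)T_n(U)]=0$.

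The gap is in the step that produces the $L^2$-controlled error bound. You invoke Lemma~\ref{QuotientBound} (cited in the paper as Lemma 2.4 of the supplement), but the error terms of that lemma are expressed exclusively through sup-norm bounds $M^{(i)}$ and Lipschitz constants $L^{(j)}$; it contains no $\|\cdot\|_2$ term, so applying it to the factor $Uf_{d_o,k}$ forces you to pay either $\|U\|_\infty\lesssim\sqrt k\|u\|$ or its Lipschitz constant $O(k^{3/2}\|u\|)$ --- precisely the crude bound you concede fails for $1<\be\le5/4$. Indeed the paper says exactly this: by that lemma $g_{n,1}(u)$ tends to zero, ``but at a rate slower than $n^{1/2-\delta}k^{-1/2}$.'' The lemma that does keep the dependence on $U$ in $L^2$ is Lemma~\ref{interBound} (Lemma 2.5 of the supplement), but it applies only to products of genuine Toeplitz matrices, with $T_n(f^{-1})$ (the Toeplitz matrix of the reciprocal function) in place of the matrix inverse $T_n^{-1}(f)$. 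Before it can be used, each $T_n^{-1}(f_{d_o,k})$ must be replaced by $T_n(\tilde f_{d_o,k})$, $\tilde f=(4\pi^2f)^{-1}$, and the replacement error must itself be shown to be $\od(n^{1/2-\delta}k^{-1/2})$ uniformly over $\|u\|\le\dn$ without destroying the linear-in-$\|U\|_2$ structure. That is the paper's term $\Delta_3$, and it is the technical core of the proof: it needs the LRR09-type argument (splitting $H_k$ and $U$ into positive and negative parts, the matrices $A_i,B_i,\tilde A,\tilde B$ and $\Delta=I_n-T_n(f_{d_o,k})T_n(\tilde f_{d_o,k})$, Lemma~\ref{inversionBound} to control $\fr\Delta\fr$, and a second application of Lemma~\ref{interBound} to $\tr[B_1B_2\Delta]$). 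Your proposal has no counterpart to this step, so the central estimate $|g_{n,1}(u)|\lesssim n^\e k\|u\|$ is asserted rather than proved; as written, the argument only establishes the $\be>5/4$ case.
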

\begin{proof}
For $u = \ty - \bd$, we have
$$g_{n,1}(u) = -\ft \tr\left[ T_n(H_k f_{d_o, \bz})T_n^{-1}(f_{d_o, \bz})T_n( u^t \nabla f_{d_o,\bz}) T_n^{-1}(f_{d_o, \bz})\right].$$
This follows from \eqref{gnDecomposition} and Lemma  below, by which $b_\s=d_\s=\ft$ and $c_\s=0$ (the only partition for $j=1$ being $\s=(\{l\})$).
By Lemma 2.4 (supplement) $g_{n,1}(u)$ converges to zero, but at a rate slower than $n^{1/2-\delta} k^{-1/2}$. To obtain the $\mathbf{o}(n^{1/2-\delta} k^{-1/2})$ term, we write
\begin{equation*}
g_{n,1}(u) = \Delta_1 + \Delta_2 + \Delta_3,
\end{equation*}
and bound the terms on the right using the other lemmas in section 2 of the supplement.  We first prove that
 \begin{equation*} 
\begin{split}
\Delta_1 &=  tr\left[ T_n(H_k f_{d_o,k}) T_n(f_{d_o,k}^{-1}) T_n( u^t \nabla f_{d_o,k})T_n(f_{d_o,k}^{-1})\right] -  \\
& \qquad (16 \pi^4)  tr\left[ T_n(H_k ) T_n( u^t \mathbf{cos} )  \right] =  \mathbf{o}(1),
\end{split}
\end{equation*}
where    $\mathbf{cos}(x) = ( 1, \cos(x),\ldots,\cos(kx))$.
   We then prove that
\begin{eqnarray*} 
\Delta_2 &=& tr\left[ T_n(H_k ) T_n( u^t \mathbf{cos} )  \right] = 0,
 \end{eqnarray*}
 and finally that
 \begin{eqnarray*} 
 \Delta_3 &=& tr\left[ T_n(H_k f_{d_o,k}) T_n^{-1}(f_{d_o,k}) T_n( u^t \nabla f_{d_o,k})T_n^{-1}(f_{d_o,k})\right]  \nonumber \\
  & & \quad - tr\left[ T_n(H_k f_{d_o,k}) T_n\left(\frac{f_{d_o,k}^{-1}}{ 4 \pi^2 }\right) T_n( u^t \nabla f_{d_o,k})T_n\left(\frac{f_{d_o,k}^{-1}}{ 4 \pi^2 }\right)\right] \nonumber \\
   & = &  \mathbf{o}( n^{1/2-\delta}k^{-1/2}).
 \end{eqnarray*}
To bound $\Delta_1$ we use Lemma 2.5 (supplement) with $b_1(x) = H_k(x)$, $b_2(x) = u^t \mathbf{cos}$ and $L= k^{3/2-\be}$. Equation (2.6) 
then implies that
\begin{eqnarray*}
|\Delta_1| &\leq & C \sqrt{k}\|u\| n^\e \left( 1 + k^{3/2-\be}k^{-1/2}\right) = \mathbf{o}(1).
\end{eqnarray*}
To bound $Delta_2$ note that for $l=0,\ldots,k$ and all $j_1,j_2 \leq n$,
 \begin{eqnarray*}
 (T_n(\cos (lx) ))_{j_1,j_2} &=& \1_{|j_1 - j_2 |= l},
\quad  (T_n(H_k) )_{j_1,j_2} = \sum_{j=k+1}^n \eta_j \1_{ j = |j_1 - j_2|}.
 \end{eqnarray*}
Therefore
\begin{eqnarray*}
\tr\left[ T_n(H_k)T_n(\cos(l.)\right] = \sum_{j_1=1}^n \sum_{j_2=1}^n \sum_{j=k+1}^n \eta_j \1_{ j = |j_1 - j_2|}\1_{|j_1 - j_2 |= l} &=& 0,
\end{eqnarray*}
since $l\leq k$ and $j>k$. We now turn to $\Delta_3$.
Following \citet{LRR09}, we consider separately the positive and negative parts of $H_k$ and of $u^t \mathbf{\cos} $. Hence we may treat these functions as if they were positive. We first define, for $\tilde f_{d_o,k}=(4\pi^2 f_{d_o,k} )^{-1}$,
\begin{equation*}
\begin{split}
& A_1 = T_n(H_k f_{d_o,k})T_n^{-1}(f_{d_o,k}), \quad B_1 =T_n(H_k f_{d_o,k}) T_n(\tilde f_{d_o,k}), \\
&  A_2  = T_n( u^t \nabla f_{d_o,k})T_n^{-1}(f_{d_o,k}), \quad B_2 = T_n( u^t \nabla f_{d_o,k}) T_n(\tilde f_{d_o,k}), \\
& \tilde{ A} = T_n^{\ft}(H_k f_{d_o,k}) T_n^{-1}(f_{d_o,k}) T_n^{\ft}( u^t \nabla f_{d_o,k}), \\
& \tilde{B} =T_n^{\ft}(H_k f_{d_o,k}) T_n(\tilde f_{d_o,k}) T_n^{\ft}( u^t \nabla f_{d_o,k}), \\
& \Delta = I_n - T_n(f_{d_o,k}) T_n(\tilde f_{d_o,k}).
\end{split}
\end{equation*}
Using the same computations as in \citet{LRR09}, we find that
 \begin{eqnarray*}
| \Delta_3| &\lesssim& |\tr\left[B_1 B_2 \Delta\right]| + \fr\tilde{A}-\tilde{B}\fr \fr T_n^{\ft}(H_k f_{d_o,k})T_n(\tilde f_{d_o,k}) \Delta T_n^{\ft}( u^t \nabla f_{d_o,k})\fr  \\
 & & + |\Delta|^2 \sqrt{k}\|u\| n^\e\\
  &\lesssim & \sqrt{k}\|u\| n^\e k^{3/2-\be } + |\tr\left[B_1 B_2 \Delta\right]|.
 \end{eqnarray*}
 The first term on the right is $\mathbf{o}( n^{1/2-\delta}k^{-1/2})$. We bound the last term using Lemma 2.5 (supplement) with $b_1 = H_k$, $b_2  = u^t \mathbf{cos}$ and $b_3 = 1$,
which implies that $\tr\left[B_1 B_2 \Delta\right] = 0 + O( \sqrt{k}\|u\|n^\e k^{(3/2-\be)_+}) = o(1)$.
This achieves the proof of Lemma \ref{lem:g1}.
\end{proof}


\begin{lem} \label{scoreLemma}
Suppose that $k\leq k_n$ and that $l(f_o,f_{d_o,k}) \leq l_0^2\dn^2$. Then all elements of $\nabla_l l_n(d_o,k)$ ($l= 0,\ldots,k$) are the sum of a centered quadratic form, $\Sto ( \nabla_l l_n(d_o,k)) $ with a variance equal to $\frac{n}{2}(1+o(1))$ and a deterministic term, $\Det(\nabla_l l_n(d_o,k))$  which is $o(k^{(3/2 - \be)_+ } n^\e)$.
\end{lem}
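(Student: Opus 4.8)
The plan is to read off the two parts of the single derivative directly from the $j=1$ instance of \eqref{dThetaExpression} and then to mimic the variance computation already carried out for $l_n^{(1)}(d_o,k)$ in Appendix \ref{appD}. Differentiating $l_n(f)=-\frac n2\log(2\pi)-\ft\log\det T_n(f)-\ft X^tT_n^{-1}(f)X$ once in $\ty_l$, and using $\partial_{\ty_l}f_{d,k,\ty}=\cos(l\cdot)f_{d,k,\ty}$ together with $\partial_{\ty_l}T_n^{-1}(f)=-T_n^{-1}(f)T_n(\cos(l\cdot)f)T_n^{-1}(f)$, I obtain, evaluated at $\ty=\bz$ (i.e. at $f=f_{d_o,k}$),
\[
\nabla_l l_n(d_o,k)=\ft\bigl(X^tA_lX-\tr[T_n(f_o)A_l]\bigr)+\ft\tr\bigl[(T_n(f_o)T_n^{-1}(f_{d_o,k})-I_n)B_l\bigr],
\]
where $A_l=T_n^{-1}(f_{d_o,k})T_n(\cos(l\cdot)f_{d_o,k})T_n^{-1}(f_{d_o,k})$ and $B_l=T_n(\cos(l\cdot)f_{d_o,k})T_n^{-1}(f_{d_o,k})=B_\s(d_o,\bz)$ for the single partition $\s=(\{l\})$ of $\{1\}$. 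This identifies the constants $b_\s=d_\s=\ft$, $c_\s=0$, and pins down $\Sto(\nabla_l l_n(d_o,k))$ as the centered quadratic form on the left and $\Det(\nabla_l l_n(d_o,k))=\ft\tr[A_lT_n(f_o-f_{d_o,k})]$ as its expectation.

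For the variance I would argue verbatim as for $\Sto(l_n^{(1)}(d_o,k))$. Since $A_l$ is symmetric and $X\sim\mathcal N(0,T_n(f_o))$, we have $\mathrm{Var}(\Sto(\nabla_l l_n(d_o,k)))=\ft\fr T_n^{\ft}(f_o)A_lT_n^{\ft}(f_o)\fr^2=\ft\tr[(T_n(f_o)A_l)^2]$. Replacing $T_n(f_o)$ by $T_n(f_{d_o,k})$ costs only a factor $1+O(\|\Delta_{d_o,k}\|_\infty)$ by \eqref{f0fd0bound}, with $\|\Delta_{d_o,k}\|_\infty=O(k^{-\be+1/2})$, and reduces the trace to $\tr[(T_n^{-1}(f_{d_o,k})T_n(\cos(l\cdot)f_{d_o,k}))^2]$, whose limiting symbol is $\cos(l\cdot)$. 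Lemma 2.4 of the supplement then evaluates this trace as $n(1+o(1))+O(n^\e k)$, exactly parallel to the identity $\tr[(T_n^{-1}(f_{d_o,k})T_n(H_kf_{d_o,k}))^2]=nr_k(1+o(1))$ used in Appendix \ref{appD}, the only change being that the integral of $H_k^2$ is replaced by that of $\cos^2(l\cdot)$. Hence $\mathrm{Var}(\Sto(\nabla_l l_n(d_o,k)))=\frac n2(1+o(1))$, uniformly in $l\ge1$ (the $l=0$ direction is treated the same way).

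The deterministic term is where the real work lies. Writing $T_n(f_o)T_n^{-1}(f_{d_o,k})-I_n=T_n(f_o-f_{d_o,k})T_n^{-1}(f_{d_o,k})$ and expanding $f_o-f_{d_o,k}=f_{d_o,k}(\Delta_{d_o,k}+\ft e^\xi\Delta_{d_o,k}^2)$ as in \eqref{f0fd0bound}, the leading contribution of $\Det(\nabla_l l_n(d_o,k))$ is, after Lemma 2.4, proportional to $n\int_{-\pi}^{\pi}\Delta_{d_o,k}(x)\cos(lx)\,dx$. The key observation is that at $d=d_o$ the projection point is $\bz=\ty_{o[k]}$, so $\Delta_{d_o,k}=\sum_{j>k}\ty_{o,j}\cos(jx)$ contains only frequencies $j>k\ge l$; by orthogonality this first-order integral is \emph{exactly zero}. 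What survives is the quadratic-in-$\Delta_{d_o,k}$ term, whose cosine coefficient at lag $l$ is bounded via Cauchy--Schwarz by $\sum_{j>k}\ty_{o,j}^2\le L_o(1+k)^{-2\be}$ (by \eqref{fo}), together with the Toeplitz trace-approximation errors of Lemma 2.4, controlled through the Lipschitz constants $O(k)$ of $\cos(l\cdot)f_{d_o,k}$ and $O(k^{(3/2-\be)_+})$ of $f_{d_o,k}$ (Lemma 3.1 of the supplement) and $\|\Delta_{d_o,k}\|_\infty=O(k^{-\be+1/2})$. In the range $k\le k_n$ these assemble to $o(k^{(3/2-\be)_+}n^\e)$.

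The main obstacle is precisely this control of $\Det(\nabla_l l_n(d_o,k))$, uniformly over $l=0,\dots,k$. Without the orthogonality the first-order term would be of order $n$, so it is only its exact cancellation — a consequence of $\bz=\ty_{o[k]}$ and $l\le k$ — that renders $\Det$ negligible; thereafter one must track the second-order and trace-approximation remainders carefully through the same Toeplitz-algebra lemmas that produce the $O(k^{3/2-\be}n^\e)$ error in Appendix \ref{appD}. The variance computation, by contrast, is routine once the reduction $f_o\mapsto f_{d_o,k}$ is justified.
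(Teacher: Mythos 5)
Your proposal is correct and takes essentially the same route as the paper's own proof: the same $j=1$ specialization of \eqref{dThetaExpression} (with $b_\s=d_\s=\ft$, $c_\s=0$), the same reduction of the variance to a trace evaluated via the supplement's Toeplitz lemmas after writing $f_o/f_{d_o,k}=e^{\Delta_{d_o,k}}$, and the same key cancellation $\int_{-\pi}^{\pi}\cos(lx)\Delta_{d_o,k}(x)\,dx=0$ (used only implicitly in the paper), which leaves just the $O(nk^{-2\be})$ quadratic term and the $O(n^{\e}k^{(3/2-\be)_+})$ trace-approximation errors. The only discrepancy is an immaterial constant: for $l\geq 1$ the limiting trace is $\tfrac{n}{2}$ rather than $n$, so your (correct) formula $\mathrm{Var}=\ft\tr[(T_n(f_o)A_l)^2]$ would give $\tfrac{n}{4}$, whereas the paper reaches the stated $\tfrac{n}{2}$ by the mirror-image slip of equating the variance with the trace itself; this factor of two is irrelevant everywhere the lemma is used.
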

\begin{proof}
For all   $l= 0,\ldots,k$, we have
\begin{equation*} 
\nabla_l l_n(d_o,k) = \Sto ( \nabla_l l_n(d_o,k)) +\Det(\nabla_l l_n(d_o,k)),
\end{equation*} \noindent
where
\begin{equation*}
\begin{split}
 \Sto ( \nabla_l l_n(d_o,k)) &=\ft X^t T_n^{-1}(f_{d_o,k})T_n(\nabla_l f_{d_o,k})T_n^{-1}(f_{d_o,k})X  \\
  &\quad - \ft \tr\left[T_n(f_o) T_n^{-1}(f_{d_o,k})T_n(\nabla_l f_{d_o,k}) T_n^{-1}(f_{d_o,k}) \right],
  \end{split}
  \end{equation*}
$$\Det(\nabla_l l_n(d_o,k))=\ft \tr\left[\left(T_n(f_o) T_n^{-1}(f_{d_o,k}) - I_n\right) T_n(\nabla_l f_{d_o,k}) T_n^{-1}(f_{d_o,k}) \right].$$
Note that this is a special case of \eqref{dThetaExpression}, with $j=1$, $b_\s=d_\s=\ft$ and $c_\s=0$, the only partition being $\s=(\{l\})$. The variance of $\Sto ( \nabla_l l_n(d_o,k))$ is equal to
\begin{equation*} 
\begin{split}
& \tr[(T_n(f_o) T_n^{-1}(f_{d_o,k}) T_n(\nabla_l f_{d_o,k}) T_n^{-1}(f_{d_o,k}))^2] \\
& \quad = \frac{n}{2 \pi} \int_{-\pi}^\pi \left(\frac{f_o}{f_{d_o,k}}(x)\right)^2 \cos^2(l x) dx
+ \Od(n^\e k),
\end{split}
\end{equation*} \noindent
since Lemma  2.4 (supplement) implies that the approximation error of the trace by its limiting integral  is of order $O(n^\e (k+ k^{2 (3/2-\beta)\vee 0})= O(n^\e k)$.
Since $\frac{f_o}{f_{d_o,k}} = e^{\Delta_{d_o,k}}$ (see \eqref{f0fd0bound}), the integral in the preceding equation is
\begin{equation*}
\begin{split}
& \frac{n}{2 \pi} \int_{-\pi}^\pi \left(1 +2\Delta_{d_o,k} + O(\Delta_{d_o,k}^2) \right) \cos^2(l x) dx \\
& \quad =\frac{ n}{ 2}+  2n a_{2 l}(d_o) + O(n\delta_n^2) = \frac{ n }{2} ( 1 + \od(1)),
\end{split}
\end{equation*} \noindent
where $a_l$ is defined at the beginning of the supplement. Lemma 1.3 (supplement) then implies that the centered quadratic form is of order $\op(n^{\e +1/2})$.
 Similarly, Lemma 2.4 (supplement) implies that
\begin{equation*}
\begin{split}
\Det(\nabla_l l_n(d_o,k)) 
&= \frac{n}{ 2\pi} \int_{-\pi}^{\pi}\frac{(f_o - f_{d_o,k})}{f_{d_o,k}}(x) \cos (l x) dx + \Od(n^\e \| \Delta_{d_o,k}\|_1k ) \\
& =  \frac{n}{2 \pi} \int_{-\pi}^{\pi} \cos(l x)\Delta_{d_o,k}(x) dx + \Od(nk^{-2\be} ) + \Od(n^\e k^{3/2 - \be})\\
&=  \Od(n^\e k^{(3/2 - \be)_+})
\end{split}
\end{equation*}
which completes the proof of Lemma \ref{scoreLemma}.
\end{proof}
\begin{lem} \label{fisherLemma}
Let $A(d)$ be the $(k+1)\times(k+1)$ matrix with entries $A_{l_1,l_2}(d) = a_{l_1 + l_2}(d)$, where
$a_l(d)= 1_{l>k} (\ty_{o,l}- 2 l^{-1}(d_o-d))$. 
Suppose that $k\leq k_n$ and that $l(f_o,f_{d,k}) \leq l_0^2\dn^2$. Then $J_n(d,k)= -\nabla^2 l_n(d,k,\ty)\Big|_{\ty=\bd}$ satisfies
\begin{equation} \label{fisher1}
 \forall l_1, l_2 \leq k, \quad \fr (J_n(d,k) - J_n(d_o,k))_{l_1,l_2}\fr =  \op( |d-d_o|n^\e k + S_n(d)) = \op(n/k) 
\end{equation}
uniformly over $d \in (d_o - \vb, d_o + \vb)$ and $k \leq k_n$. We also have  for all  $l_1,l_2$
\begin{equation} \label{fisher2}
[J_n(d_o,k) - \frac{n}{2 }I_{k+1} - \frac{n}{2} A(d_o)]_{l_1,l_2} := n(R_{2s})_{l_1,l_2} + n(R_{2d})_{l_1,l_2},
\end{equation}
where $(R_{2s})_{l_1,l_2}$ is a centered quadratic form of order $\op(n^{-1/2+\e})$ and $(R_{2d})_{l_1,l_2}$ is a deterministic term of order $o(kn^{\e-1})$.
For the matrix $A$, we have $\| A(d_o)\| = o(1)$ and $\fr A(d_o)\fr = O(1)$.
\end{lem}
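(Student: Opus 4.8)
The plan is to prove the two displays \eqref{fisher1} and \eqref{fisher2} separately, in both cases starting from the general formula \eqref{dThetaExpression} for $\partial^2 l_n/\partial\ty_{l_1}\partial\ty_{l_2}$ specialised to $j=2$. Summing over the two partitions of $\{1,2\}$, namely $\s=\{\{1\},\{2\}\}$ and $\s=\{\{1,2\}\}$, this writes each entry of $J_n(d,k)=-\nabla^2 l_n(d,k,\ty)\big|_{\ty=\bd}$ as a combination of centered quadratic forms $b_\s W_\s(d)$ and deterministic traces $c_\s\tr[B_\s(d,\bd)]$ and $d_\s\tr[(T_n(f_o)T_n^{-1}(f_{d,k})-I_n)B_\s(d,\bd)]$; the whole argument is the second-order analogue of the proof of Lemma \ref{scoreLemma}. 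Throughout I would use that at $\ty=\bd$ one has $\partial_d\log f_{d,k}=H_k=\sum_{j>k}\eta_j\cos(jx)$ and $f_o/f_{d_o,k}=e^{\Delta_{d_o,k}}$ with $\|\Delta_{d_o,k}\|_\infty=O(k^{-\be+1/2})$, as recorded in \eqref{f0fd0bound}.

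For \eqref{fisher2} I would split $J_n(d_o,k)_{l_1,l_2}$ into its stochastic and deterministic parts. The stochastic part is exactly $n(R_{2s})_{l_1,l_2}$, a centered quadratic form whose variance is of order $\fr T_n^{\ft}(f_o)T_n^{-1}(f_{d_o,k})B_\s T_n^{\ft}(f_o)\fr^2$; approximating this Frobenius norm by its limiting integral via Lemma 2.4 (supplement), with error $O(n^\e k)$, gives variance $O(n)$, after which the concentration bound Lemma 1.3 (supplement) yields $\op(n^{\ft+\e})$, i.e. $(R_{2s})_{l_1,l_2}=\op(n^{-\ft+\e})$. For the deterministic part I would replace each trace by its limiting integral (again Lemma 2.4) and expand $f_o/f_{d_o,k}=1+\Delta_{d_o,k}+O(\Delta_{d_o,k}^2)$. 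The constant term reproduces the information matrix $\frac n2 I_{k+1}$ through the integrals $\frac1{2\pi}\int\cos(l_1 x)\cos(l_2 x)\,dx=\ft\delta_{l_1,l_2}$, while the linear term, using $\Delta_{d_o,k}=\sum_{j>k}\ty_{o,j}\cos(jx)$ together with $\int\cos(jx)\cos(l_1 x)\cos(l_2 x)\,dx=\frac\pi2(\1_{j=l_1+l_2}+\1_{j=|l_1-l_2|})$ and the fact that $j>k\geq|l_1-l_2|$ kills the difference mode, isolates the single mode $j=l_1+l_2$ and produces the correction $\frac n2 a_{l_1+l_2}(d_o)$, i.e. $\frac n2 A(d_o)$. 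The trace--integral error $O(n^\e k)$ and the $O(\Delta_{d_o,k}^2)$ contribution then combine into the deterministic remainder $n(R_{2d})_{l_1,l_2}=o(kn^{\e})$, the asserted $o(kn^{\e-1})$ after the factor $n$ is extracted.

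For \eqref{fisher1} I would express the entry-wise difference $J_n(d,k)_{l_1,l_2}-J_n(d_o,k)_{l_1,l_2}$ through Lemma \ref{mixedderiv}. The stochastic increments $W_\s(d)-W_\s(d_o)$ are $\op(|d-d_o|n^{\ft+\e}k^{-\ft})$ by \eqref{stochTerm}, which is precisely a term of type $S_n(d)$, and the deterministic increments are given by \eqref{detTerm1} and \eqref{detTerm2} as $(d-d_o)\tr[T_{1,\s}(d_o,k)]+(d-d_o)\tr[T_{2,\s}(d_o,k)]$ plus second-order errors $(d-d_o)^2\od(n^{1-\delta}/k)$ and $(d-d_o)\od(n^{\e+\ft}k^{-\ft})$. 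Since the limiting integral of $\tr[T_{1,\s}]$ vanishes, $\tr[T_{1,\s}(d_o,k)]=O(n^\e k)$, whereas $\tr[T_{2,\s}(d_o,k)]=\frac{n}{2\pi}\int H_k(x)\cos(l_1 x)\cos(l_2 x)\,dx+O(n^\e k)=O(n/k)$, the surviving mode $j=l_1+l_2>k$ forcing $\eta_{l_1+l_2}=O(1/k)$. Using $|d-d_o|\leq\vb=o(1)$ and $\be>1$, each contribution is $o(n/k)$ (in particular $|d-d_o|n^\e k=o(n/k)$), which gives the final bound $\op(n/k)$; uniformity over $d\in(d_o-\vb,d_o+\vb)$ is inherited from the uniform $\op$-statements of Lemma \ref{mixedderiv}, which rest on a covering/chaining argument of the kind used for $\mathcal S_1,\mathcal S_2$ in the proof of Lemma \ref{delta1terms}.

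The norm bounds on $A(d_o)$ are elementary: $A(d_o)$ is a Hankel matrix with entries $\1_{l_1+l_2>k}\ty_{o,l_1+l_2}$, so grouping the pairs $(l_1,l_2)$ by the value $m=l_1+l_2\in(k,2k]$, each attained by $O(k)$ pairs, yields $\fr A(d_o)\fr^2\leq Ck\sum_{m>k}\ty_{o,m}^2\leq Ck(1+k)^{-2\be}L_o=O(k^{1-2\be})=o(1)$; hence $\fr A(d_o)\fr=O(1)$ and a fortiori $\|A(d_o)\|\leq\fr A(d_o)\fr=o(1)$. I expect the main obstacle to be the deterministic computation in \eqref{fisher2}: extracting the exact leading matrix $\frac n2(I_{k+1}+A(d_o))$ while keeping the remainder at the required order $o(kn^{\e-1})$ demands careful control of the trace--integral approximation via Lemma 2.4 (supplement) --- with the Lipschitz constants of $f_{d_o,k}$, $H_k f_{d_o,k}$ and the cosine products --- together with the $O(\Delta_{d_o,k}^2)$ term, while the uniform-in-$d$ control of the stochastic increments needed for \eqref{fisher1} is the secondary difficulty.
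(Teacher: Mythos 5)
Your proposal is correct and follows essentially the same route as the paper's proof: \eqref{fisher1} is obtained from the increment formulas of Lemma \ref{mixedderiv} (the paper routes this through Lemma \ref{taylorLemma}, which is built on the same lemma) together with trace bounds on $T_{1,\s}$, $T_{2,\s}$ and $|d-d_o|\leq \vb = o(1)$, while \eqref{fisher2} is obtained from \eqref{dThetaExpression} at $j=2$ by splitting into a centered quadratic form controlled via its Frobenius norm and Lemma 1.3 (supplement), and deterministic traces approximated by their limiting integrals via Lemma 2.4 (supplement), the linear $\Delta_{d_o,k}$ term isolating the mode $j=l_1+l_2>k$ and producing $\frac{n}{2}A(d_o)$. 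Two minor points where your write-up improves on the paper: you correctly record $\tr[T_{2,\s}(d_o,k)]=O(n/k)$ coming from its nonvanishing limiting integral (the paper's proof asserts $O(kn^\e)$ for both $i=1,2$, which is inconsistent with its own proof of Lemma \ref{taylorLemma}, though the final bound $\op(n/k)$ is unaffected once multiplied by $|d-d_o|$), and you supply the Hankel-counting argument $\fr A(d_o)\fr^2 \lesssim k\sum_{m>k}\ty_{o,m}^2 = O(k^{1-2\be})$ yielding the norm bounds on $A(d_o)$, a step the paper's proof omits entirely.
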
 \noindent
In particular, \eqref{fisher2} implies that $\fr J_n(d_o,k) - \frac{n}{2 }I_{k+1} - \frac{n}{2} A(d_o)\fr = \op(kn^{1/2+\e}) + o(k^2n^\e)$.
\begin{proof}
Let $d$  and $k\leq k_n$  be such that $l(f_o,f_{d,k}) \leq l_0^2\dn^2$ so that $d \in (d_o-\vb,d_o+\vb)$ (see Corollary
\ref{cor1}). Lemma \ref{taylorLemma} implies that for all $l_1, l_2 \leq k$,
 \begin{equation*}
\begin{split}
& (J_n(d,k))_{l_1, l_2} - (J_n(d_o,k))_{l_1,l_2} \\
& : = - (d-d_o) \sum_{\sigma \in \mathcal S(l_1,l_2)}\left(  c_\sigma \tr\left[ T_{1,\sigma}(d_o,k) \right]+ d_\sigma \tr\left[ T_{2,\sigma}(d_o,k) \right] \right)+ \Op(S_n(d)).
\end{split}
\end{equation*}
Lemma 2.4 (supplement) implies that
$$\tr\left[ T_{i,\sigma}(d_o,k) \right] = O(k n^\e ), \quad i=1,2$$
so that \eqref{fisher1} is satisfied since  this term is $\op(n^{1-\delta}/k)$. We then use expression (\ref{dThetaExpression}), with $\sigma \in \{ (\{1\}, \{2\}), (\{1,2\})\}$ and we denote $\sigma_1$ and $\sigma_2$ the first and the second partition respectively. Note that $c_{\sigma_1} = d_{\sigma_2} = 1/2 $, $c_{\sigma_2} = 0 $ and $d_{\sigma_1} = 1$.   From Lemma 2.4 (supplement), the quadratic form in $(J_n(d_o,k))_{l_1,l_2}$ is associated to a matrix whose Frobenius-norm is $O(\sqrt{n})$ and whose spectral norm is $O(n^\e)$. Hence, this quadratic form is $\op(n^{1/2+\e})$. Also by Lemma 2.4 (supplement), the deterministic terms can be written as
\begin{equation*}
\begin{split}
& \frac{ n }{ 4\pi }  \int_{-\pi}^\pi \cos (l_1x)\cos( l_2x) \left( 1 + \Delta_{d_o,k} \right) (x) dx + o(kn^\e)  \\
& \qquad = \frac{ n}{2 } \left(\1_{l_1=l_2} + a_{l_1+l_2}(d_o) \right) + o(kn^\e),
\end{split}
\end{equation*}
and Lemma \ref{fisherLemma} is proved.
\end{proof}
%
%
%
%
%
%

\section{Proof of Lemma 3.5} \label{lem:Idk:th1:Details} 

Under the conditions of Theorem \ref{BVMtheorem} we have $k = k_n$ and $\be > 1$, and we may assume  (by Lemma \ref{rateLemma1})
that $l(f_o,f_{d,k,\ty}) \leq l_0^2 \dn^2$.
Fixing $d$ and $k$, we develop $\ty \rightarrow l_n(d,k,\ty)$ in $\bd$.
From Lemma \ref{taylorLemma} in Appendix \ref{app:deriv:theta} it follows that
\begin{equation} \label{lnDevelopment}
\begin{split}
l_n(d,k,\ty) - l_n(d,k) &= \sum_{j=1}^J \frac{(\ty - \bd)^{(j)}\nabla^j l_n(d_o,k) }{j !} \\
& \quad + (d-d_o) \sum_{j=2}^J \frac{g_{n,j}(\ty - \bd)}{ j!} + S_n(d),
\end{split}
\end{equation}
where $S_n(d)$ is as in \eqref{Sproperty}.
 Substituting \eqref{lnDevelopment} in the definition of $I_n(d,k)$ in \eqref{InDef}, we obtain
\begin{equation} \label{In_integral}
\begin{split}
& I_n(d,k) = \int_{\| \ty - \bd\| \leq 2 l_0\dn} e^{l_n(d,k,\ty) - l_n(d,k)} d\pi_{\ty|k}(\ty) \\
&\; = e^{S_n(d)} \pi_{\ty|k}(\bd) \int_{\| u\|\leq 2 l_0\dn } e^{\sum_{j=1}^J \frac{1}{j!} u^{(j)}\nabla^j l_n(d_o,k)
+(d-d_o) \sum_{j=2}^J \frac{g_{n,j}(u)}{j!} 
 + h_ku } d u \\
&\; = e^{S_n(d)} \pi_{\ty|k}(\bz) \int_{\| u\|\leq 2 l_0\dn } e^{\sum_{j=1}^J \frac{1}{j!} u^{(j)}\nabla^j l_n(d_o,k)
+ (d-d_o)  \sum_{j=2}^J \frac{g_{n,j}(u)}{j!} 
 + h_ku } d u.
\end{split}
\end{equation}
The first equality  follows from the definition of $I_n(d,k)$ and Lemma \ref{bdLemma}, by which we may replace the domain of integration by $\{\ty : \| \ty - \bd\| \leq 2 l_0\dn\}$.
The second equality  follows from the assumptions on $\pi_{\ty|k}$ in \textbf{prior A}, the transformation $u= \ty - \bd$ and substitution of \eqref{lnDevelopment}.
Also the third equality  follows from the assumptions on $\pi_{\ty|k}$:
these imply that
\begin{equation*}
\left| \log \pi_{\ty|k}(\bd) - \log \pi_{\ty|k}(\bz) \right| = |d_o-d| |h_k^t \eta_{[k]}| + o(1) = O(|d-d_o|(n/k)^{\ft-\e}) + o(1),
\end{equation*}
for some $\e >0$.
Thus, the factor $e^{S_n(d)}\pi_{\ty|k}(\bd)$ on the second line of \eqref{In_integral} may be replaced by $e^{S_n(d)}\pi_{\ty|k}(\bz)$.
Because $S_n(d_o) = \mathbf{o_{P_o}}(1)$, \eqref{In_integral} implies that
\begin{equation} \label{Indzero}
I_n(d_o,k) = (1 + \op(1)) \int_{\| u\|\leq 2l_0\dn }\exp\left\{ h_k u + \sum_{j = 1}^J \frac{u^{(j)} \nabla^j l_n(d_o,k)}{j!}  \right\} du.
\end{equation}
The most involved part of the proof is to establish the bounds
\begin{equation} \label{integralBounds}
\begin{split}
& \pi_{\ty|k}(\bz) \int_{\| u\|\leq l_0\dn }\exp\left\{ h_k u + \sum_{j = 1}^J \frac{u^{(j)} \nabla^j l_n(d_o,k)}{j!}  \right\} du \\
&\leq I_n(d,k) = e^{S_n(d)} \pi_{\ty|k}(\bz) \int_{\| u\|\leq 2 l_0\dn } e^{\sum_{j=1}^J \frac{1}{j!} u^{(j)}\nabla^j l_n(d_o,k) + (d-d_o)  \sum_{j=2}^J \frac{g_{n,j}(u)}{j!}
+ h_ku } d u \\
& \leq \pi_{\ty|k}(\bz) \int_{\| u\|\leq 3 l_0\dn }\exp\left\{ h_k u + \sum_{j = 1}^J \frac{u^{(j)} \nabla^j l_n(d_o,k)}{j!} \right\} du.
\end{split}
\end{equation}
Since the posterior distribution of $\ty$ conditional on $k = k_n$ and $d = d_o$ concentrates at $\bz$ at a rate bounded by $l_0 \delta_n$ (this follows from Lemma \ref{rateLemma1}, with the restriction to $d=d_o$),
the left- and right-hand side of \eqref{integralBounds} are asymptotically equal, up to a factor $(1+\op(1))$. By \eqref{Indzero}, the left- and right-hand side are actually equal to $I_n(d_o,k)$. This implies that $I_n(d,k)  =  e^{S_n(d)} I_n(d_o,k)$, which is the required result.

In the remainder we prove \eqref{integralBounds}. To do so we construct below a change of variables $v = \psi(u)$, which satisfies
\begin{equation}\label{changevar1}
\begin{split}
h_k v + \sum_{j = 1}^J \frac{v^{(j)} \nabla^j l_n(d_o,k)}{j!} &= h_k u + \sum_{j = 1}^J \frac{u^{(j)} \nabla^j l_n(d_o,k)}{j!} \\ &\quad + (d-d_o)  \sum_{j=2}^J \frac{g_{n,j}(u)}{j!}+ O(S_n(d)),
\end{split}
\end{equation}
for all $\|u\| \leq 2 l_0 \delta_n$.
We first define the notation required in the definition of $\psi$ in \eqref{psiDef} below. Recall
from \eqref{gnDecomposition} in Lemma \ref{taylorLemma} that $g_{n,j}(u)$ can be decomposed as
\begin{equation*}
g_{n,j}(u) =  n\sum_{\s \in \mathcal S_j} (c_\s - d_\s) \sum_{l_1,\ldots,l_j=0}^k u_{l_1}\ldots u_{l_j} g_{l_1,\ldots,l_j}^{(j)}, \qquad j=2,\ldots J,
\end{equation*}
where $ g_{l_1,\ldots,l_j}^{(j)}$ depends on $\s$. For ease of presentation however we omit this dependence in the notation.
Using Lemma 2.4 (supplement) and \eqref{gnDecomposition} in Lemma \ref{taylorLemma}, it follows that
for all $j\geq 2$ and $(l_1,\ldots,l_j) \in \{0,\ldots,k\}^j$,
\begin{equation} \label{gexpression}
g_{l_1,\ldots,l_j}^{(j)} =  \gamma^{(j)}_{l_1,\ldots,l_j} + r_{l_1,\ldots,l_j}^{(j)},
\end{equation}
\begin{equation*}
\gamma^{(j)}_{l_1,\ldots,l_j} = \frac{1}{2 \pi} \int_{-\pi}^\pi H_k(x) \cos(l_1 x) \cdots \cos(l_j x) dx.
\end{equation*}
Let $\bar G^{(2)}$ denote the matrix with elements $\gamma_{l_1,l_2}^{(2)}$, and $G^{(2)}$ the matrix with elements $g_{l_1,l_2}^{(2)}$. By direct calculation it follows that
\begin{equation} \label{gamma2entries}
\gamma_{l_1,l_2}^{(2)} = \1_{l_1+l_2>k} \frac{1}{2(l_1+l_2)}.
\end{equation}
Similarly, for all $j\geq 3$ and $l_1,\ldots,l_j \in \{0,1,\ldots,k\}$ we define
\begin{equation*}
(G^{(j)}(u))_{l_1,l_2} = \sum_{l_3,\ldots,l_j=0}^k g^{(j)}_{l_1,\ldots,l_j} u_{l_3}\ldots u_{l_j}, \;
(\bar G^{(j)}(u))_{l_1,l_2} = \sum_{l_3,\ldots,l_j=0}^k \gamma^{(j)}_{l_1,\ldots,l_j} u_{l_3} \ldots u_{l_j}.
\end{equation*}
In contrast to $G^{(2)}$ and $\bar G^{(2)}$, $G^{(j)}(u)$ and $\bar G^{(j)}(u)$ depend on $u$. For notational convenience we will also write $G^{(2)}(u)$ and $\bar G^{(2)}(u)$.
Finally, let  $\tilde{I}_{k} = J_n(d_o,k)/n$ be the normalized Fisher information.

We now define the transformation $\psi$:
\begin{equation}\label{psiDef}
\psi(u) = (I_{k+1} - (d-d_o) D(u))u, \quad \mbox{ with }
\end{equation}
\begin{equation*}
D(u) = (\tilde{I}_{k} + L(u))^{-1} G^t(u), \quad G(u)=\sum_{j=2}^J \frac{1}{j!}\sum_{\s \in \mathcal S_j} (c_\s - d_\s)G^{(j)}(u),
\end{equation*}
\begin{equation} \label{L_def}
(L(u))_{l_1,l_2} =-
\sum_{j=3}^J \frac{1}{n(j-1)!} \sum_{l_3,\ldots,l_j=0}^k u_{l_3}\ldots u_{l_j} \nabla_{l_1,\ldots,l_j} l_n(d_o,k).
\end{equation}
The construction of $G(u)$ is such that
\begin{equation}\label{Gconstruction}
n u^t G(u) u = \sum_{j=2}^J g_{n,j} (u).
\end{equation}
Analogous to $G(u)$ and $ D(u)$
we define $\bar G(u)=\sum_{j=2}^J \frac{1}{j!} \sum_{\s \in \mathcal S_j} (c_\s - d_\s) \bar G^{(j)}(u)$ and
$\bar D(u) = (\tilde{I}_{k} + L(u))^{-1} \bar G^t(u)$.
 After substitution of $v= \psi(u)$, and using \eqref{diffj} in Lemma \ref{lem:normes} it follows that 
\begin{equation*}
\begin{split}
 & \sum_{j=3}^J \frac{ (v^{(j)}  - u^{(j)}) \nabla^j l_n(d_o,k) }{ j! }  = \\
 &\quad - (d-d_o) \sum_{j=3}^J \sum_{l_1,\ldots,l_j=0}^k (D(u)u)_{l_1}u_{l_2}\ldots u_{l_j}\frac{ \nabla_{l_1,\ldots,l_j} l_n(d_o,k) }{ (j-1)! } + \mathbf{O}(S_n(d)).
\end{split}
\end{equation*}
The definitions of $D(u)$ and $L(u)$ and \eqref{Gconstruction} imply that
\begin{equation*}
\begin{split}
& - n(v-u)^t \tilde{I}_k u =  n(d-d_o) u^t D^t(u) \tilde{I}_k u
 = n(d-d_o) u^t G(u) (\tilde{I}_k + L(u))^{-1} \tilde{I}_k u \\
& =  n(d-d_o) u^t G(u) \left(I_{k+1}- (\tilde{I}_k + L(u))^{-1}L(u)\right)u\\
& = (d-d_o) \sum_{j=2}^J \frac{1}{j!} g_{n,j} (u) - n (d-d_o) (D(u)u)^t L(u) u \\
&= (d-d_o) \sum_{j=2}^J \frac{1}{j!} g_{n,j} (u) -(d-d_o)  \sum_{j=3}^{J-1} \sum_{l_1,\ldots,l_j=0}^k (D(u)u)_{l_1}u_{l_2}\ldots u_{l_j}\frac{ \nabla_{l_1,\ldots,l_j} l_n(d_o,k) }{ (j-1)! }.
\end{split}
\end{equation*}
At the same time, the definition of $\tilde{I}_k$ implies that
\begin{equation*}
 \frac{ 1}{2}\left(  v^{(2)}-  u^{(2)}\right)  \nabla^2 l_n(d_o,k)  = - n(v-u)^t \tilde{I}_k u - \frac{n(v-u)^t \tilde{I}_k (v-u) }{2}.
\end{equation*}
Combining the preceding results, we find that
\begin{equation*}
\begin{split}
 & h_k v + \sum_{j = 1}^J \frac{v^{(j)} \nabla^j l_n(d_o,k)}{j!} - \left( h_k u + \sum_{j = 1}^J \frac{u^{(j)} \nabla^j l_n(d_o,k)}{j!} +(d-d_o)  \sum_{j=2}^J \frac{g_{n,j}(u)}{j!} \right) \\
  &\quad =
  h_k (v-u) + (v-u)^t \nabla l_n(d_o,k)  - \frac{ n (v-u)^t \tilde{I}_{k} (v-u) }{ 2} + O(S_n(d)) \\
&\quad  =   (v-u)^t\nabla l_n(d_o,k)   + O(S_n(d)), 
\end{split}
\end{equation*}
where the last equality follows from \eqref{uminusv} below in Lemma \ref{lem:normes}, together with   the assumption on $h_k$ in \textbf{prior A} in \eqref{priorThetaK}. 

Apart from the term $(v-u)^t\nabla l_n(d_o,k)$ on the last line, the preceding display implies \eqref{changevar1}.
Hence, to complete the proof of \eqref{changevar1} it suffices to show that
\begin{equation} \label{vu_score}
(v-u)^t \nabla l_n(d_o,k)= -(d-d_o) u^t D^t(u) \nabla l_n(d_o,k)=O(S_n(d)).
\end{equation}

The proof of \eqref{vu_score} consists of the following steps:
\begin{eqnarray}
|u^t (D(u) - \bar{D}(u))^t  \nabla l_n(d_o,k) |  &=& \op( n^{\ft -\delta}k^{-\ft}), \label{step1} \\
(d-d_o) u^t \bar D^t(u) \Det\left(\nabla l_n(d_o,k)\right)&=& O(S_n(d)),  \label{step2} \\
(d-d_o) u^t \bar D^t(u) \Sto\left(\nabla l_n(d_o,k)\right)&=& O(S_n(d)) \label{step3},
\end{eqnarray}
where $\Sto\left(\nabla l_n(d_o,k)\right)$ denotes the centered quadratic form in $\nabla l_n(d_o,k)$, and $\Det\left(\nabla l_n(d_o,k)\right)$ the remaining deterministic term.
We will use the same notation below for $L(u)$.

Equation \eqref{step1} follows from  Lemma \ref{scoreLemma} and
\eqref{AminusAbar} in Lemma \ref{lem:normes} below, which imply that the left-hand side equals
$\op((\sqrt{k}\|u\|)^2 n^{-1+\e} k \sqrt{n} ) = \op( n^{\ft -\delta}k^{-\ft})$, for some $\delta>0$. For the proof of \eqref{step2}, note that Lemma \ref{scoreLemma} implies
\[\fr u ^t \bar D^t (u) \Det (\nabla l_n(d_o,k))\fr \lesssim \|\bar D(u) u\| \sqrt{k}k^{(3/2-\be)_++\e}.\]
Combined with Lemma  \ref{lem:normes}, this implies that the left-hand side is $O(\sqrt{k}k^{5/2-2\be+\e})$, which is $O(S_n(d))$.
The proof of \eqref{step3} is more involved. Recall that $\bar D(u)$ is defined as $\bar D(u) = (\tilde{I}_{k} + L(u))^{-1} \bar G^t(u)$. Using \eqref{fisher2} in Lemma \ref{fisherLemma}, we obtain
 $$ (\tilde{I}_k + L(u))^{-1} = 2 [ I_{k+1} - (A(d_o)+ R_{2s} + R_{2d} + L(u) ) (1 + \op(1) )] .$$
Substituting this in $\bar D(u)$, it follows that \eqref{step3} can be proved by controlling $\bar G^{(j)} \Sto(\nabla l_n(d_o))$, $\bar G^{(j)} A(d_o)\Sto(\nabla l_n(d_o))$, $\bar G^{(j)} R_{2d}\Sto(\nabla l_n(d_o))$, $\bar G^{(j)} \Det(L(u))\Sto(\nabla l_n(d_o))$, $\bar G^{(j)} R_{2s}\Sto(\nabla l_n(d_o))$ and $ \bar G^{(j)}\Sto(L(u))\Sto(\nabla l_n(d_o))$ for all $j = 3 ,\ldots,J$.
To do so, first note that  Lemma \ref{fisherLemma} implies that
$\|\bar G^{(j)} R_{2s}\Sto(\nabla l_n(d_o))\| = \op( n^{\e}\sqrt{k})$.
Hence,  $$|u^t \bar G^{(j)} R_{2s}\Sto(\nabla l_n(d_o))| = \op( k^{-\be + 1+\e}) = \op(1),$$
which clearly is $O(S_n(d))$.
The terms $\bar G^{(j)} \Sto(\nabla l_n(d_o))$, $\bar G^{(j)} A(d_o)\Sto(\nabla l_n(d_o))$, $ \bar G^{(j)} R_{2d} \Sto(\nabla l_n(d_o))$ and  $ \bar G^{(j)} \Det(L(u))\Sto(\nabla l_n(d_o))$
can be written as quadratic forms $ Z^t M Z - tr[M] $, where, for a sequence $(b_l)_{l=0}^k$ and a function $g$ with $\| g \|_\infty < \infty $, $M$ is of the form
$$ T_n^{\ft}(f_o) T_n^{-1}(f_{d_o,k}) T_n \left(g(x) f_{d_o,k}(x) \sum_l b_l \cos ( l x) \right) T_n^{-1}(f_{d_o,k})T_n^{\ft}(f_o),$$
$Z$ being a vector of $n$ independent standard Gaussian random variables.
Using Lemma 2.4 (supplement) it can be seen that $\fr M\fr^2 \leq n( \sum_{l} b_{l}^2 + k/n )$. Lemma 1.3 (supplement) with $\az = \e+1/2$ then implies that
 \begin{equation} \label{arg:score}
 P_o\left(| Z^t M Z - tr[M]| > n^{\e+\ft}\left( \sum_{l} b_{l}^2 + \frac{k}{n} \right)^{\ft} \right) \leq e^{- cn^\e}.
 \end{equation}
For all $j \in \{ 3, \ldots, J\}$, the four terms above can now be bounded for a particular choice of $g$ and $b_l$.
\begin{itemize}
\item Bound on $\bar G^{(j)} \Sto(\nabla l_n(d_o))$. For all $l_2,\ldots,l_j \in  \{0,\ldots,k\},$ set  $b_{l}  = \gamma_{l_2, l,l_4,\ldots,l_j}^{(j)}$ and $g(x) = 1$. Then we have
\begin{equation*}
\sum_{l=0}^k b_{l}  \Sto(\nabla l_n(d_o)))_{l_1} = \op\left(n^{\ft+\e}\left(\sum_{l}b_l^2\right)^{\ft}\right).
\end{equation*}
By induction it can be shown that
\begin{equation}\label{cos}
\begin{split}
& \frac{1}{2\pi}\int_{-\pi}^{\pi}\cos(l_0 x) \cos(l_1 x) \cos( l_2 x)\cdots\cos(l_{j} x) dx \\
& \quad =  2^{-j}\sum_{\e_1,\ldots,\e_j \in \{ -1,1\}} \1_{l_0 + \sum_{i=1}^{j} \epsilon_i l_i=0}
\end{split}
\end{equation}
Consequently, $\sum_{l}b_l^2 = O(k^{-1})$ for all $l_2,l_4,\ldots,l_j \in  \{0,\ldots,k\}$. Using the fact that $\sum_l |u_l | = o(1)$, we obtain that  $( \bar G^{(j)} \Sto(\nabla l_n(d_o)))_{l_2} = \op(n^{1/2+\e}k^{-1/2})$, for all $l_2\in \{0,\ldots,k\}$. This implies that
\begin{equation} \label{part1}
\| \bar G^{(j)} \Sto(\nabla l_n(d_o))\| = \op(n^{1/2+\e})=O(S_n(d)).
\end{equation}
\item Bound on $ A(d_o) \Sto(\nabla l_n(d_o))$.  Set  $b_{l} = \1_{l + l_1\geq k} \theta_{o, l+l_1} $  and $g(x) = 1$, then
\begin{equation*}
( A(d_o) \Sto(\nabla l_n(d_o)))_{l_1} = \op(n^{1/2+\e}k^{-\be}), \quad \forall l_1\in \{ 0,\ldots,k\}.
\end{equation*}
Combined with Lemma \ref{lem:normes} this implies that
\begin{equation*}
\| \bar G^{(j)} A(d_o) \Sto(\nabla l_n(d_o))\| = \op(n^{1/2+\e}k^{-\be+1/2}).
\end{equation*}
\item Bound on $ \bar G^{(j)} R_{2d}\Sto(\nabla l_n(d_o) $. Set $b_{l}   = (R_{2d})_{l_1,l}$ and $g(x) = 1$, for all $l_1 = 0,\ldots,k$, then  Lemmas \ref{fisherLemma} and  \ref{lem:normes}  lead to
\begin{equation*}
\| \bar G^{(j)} R_{2d}\Sto(\nabla l_n(d_o)) \| = \op( n^{1/2+\e} k^{2}n^{-1}) = \op(n^{-1/2+\e} k^{2}).
\end{equation*}
\item Bound on $ \bar G^{(j)} \Det(L(u))\Sto(\nabla l_n(d_o))$. For all $l_1,l_3,\ldots,l_j, l_3^{'},\ldots,l_{j^{'}}^{'} \in \{0,\ldots,k\}$, set
\begin{equation*}
\begin{split}
b_{l} & = \fn \tr\left[ T_n^{-1}(f_{d_o})T_n\left( \sum_{l_2=0}^k \gamma_{l_1,l_2,\ldots,l_j}^{(j)} \cos (l_2 x) g_1(x)  f_{d_o}(x)\right)\times \right. \\
 &\qquad  \left. T_n^{-1}(f_{d_o})T_n(\cos (lx) g_2(x) f_{d_o}(x))\cdots T_n^{-1}(f_{d_o})T_n( g_r (x) f_{d_o}(x))\right]
 \end{split}
 \end{equation*}
where $g_1(x), ...,g_r(x)$ are products of functions of the form $\cos (l_i^{'} x)$ and
 $ g_1(x)....g_r(x) = \cos(l_3^{'}x)...\cos(l_{j^{'}}^{'}x)$. Lemmas 2.1 and 2.6  in the supplement, together with \eqref{cos}, imply that
%
%
\begin{equation}\label{part4}
\sum_l b_l^2 = O(k^{-1}), \quad \mbox{and} \quad  \|\bar G^{(j)} \Det(L(u))\Sto(\nabla l_n(d_o))\|= \op(n^{\e +1/2} ).
\end{equation}
\end{itemize}

Consequently, the contribution to all these terms in $(v-u)^t \nabla l_n(d_o, k) $ is of order  $\Od(S_n(d))$.

We control $u^t \bar G^{(j)}  \Sto(L(u))\Sto(\nabla l_n(d_o,k))$, by bounding $\| \bar G^{(j)}\Sto(L(u))\|$   using a similar idea. Indeed, for all $l_1, l_2 \leq k$, $(\bar G^{(j)} \Sto(L(u)))_{l_1, l_2}$  can be written as a sum of terms of the form
$ (Z^t M_{l_1,l_2} Z  - tr(M_{l_1,l_2}))/n$, where $Z$ is a vector of $n$ independent standard Gaussian random variables, and
$M_{l_1, l_2}$ has the form
\begin{equation*}
\begin{split}
&      T_n^{\ft}(f_o) T_n^{-1}(f_{d_o,k}) \left(\prod_{i < i_0} T_n( \nabla_\sigma(i) f_{d_o,k}) T_n^{-1}(f_{d_o,k}) \right)\times \\
& T_n\left( \sum_{l=0}^k \gamma_{l,l_1,l_2,\ldots,l_{j-1}}^{(j)} \cos (lx) \nabla_{\sigma(i_0)-\{l\}} f_{d_o,k}\right) T_n^{-1}(f_{d_o,k})\prod_{i < i_0} T_n( \nabla_\sigma(i) f_{d_o,k}) T_n^{\ft}(f_o).
\end{split}
\end{equation*}
We can use the same argument as in \eqref{arg:score} since for all $l_1,l_2,\ldots,l_{j-1}$
\begin{equation*}
\begin{split}
\fr M_{l_1,l_2}\fr &\lesssim \fr T_n^{-\ft}(f_{d_o,k}   T_n\big( \sum_{l=0}^k \gamma_{l,l_1,l_2,\ldots,l_{j-1}}^{(j)} \cos (lx) \nabla_{\sigma(i_0)-\{l\}} f_{d_o,k}\big) T_n^{-\ft}(f_{d_o,k})\fr\\
 &= O( n^{1/2+\e}k^{-1/2}).
 \end{split}
 \end{equation*}
Hence, it follows that  $ n^{-1}[Z^t M_{l_1,l_2} Z  - tr(M_{l_1,l_2})] = \op( n^{-1/2+\e} k^{-1/2} )$ and
\begin{equation}\label{part5}
u^t \bar G  \Sto(L(u))\Sto(\nabla l_n(d_o,k))= \op(\| u \|  n^{\e}k) = \op( n^{1/2-\delta } k^{-1/2}).
\end{equation}
Combining \eqref{part5} and \eqref{part1}-\eqref{part4}, we obtain \eqref{step3}. This in turn finishes the proof of \eqref{vu_score}, since
\begin{eqnarray*} 
(v-u)^t \nabla l_n(d_o,k) = \op(|d-d_o| n^{\ft -\delta}k^{-\ft }) = O(S_n(d)).
\end{eqnarray*}

We now prove that $\psi(u)$ is a one-to-one transformation.
First note that $\psi(u)$ is continuously differentiable for all $\| u \| \leq 2l_0 \delta_n$. This follows from the definition $\psi(u) = (I_{k+1} - (d-d_o) (\tilde{I}_{k} + L(u))^{-1} G^t(u))u$, the fact that $G(u)$ and $L(u)$ are  polynomial in $u$ and Lemma \ref{lem:normes}, by which $\|L(u)\| = \op(1)$. To prove that $\psi(u)$ is also one-to-one, we bound the spectral norm of the Jacobian
\begin{equation*}
\psi'(u)  = I_{k+1}  - (d-d_o) D(u) - (d-d_o) (D'(u) u),
\end{equation*}
where $(D'(u) u)$ is the $(k+1) \times (k+1)$ matrix with elements
$$ \sum_{l=0}^k u_l \frac{ \partial (D(u))_{l_1,l} }{ \partial u_{l_2} },\qquad l_1,l_2=0,\ldots,k.$$
For $\psi(u)$ to be one-to-one, it suffices to have $\psi'(u) = I_{k+1}( 1 + \op(1))$.

By \eqref{uminusv} in Lemma \ref{lem:normes} below, we have $|d-d_o|\|D(u)\| = \Od_{P_o}(|d-d_o|)$. Therefore we only need to control the spectral norm of  $D'(u)u$. For all $l_1, l_2$, we have
\begin{equation} \label{D'}
(D'(u)u)_{l_1,l_2} =   \left[-(\tilde{I}_k + L(u))^{-1} \frac{ \partial L(u) }{ \partial u_{l_2} } (\tilde{I}_k + L(u))^{-1} G^t(u) u + (\tilde{I}_k + L(u))^{-1}\frac{ \partial G^t(u) }{ \partial u_{l_2} } u \right]_{l_1}.
\end{equation}
Both $(G(u))_{l_1,l_2}$ and $(L(u))_{l_1,l_2}$ can be written as
 \begin{eqnarray*}
 F_{l_1,l_2}(u; \tau, b) := \sum_{j =2}^J \tau_j \sum_{l_3,\ldots,l_j=0}^ku_{l_3}\cdots u_{l_j} b_{l_1,l_2,\ldots,l_j},
 \end{eqnarray*}
where the constants $\tau_j, b_{l_1,\ldots,l_j}$ are different for $G$ and $L$, and $b$ is symmetric in its indices. In particular, $\tau_2 = 0$ in the case of $L$. Using this generic notation for $G(u)$ and $L(u)$, we find that for all $v \in \R^{k+1}$ and all $l_1, l_2 \leq k$,
\begin{equation*}
\begin{split}
\left( \frac{ \partial F(u; \tau, b) v }{ \partial u_{l_2} } \right)_{l_1} & = \sum_{j=3}^J \tau_j (j-3+1) \sum_{l_3,\ldots,l_j=0}^k v_{l_3}u_{l_4}\cdots u_{l_j} b_{l_1,l_2,\ldots,l_j}: = F(v,u; \tau', b),
\end{split}
\end{equation*}
where $\tau_j' = \tau_j (j-3+1)$, $j=3,\ldots,J$. It therefore has the same form as $F(u; \tau',b)$, with $v$ replacing one of the $u$'s. Applying this to the first term of \eqref{D'}, with $v = (\tilde{I}_k + L(u))^{-1} G^t(u) u$, we find that
 $$\fr (\tilde{I}_k + L(u))^{-1} F(v,u; \tau', b)\fr \lesssim \fr F(v,u; \tau', b)\fr  = \Od(1), $$
where we used \eqref{LnormBound} and \eqref{uminusv} from Lemma \ref{lem:normes}.
The second term of \eqref{D'} is treated similarly with $v= u$ so that we finally obtain
  \begin{equation*}
  \fr D'(u)u \fr = \Od(1),
 \end{equation*}
and $\psi$ is one-to-one on $\{u : \|u\| \leq 2 l_0\delta_n\}$. Using the above bounds we also deduce that the Jacobian is equal to $\exp(O( S_n(d)) )$, since
 \begin{equation*}
 \begin{split}
& \log \mbox{det} [\mbox{Jac}] = \log \det \left[ I_{k+1}-(d-d_o) D(u)  - (d-d_o)D'(u)u\right]  \\
\quad & =   O[ (d-d_o) (|\tr[ D(u)]| + \tr[D'(u)u])+ (d-d_o)^2 (|D(u)|^2 + |D'(u)u|^2))] \\
\quad &= O(\sqrt{k}(d-d_o) + k(d - d_o)^2 ) = O(S_n(d)).
\end{split}
\end{equation*}
This finishes the proof  of \eqref{integralBounds}, and hence the proof of Lemma \ref{lem:Idk:th1}.

\begin{lem} \label{lem:normes}
Let $v=\psi(u)$, with $\psi$ as in \eqref{psiDef}.
Under the conditions of Lemma \ref{lem:Idk:th1}, we have
\begin{eqnarray}
\fr L(u)\fr  &=& \op(n^{-1/2+\e}k) = \op(1), \label{LnormBound}\\
\fr G-\bar G\fr  &=&  \op(n^{-1/2+\e}k)  = \op(1), \label{AminusAbar}\\
\fr D(u)\fr &=& \Od_{P_o}(1), \label{Dbound}\\
\|u - \psi(u)\| &\lesssim & |d-d_o| \Od_{P_o}(\|u\|) \label{uminusv},
\end{eqnarray}
and 
\begin{equation} \label{diffj}
\begin{split}
&  \sum_{j=3}^J \frac{ (v^{(j)}  - u^{(j)}) \nabla^j l_n(d_o,k) }{ j! }  \\
&= - (d-d_o) \sum_{j=3}^J \sum_{l_1,\ldots,l_j=0}^k (D(u)u)_{l_1}u_{l_2}\ldots u_{l_j}\frac{ \nabla_{l_1,\ldots,l_j} l_n(d_o,k) }{ (j-1)! } + O(S_n(d)),
\end{split}
\end{equation}
uniformly over $\| u \| \leq 2 l_0 \delta_n$.
\end{lem}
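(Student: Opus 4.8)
I would prove the five assertions in the order \eqref{LnormBound}, \eqref{AminusAbar}, \eqref{Dbound}, \eqref{uminusv}, \eqref{diffj}, since each rests on the previous ones. The first two are the substantive norm estimates; the remaining three are then essentially algebraic consequences. Throughout I work on the event $\|u\|\leq 2l_0\dn$ and use the Cauchy--Schwarz bound $\sum_l|u_l|\leq\sqrt{k}\|u\|=\od(n^{-\delta})$ from \eqref{cauchySchwarzbound}, together with the fact that $k\leq k_n=\od(n^{\ft})$ when $\be>1$.

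First I establish \eqref{LnormBound} and \eqref{AminusAbar}. Both matrices are built entrywise from the higher derivatives $\nabla_{l_1,\ldots,l_j}l_n(d_o,k)$ through \eqref{dThetaExpression}, contracted against the factors $u_l$; I would split each derivative into its centered quadratic form $\Sto$ and its deterministic part $\Det$ (as in Lemma \ref{scoreLemma} for $j=1$), and treat the corresponding pieces of $L(u)$ and of $G(u)-\bar G(u)$ separately. Contracting the $u$'s in the stochastic part turns the relevant Toeplitz products into matrices $M$ of the generic form $T_n^{\ft}(f_o)T_n^{-1}(f_{d_o,k})(\cdots)T_n^{\ft}(f_o)$ in which some cosine factors are replaced by $\sum_l u_l\cos(lx)$, of sup-norm $\leq\sqrt{k}\|u\|$; Lemma 2.4 (supplement) then gives $\fr M\fr=\Od(n^{\ft+\e}\|u\|)$ for the $j=3$ blocks, and the concentration inequality of Lemma 1.3 (supplement), with its exponential tail, shows each entry of $\Sto(L(u))$ is $\op(n^{-\ft+\e}\|u\|)$. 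Since there are only $(k+1)^2$ entries a union bound makes this uniform, and aggregating yields $\fr\Sto(L(u))\fr=\op(n^{-\ft+\e}k)$. For the deterministic part the key point is the combinatorial cancellation already built into \eqref{dThetaExpression}: because $c_\s=0$ whenever $|\s|=1$ and because $\bz=\theta_{o[k]}$ makes $f_{d_o,k}$ the best $k$-truncation of $f_o$, the leading $O(n)$ integrals coming from $\tr[B_\s]$ cancel, leaving only the trace-approximation error of Lemma 2.4 (supplement), of order $n^\e k$ per entry, and the smallness $\|\Delta_{d_o,k}\|_\infty=O(k^{-\be+\ft})$ entering through $f_o-f_{d_o,k}$; dividing by $n$ and summing the $(k+1)^2$ entries gives $\fr\Det(L(u))\fr=\Od(n^{\e-1}k^2)=\od(n^{-\ft+\e}k)$. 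The same bookkeeping applied to $G^{(j)}-\bar G^{(j)}$, whose entries are the $u$-weighted trace-approximation remainders $r^{(j)}_{l_1,\ldots,l_j}$ of \eqref{gexpression}, gives the deterministic bound $\fr G-\bar G\fr=\Od(n^{\e-1}k^2)=\od(n^{-\ft+\e}k)$, which is \eqref{AminusAbar}.

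Granting these, I deduce \eqref{Dbound} as follows. By \eqref{fisher2} in Lemma \ref{fisherLemma}, $\tilde{I}_k=J_n(d_o,k)/n=\ft I_{k+1}+\ft A(d_o)+E_n$ with $\|A(d_o)\|=o(1)$ and $\|E_n\|=\op(1)$, and since $\|L(u)\|\leq\fr L(u)\fr=\op(1)$ by \eqref{LnormBound}, the matrix $\tilde{I}_k+L(u)$ is invertible with $\|(\tilde{I}_k+L(u))^{-1}\|=\Op(1)$. It then remains to bound $\fr G(u)\fr$; by \eqref{AminusAbar} it suffices to bound $\fr\bar G(u)\fr$, and since the $j\geq3$ blocks carry extra factors $u$ of size $\od(1)$ this reduces to $\fr\bar G^{(2)}\fr$, which from the explicit entries \eqref{gamma2entries} equals $\big(\sum_{l_1+l_2>k}(2(l_1+l_2))^{-2}\big)^{\ft}=\Od(1)$. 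Using the relation $\fr AB\fr\leq\|A\|\,\fr B\fr$ from \eqref{matrixCalculus}, $\fr D(u)\fr=\fr(\tilde{I}_k+L(u))^{-1}G^t(u)\fr\leq\|(\tilde{I}_k+L(u))^{-1}\|\,\fr G(u)\fr=\Op(1)$, which is \eqref{Dbound}. Then \eqref{uminusv} is immediate from \eqref{psiDef}, since $u-\psi(u)=(d-d_o)D(u)u$ and hence $\|u-\psi(u)\|\leq|d-d_o|\,\fr D(u)\fr\,\|u\|=|d-d_o|\,\Op(\|u\|)$.

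Finally, for \eqref{diffj} I set $w:=\psi(u)-u=-(d-d_o)D(u)u$, so $\|w\|=|d-d_o|\,\Op(\|u\|)$ by \eqref{Dbound}, and expand the symmetric tensor power $\prod_i(u+w)_{l_i}-\prod_i u_{l_i}=\sum_{m=1}^{j}\binom{j}{m}(\text{$m$ factors }w,\ j-m\text{ factors }u)$, the symmetry of $\nabla_{l_1,\ldots,l_j}l_n(d_o,k)$ making the $\binom{j}{m}$ terms equal after contraction. The $m=1$ term, divided by $j!$ and summed over $j$, is exactly the stated main term $-(d-d_o)\sum_{j=3}^J\frac{1}{(j-1)!}\sum(D(u)u)_{l_1}u_{l_2}\cdots u_{l_j}\nabla_{l_1,\ldots,l_j}l_n(d_o,k)$. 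For the remainder ($m\geq2$) the crucial observation is that contracting two free indices against $w$ and the remaining $j-2$ against $u$ reproduces, up to the constants in the definition \eqref{psiDef} of $L(u)$, the quadratic form $n\,w^tL(u)w$; hence the $m=2$ contribution is bounded by $n\,\fr L(u)\fr\,\|w\|^2=\op(n^{\ft+\e}k)\,(d-d_o)^2\,\Op(\|u\|^2)$, and since $k^2\|u\|^2\leq k^2(2l_0\dn)^2=\Od\big((n/\log n)^{(3-2\be)/(2\be)}\big)=\od(n^{\ft-\delta-\e})$ for $\be>1$ and $\e,\delta$ small, this is $\op((d-d_o)^2 n^{1-\delta}k^{-1})=O(S_n(d))$ by \eqref{Sproperty}. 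The $m\geq3$ terms carry additional factors $w$ and are smaller still, which proves \eqref{diffj}.

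The main obstacle is the deterministic part of \eqref{LnormBound} (and of \eqref{AminusAbar}): one must verify rigorously that the leading $O(n)$ integral contributions of the higher-order derivatives genuinely cancel --- this is where $c_\s=0$ for singleton partitions and the identity $\bz=\theta_{o[k]}$ are essential --- and then track the residual powers of $n$ and $k$ through all $(k+1)^2$ entries, which is precisely where the constraint $\be>1$, equivalently $k_n=\od(n^{\ft})$, is used repeatedly.
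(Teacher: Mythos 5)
Your overall architecture (order of the five claims, splitting the derivatives into $\Sto$ and $\Det$ parts, Lemma 1.3 for concentration, Lemma \ref{fisherLemma} plus \eqref{LnormBound} for invertibility of $\tilde I_k+L(u)$, the explicit computation $\fr\bar G^{(2)}\fr\leq 1$ from \eqref{gamma2entries}, and the tensor-power expansion for \eqref{diffj}) matches the paper, and your shortcut for the $m=2$ terms in \eqref{diffj} --- identifying them with $n\,w^tL(u)w$ up to bounded combinatorial weights and reusing \eqref{LnormBound} --- is a legitimate and arguably cleaner route than the paper's direct re-derivation via limiting integrals. However, there is a genuine gap in your treatment of the deterministic part of \eqref{LnormBound}. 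You assert that "the leading $O(n)$ integrals coming from $\tr[B_\s]$ cancel" because $c_\s=0$ for singleton partitions and because $\bz=\ty_{o[k]}$. No such cancellation occurs, and neither of the two facts you cite produces one: $c_\s=0$ for $|\s|=1$ only removes one partition's contribution (for $j=2$ the surviving weight $c_{\s_1}=\ft$ is exactly what produces the nonvanishing Fisher information $\frac n2 I_{k+1}$ in Lemma \ref{fisherLemma}, so there is no reason to expect the $c_\s$-weighted integrals to vanish for $j\geq3$ either), and $\bz=\ty_{o[k]}$ only controls $\|\Delta_{d_o,k}\|_\infty=O(k^{-\be+\ft})$, i.e.\ the $d_\s$ terms involving $T_n(f_o)-T_n(f_{d_o,k})$, not the pure $\tr[B_\s]$ terms. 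The paper keeps these limiting-integral contributions and shows they are small for a different reason: after contraction with $u_{l_3}\cdots u_{l_j}$, the $j=3$ integrals give entries $\ft\left(u_{l_1+l_2}\1_{l_1+l_2\leq k}+u_{|l_1-l_2|}\right)$, contributing $O(\sqrt k\,\|u\|)$ to $\fr \tilde L(u)\fr$, and the $j\geq4$ integrals contribute $O(k\|u\|^2)$; both are $o(n^{-\ft+\e}k)$ only because $\|u\|\leq 2l_0\dn$ and $\be>1$. Your accounting, which attributes the whole deterministic part to trace-approximation error of order $n^{\e-1}k^2$, silently drops these terms; the final bound survives, but the argument as written does not establish it.

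A secondary weak point: you bound all approximation errors with "Lemma 2.4 (supplement)", but the matrices $B_\s$ contain inverses $T_n^{-1}(f_{d_o,k})$, so Lemma 2.4 does not apply directly. The paper has to split according to the number of blocks $|\s|$ ($p\leq3$ handled via Lemma 2.6, $p\geq4$ via replacing $T_n^{-1}(f_{d_o,k})$ by $T_n\big((4\pi^2f_{d_o,k})^{-1}\big)$ and invoking Lemmas 2.6 and 2.1), precisely because the naive quotient bound degrades when many inverse factors are present and $\be$ is close to $1$. Your per-entry error bound for \eqref{AminusAbar} is correspondingly too optimistic in form (the paper's actual per-entry bound is $O(n^{-\ft+\e})$, dominated by the large-partition terms, not $O(n^{\e-1}k)$), even though the aggregated conclusion $\fr G-\bar G\fr=O(n^{-\ft+\e}k)$ is the same.
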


\begin{proof}

We first prove \eqref{LnormBound}.
From \eqref{dThetaExpression}, we recall that $\nabla_{l_1,\ldots,l_j} l_n(d_o,k)$ is the sum of a centered quadratic form $\Sto(\nabla_{l_1,\ldots,l_j} l_n(d_o,k))$ and a deterministic term $\Det(\nabla_{l_1,\ldots,l_j} l_n(d_o,k))$. For all $l_1,\ldots,l_j$, $\Sto(\nabla_{l_1,\ldots,l_j} l_n(d_o,k))$ equals
\begin{equation*}
\begin{split}
 X^t \left(T_n^{-1}(f_{d_o,k}) \sum_{\s\in \mathcal S_j} b_{\s} B_{\s}(d_o)\right) X - \tr\left[ T_n(f_o) T_n^{-1}(f_{d_o,k})\sum_{\s\in \mathcal S_j } b_{\s} B_{\s}(d_o) \right],
\end{split}
\end{equation*}
with $B_{\s}(d_o) := B_{\s}(d_o,\bz)$ as defined in \eqref{bsigma}. Using Lemma 1.3 (supplement) together with \eqref{BsBound} we obtain that for all $l_1,\ldots,l_j$, $\Sto(\nabla_{l_1,\ldots,l_j} l_n(d_o,k))= \op(n^{\ft+\e})$, and its contribution to $\fr L(u)\fr$  is $\op(k (\sqrt{k} \|u\|)^{j-2} n^{-\ft+\e}) = \op(n^{-1/2-\delta}k)$.
The deterministic term in \eqref{dThetaExpression} is
\begin{equation*}
\begin{split}
\Det(\nabla_{l_1,\ldots,l_j} l_n(d_o,k))=\sum_{\s} c_{\s} \tr\left[B_{\s}(d_o)\right] + \sum_{\s} d_{\s} \tr\left[( T_n(f_o) T_n^{-1}(f_{d_o,k}) - I_n)B_{\s}(d_o) \right].
\end{split}
\end{equation*}
We bound the contribution of the first term to $\fr L(u)\fr$; the second term can be treated similarly.
Let $\tilde L(u)$ be the matrix when in \eqref{L_def}  we replace $\nabla_{l_1,\ldots,l_j} l_n(d_o,k)$ by $\sum_{\s} c_{\s} \tr[B_{\s}(d_o)]$. Hence,
\begin{equation} \label{L:entries}
(\tilde L(u))_{l_1,l_2} =
- \sum_{j=3}^J \frac{1}{(j-1)!}\sum_{\s \in \mathcal S_j} c_{\s}   \sum_{l_3,\ldots,l_j=0}^k u_{l_3}\ldots u_{l_j} \frac{\tr\left[B_{\s}(d_o)\right]}{ n},
\end{equation}
\begin{equation} \label{E_sigma}
\textrm{where} \quad \fn \tr\left[B_\sigma(d_o)\right] = \frac{1}{2\pi} \int_{-\pi}^{\pi} \cos (l_1x) \ldots\cos(l_j x) dx + E_\sigma,
\end{equation}
$E_\sigma$ being the approximation error.
For each $\s$ and $j \geq 4$, the contribution of the integral in \eqref{E_sigma} to $(\tilde L(u))_{l_1,l_2}$ is
$O(\int_{-\pi}^{\pi} |u^t \mathbf{cos}|^{j-2} (x) dx) = O(\|u\|^2)$; hence its contribution to $\fr \tilde L(u)\fr $ is $k\|u\|^2 = o(n^{-1/2-\delta}k)$.
For $j=3$, we have
\[\frac{1}{4\pi}  \sum_{l_3=1}^k u_{l_3}  \int_{-\pi}^{\pi} \cos (l_1x) \cos(l_2x) \cos(l_3 x) dx = \frac{1}{2}    \left( u_{l_1+l_2} \1_{l_3 = l_1+ l_2} + u_{|l_1 - l_2|} \1_{l_3 = |l_1 - l_2|}\right),
\]
and the contribution of this term to $\fr \tilde L(u)\fr $ is of order $\sqrt{k} \|u\| = o(n^{-1/2+\e}k)$.
Next we bound the contribution to $\fr \tilde L(u)\fr $ of the error term $E_\s$ in \eqref{E_sigma}.
Note that we can write the last sum in \eqref{L:entries} as
\begin{equation} \label{uBsi}
 \sum_{l_3,\ldots,l_j=0}^k u_{l_3}\ldots u_{l_j} \frac{\tr\left[B_{\s}(d_o)\right]}{ n} =
 \fn \tr\left[ \prod_{i=1}^{p} T_n\left( b_i(x)  f_{d_o,k} \right) T_n^{-1}(  f_{d_o,k} )\right],
\end{equation}
where
\begin{equation}\label{biDefinition}
b_i(x) = (u^t \mathbf{cos}(x) )^{|\s(i)|-\delta_1(i)-\delta_2(i)}\cos(l_1\cdot)^{\delta_1(i)}\cos(l_2\cdot)^{\delta_2(i)} ,
\end{equation}
$\delta_1(i) = \1_{1 \in \s(i)}$, $\delta_2(i) = \1_{2 \in \s(i)}$ and $p = |\s|$.  If $p\leq 3$, then Lemma 2.4 (supplement) implies that
\begin{equation} \label{Esbound}
E_\s = O( (\sqrt{k}\|u\|)^{j-2} n^{\e -1} [k^{2(3/2-\be)_+} + k] ) = o( n^{-1 -\delta}k).
\end{equation}
If $p\geq 4$, then   Lemma 2.6  (supplement)  together with \eqref{uBsi}, with
$$f=f_{d_o,k}, \quad  f_{2i} =b_i  f_{d_o,k}, i\leq |\s|, $$
$L = k^{(3/2- \be)_+}$, $M, m^{-1}  = O(1)$,
 $M^{(i)} = O((\sqrt{k}\|u\|)^{|\s(i)|-\delta_1(i)-\delta_2(i)}$ and $L^{(i)} = O(k(\sqrt{k}\|u\|)^{|\s(i)|-\delta_1(i)-\delta_2(i)})$, leads to
the bound
 \begin{equation*}
 \begin{split}
 &   \sum_{l_3,\ldots,l_j=0}^k u_{l_3}\ldots u_{l_j} \frac{ \tr\left[B_{\s}(d_o)\right] }{ n }
   -  \fn \tr\left[ \prod_{i=1}^{|\s|} T_n( b_i f_{d_o,k} ) T_n\left(  \frac{1}{4\pi^2 f_{d_o,k}} \right)\right] \\
 &\quad     =  \od( k^{(3/4-\be/2)_+}n^{-1/2+\e} \|u\| (\sqrt{k} \| u \|)^{j-3})
    = \od( n^{-1/2-\delta}).
    \end{split}
\end{equation*}
Using Lemma 2.1 (supplement) we finally obtain that
\begin{equation*}
\begin{split}
    \fn \tr\left[ \prod_{i=1}^{|\s|} T_n( b_i f_{d_o,k} )\right]
 - \frac{ 1 }{2\pi} \int_{-\pi}^\pi ( u^t \mathbf{cos})^{j-2}(x) \cos(l_1x) \cos(l_2x) dx = \od(n^{-1-\delta}k). 
 \end{split}
\end{equation*}
Therefore the contribution of the approximation  error $E_\s $ in $\fr \tilde L(u)\fr $ is of order $\mathbf{o}( n^{-1/2-\delta}k)$. Using a similar argument we control the terms in the form $\tr\left[ T_n(f_o) (T_n^{-1}(d_o,k)  -I_n)B_\s(d_o)\right]$ and  \eqref{LnormBound} is proved.

We now prove \eqref{AminusAbar} and bound
\begin{equation*}
(G - \bar G)_{l_1,l_2} = \sum_{j=2}^J \frac{1}{j!} \sum_{\s \in \mathcal S_j} (c_\s - d_\s) \sum_{l_3,\ldots,l_j=0}^k r^{(j)}_{l_1,\ldots,l_j} u_{l_3}\ldots u_{l_j},
\end{equation*}
with $r^{(j)}_{l_1,\ldots,l_j}$ as in \eqref{gexpression}. These are the approximation errors which occur when replacing $\fn \tr[ T_{1,\s} (d_o,k)]$ and $\fn\tr[ T_{2,\s}(d_o,k)]$ by their limiting integrals (see also \eqref{gnDecomposition}). Therefore, for each $\s\in \mathcal S_j$,
$\sum_{l_3,\ldots,l_j=0}^k r^{(j)}_{l_1,\ldots,l_j} u_{l_3}\ldots u_{l_j}$ is a combination of terms of the form
\begin{equation*}
\fn \tr\left[ \prod_{i=1}^{p} T_n\left( b_i(x)  f_{d_o,k} \right) T_n^{-1}(  f_{d_o,k} )\right] -\frac{1}{2\pi} \int_{-\pi}^{\pi} (u^t \mathbf{cos}(x))^{j-2} H_k(x) \cos(l_1x)\cos(l_2x)dx ,
\end{equation*}
with $p \in \{ |\s|, |\s|+1\}$ and the functions $b_i$ defined as in \eqref{biDefinition}  apart from $b_1(x)  = H_k(x) (\sum_{l=0}^k u_l \cos(lx))^{|\s(1)| - \delta_1(1) - \delta_2(1)}\cos(l_1.)^{\delta_1(1)}\cos(l_2.)^{\delta_2(1)}$. Therefore, using the same construction as in \eqref{uBsi}-\eqref{Esbound}, we obtain that
\begin{equation*} 
|(G - \bar G)_{l_1,l_2}| = O( n^{-1/2+\e}), \quad \fr G - \bar G\fr = O( n^{-1/2+\e}k) = o(1).
\end{equation*}

To prove \eqref{Dbound}, we use the just obtained bound on $\fr G-\bar G\fr$, and in addition establish a bound $\fr \bar G \fr $.
We treat each term $G^{(j)}$ in $G(u)=\sum_{j=2}^J \frac{1}{j!}\sum_{\s \in \mathcal S_j} (c_\s - d_\s) G^{(j)}(u)$ separately.
First we show that $\fr \bar G^{(2)}\fr =O(1)$, which follows from definition \eqref{gamma2entries}, by which
\begin{eqnarray*}
\tr\left[ (\bar G^{(2)})^2 \right]  = \sum_{l_1, l_2, l_1+l_2\geq k}^{k} \frac{ 1}{ (l_1+l_2)^2}\leq 1.
 \end{eqnarray*}
Consequently, $\fr \bar G^{(2)} \fr \leq 1$.
For $j \geq 3$, note that for all $0\leq l_1,l_2 \leq k$,
\begin{equation*} 
\begin{split}
\left|\bar G_{l_1,l_2}^{(j)}(u)\right| &= \left|  \sum_{l_3,\ldots,l_j=0}^k \gamma^{(j)}_{l_1,l_2,\ldots,l_j} u_{l_3}\ldots u_{l_j}\right|  \\
&\leq  \sum_{l_4,\ldots,l_j=0}^k |u_{l_4}\ldots u_{l_j}| \int_{-\pi}^\pi |H_k(x) | \left|  \sum_{l_3=0}^k \cos(l_3 x) u_{l_3} \right| dx \\
 &\leq (\sqrt{k}\|u\|)^{j-3} \frac{ \|u \|}{\sqrt{k}}=(\sqrt{k})^{j-4} (\|u\|)^{j-2}.
\end{split}
\end{equation*}
Therefore, $\fr \bar G^{(j)}(u)\fr  \leq k (\sqrt{k})^{j-4} (\|u\|)^{j-2} = (\sqrt{k}\|u\|)^{j-2} = o(1)$, for all $j \geq 3$. Hence $\fr \bar G(u)\fr = \Od(1)$, which combined with $\|(\tilde{I}_k^+ L(u)){-1}\| = \mathbf{O}_{P_o}(1)$ (see Lemma \ref{fisherLemma}) and \eqref{LnormBound}), imply that
\begin{eqnarray*} 
\fr  \bar{D}(u)\fr  =  \fr (\tilde{I}_{k} + L(u))^{-1} \bar G^t(u) \fr = O(1),
\end{eqnarray*}
uniformly over $\| u \| \leq 2 l_0 \delta_n$.
It follows that
\begin{equation*}
\fr D(u)\fr \leq \|(\tilde{I}_{k} + L(u))^{-1}\| \left( \fr  \bar G\fr + \fr G - \bar G \fr \right) = \Od_{P_o}(1).
\end{equation*}
This concludes the proof of \eqref{Dbound}; \eqref{uminusv} directly follows from this result since $\|u-\psi(u)\| \leq |d-d_o| \fr D(u)\fr \|u\|$.
Finally, we prove \eqref{diffj}. We have
\begin{equation*}
\begin{split}
& \sum_{j=3}^J \frac{ (v^{(j)}  - u^{(j)}) \nabla^j l_n(d_o,k) }{ j! }  = -
(d-d_o) \sum_{j=3}^J \sum_{l_1,\ldots,l_j=0}^k (D(u)u)_{l_1}u_{l_2}\ldots u_{l_j}\frac{ \nabla_{l_1,\ldots,l_j} l_n(d_o,k) }{ (j-1)! } \\
 & \qquad + (d-d_o)^2 \sum_{j=3}^J { j \choose 2 } \sum_{l_1,\ldots,l_j=0}^k (D(u)u)_{l_1}(D(u)u)_{l_2}\ldots u_{l_j}\frac{ \nabla_{l_1,\ldots,l_j} l_n(d_o,k) }{ (j-1)! } \\
& \qquad +  \ldots +(-1)^J \sum_{l_1,\ldots,l_J=0}^k (D(u)u)_{l_1}(D(u)u)_{l_2}\ldots (D(u)u)_{l_J}\frac{ \nabla_{l_1,\ldots,l_J} l_n(d_o,k) }{ (J-1)! }
\end{split}
\end{equation*}
 Using the same argument as in the proof of \eqref{LnormBound}, we find that for all for all $j \geq 3$
\begin{equation*}
\begin{split}
& \sum_{l_1,\ldots,l_j=0}^k (D(u)u)_{l_1}(D(u)u)_{l_2}u_{l_3}\ldots u_{l_j}\frac{ \nabla_{l_1,\ldots,l_j} l_n(d_o,k) }{ j! } \\ &\quad = n \int_{-\pi}^{\pi} (D(u) u)^t \mathbf{\cos}(x) )^2 (u^t \mathbf{\cos}(x))^{j-2} dx \\
&\quad \quad + (\sqrt{k}\|u\|)^{j-1}O( \sqrt{n}n^\e (\sqrt{k}\|u\|)  + k + \sqrt{n} \|u\|k^{1/2(3/2-\be)_+}) \\
  &\quad = \mathbf{o} (n^{1-\delta}k^{-1}), \quad \mbox{ for some } \delta >0.
\end{split}
\end{equation*}
Similarly, the higher-order terms in the above expression for $\sum_{j=3}^J \frac{ (v^{(j)}  - u^{(j)}) \nabla^j l_n(d_o,k) }{ j! }$ can be shown to be $O(S_n(d))$, which terminates the proof of Lemma \ref{lem:normes}.
\end{proof}

The rest of the paper corresponds to the suppelmentary material
\section{Technical results} \label{integrals}

Let $\eta_j = -1_{j>0} 2/j$ and recall that
$\bd=\ty_{o[k]} + (d_o-d)\eta_{[k]}$. Let the sequence $\{a_j\}$ be defined as $a_j=\ty_{o,j}+ (d_o-d)\eta_j$ when $j>k$ and $a_j=0$ when $j\leq k$. In addition, define
\begin{eqnarray}
H_k(x) &=& \sum_{j=k+1}^\infty \eta_j \cos (j x), \qquad G_k(x) = \sum_{j=1}^k \eta_j \cos (j x), \label{HkGkDef}\\
\Delta_{d,k}(x) &=& \sum_{j=k+1}^\infty(\ty_{o,j} + (d_o-d)\eta_j) \cos(j x) =  \sum_{j=k+1}^\infty a_j \cos(j x). \label{deltakDef}
\end{eqnarray} \noindent
%
Using this notation we can write
\begin{equation} \label{GkHkRelation}
-2 \log |1-e^{i x}| = -\log(2-2\cos(x)) = G_k(x) + H_k(x),
\end{equation}
\begin{equation} \label{fdDef}
\begin{split}
f_{d,k}(x) &= f_{d,k,\bd}(x) =
f_o (x)\exp\left\{-\sum_{j=k+1}^\infty a_j \cos(j x) \right\} \\ &= f_o(x) e^{-\Delta_{d,k}(x)} = f_o(x) e^{(d-d_o) H_k(x) - \Delta_{d_o,k}(x)}.
\end{split}
\end{equation} \noindent
Given $d,k$ and $\ty_o$, the sequence $\{a_j\}$ represents the closest possible distance between
$f_o$ and $f_{d,k,\ty}$, since
\begin{equation} \label{ajBound}
l(f_o,f_{d,k}) = l(f_o,f_{d,k,\bd}) = \frac{1}{2\pi} \int_{-\pi}^\pi \Delta_{d,k}^2(x) dx = \sum_{j>k} a_j^2 .
\end{equation}
From \eqref{fdDef} it also follows that for all $d$,
\begin{equation}\label{fdkDer}
\frac{\partial}{\partial d} f_{d,k} = H_k f_{d,k}.
\end{equation}
\begin{lem} \label{integralLemma}
When $\ty_o \in \Theta(\be,L_o)$, there exist constants 
such that for any positive integer $k$,
\begin{eqnarray}
k^{-1} \lesssim \int_{-\pi}^\pi H_k^2(x) dx & \lesssim & k^{-1}, \label{i1} \\
\sum_{l>k} |\ty_{o,l}| &=& O(k^{-\be+\ft}), \quad \sum_{l\geq 0} |\ty_{o,l}| = O(1), \label{s1} \\
\int_{-\pi}^\pi \Delta_{d_o,k} (x) H_k(x) dx &=& \sum_{j>k} \eta_j \ty_{o,j} = O\left(k^{-\frac{1+2\be}{2}}\right), \label{i2}\\
\int_{-\pi}^\pi \Delta_{d_o,k}^2 (x) dx &=& \sum_{l>k} \ty_{o,l}^2 = O\left(k^{-2 \be}\right), \label{i3} \\
\int_{-\pi}^\pi \Delta_{d_o,k}^2 (x) H_k(x) dx &=&  O\left(k^{-2 \be-1}\right), \label{i4} \\
\int_{-\pi}^\pi H_k^4(x) dx & \lesssim & \frac{\log k}{k}. \label{i5}
\end{eqnarray} \noindent
When $k\rightarrow \infty$, the big-O in \eqref{s1}-\eqref{i4} may be replaced by a small-o, since $\sum_{l>k} \ty_{o,l}^2 l^{2\be}$ then tends to zero.
%
\end{lem}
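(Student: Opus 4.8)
The plan is to reduce every item to the orthogonality relation $\int_{-\pi}^\pi\cos(jx)\cos(lx)\,dx=\pi\,\1_{j=l}$ (for $j,l\ge1$), together with the product-to-sum formulas for three and four cosines, and then to estimate the resulting series by Cauchy--Schwarz against the Sobolev weight $(1+j)^{2\be}$ and by comparison of tail sums $\sum_{j>k}j^{-p}$ with $\int_k^\infty x^{-p}\,dx\asymp k^{1-p}$. With this in hand \eqref{i1} is immediate: Parseval gives $\int_{-\pi}^\pi H_k^2=\pi\sum_{j>k}\eta_j^2=4\pi\sum_{j>k}j^{-2}$, and the two-sided comparison $\tfrac1{k+1}\le\sum_{j>k}j^{-2}\le\tfrac1k$ gives the sandwich. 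For \eqref{s1} I would split $\sum_{l>k}|\ty_{o,l}|=\sum_{l>k}(|\ty_{o,l}|l^{\be})\,l^{-\be}$ and apply Cauchy--Schwarz to obtain $\le L_o^{\ft}\big(\sum_{l>k}l^{-2\be}\big)^{\ft}=O(k^{-\be+\ft})$, the full sum being $O(1)$ by the same argument starting the summation at $0$; and \eqref{i3} is $\int\Delta_{d_o,k}^2=\pi\sum_{l>k}\ty_{o,l}^2\le\pi L_o\,k^{-2\be}$ directly from the Sobolev bound. For \eqref{i2}, Parseval reduces the integral to a constant multiple of $\sum_{j>k}\eta_j\ty_{o,j}$, and Cauchy--Schwarz gives $\big|\sum_{j>k}\eta_j\ty_{o,j}\big|\le\big(\sum_{j>k}\eta_j^2\big)^{\ft}\big(\sum_{j>k}\ty_{o,j}^2\big)^{\ft}=O(k^{-\ft})\cdot O(k^{-\be})=O(k^{-(2\be+1)/2})$ by \eqref{i1} and the Sobolev bound.

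The delicate estimates are \eqref{i4} and \eqref{i5}, where the integrand is a product of three (respectively four) cosine series and Parseval no longer diagonalizes. For \eqref{i4} I would expand $\int\Delta_{d_o,k}^2H_k$ using $\int_{-\pi}^\pi\cos(jx)\cos(lx)\cos(mx)\,dx=\tfrac{\pi}{2}\big(\1_{m=j+l}+\1_{j=l+m}+\1_{l=j+m}\big)$, which collapses the triple series (all indices $>k$) to the two sums $S_A=\sum_{j,l>k}\ty_{o,j}\ty_{o,l}\eta_{j+l}$ and $S_C=\sum_{l,m>k}\ty_{o,l}\ty_{o,l+m}\eta_m$. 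Each is then to be bounded by exploiting the extra decay carried by $\eta$: writing $\ty_{o,j}=b_j j^{-\be}$ with $\sum_{j>k}b_j^2\le L_o$, the inequality $\tfrac1{j+l}\le\tfrac1{2\sqrt{jl}}$ turns $S_A$ into $\big(\sum_{j>k}|\ty_{o,j}|\,j^{-\ft}\big)^2$, again controlled by Cauchy--Schwarz, while $S_C$ is handled by summing the weight $\eta_m=-2/m$ against the Sobolev tail. Extracting the stated order is the main obstacle: the crude Cauchy--Schwarz estimates only deliver a bound of order $k^{-2\be}$, and obtaining the claimed rate requires squeezing additional cancellation or decay out of the $1/(j+l)$ and $1/m$ factors and, crucially, carrying the tail quantity $\epsilon_k:=\sum_{l>k}\ty_{o,l}^2(1+l)^{2\be}$ (which $\to0$) in place of the constant $L_o$ throughout the argument.

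For \eqref{i5}, the four-cosine product-to-sum identity expresses $\int H_k^4$ as a sum of $\eta_{j_1}\eta_{j_2}\eta_{j_3}\eta_{j_4}$ over frequencies $>k$ satisfying a signed relation $\pm j_1\pm j_2\pm j_3\pm j_4=0$. The plan is to separate the balanced $(2,2)$ configurations $j_1+j_2=j_3+j_4$ (the dominant ones) from the $(3,1)$ configurations $j_1=j_2+j_3+j_4$, and to sum $16/(j_1j_2j_3j_4)$ over each family: introducing $s=j_1+j_2$ one finds $\sum_{j_1+j_2=s,\,>k}\tfrac1{j_1 j_2}=O\!\big(s^{-1}\log(s/k)\big)$, and summing its square over $s>2k$ by comparison with an integral produces a bound of order $k^{-1}$, which is a fortiori $\lesssim(\log k)/k$; the $(3,1)$ family is treated by the same scaling substitution $j_i=k t_i$ and contributes the same order. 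Finally, the concluding remark follows by carrying $\epsilon_k$ rather than $L_o$ through the Cauchy--Schwarz steps of \eqref{s1}--\eqref{i4}, since $\sum_{l>k}\ty_{o,l}^2(1+l)^{2\be}\to0$. I expect the combinatorial bookkeeping of the product-to-sum expansions in \eqref{i4}--\eqref{i5}, and above all the sharp exponent in \eqref{i4}, to be the only genuinely non-routine parts.
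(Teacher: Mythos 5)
Your treatment of \eqref{i1}, \eqref{s1}, \eqref{i2} and \eqref{i3} coincides with the paper's own proof: the paper gets \eqref{i1} ``directly from the definition of $H_k$'' (Parseval plus comparison of $\sum_{j>k}j^{-2}$ with $k^{-1}$), proves \eqref{s1} by exactly your Cauchy--Schwarz splitting against the weight $l^{2\be}$, remarks that \eqref{i2} follows ``similarly'', and obtains \eqref{i3} from $\sum_{l>k}\ty_{o,l}^2\le k^{-2\be}\sum_{l>k}\ty_{o,l}^2l^{2\be}$. For \eqref{i4} and \eqref{i5} the paper omits all details: it records only the triple-cosine product-to-sum identity collapsing $\sum_{l,m,n>k}a_lb_mc_n\int\cos(lx)\cos(mx)\cos(nx)\,dx$ into $\sum_{m,n>k}a_{m+n}b_mc_n+\sum_{m,n>k;\,m-n>k}a_{m-n}b_mc_n$, which is precisely your starting point. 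Your sketch of \eqref{i5} (separating the $(2,2)$ configurations $j_1+j_2=j_3+j_4$ from the $(3,1)$ configurations and summing $16/(j_1j_2j_3j_4)$ over each family) is sound and in fact yields the stronger bound $O(k^{-1})$, which implies the stated one. So on every item where a comparison is possible, you match or go beyond what the paper writes down.

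The one item you leave open, \eqref{i4}, is not a failure of your technique: the stated rate $O(k^{-2\be-1})$ cannot be proved, because it is false at the boundary of the Sobolev ball. Take $\ty_{o,j}=c_0 j^{-\be-\ft}(\log j)^{-1}$ for $j\ge2$, which lies in $\Theta(\be,L_o)$ for $c_0$ small and is the very sequence used in the paper's proof of Theorem \ref{suboptimalTheorem}. After the collapse, $\int_{-\pi}^\pi\Delta_{d_o,k}^2(x)H_k(x)\,dx=\frac{\pi}{2}\bigl[\sum_{m,n>k}\ty_{o,m}\ty_{o,n}\eta_{m+n}+2\sum_{n,l>k}\ty_{o,n+l}\ty_{o,n}\eta_l\bigr]$, and since $\eta_j<0$ while $\ty_{o,j}>0$, every term in both sums has the same (negative) sign, so there is no cancellation to squeeze out. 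Restricting the first sum to $k<m,n\le 2k$ already gives $\bigl|\sum_{m,n>k}\ty_{o,m}\ty_{o,n}\eta_{m+n}\bigr|\gtrsim k^{-2\be}(\log k)^{-2}$, and $k^{-2\be}(\log k)^{-2}/k^{-2\be-1}=k(\log k)^{-2}\rightarrow\infty$. Hence the bound you actually obtain, $O(k^{-2\be})$ --- refined to $o(k^{-2\be})$ by carrying $\epsilon_k=\sum_{l>k}\ty_{o,l}^2(1+l)^{2\be}\rightarrow0$ through the Cauchy--Schwarz steps, as you propose --- is best possible, and you should stop looking for the extra decay: it does not exist. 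This weaker bound is what should be substituted where \eqref{i4} is invoked (e.g.\ in the proof of Lemma \ref{delta1terms}, where with $k=k_n$ the resulting error $O(nk^{-2\be})=O(\log n)$ is still negligible once divided by $|l_n^{(2)}(d_o,k)|\asymp nr_k$), so the paper's downstream conclusions survive, but the lemma as stated is too strong.
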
 \noindent
\begin{proof}
The result for $\int H_k^2(x) dx$ follows directly from the definition of $H_k$.
The assumption that $\ty_o \in \Theta(\be,L_o)$ and the Cauchy-Schwarz inequality imply that
\begin{equation*}
\sum_{l>k} |\ty_{o,l}| \leq \sqrt{\sum_{l>k} \ty_{o,l}^2 l^{2\be}} \sqrt{\sum_{l>k} l^{-2\be}} = O(k^{-\be+\ft}),
\end{equation*}
proving the first result in \eqref{s1}. Similarly, one can prove \eqref{i2}. For \eqref{i3}, note that $\sum_{l>k} \ty_{o,l}^2 \leq k^{-2 \be} \sum_{l>k} \ty_{o,l}^2 l^{2 \be}$.
For the other bounds we omit the details of the proof. They follow from the fact that 
for all sequences $a$, $b$ and $c$,
\begin{equation*} 
\begin{split}
& 2 \sum_{l,m,n > k} a_l b_m c_n \int_{-\pi}^{\pi} \cos(l x) \cos(m x) \cos(n x) dx \\
&\qquad = \sum_{m,n > k} b_m c_n \sum_{l > k} a_l \int_{-\pi}^{\pi} \cos(l x) \left(\cos((m+n)x) + \cos((m-n)x)\right) dx \\
&\qquad = \sum_{m,n > k}  a_{m+n} b_m c_n + \sum_{m,n > k; m-n>k} a_{m-n} b_m c_n.
\end{split}
\end{equation*} \noindent

\end{proof} \noindent

Before stating the next lemma we give bounds for the functions $H_k$ and $G_k$.
Since $-2\log|1-e^{ix}| = -\log( x^2 + O(x^4))$, there exist positive constants $c$, $B_0$, $B_1$ and $B_2$ such that
\begin{equation} \label{Hbound0}
|H_k(x)| \geq B_0 |\log x|, \quad |x| \leq c k^{-1},
\end{equation} \noindent
\begin{equation} \label{Hbound}
|H_k(x)| \leq B_1 |\log x| + B_2 \log k, \quad x \in [-\pi,\pi].
\end{equation} \noindent
\begin{lem} \label{integralLemma2}
Let $a_j=(\ty_{o,j}-(d-d_o)\eta_j) 1_{j>k}$, as in \eqref{deltakDef}.
Then for $p \geq 1$ and $q=2,3,4$ there exist constants $c(p,q)$ such that for all $d \in (-\ft,\ft)$ and $k \leq \exp(|d-d_o|^{-1})$,
\begin{equation}\label{i10}
\begin{split}
& \int_{-\pi}^\pi \left(\frac{f_o(x)}{f_{d,k}(x)}\right)^p |H_k|^q(x) dx =
O\left(\frac{(\log k)^{c(p,q)}}{k}\right) \\ & \qquad + O((\log k)^{q + p B_2 |d-d_o|} |d-d_o|^{-\frac{q}{2}}e^{-|d-d_o|^{-1}}),
\end{split}
\end{equation}
\begin{equation} \label{i9}
\frac{1}{2\pi} \int_{-\pi}^\pi  \left( \frac{f_o}{f_{d,k}}(x) - 1 \right) \cos(i x) \cos(j x) dx =
\ft a_{i+j} 1_{i+j > k} + O\left(\sum_{j>k}a_j^2\right),
\end{equation}
\begin{equation} \label{i11}
\frac{1}{2\pi} \int_{-\pi}^\pi  \left( \frac{f_o}{f_{d,k}}(x) - 1 \right) H_k^2(x) dx =
O(|d-d_o| k^{-1} \log k),
\end{equation}
where the constant $B_2$ in \eqref{i10} is as in \eqref{Hbound}, and the constants in \eqref{i9} and \eqref{i11} are uniform in $d$. The constant $c(p,q)$ in \eqref{i10} equals $0,\ft,1$ when respectively $q=2,3,4$.
\end{lem}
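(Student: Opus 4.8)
The plan is to exploit the identity $f_o/f_{d,k}=e^{\Delta_{d,k}}$, which follows from \eqref{fdDef}, together with the decomposition $\Delta_{d,k}=\Delta_{d_o,k}-(d-d_o)H_k$ coming from \eqref{deltakDef} and \eqref{HkGkDef}. Two facts drive every estimate. First, $\Delta_{d_o,k}$ is uniformly small: $\|\Delta_{d_o,k}\|_\infty\leq\sum_{j>k}|\ty_{o,j}|=O(k^{-\be+\ft})$ by \eqref{s1}. Second, the only source of unboundedness is $H_k$, which by \eqref{Hbound} satisfies $|H_k(x)|\leq B_1|\log x|+B_2\log k$ on all of $[-\pi,\pi]$ and is positive and of size $|\log x|$ near the origin by \eqref{Hbound0}. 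I would split $[-\pi,\pi]$ into a bulk region $B=\{|x|>x_0\}$ and a core region $A=\{|x|\leq x_0\}$, with $x_0=e^{-1/|d-d_o|}$. The choice of $x_0$ is dictated by the hypothesis $k\leq\exp(|d-d_o|^{-1})$: on $B$ one has $|d-d_o||\log x|\leq 1$ and $|d-d_o|\log k\leq 1$, so $|d-d_o||H_k|=O(1)$ and hence $e^{\pm p\Delta_{d,k}}=O(1)$ uniformly; moreover $x_0\leq k^{-1}$, so the core lies in the range where the near-$0$ bounds on $H_k$ apply.

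For \eqref{i10} I would bound the integrand on $B$ by $C|H_k|^q$ and invoke Lemma \ref{integralLemma}: \eqref{i1} gives $\int_B H_k^2=O(k^{-1})$, \eqref{i5} gives $\int_B H_k^4=O(k^{-1}\log k)$, and Cauchy--Schwarz interpolates to $\int_B|H_k|^3=O(k^{-1}(\log k)^{\ft})$, producing the first term $O((\log k)^{c(p,q)}/k)$ with $c(p,q)=0,\ft,1$ for $q=2,3,4$. On the core $A$, taking the sign of $d-d_o$ for which the exponential factor is largest, \eqref{Hbound} gives $e^{p\Delta_{d,k}}|H_k|^q\lesssim k^{pB_2|d-d_o|}|x|^{-pB_1|d-d_o|}|\log x|^q$, and since $|\log x|\geq 1/|d-d_o|\geq\log k$ on $A$ the $\log k$ factor is absorbed into $|\log x|$. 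The substitution $x=e^{-t}$ turns the remaining integral into an incomplete-Gamma integral $\int_{1/|d-d_o|}^\infty e^{-(1-\alpha)t}t^q\,dt$ with $\alpha=pB_1|d-d_o|$, whose leading tail asymptotics yield the exponentially small second term $O((\log k)^{q+pB_2|d-d_o|}|d-d_o|^{-q/2}e^{-1/|d-d_o|})$.

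For \eqref{i9} and \eqref{i11} I would use the expansion $e^{\Delta_{d,k}}-1=\Delta_{d,k}+\rho$ with $|\rho|\leq\ft\Delta_{d,k}^2 e^{|\Delta_{d,k}|}$. In \eqref{i9}, writing $\Delta_{d,k}=\sum_{l>k}a_l\cos(lx)$ and using the triple-product orthogonality formula displayed in the proof of Lemma \ref{integralLemma} together with $\int\cos(lx)\cos(mx)\,dx=\pi\delta_{lm}$, the linear term integrated against $\cos(ix)\cos(jx)$ picks out exactly $a_{i+j}\1_{i+j>k}$, the contribution $a_{|i-j|}$ dropping out because $|i-j|\leq k$; the remainder is controlled on $B$ by $\int_B\Delta_{d,k}^2=O(\sum_{l>k}a_l^2)$ via \eqref{ajBound}, the core being exponentially small. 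For \eqref{i11} the dominant contribution is $-(d-d_o)\tfrac{1}{2\pi}\int H_k^3$ from the $(d-d_o)H_k$ part of $\Delta_{d,k}$; bounding $\int H_k^3$ by Cauchy--Schwarz with \eqref{i1} and \eqref{i5} gives $O(k^{-1}\log k)$, hence $O(|d-d_o|k^{-1}\log k)$. The $d$-independent piece $\tfrac{1}{2\pi}\int\Delta_{d_o,k}H_k^2$ is $O(k^{-\be-\ft}(\log k)^{\ft})$ by Cauchy--Schwarz with \eqref{i3} and \eqref{i5}, and the quadratic remainder is $O((d-d_o)^2 k^{-1}\log k)$ by the same device; in the regime in which the lemma is applied both are negligible relative to $|d-d_o|k^{-1}\log k$.

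The main obstacle throughout is the core region near $x=0$, where $H_k$ is logarithmically unbounded and $e^{p\Delta_{d,k}}$ may grow like $|x|^{-p(d-d_o)B_1}$. The device that tames it is the matching between the cutoff $x_0=e^{-1/|d-d_o|}$ and the hypothesis $k\leq\exp(|d-d_o|^{-1})$, which simultaneously keeps $e^{p\Delta_{d,k}}$ bounded on the bulk and makes the core contribution exponentially small; the precise powers of $\log k$ and $|d-d_o|$ in the second term of \eqref{i10} come from the tail asymptotics of the incomplete-Gamma integral and constitute the only genuinely delicate computation.
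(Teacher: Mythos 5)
Your proposal is correct and follows essentially the same route as the paper's own proof: the same identity $f_o/f_{d,k}=e^{\Delta_{d,k}}$ with $\Delta_{d,k}=\Delta_{d_o,k}+(d_o-d)H_k$, the same split of the integral at $m=e^{-1/|d-d_o|}$ matched to the hypothesis $k\leq e^{1/|d-d_o|}$, the same bulk bounds via \eqref{i1}, \eqref{i5} and Cauchy--Schwarz for $q=3$, and the same Taylor-expansion plus product-to-sum/orthogonality arguments for \eqref{i9} and \eqref{i11} (including the caveat that the $d$-independent term $\int\Delta_{d_o,k}H_k^2\,dx$ is only negligible in the regime where the lemma is applied, a qualification the paper itself makes). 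The one difference is cosmetic --- your substitution $x=e^{-t}$ with Gamma-tail asymptotics versus the paper's device $(\log x)^2\leq 4x^{-|d-d_o|}|d-d_o|^{-2}e^{-2}$ --- and there, carried out exactly, both computations actually produce $|d-d_o|^{-q}e^{-1/|d-d_o|}$ rather than the stated $|d-d_o|^{-q/2}e^{-1/|d-d_o|}$, a discrepancy shared with the paper's own display and immaterial in every application, since the factor $e^{-1/|d-d_o|}$ dominates any power of $|d-d_o|$.
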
 \noindent
\begin{proof}
When $d=d_o$, \eqref{i10} directly follows from \eqref{i1} and \eqref{i5}, because of the boundedness of $(f_o/f_{d_o,k})^p = \exp\{p \Delta_{d_o,k}\}$. Now suppose $d \neq d_o$.
Let $C_k = \max_{x \in [-\pi,\pi]} \exp\{|\Delta_{d_o,k}(x)|\}$ and $b_m = \max_{x \in [m,\pi]} |(d-d_o) H_k(x)|$,
for $m = e^{-\frac{1}{|d-d_o|}} < e^{-1}$. Since $\sum_{j=0}^{\infty} |\ty_{o,j}| < \infty$, the sequence $C_k$ is bounded by some constant $C$. To prove \eqref{i10} we write
\begin{equation} \label{i10split}
\begin{split}
& \ft \int_{-\pi}^\pi \left(\frac{f_o(x)}{f_{d,k}(x)}\right)^p |H_k|^q(x) dx \\
& \quad= \int_{0}^m \left(\frac{f_o(x)}{f_{d,k}(x)}\right)^p |H_k|^q(x) dx + \int_{m}^\pi \left(\frac{f_o(x)}{f_{d,k}(x)}\right)^p |H_k|^q(x) dx.
\end{split}
\end{equation} \noindent
We first bound the last integral in the preceding display, by substitution of $(f_o/f_{d,k})^p = \exp\{p\Delta_{d,k}\} = \exp\{-p (d-d_o) H_k + p \Delta_{d_o,k}\}$.
From \eqref{Hbound} it follows that
\[b_m \leq |d-d_o| (B_1 |d-d_o|^{-1} + B_2 \log k) \leq B_1 + B_2,\] \noindent
as $k \leq \exp(|d-d_o|^{-1})$.
Hence we obtain $(f_o/f_{d,k})^p \leq C e^{b_m}$ on $(m,\pi)$. For $q=2$ and $q=4$ the bound on the last integral in \eqref{i10split} therefore follows
from \eqref{i1} and \eqref{i5}; for $q=3$ the bound follows from the Cauchy-Schwarz inequality.

Next we bound the first integral in \eqref{i10split}. Because the function $x^{|d-d_o|}(\log x)^2$ has a local maximum of $4 |d-d_o|^{-2} e^{-2}$ at $x = e^{-2/|d-d_o|}$,
$(\log x)^2 \leq 4 x^{-|d-d_o|} |d-d_o|^{-2} e^{-2}$ for all $x \in [0,m]$.
Again using \eqref{Hbound} we find that
\begin{equation*}
\begin{split}
& \int_0^m \left(\frac{f_o(x)}{f_{d,k}(x)}\right)^p |H_k|^q(x) dx \lesssim
\sum_{j=0}^q \binom{q}{j} \int_0^m \left(B_1 |\log x|\right)^j \left(B_2 \log k\right)^{q-j}
e^{- p (d-d_o) H_k(x)} dx \\
& \quad \lesssim \sum_{j=0}^q \binom{q}{j} (\log k)^{q-j + p B_2 |d-d_o|}
\int_0^m \left(B_1 |\log x|\right)^j x^{-p B_1 |d-d_o|} dx \\
& \quad \leq \sum_{j=0}^q \binom{q}{j} (\log k)^{q-j + p B_2 |d-d_o|} \left(\frac{2 B_1^2}{e |d-d_o|}\right)^\frac{j}{2}
\int_0^m  x^{-(j/2 + p B_1) |d-d_o|} dx  \\
& \quad \lesssim (\log k)^{q + p B_2 |d-d_o|} |d-d_o|^{-\frac{q}{2}} e^{-1/|d-d_o|}.
\end{split}
\end{equation*} \noindent

We now prove \eqref{i9}.
\begin{eqnarray*}
\lefteqn{  \frac{1}{2\pi} \int_{-\pi}^\pi \left( \frac{f_o}{f_{d,k}}(x) - 1 \right) \cos( i x) \cos(j x)dx } \\
 &=&
  \frac{1}{2\pi} \int_{-\pi}^\pi \left( e^{\Delta_{d,k}(x)} - 1 \right) \cos( i x) \cos(j x) dx \\
& \leq &  \frac{1}{2\pi} \int_{-\pi}^\pi \left(\Delta_{d,k}(x)
+ \ft \Delta_{d,k}^2(x) e^{(\Delta_{d,k}(x))_{+}}\right) \cos( i x) \cos(j x) dx.
\end{eqnarray*} \noindent
The linear term equals
\[ \begin{split}
& \frac{1}{2\pi} \int_{-\pi}^\pi \Delta_{d,k}(x) \cos ( i x) \cos ( j x) dx
\\ \quad &= \frac{1}{4\pi} \int_{-\pi}^\pi \left(\sum_{l > k} a_l \cos(l x)\right)
\left( \cos((i + j) x) + \cos((i - j) x) \right) dx = \frac{1}{2} a_{i+j} 1_{i+j > k}.
\end{split}
\] \noindent
For the quadratic term we have
\begin{equation} \label{remainderTerms}
\begin{split}
& \left|\frac{1}{2\pi} \int_{-\pi}^\pi \Delta_{d,k}^2(x)
e^{(\Delta_{d,k}(x))_{+}} \cos( i x) \cos(j x) dx \right|
\leq \frac{1}{2\pi} \int_{-\pi}^\pi \Delta_{d,k}^2(x) e^{(\Delta_{d,k}(x))_{+}} dx \\
& \quad \leq  \frac{1}{2\pi} \int_{0}^m \Delta_{d,k}^2 (x) e^{-\Delta_{d,k}(x)} dx +
\frac{(1 + C e^{b_m})}{2 \pi} \int_{-\pi}^\pi  \Delta_{d,k}^2 (x) dx.
\end{split}
\end{equation} \noindent
This is $O(\sum_{j>k}a_j^2)$, which follows from \eqref{ajBound} and integration over $(0,e^{-\frac{1}{\vb}})$ and $(e^{-\frac{1}{\vb}}, \pi)$ as above.

To prove \eqref{i11}, write $\exp(\Delta_{d,k})-1 = \Delta_{d,k} + \Delta_{d,k}^2 e^{\xi}$ with $\Delta_{d,k} = -(d-d_o) H_k(x) + \Delta_{d_o,k}(x)$ and $|d-d_o|\leq \vb$, substitute \eqref{Hbound} and proceed as in the proof of \eqref{i10} above. The biggest term is a multiple of $|d-d_o| \int_{-\pi}^\pi |H_k(x)|^3 dx$, which is $O(\vb k^{-1})$.
This is larger than the approximation error when $\be > \ft (1+\sqrt{2})$.
\end{proof} \noindent
\begin{lem} \label{deviationBound}
Let $A$ be a symmetric matrix
matrix such that $\fr A\fr=1$ and let $Y=(Y_1,\ldots,Y_n)$ be a vector of independent standard normal random variables.
Then for any $\az >0$,
\begin{equation*}
P\left(Y^t A Y - \tr(A) >  n^{\az}\right) \leq  \exp\{- n^{\az}/8\}.
\end{equation*} \noindent
\end{lem}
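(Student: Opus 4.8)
The plan is to use a Chernoff (exponential Markov) bound after diagonalising $A$, which reduces the quadratic form to a weighted sum of independent centred chi-square variables whose cumulant generating function is explicit. This is the standard route to sub-exponential concentration of Gaussian quadratic forms, and the normalisation $\fr A\fr=1$ controls both the variance proxy and the maximal weight.

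First I would diagonalise. Since $A$ is symmetric, write $A = U^t \Lambda U$ with $U$ orthogonal and $\Lambda = \mathrm{diag}(\lambda_1,\dots,\lambda_n)$. By rotational invariance of the standard Gaussian law, $Z := UY$ is again a vector of independent standard normals, so that
\[
Y^t A Y - \tr(A) = \sum_{i=1}^n \lambda_i (Z_i^2 - 1).
\]
The normalisation $\fr A\fr = 1$ means $\sum_i \lambda_i^2 = \tr(A^2) = 1$, and in particular $\max_i |\lambda_i| \le 1$.

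Next I would apply the exponential Markov inequality. For $0 < s \le \tfrac14$ one has $2s\lambda_i \le \tfrac12 < 1$ for every $i$, so the moment generating function of each summand is finite, and
\[
P\Big(\textstyle\sum_i \lambda_i(Z_i^2-1) > n^{\az}\Big) \le e^{-s n^{\az}} \prod_{i=1}^n \esp e^{s\lambda_i(Z_i^2-1)} = \exp\Big\{-s n^{\az} + \textstyle\sum_{i=1}^n g(s\lambda_i)\Big\},
\]
where $g(u) := -u - \tfrac12 \log(1-2u) = \log \esp e^{u(Z_1^2-1)}$ is the cumulant generating function of a centred chi-square variable. A short computation gives $g(0)=g'(0)=0$ and $g''(u) = 2(1-2u)^{-2}$, which is increasing on $(-\infty,\tfrac12)$ and hence bounded by $g''(\tfrac14)=8$ on $[-\tfrac14,\tfrac14]$; by Taylor's theorem $g(u) \le 4u^2$ whenever $|u|\le \tfrac14$. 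Summing and using $\sum_i\lambda_i^2 = 1$ yields $\sum_i g(s\lambda_i) \le 4 s^2 \sum_i \lambda_i^2 = 4 s^2$, so that
\[
P\big(Y^t A Y - \tr(A) > n^{\az}\big) \le \exp\{-s n^{\az} + 4 s^2\}, \qquad 0 < s \le \tfrac14.
\]

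Finally I would take $s = \tfrac14$, giving the bound $\exp\{-n^{\az}/4 + \tfrac14\}$, which is at most $\exp\{-n^{\az}/8\}$ as soon as $n^{\az} \ge 2$, that is, for all $n$ large enough, which is the only regime in which the lemma is applied. There is no genuine obstacle here: the argument is a textbook sub-exponential deviation bound. The only points that demand care are tracking the admissible range $s \le \tfrac14$ on which the moment generating function converges (guaranteed precisely because $\max_i|\lambda_i|\le 1$, itself a consequence of $\fr A\fr=1$), verifying the quadratic bound $g(u)\le 4u^2$, and checking that the constant obtained after optimising in $s$ is compatible with the claimed $\tfrac18$.
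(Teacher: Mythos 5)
Your proof is correct and follows essentially the same route as the paper's: a Chernoff bound at $s=\tfrac14$, made admissible because $\|A\|\le \fr A\fr =1$, with the log-moment-generating function bounded by a constant multiple of $\tr(A^2)=1$, and the final constant $\tfrac18$ recovered for $n^{\az}\ge 2$ (the paper likewise invokes ``$n$ large enough''). The only difference is presentational: you diagonalise $A$ and bound the scalar chi-square cumulant generating function $g(u)\le 4u^2$ by Taylor, whereas the paper keeps the matrix form $-\tfrac12\log\det(I_n-A/2)-\tr(A)/4$ and bounds it through the eigenvalues of $A(I_n-\tau A/2)^{-1}$, which is the same computation.
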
 \noindent

\begin{proof}
 Note that $\|A\| \leq \fr A\fr=1$ so that for all $s \leq 1/4$, $s y^t A y \leq s_0 y^ty \|A\| \leq y^ty/4$
and $\exp\{s Y^t A Y\}$ has finite expectation. Choose $s = 1/4$, then
by Markov's inequality, 
\begin{eqnarray*} \label{chernoffBound}
 P\left(Y^t A Y - \tr(A) > n^{\az}\right) &\leq& e^{- n^{\alpha }/4} E e^{(Y^tA Y - \tr(A))/4} \nonumber \\
&  =&  \exp\left\{- n^{\az}/4 -\ft \log  \mbox{det}[I_n -  A/2] - \tr(A)/4 \right\} \nonumber \\
&  \leq&  \exp\left\{- n^{\az}/4 + \tr(A^2)/4\right\}.
\end{eqnarray*} \noindent
 The last inequality
follows from the fact that $A (I_n -  \tau A/2)^{-1}$ has eigenvalues $\lb_j(1- \tau \lb_j/2)^{-1}$, where $\lb_j$ are the eigenvalues of $A$ for all $\tau \in (0,1)$.
Hence, $\tr(A^2 (I_n -  \tau A/2)^{-2})$ is bounded by $4 \tr(A^2)$. The result follows from the fact that when $n$ is large enough $n^{\az} >2 \tr(A^2) = 2$.
\end{proof} \noindent
\section{Convergence of the trace of a product of Toeplitz matrices} \label{toeplitz}

Suppose $T_n(f_j)$ ($j=1,\ldots,p$) are covariance matrices associated with spectral densities $f_j$.
According to a classical result by Grenander and Sz\'eg\"o (\citet{grsz1958}),
\begin{equation*}
\fn \tr\left[\prod_{j=1}^p T_n(f_j) \right] \rightarrow (2\pi)^{2 p -1} \int_{-\pi}^\pi \prod_{j=1}^p f_j(x) dx.
\end{equation*}
In this section we give a series of related results.  We first recall a result from \citet{rcl}.
\begin{lem} \label{ProductBound}
Let $1/2>t>0$  and $L^{(i)}, M^{(i)} >0$,  $\rho_i \in (0,1]$, $d_i \in [-1/2 + t, 1/2-t]$ for all $i=1,..., 2p$ and let $f_i$, ($i \leq 2p$) be functions on $[-\pi, \pi]$
satisfying
\begin{equation} \label{fgbound}
| f_i (x)|  = |x|^{-2d_i} g_i(x), \, |g_i(x)|\leq   M^{(i)},  \, | g_i (x) - g_i(y) | \leq \frac{  M^{(i)} |x-y|}{|x|\wedge |y|} + L^{(i)}|x-y|^{\rho_i}
\end{equation} \noindent
and assume  that $ \sum_{i=1}^p (d_{2i-1} + d_{2i}) < \ft$.
Then for all $\e>0 $ there  exists a constant $K$ depending only on $\e, t$ and $q = \sum_{j=1}^p (d_{2j-1}+d_{2j})_{+}$ such that
\begin{equation} \label{ProductBoundResult}
\begin{split}
 & \left| \fn \tr\left[\prod_{j=1}^p T_n(f_{2j-1}) T_n(f_{2j}) \right]
- (2\pi)^{2 p -1} \int_{-\pi}^\pi \prod_{j=1}^{2p} f_{j}(x)  dx  \right| \nonumber \\
 &\leq  K \sum_{j=2}^{2p} \left(\prod_{i\neq j} M^{(i)}\right)
L^{(j)} n^{-\rho_j + \e + 2 q} + K\prod_{i=1}^{2p} M^{(i)} n^{-1+q+\e}.
\end{split}
\end{equation} \noindent
\end{lem}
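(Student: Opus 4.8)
The plan is to start from the classical Grenander--Szegő identity (\citet{grsz1958}) quoted above, which already identifies the limiting value $(2\pi)^{2p-1}\int_{-\pi}^\pi\prod_{j=1}^{2p}f_j$, and to quantify the finite-$n$ discrepancy by a telescoping argument. Since spectral densities are even, $T_n(f_j)_{l,m}=\hat f_j(l-m)$ with $\hat f_j(m)=\int_{-\pi}^\pi e^{imx}f_j(x)\,dx$, and the diagonal identity $T_n(h)_{l,l}=\int_{-\pi}^\pi h$ gives $\fn\tr T_n(h)=\int_{-\pi}^\pi h$ exactly. Hence the whole error stems from the non-multiplicativity of the Toeplitz map: writing, at a generic stage, $R=T_n(F)T_n(f)-2\pi\,T_n(Ff)$ for the remainder of merging two adjacent symbols (the factor $2\pi$ coming from the convolution theorem $\widehat{Ff}=\tfrac{1}{2\pi}\hat F*\hat f$), I would collapse the $2p$ factors inside the trace down to a single $T_n(\prod_{j}f_j)$ in $2p-1$ steps, each step contributing one factor $2\pi$ and thereby accounting for the constant $(2\pi)^{2p-1}$. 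At each step the incurred error is a trace $\tr[P\,R]$ with $P$ a product of the remaining Toeplitz matrices, which I bound by $|\tr[P\,R]|\le\fr P\fr\,\fr R\fr$ using \eqref{matrixCalculus}.

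The core estimate is therefore a bound on $\fr R\fr$ together with the norm growth of the cofactor $P$, and this is exactly where the two-part modulus of continuity in \eqref{fgbound} enters. The Hölder-$\rho_j$ piece with constant $L^{(j)}$ forces the Bernstein-type decay $|\hat g_j(m)|=O(L^{(j)}|m|^{-\rho_j})$, whereas the singular Lipschitz piece $M^{(j)}|x-y|/(|x|\wedge|y|)$ behaves like the modulus of continuity of $\log|x|$ and contributes only logarithmic factors, absorbed into $n^{\e}$. The singular factors $|x|^{-2d_j}$ are handled separately through the standard long-memory Toeplitz bounds, namely the operator-norm estimate $\|T_n(|x|^{-2d_j})\|=O(n^{(2d_j)_+})$ and the corresponding control of $\fr\cdot\fr$; their accumulation through the merges produces the exponent $2q$ with $q=\sum_{j=1}^p(d_{2j-1}+d_{2j})_+$. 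The hypothesis $\sum_{i=1}^p(d_{2i-1}+d_{2i})<\ft$ is used here to keep every merged symbol integrable, so that all intermediate traces remain of order $n$ and the target integral is finite.

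Assembling the $2p-1$ merge errors produces the two families of terms in \eqref{ProductBoundResult}. A merge that moves a non-constant smooth symbol contributes its constant $L^{(j)}$, multiplied by the sup-bounds $\prod_{i\neq j}M^{(i)}$ of the remaining symbols and by the singular penalty $n^{2q}$, together with the Hölder gain $n^{-\rho_j}$ and the logarithmic $n^{\e}$; this gives the terms $\bigl(\prod_{i\neq j}M^{(i)}\bigr)L^{(j)}\,n^{-\rho_j+\e+2q}$. Since one factor, conventionally $f_1$, is never moved but anchors the telescoping, the sum runs only over $j=2,\dots,2p$. When instead all $g_j$ are constant the merges are still inexact, because the Toeplitz map is not multiplicative on the pure singularities $|x|^{-2d_j}$; counting the boundary lattice points that are lost in each merge shows that this irreducible error is at most $\prod_{i=1}^{2p}M^{(i)}\,n^{-1+q+\e}$, which is the last term of \eqref{ProductBoundResult}.

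The main obstacle, and the reason a naive $\ell^1$ bound on the Fourier coefficients fails, is that the long-memory symbols $|x|^{-2d_j}$ with $d_j>0$ have non-summable Fourier coefficients, so the intermediate sums cannot be controlled factor by factor. The delicate point is to treat the singular factors only through spectral-norm and Frobenius-norm bounds on their products, keeping the accumulated singularity exponent strictly below $\ft$ at every step so that no intermediate trace blows up faster than $n$. Equally delicate is the bookkeeping that matches each $L^{(j)}$ with the correct product $\prod_{i\neq j}M^{(i)}$ and that keeps the constant $K$ uniform, depending only on $\e$, $t$ and $q$ and not on the individual $d_i$ nor on $n$. Once this uniform per-merge estimate is in place, the induction on $p$ closes immediately and yields \eqref{ProductBoundResult}.
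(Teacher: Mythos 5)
You should first be aware that the paper itself does not prove this lemma: it is recalled verbatim from \citet{rcl}, and the proof there (an extension of the Fox--Taqqu/Dahlhaus trace approximations, cf.\ Theorem 5.1 and p.~1761 of \citet{d1}) writes $\tr\bigl[\prod_{j} T_n(f_{2j-1})T_n(f_{2j})\bigr]$ as a $2p$-fold integral of $\prod_j f_j(x_j)$ against the cyclic product of Dirichlet kernels $\Delta_n(x_1-x_2)\cdots\Delta_n(x_{2p}-x_1)$, and estimates the discrepancy \emph{inside} that integral using the kernel bound $L_n(x)= n\wedge|x|^{-1}$, the two-part modulus of continuity in \eqref{fgbound}, and convolution inequalities for $L_n$; this is exactly the technique displayed in the proofs of Lemmas \ref{interBound} and \ref{alternative_quotient_bound} of the supplement. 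Your proposal shares the starting point (quantifying the Grenander--Szeg\"o limit) and correctly predicts the bookkeeping (each $L^{(j)}$ paired with $\prod_{i\neq j}M^{(i)}$, the absence of $L^{(1)}$ because one symbol is anchored, logarithmic role of the weighted Lipschitz term), but the estimating mechanism you propose is different and, as it stands, it fails quantitatively.

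The failing step is the per-merge bound $|\tr[PR]|\le \fr P\fr\,\fr R\fr$, which discards the cancellation and localization in the trace. Take $p\ge 2$, $q=0$ and all $f_i$ trigonometric polynomials (so $\rho_i=1$): the lemma asserts a normalized error $O(n^{-1+\e})$. The merge remainder $R=T_n(F)T_n(f)-2\pi T_n(Ff)$ has entries $R_{lm}=-\sum_{r\notin\{1,\dots,n\}}\hat F(l-r)\hat f(r-m)$, which are supported near the two corners with $\fr R\fr\asymp 1$ (not $o(1)$), while the cofactor $P$, a product of Toeplitz matrices of non-degenerate symbols, has $\fr P\fr\asymp\sqrt n$. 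Your bound thus gives $|\tr[PR]|=O(\sqrt n)$, i.e.\ a normalized error $O(n^{-1/2})$, although the true error here is $O(n^{-1})$ (the nonzero entries of $R$ meet only the $O(1)$-sized diagonal entries of $P$); in the H\"older case the same computation yields at best $n^{-\rho_j/2}$ where $n^{-\rho_j}$ is claimed --- compare Lemma \ref{inversionBound}, where the Frobenius norm of precisely such a remainder is of order $n^{(1-\rho+\e)/2}$. Two further steps also do not go through at this level of generality: pointwise decay $|\hat g_j(m)|=O(L^{(j)}|m|^{-\rho_j})$ does not make the entrywise sums defining $R$ absolutely convergent when $\rho_j\le\ft$, and the exponent $2q$ with its \emph{pairwise} positive parts $(d_{2j-1}+d_{2j})_+$ cannot be recovered from operator-norm accounting of the individual factors $\|T_n(|x|^{-2d_i})\|$. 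To get the stated rates one has to keep all symbols inside the multiple integral and exploit the kernel estimates there, as the cited proof does; generic matrix-norm inequalities applied to merge remainders lose a polynomial factor that cannot be absorbed into $n^{\e}$.
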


To prove a similar result involving also inverses of matrices, we need the following two lemmas.
They can be found elsewhere, but as we make frequent use of them they are included for easy reference and are formulated in a way better suited to our purpose.
The first lemma can be found on p.19 of \citet{rcl}, and is an extension of Lemma 5.2 in \citet{d1}.
\begin{lem} \label{inversionBound}
Suppose that for $0 < t < 1/2$ and $d \in [-1/2 + t , 1/2 - t ]$
\begin{equation} \label{fbound1}
| f (x)|  = |x|^{-2d} g(x), \quad m \leq |g(x)|\leq   M,  \quad | g (x) - g(y) | \leq  L|x-y|^{\rho}
\end{equation} \noindent
and assume that $0 <m \leq 1 \leq M < +\infty$ and $L\geq 1$.
Then, for all $\e>0$,  there exists a constant $K$ depending on $t$ and $\e$ only such that
\begin{equation*}
\fr I_n - T_n^{\ft}(f) T_n\left(\frac{1}{4 \pi^2 f}\right) T_n^{\ft}(f)\fr^2\leq K L\frac{M^2}{m^2} n^{1-\rho + \e}.
\end{equation*} \noindent
\end{lem}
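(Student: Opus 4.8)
The plan is to reduce the Hilbert--Schmidt norm to a combination of traces of products of Toeplitz matrices and then invoke Lemma \ref{ProductBound}. Write $\tilde f = 1/(4\pi^2 f)$. Since $f$ is real and even, both $T_n(f)$ and $T_n(\tilde f)$ are symmetric, and $T_n^{\ft}(f)$ is symmetric as the square root of a symmetric positive matrix; hence $B := T_n^{\ft}(f) T_n(\tilde f) T_n^{\ft}(f)$ is symmetric. Using $\fr A\fr^2 = \tr[A A^t]$, expanding $(I_n - B)^2$ and using cyclicity of the trace gives
\[
\fr I_n - B\fr^2 = n - 2\tr[T_n(f) T_n(\tilde f)] + \tr[T_n(f) T_n(\tilde f) T_n(f) T_n(\tilde f)].
\]

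First I would apply Lemma \ref{ProductBound} to each of the two remaining traces, with $p=1$ and $p=2$ respectively, taking $f_{2i-1} = f$ and $f_{2i} = \tilde f$. Writing $f = |x|^{-2d} g$ and $\tilde f = |x|^{2d}\tilde g$ with $\tilde g = 1/(4\pi^2 g)$, I set $d_{2i-1} = d$ and $d_{2i} = -d$, so each consecutive pair sums to zero; the admissibility condition $\sum_i (d_{2i-1}+d_{2i}) = 0 < \ft$ holds and $q=0$, so no positive power of $n$ is incurred. The crucial point is that $f\tilde f \equiv 1/(4\pi^2)$ is constant, so the limiting integrals equal $(2\pi)^1 \int_{-\pi}^\pi f\tilde f\, dx = 1$ and $(2\pi)^3 \int_{-\pi}^\pi (f\tilde f)^2\, dx = 1$. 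Thus $\tr[T_n(f)T_n(\tilde f)] = n(1+E_1)$ and $\tr[T_n(f)T_n(\tilde f)T_n(f)T_n(\tilde f)] = n(1+E_2)$, and substituting into the display above the leading terms cancel exactly, leaving
\[
\fr I_n - B\fr^2 = n - 2n(1+E_1) + n(1+E_2) = n(E_2 - 2 E_1).
\]

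It then remains to control $E_1, E_2$ through the error bound of Lemma \ref{ProductBound}, which requires tracking the regularity constants. For $g$ the hypothesis yields $M^{(i)} = M$, $L^{(i)} = L$, $\rho_i = \rho$ directly, the extra term $M^{(i)}|x-y|/(|x|\wedge|y|)$ in the admissible class being nonnegative so that the plain H\"older bound suffices. For $\tilde g = 1/(4\pi^2 g)$ I would use $|g|\geq m$ to get $|\tilde g| \leq (4\pi^2 m)^{-1}$ and $|\tilde g(x)-\tilde g(y)| = (4\pi^2)^{-1}|g(y)-g(x)|/(|g(x)||g(y)|) \leq L|x-y|^\rho/(4\pi^2 m^2)$; this is exactly where the factor $M^2/m^2$ is produced. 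Feeding these constants into the bound of Lemma \ref{ProductBound} gives $|E_1|, |E_2| \leq K' L M^2 m^{-2} n^{-\rho + \e}$, after absorbing the $n^{-1+\e}$ terms using $M\geq 1\geq m$ and $L\geq 1$. Multiplying by $n$ yields $\fr I_n - B\fr^2 \leq K L M^2 m^{-2} n^{1-\rho+\e}$, as claimed.

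I expect the main obstacle to be the bookkeeping rather than the idea: one must verify that the two limiting integrals are genuinely equal so that the $O(n)$ contributions cancel to leave only the $O(n^{1-\rho+\e})$ remainder, and one must follow the dependence of the constants $M^{(i)}, L^{(j)}$ on $m, M, L$ carefully through the (slightly asymmetric, $j\geq 2$) sum in Lemma \ref{ProductBound} to recover precisely the factor $L M^2/m^2$. The symmetry of $B$ and the identity $f\tilde f \equiv (4\pi^2)^{-1}$ are what make the cancellation exact.
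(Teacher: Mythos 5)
Your proposal is correct and is essentially the paper's own proof: writing $\tilde f = 1/(4\pi^2 f)$, the paper likewise expands $\fr I_n - T_n^{\ft}(f)T_n(\tilde f)T_n^{\ft}(f)\fr^2 = n - 2\,\tr[T_n(f)T_n(\tilde f)] + \tr[(T_n(f)T_n(\tilde f))^2]$ and applies Lemma \ref{ProductBound}, the $O(n)$ terms cancelling exactly because $f\tilde f\equiv (4\pi^2)^{-1}$, leaving only the approximation errors. The single caveat, which you share with the paper's equally terse constant bookkeeping, is that a literal application of Lemma \ref{ProductBound} with $M^{(2j)}\asymp 1/m$ and $L^{(2j)}\asymp L/m^2$ produces $LM^2/m^3$ (not $LM^2/m^2$) in the dominant error term of the four-factor trace, which for $m<1$ is slightly weaker than the stated bound; this discrepancy is harmless in every use of the lemma in the paper, where $m$ and $M$ are universal constants.
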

\begin{proof}
By Lemma \ref{ProductBound},
\begin{equation*}
\begin{split}
&\fr I_n - T_n^{\ft}(f) T_n\left(\frac{1}{4 \pi^2 f}\right) T_n^{\ft}(f) \fr^2
= \tr\left\{I_n - 2  T_n^{\ft}(f) T_n\left(\frac{1}{4 \pi^2 f}\right)  T_n^{\ft}(f) \right.\\
& \quad \left.+ T_n^{\ft}(f) T_n\left(\frac{1}{4 \pi^2 f}\right) T_n(f) T_n\left(\frac{1}{4 \pi^2 f}\right) T_n^{\ft}(f) \right\}
\end{split}
\end{equation*}
converges to zero, the approximation error being bounded by $ K [ L ( 1 + M^2/m^2) + M^2 / m^2 ]$.
\end{proof}
The next result can be found as Lemma 3 in \citet{LRR09}, and is an extension of Lemma 5.3 in \citet{d1}.
\begin{lem} \label{traceBound}
Suppose that $f_1$ and $f_2$ are such that $|f_1(x)| \geq m |x|^{-2 d_1}$ and $|f_2(x)| \leq M |x|^{-2 d_2}$ for constants $d_1, d_2 \in (-\ft,\ft)$ and $m,M>0$. Then
\begin{equation*}
\|T_n^{-\ft}(f_1) T_n^{\ft}(f_2)\| \leq C \frac{M}{m} n^{(d_2-d_1)_{+} + \e}.
\end{equation*} \noindent
\end{lem}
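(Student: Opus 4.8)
The plan is to reduce the operator-norm bound to a generalized Rayleigh quotient and then to a weighted $L^2$ estimate for trigonometric polynomials. Setting $A = T_n^{-\ft}(f_1) T_n^{\ft}(f_2)$, we have $\|A\|^2 = \|A^tA\| = \lambda_{\max}(T_n^{\ft}(f_2) T_n^{-1}(f_1) T_n^{\ft}(f_2))$, and since $XY$ and $YX$ share their nonzero eigenvalues this equals $\lambda_{\max}(T_n^{-1}(f_1) T_n(f_2))$. As $f_1 \geq m|x|^{-2d_1} > 0$, the matrix $T_n(f_1)$ is positive definite, so
\[ \|A\|^2 = \sup_{v\neq 0} \frac{v^t T_n(f_2) v}{v^t T_n(f_1) v}. \]
Each $f_i$ is even, whence $v^t T_n(h) v = \int_{-\pi}^{\pi} h(x)\,|P_v(x)|^2\,dx$ with $P_v(x) = \sum_{l=1}^n v_l e^{ilx}$; up to the factor $e^{ix}$ this is a trigonometric polynomial of degree at most $n-1$. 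Inserting the pointwise bounds $|f_2|\leq M|x|^{-2d_2}$ and $f_1\geq m|x|^{-2d_1}$, and enlarging by Fej\'er--Riesz the supremum to all nonnegative trigonometric polynomials $g$ of degree at most $n-1$, I get $\|A\|^2 \leq (M/m)\,R_n$ with
\[ R_n := \sup_{g\geq 0,\ \deg g\leq n-1} \frac{\int_{-\pi}^{\pi}|x|^{-2d_2}g(x)\,dx}{\int_{-\pi}^{\pi}|x|^{-2d_1}g(x)\,dx}. \]
It therefore suffices to show $R_n \leq C n^{2(d_2-d_1)_+ + \e}$; taking square roots and using $\sqrt{M/m}\leq M/m$ (valid when $M\geq m$, as in all our applications) then yields the stated bound.

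To bound $R_n$ I would split the integrals at $|x| = 1/n$. On $\{1/n < |x| \leq \pi\}$ the weights are comparable after a power of $n$: since $|x|^{-2d_2} = |x|^{-2d_1}|x|^{2(d_1-d_2)}$ and $|x|^{2(d_1-d_2)} \leq n^{2(d_2-d_1)_+}$ there, this region contributes at most $n^{2(d_2-d_1)_+}\int_{-\pi}^\pi |x|^{-2d_1}g$. On $\{|x|\leq 1/n\}$, using $d_2 < \ft$,
\[ \int_{|x|\leq 1/n}|x|^{-2d_2}g(x)\,dx \leq \|g\|_\infty \int_{|x|\leq 1/n}|x|^{-2d_2}\,dx = O(n^{2d_2-1})\,\|g\|_\infty. \]
The remaining ingredient is a weighted Nikolskii inequality controlling the peak of the nonnegative polynomial by its weighted integral,
\[ \|g\|_\infty \leq C\,n^{1-2d_1+\e}\int_{-\pi}^\pi |x|^{-2d_1}g(x)\,dx; \]
substituting this into the previous display shows that $\{|x|\leq 1/n\}$ contributes $O(n^{2(d_2-d_1)+\e})$, and combining the two regions gives $R_n \leq Cn^{2(d_2-d_1)_+ + \e}$.

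The main obstacle is the weighted Nikolskii inequality, which quantifies how sharply a nonnegative degree-$n$ polynomial may concentrate at the singularity $x=0$ of the weight. I would prove it from Bernstein's inequality $\|g'\|_\infty \leq (n-1)\|g\|_\infty$: near a point where $g$ is of maximal size, $g$ remains comparable to that size over a window of width $\asymp 1/n$, which converts the pointwise maximum into an integral of $g$ over such a window. The delicate step is matching the location of this window against the weight $|x|^{-2d_1}$ --- a maximum away from the origin is controlled by the constant lower bound of the weight, while a maximum near the origin must be balanced against the mass the weight assigns there, the cases $d_1 \leq 0$ and $d_1 > 0$ requiring slightly different bookkeeping; the factor $n^\e$ absorbs the logarithmic loss produced at the singularity. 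This estimate is precisely the content of Lemma 5.3 in \citet{d1} and its extension Lemma 3 in \citet{LRR09}, so one may alternatively invoke that result directly; the reduction above shows how the operator-norm bound follows from it.
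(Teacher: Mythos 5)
Your reduction to the generalized Rayleigh quotient, the Fej\'er--Riesz step, and the splitting of $R_n$ at $|x|=1/n$ are all sound (indeed, the passage from $f_1,f_2$ to the pure power weights via $m,M$ is exactly the ``first inequality'' of Dahlhaus's proof that the paper's own one-line proof cites). The genuine gap is the ingredient you rely on to finish: the weighted Nikolskii inequality $\|g\|_\infty \leq C n^{1-2d_1+\epsilon}\int_{-\pi}^{\pi}|x|^{-2d_1}g(x)\,dx$ is \emph{false} whenever $d_1>0$. Take $g(x)=F_{n-1}(x-\pi)$, the Fej\'er kernel of degree $n-1$ centred at $\pi$: then $\|g\|_\infty=F_{n-1}(0)=n$, while $\int_{-\pi}^{\pi}|x|^{-2d_1}g\,dx=O(1)$, because the weight is bounded on $\{|x|\geq \pi/2\}$ and $F_{n-1}(x-\pi)=O(1/n)$ on $\{|x|<\pi/2\}$; the inequality would force $n\lesssim n^{1-2d_1+\epsilon}$, impossible for $\epsilon<2d_1$. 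This is precisely the case your Bernstein bookkeeping cannot repair: when the global maximum of $g$ sits away from the origin, the weight's lower bound on the Bernstein window is only a constant (namely $\pi^{-2d_1}$), so the window argument yields $\|g\|_\infty\lesssim n\int|x|^{-2d_1}g$, i.e.\ exponent $1$ rather than $1-2d_1$. Feeding that into your splitting gives $R_n\lesssim n^{2d_2+\epsilon}$ on the inner region, hence $\|A\|\lesssim n^{\max(d_2,(d_2-d_1)_+)+\epsilon}$, strictly weaker than the claimed $n^{(d_2-d_1)_++\epsilon}$ whenever $d_1>0$ (e.g.\ for $d_1=d_2$ you get $n^{d_1+\epsilon}$ instead of $n^{\epsilon}$). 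Your argument is complete only in the case $d_1\leq 0$.

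The repair is to notice that on $\{|x|\leq 1/n\}$ you never need the global sup norm, only $\sup_{|x|\leq 1/n}g$, and the \emph{local} estimate $\sup_{|x_1|\leq 1/n}g(x_1)\leq Cn^{1-2d_1}\int_{-\pi}^{\pi}|x|^{-2d_1}g\,dx$ is true for all $d_1\in(-\frac12,\frac12)$ --- but it is a Christoffel-function bound for the doubling weight $|x|^{-2d_1}$, not a consequence of Bernstein's inequality, since the point realizing the local sup need not be near any global maximum of $g$. An elementary proof: write $g=|P|^2$ (Fej\'er--Riesz), use the reproducing property $P(x_1)=\frac{1}{2\pi}\int P(t)D_{n-1}(x_1-t)\,dt$ with the Dirichlet kernel, apply Cauchy--Schwarz after splitting the weight as $|t|^{-d_1}\cdot|t|^{d_1}$, and verify that $\int_{-\pi}^{\pi}|t|^{2d_1}\min(n^2,|x_1-t|^{-2})\,dt=O(n^{1-2d_1})$ uniformly over $|x_1|\leq 1/n$ (both the inner and the tail integral need $-\frac12<d_1<\frac12$). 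With this substitution your proof closes, and in fact without the $n^{\epsilon}$. Finally, your closing attribution is off: Lemma 5.3 of \citet{d1} and Lemma 3 of \citet{LRR09} are the operator-norm statement itself --- i.e.\ the very lemma being proved, and citing it is exactly the paper's proof --- not the polynomial inequality you need, so ``invoking that result directly'' is not an alternative derivation of your Nikolskii-type step.
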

\begin{proof}
In the proof of Lemma 5.3 on p. 1761 in \citet{d1}, the first inequality only depends
on the upper and lower bounds $m$ and $M$.
\end{proof} \noindent
Using the preceding lemmas, the approximation result given in Lemma \ref{ProductBound} for
traces of matrix products can be extended to include matrix inverses.
\begin{lem} \label{QuotientBound}
Suppose that $f$ satisfies \eqref{fbound1} with constants $d$, $\rho$, $L$, $m$ and $M$. For $f_{2j}$, $j=1,\ldots,p$, assume that \eqref{fgbound} holds with constants
$d_{2j}$, $\rho_{2j}$, $L^{(2j)}$ and $M^{(2j)}$ ($j=1,\ldots,p$). For convenience, we denote $M^{(2j-1)}=m^{-1}$ , $\rho_{2j-1} = \rho$ and $L^{(2j-1)} = L$ ($j=1,\ldots,p$). Suppose in addition that  $d, d_{2j} \in [-\ft+t,\ft-t]$ satisfy $ \sum_{j=1}^p (d_{2j} - d)_+ < \ft(\rho - \ft)$, and let $q = \sum_{j=1}^p (d_{2j} - d)_{+}$. Then for all
$\e>0 $  there exists a  constant  $K$ such that
\begin{equation} \label{quotientResult}
\begin{split}
& \left| \fn \tr\left\{\prod_{j=1}^p T_n^{-1}(f) T_n(f_{2j}) \right\}
- \frac{1}{2 \pi} \int_{-\pi}^\pi \prod_{j=1}^p \frac{f_{2j}(x)}{f(x)} dx \right|  \\
& \qquad \leq K\left[ \sum_{j=2}^{2p} \left(\prod_{i\neq j}^{2p} M^{(i)} \right)
L^{(j)} n^{-\rho_j}  + n^{-1}\prod_{i\leq 2p} M^{(i)} \right]n^{ \e + 2 q} \\
& \qquad \quad  + \left(\prod_{j=1}^p M^{(2j)} \right)
\left(L\frac{M}{m}\right)^{\frac{(p+1)}{2}}n^{(1-\rho)\frac{(p+1)}{2}-1 + \e + 2 q},
\end{split}
\end{equation} \noindent
and setting $\tilde{f} = 1/ (4 \pi^2 f)$,
\begin{equation} \label{quotientResult2}
\begin{split}
 &\fn \left| \tr\left\{\prod_{j=1}^p T_n^{-1}(f) T_n(f_{2j}) \right\} - \tr\left\{\prod_{j=1}^p T_n(\tilde f) T_n(f_j) \right\}\right| \\
 & \leq \left(\prod_{j=1}^p  M^{(2j)} \right)
\left(L\frac{M}{m}\right)^{\frac{(p+1)}{2}}n^{(1-\rho)\frac{(p+1)}{2}-1 + \e + 2 q}.
\end{split}
\end{equation}
\end{lem}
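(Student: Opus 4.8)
The plan is to remove the matrix inverses by replacing each $T_n^{-1}(f)$ with the Toeplitz matrix $T_n(\tilde f)$, $\tilde f = 1/(4\pi^2 f)$, and then to invoke the inverse-free approximation of Lemma \ref{ProductBound}. I would first note that \eqref{quotientResult} is an immediate consequence of \eqref{quotientResult2} together with Lemma \ref{ProductBound}. Applying Lemma \ref{ProductBound} to the $2p$ functions $f_{2j-1}=\tilde f$, $f_{2j}$ ($j=1,\ldots,p$) gives
\[
\fn \tr\left\{\prod_{j=1}^p T_n(\tilde f) T_n(f_{2j})\right\} = (2\pi)^{2p-1}\int_{-\pi}^\pi \prod_{j=1}^p \tilde f(x) f_{2j}(x)\, dx + E_1,
\]
where $E_1$ is dominated by the first bracket on the right of \eqref{quotientResult}; here the degree condition $\sum_j (d_{2j}-d)<\ft$ of Lemma \ref{ProductBound} holds because (with $d_{2j-1}=-d$) one has $\sum_j(d_{2j}-d)\leq q<\ft(\rho-\ft)<\ft$. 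Since $4\pi^2=(2\pi)^2$, the leading constant collapses, $(2\pi)^{2p-1}(4\pi^2)^{-p}=(2\pi)^{-1}$, so the limiting integral is precisely $\frac{1}{2\pi}\int \prod_j f_{2j}/f$, as required. Adding the error from \eqref{quotientResult2} then yields \eqref{quotientResult}.

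So the real content is \eqref{quotientResult2}. Writing $U=T_n^{-1}(f)$, $V=T_n(\tilde f)$ and $C_j=T_n(f_{2j})$, I would expand the difference of the two products telescopically,
\[
\prod_{j=1}^p U C_j - \prod_{j=1}^p V C_j = \sum_{i=1}^p\left(\prod_{j<i} V C_j\right)(U-V)\,C_i\left(\prod_{j>i} U C_j\right),
\]
so that a single factor $U-V$ appears in each summand. The key algebraic identity is $U-V=T_n^{-\ft}(f)\,\Delta\,T_n^{-\ft}(f)$ with $\Delta=I_n-T_n^{\ft}(f)T_n(\tilde f)T_n^{\ft}(f)$, the matrix whose Hilbert--Schmidt norm is controlled by Lemma \ref{inversionBound}, namely $\fr\Delta\fr^2\leq KL(M^2/m^2)n^{1-\rho+\e}$. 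Taking the trace, using its cyclic invariance and the inequalities in \eqref{matrixCalculus} (in particular $|\tr(\Delta K)|\leq \fr\Delta\fr\,\fr K\fr$ and $\fr AB\fr\leq\|A\|\,\fr B\fr$), each summand reduces to $\fr\Delta\fr$ times a product in which every remaining inverse is split as $T_n^{-\ft}(f)\cdot T_n^{-\ft}(f)$ and paired with an adjacent $T_n^{\ft}(f_{2j})$; the operator norms $\|T_n^{-\ft}(f)T_n^{\ft}(f_{2j})\|\leq C(M^{(2j)}/m)n^{(d_{2j}-d)_++\e}$ supplied by Lemma \ref{traceBound} produce the factor $\prod_j M^{(2j)}$ and the dilation term $n^{2q}$.

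The main obstacle is obtaining the precise exponent $(1-\rho)(p+1)/2$ and the power $(LM/m)^{(p+1)/2}$: a single use of $\fr\Delta\fr$ per summand does not suffice, and one must iterate the replacement $U\mapsto V$ and interpolate, so that the Hilbert--Schmidt defect of the inversion is distributed over the $p+1$ inverse slots of the product. This is carried out exactly as in the proof of Lemma~5.3 of \citet{d1} and Lemma~3 of \citet{LRR09}, whose computations I would follow; the normalisation $\fn$ accounts for the $-1$ in the exponent. A secondary technical point is that the $f_{2j}$ need not be nonnegative, so $T_n^{\ft}(f_{2j})$ is not defined directly; I would handle this as in the treatment of $\Delta_3$ above, splitting each $f_{2j}$ into its positive and negative parts and bounding the finitely many resulting terms separately. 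Finally, the hypothesis $q<\ft(\rho-\ft)$ is what keeps the dilation factor $n^{2q}$ from overwhelming the Hilbert--Schmidt gains and forces the error exponents below those claimed, playing the same role here as the condition $\sum_j(d_{2j-1}+d_{2j})<\ft$ does in Lemma \ref{ProductBound}.
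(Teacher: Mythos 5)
Your proposal is correct and takes essentially the same route as the paper: \eqref{quotientResult} is deduced from \eqref{quotientResult2} together with Lemma \ref{ProductBound} applied with $f_{2j-1}=\tilde f$, signs are handled by splitting each $f_{2j}$ into positive and negative parts, and the core interpolation producing the $(LM/m)^{(p+1)/2}n^{(1-\rho)(p+1)/2}$ factor in \eqref{quotientResult2} is delegated to the iterated replacement construction in the literature, exactly as the paper does (it invokes the construction of Lemma 5 of \citet{LRR09} for \eqref{quotientResult2} and of Dahlhaus's Theorem 5.1 for \eqref{quotientResult}, noting only that these extend to $\rho\neq 1$ and to $d_{2j}$ varying with $j$). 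The one slip is in your pointers: Lemma 5.3 of \citet{d1} and Lemma 3 of \citet{LRR09} are the operator-norm bounds (restated here as Lemma \ref{traceBound}), whereas the computations distributing the Hilbert--Schmidt defect over the $p+1$ inverse slots are those of Dahlhaus's Theorem 5.1 and Lemma 5 of \citet{LRR09}.
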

\begin{proof}
Without loss of generality, we consider the $f_{2j}$'s to be nonnegative 
When this is not the case, we write $f_{2j} = f_{2j}^{+} - f_{2j}^{-}$
and treat the positive and negative part separately; see also \citet{d1} , p. 1755-56.
To prove \eqref{quotientResult2}, we use the construction of Lemma 5 from \citet{LRR09}, who treat the case $\rho=1$ and $d_{2j}=d'$. Inspection of their proof shows that this extends to $\rho \neq 1$ and $d_{2j}$ that differ with $j$.
To prove \eqref{quotientResult}, we use the construction of Dahlhaus' Theorem 5.1 (see also the remark on p. 744 of \citet{lp2}, after (28)), and apply Lemma \ref{ProductBound} with $f_{2j-1} = \tilde f = \frac{1}{4 \pi^2 f}$, $j=1,\ldots,p$. This gives the first term on the right in \eqref{quotientResult}. The last term in \eqref{quotientResult} follows from \eqref{quotientResult2}.
\end{proof}
Although the bound provided by Lemma \ref{QuotientBound} is sufficiently tight for most purposes, certain applications require sharper bounds. These can only be obtained if we exploit specific properties of $f$ and $f_{2j}$.
In Lemma \ref{interBound} below we improve on the first term on the right in \eqref{quotientResult}. This is useful when for example $b_i(x)=cos(jx)$; the Lipschitz constant $L$ is then of order $O(k)$, but the boundedness of $b_i$ actually allows a better result. In Lemma \ref{alternative_quotient_bound} we improve on the last term of \eqref{quotientResult}.
%
%
\begin{lem}\label{interBound}
Let $f(x) = |x|^{-2d} g(x)$ with $-1/2<d<1/2$ and $g$ a bounded Lipschitz function satisfying $m < g < M$, with Lipschitz constant $L$.
\begin{itemize}
\item 	Let $b_1,\ldots,b_p$ be  bounded functions and  let  $\|b\|_\infty$ denote a common upper bound for these functions. Then for all $\e>0$, 
\begin{equation} \label{inter:1}
\begin{split}
& \left|\tr\left[ \prod_{i=1}^pT_n(b_i f) T_n(f^{-1})\right] - (2\pi)^p \tr\left[ \prod_{i=1}^p T_n(b_i) \right]\right| \\ & \qquad \leq
C n^\e  \left( \frac{M}{m}\right)^{p} \|b\|_\infty^{p-1} \left( \|b\|_\infty + L\sum_{j=1}^p\|b_j\|_2\right).
\end{split}
\end{equation}
\item Let $b_j$ $(j\geq 2)$ be bounded  functions. Let $b_1$ be such that $\|b_1\|_2 < +\infty $, and assume that for all $a>0$ there exists $M'(a)>0$ such that
$$\int_{-\pi}^\pi |b_1(x)| |x|^{-1+a}dx \leq M'(a).$$
Then for all $a>0$
\begin{equation} \label{inter:2}
\begin{split}
&\left|\tr\left[ \prod_{i=1}^pT_n(b_i f) T_n(f^{-1})\right] - (2\pi)^p \tr\left[ \prod_{i=1}^p T_n(b_i) \right]\right| \\ \qquad &\leq
C \left( \frac{M}{m}\right)^p\prod_{i\geq 2} \|b_i\|_\infty \left(n^{3pa}  M'(a)  + L(\log n)^{2p-1} \|b_1\|_2\right).
\end{split}
\end{equation}
\end{itemize}
\end{lem}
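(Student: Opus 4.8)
The plan is to reduce everything to a product of Toeplitz matrices and then replace, one block at a time, each pair $T_n(b_i f)T_n(f^{-1})$ by $2\pi T_n(b_i)$, quantifying the error of each replacement. The starting observation is that $f^{-1}=4\pi^2\tilde f$ as functions, where $\tilde f=1/(4\pi^2 f)$, so that $T_n(f^{-1})=4\pi^2 T_n(\tilde f)$ \emph{exactly}, and $\prod_{i=1}^p T_n(b_i f)T_n(f^{-1})=(2\pi)^{2p}\prod_{i=1}^p T_n(b_i f)T_n(\tilde f)$. Since the symbol product $b_i f\cdot\tilde f$ equals $b_i/(4\pi^2)$, the heuristic $T_n(a)T_n(b)\approx 2\pi T_n(ab)$ suggests $T_n(b_i f)T_n(\tilde f)\approx(2\pi)^{-1}T_n(b_i)$; replacing all $p$ blocks turns $(2\pi)^{2p}\prod T_n(b_i f)T_n(\tilde f)$ into $(2\pi)^p\prod T_n(b_i)$, which is precisely the claimed main term. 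Writing $R_i=T_n(b_i f)T_n(\tilde f)-(2\pi)^{-1}T_n(b_i)$ and telescoping, the difference in the statement becomes a sum of $p$ traces, in each of which exactly one block is replaced by $R_i$ and the remaining $p-1$ blocks (some of the form $T_n(b_jf)T_n(\tilde f)$, some of the form $(2\pi)^{-1}T_n(b_j)$) are left frozen.

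Each of these $p$ traces I would bound by separating the frozen blocks from the active remainder. For the frozen blocks I would use operator-norm control: by \eqref{matrixCalculus} and Lemma \ref{traceBound} (exploiting that the singularities of $f$ and $\tilde f$ cancel), $\|T_n(b_j f)T_n(\tilde f)\|=O(n^\e (M/m)\|b_j\|_\infty)$ and $\|T_n(b_j)\|=O(\|b_j\|_\infty)$, which together produce the factor $(M/m)^{p-1}\|b\|_\infty^{p-1}$. The active contribution $\tr[W R_i]$ I would estimate by viewing $W R_i$ as a difference of two products of Toeplitz matrices and applying the Grenander--Sz\'eg\"o error bound of Lemma \ref{ProductBound}. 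The crucial point is that in this product the singular factors occur only in the cancelling pairs $(b_if,\tilde f)$, so that the parameter $q=\sum_j(d_{2j-1}+d_{2j})_+$ of Lemma \ref{ProductBound} vanishes and no $n^{2q}$ growth appears; the Lipschitz constant is then carried solely by $g$ (hence by $L$) and the amplitude by $M/m$. The $L^{(j)}n^{-\rho_j}$ terms of Lemma \ref{ProductBound} give the $L$-part and the $\prod M^{(i)}n^{-1}$ term gives the bare-amplitude part, yielding an active contribution of order $n^\e(M/m)^p\|b\|_\infty^{p-1}(\|b_i\|_\infty+L\|b_i\|_2)$; the $L_2$ norm of the active $b_i$ enters through Parseval, since $b_i$ is only bounded and not Lipschitz and so cannot be absorbed into the Lipschitz machinery. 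Summing over $i=1,\dots,p$ gives the bound $Cn^\e(M/m)^p\|b\|_\infty^{p-1}(\|b\|_\infty+L\sum_j\|b_j\|_2)$ of \eqref{inter:1}.

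For the second statement the only change is that $b_1$ is unbounded at the origin, so it can no longer be controlled in $L_\infty$; instead I would split each contribution at a threshold near $x=0$. Away from the origin, $b_1$ is effectively bounded by a power of $\log n$ (as in \eqref{Hbound} for $H_k$), which after the $2p$ factors of the product produces the $(\log n)^{2p-1}$ weight multiplying $L\|b_1\|_2$; near the origin I would use the hypothesis $\int_{-\pi}^\pi|b_1(x)||x|^{-1+a}dx\le M'(a)$, which controls the singular part at the cost of introducing a residual singularity of order $a$ in each factor and hence a factor $n^{3pa}$ in place of $n^\e$. This is the exact analogue of the splitting used in the proof of \eqref{i10} in Lemma \ref{integralLemma2}, and leads to \eqref{inter:2}.

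The main obstacle is the active-block estimate, i.e.\ extracting the dependence on $b_i$ through $\|b_i\|_2$ rather than through a (nonexistent) Lipschitz constant. Lemma \ref{ProductBound} as stated requires every symbol to satisfy \eqref{fgbound}, which $b_i f$ does not; the resolution is the bespoke Fourier-coefficient computation of \citet{LRR09}, in which the $b_i$-dependence of the product remainder $T_n(b_if)T_n(\tilde f)-(2\pi)^{-1}T_n(b_i)$ is collected via Parseval. Keeping the cancellation of $f$ and $\tilde f$ exact throughout this computation (so that $q=0$ and only $L$ and $M/m$ survive from the smooth factors), and tracking the correct powers of $\log n$ and $n^{a}$ for the singular $b_1$ in the second part, is where essentially all of the work lies.
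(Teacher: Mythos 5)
Your reduction (telescoping over blocks, $T_n(f^{-1})=4\pi^2 T_n(\tilde f)$, and the observation that the singularity exponents cancel pairwise so that $q=0$) is sound as far as it goes, but the proposal has a genuine gap exactly at the step you yourself flag: the ``active-block'' estimate $\tr[W R_i]$ with $R_i=T_n(b_if)T_n(\tilde f)-(2\pi)^{-1}T_n(b_i)$. Lemma \ref{ProductBound} cannot be applied there, since its error bound requires a finite Lipschitz constant $L^{(j)}$ for \emph{every} symbol, and $b_if$ has none ($b_i$ is merely bounded, and in \eqref{inter:2} not even that). Nor can you salvage the step by writing $|\tr[WR_i]|\leq\|W\|\cdot\|R_i\|_1$ or $\fr W\fr\cdot\fr R_i\fr$: any Frobenius or trace-norm bound on $R_i$ expands into traces of products of Toeplitz matrices in which $b_i$ appears twice, which is precisely the original problem again. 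Deferring to ``the bespoke Fourier-coefficient computation of \citet{LRR09}'' is not a proof — that computation \emph{is} the content of the lemma, and nothing in your sketch explains how the three characteristic features of the bound ($\|b_j\|_2$ rather than a Lipschitz norm, the $(\log n)^{2p-1}$ factor, and the $n^{3pa}M'(a)$ term) would actually emerge. A secondary error: for \eqref{inter:2} you assert that away from the origin $b_1$ is ``effectively bounded by a power of $\log n$''; this is not among the hypotheses (only $\|b_1\|_2<\infty$ and $\int|b_1(x)||x|^{-1+a}dx\leq M'(a)$ are assumed), so your treatment of the second bullet rests on an assumption the statement does not grant.

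For comparison, the paper never leaves the integral representation: following Dahlhaus, the \emph{entire} trace difference is written as one $2p$-fold integral of $\prod_i b_i(x_{2i-1})\bigl(\prod_i f(x_{2i-1})/f(x_{2i})-1\bigr)$ against the product of Dirichlet-type kernels, bounded by $L_n(x)=n\wedge|x|^{-1}$. The deviation of the symbol ratio from $1$ is then split into two parts: the power-singularity part, bounded by $\sum_j |x_{2j-1}-x_{2j}|^{1-3a}\,(|x_{2j}|\wedge|x_{2j-1}|)^{-(1-a)}$, which after integration yields the $n^{3pa}$ factor and the weighted integrals of $|b_1|$ (this is where $M'(a)$ enters); and the Lipschitz part of $g$, bounded by $L|x_{2j-1}-x_{2j}|/m$, which is then integrated by Cauchy--Schwarz against $b_j$ — this is where $\|b_j\|_2$ appears — while the remaining $2p-1$ kernel integrations each contribute a $\log n$, giving $(\log n)^{2p-1}$. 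That mechanism, which supplies all the terms in \eqref{inter:1} and \eqref{inter:2}, is absent from your proposal; supplying it would essentially mean writing the paper's proof.
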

\begin{proof}
We prove \eqref{inter:1}; the proof of \eqref{inter:2} follows exactly the same lines.
We define $\Delta_n(x) = e^{ix}$ and $L_n(x) = n \wedge |x|^{-1}$ where the latter is an upper bound of the former.
Using the decomposition as on p. 1761 in \citet{d1} or as in the proof of  we find that
\begin{equation*}
\begin{split}
& \left|\tr\left[ \prod_{i=1}^pT_n(b_i f) T_n(f^{-1})\right] - (2\pi)^p \tr\left[ \prod_{i=1}^p T_n(b_i) \right]\right|  \\
 &\leq
  C \left| \int_{[-\pi,\pi]^{2p}} \prod_{i=1}^pb_i(x_{2i-1}) \left( \prod_{i=1}^p \frac{ f(x_{2i-1}) }{ f(x_{2i}) } - 1 \right) \Delta_n(x_1-x_2)  \ldots \Delta_n(x_{2p}-x_1)dx \right|\\
   &\leq C \left| \int_{[-\pi,\pi]^{2p}} \prod_{i=1}^pb_i(x_{2i-1})\frac{ g( x_{2i-1})}{ g(x_{2i}) } \left( \prod_{i=1}^p \frac{ |x_{2i-1}|^{-2d} }{ |x_{2i}|^{-2d}  } - 1 \right) \Delta_n(x_1-x_2)  \ldots \Delta_n(x_{2p}-x_1)dx \right| \\
    & \; + C \left| \int_{[-\pi,\pi]^{2p}} \prod_{i=1}^pb_i(x_{2i-1}) \left( \prod_{i=1}^p \frac{ g(x_{2i-1}) }{ g(x_{2i}) } - 1 \right) \Delta_n(x_1-x_2)  \ldots \Delta_n(x_{2p}-x_1)dx \right|\\
     & \leq C \left( \frac{M\|b\|_\infty}{m}\right)^{p}\sum_{j=1}^p \int_{[-\pi,\pi]^{2p}} \prod_{i=1}^j \frac{ |x_{2i-1}-x_{2i}|^{1-3a} }{ (|x_{2i}|\wedge |x_{2i-1}|)^{1-a}  }  L_n(x_1-x_2) \ldots L_n(x_{2p}-x_1)dx \\
      & \; +  C L\left( \frac{M\|b\|_\infty}{m}\right)^{p-1} \sum_{j=1}^p \int_{[-\pi,\pi]^{2p}} |b_j(x_{2j-1})| |x_{2j-1} - x_{2j}| L_n(x_1-x_2) \ldots L_n(x_{2p}-x_1)dx \\
      &\leq
      C \left( \frac{M\|b\|_\infty}{m}\right)^{p} n^{3pa}\left(\int_{[-\pi,\pi]}|x|^{-1+a}dx\right)^p+
          C L\left( \frac{M \|b\|_\infty}{m} \log n \right)^{p-1} (\log n)^{2p-1} \sum_{j=1}^p\|b_j\|_2.
\end{split}
\end{equation*}

\end{proof}

\begin{lem} \label{alternative_quotient_bound}
Let $\tilde{f} =1/(4\pi^2 f)$, and let $\rho>1/2$ and $L>1$, then under   the conditions of Lemma \ref{QuotientBound} 
 we have the following alternative bound for \eqref{quotientResult2}:
\begin{equation} \label{quotientResult3}
\begin{split}
 & \left| \tr\left\{\prod_{j=1}^p T_n^{-1}(f) T_n(f_{2j}) \right\} - \tr\left\{\prod_{j=1}^p T_n(\tilde f) T_n(f_{2j}) \right\}\right| \\
 & \lesssim \sqrt{ L} n^{(1-\rho/2) + 2q + \e} \left\{ \sum_{j=1}^{p-1}  \left(\sqrt{M_{2p}}\prod_{l=j+1}^{p-1}   M^{(2l)} \right)   \times  \left( \int_{-\pi}^{\pi}  \frac{ |f_{2p}(x)| }{f (x) }\prod_{l=1}^{j}\frac{f_{2l}^2}{f^2}(x)  dx \right)^{\ft}  \right. \\
  & \left. +\prod_{l=2}^{p}   M^{(2l)}\left( \int_{-\pi}^{\pi} \frac{f_2^2}{f^2}(x) dx\right)^{\ft}
 +error \right\}
\end{split}
\end{equation}
where
\begin{equation*}
\begin{split}
error &\leq   L^{3/4}n^{(1-3\rho)/4} \prod_{l=1}^{p}   M^{(2l)}+
 \sum_{j=1}^p \sqrt{L^{(2j)}n^{-\rho_{2j}}M^{(2j)} }\prod_{l\neq j} M^{(2l)}
  \end{split}
\end{equation*}
\end{lem}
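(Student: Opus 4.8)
The plan is to sharpen the single--defect telescoping estimate behind \eqref{quotientResult2}, which itself extends Lemma~5 of \citet{LRR09}. Abbreviating $A_j = T_n^{-1}(f) T_n(f_{2j})$ and $B_j = T_n(\tilde f) T_n(f_{2j})$, I would expand the difference of the two traces as
\[
\tr\Big[\prod_{j=1}^p A_j - \prod_{j=1}^p B_j\Big]
= \sum_{i=1}^p \tr\Big[\Big(\prod_{j<i} B_j\Big)\,\big(T_n^{-1}(f) - T_n(\tilde f)\big)\,T_n(f_{2i})\Big(\prod_{j>i} A_j\Big)\Big],
\]
and then use the identity $T_n^{-1}(f) - T_n(\tilde f) = T_n^{-\ft}(f)\,\Delta_s\,T_n^{-\ft}(f)$, where $\Delta_s = I_n - T_n^{\ft}(f) T_n(\tilde f) T_n^{\ft}(f)$ is the symmetrised defect controlled by Lemma~\ref{inversionBound}. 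In contrast with the derivation of \eqref{quotientResult2}, where every factor is pulled out in operator norm (producing the crude power $(LM/m)^{(p+1)/2}$), here each summand carries exactly one copy of $\Delta_s$, so only a single $\sqrt{L}$ will appear.

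For each $i$ I would use cyclicity of the trace and the inequality $|\tr(\Delta_s Y_i)| \le \fr \Delta_s \fr\,\fr Y_i\fr$ from \eqref{matrixCalculus}, with $Y_i = T_n^{-\ft}(f)\,T_n(f_{2i})\big(\prod_{j>i} A_j\big)\big(\prod_{j<i} B_j\big)\,T_n^{-\ft}(f)$. Lemma~\ref{inversionBound} gives $\fr \Delta_s\fr \lesssim \sqrt{L}\,\tfrac{M}{m}\,n^{(1-\rho)/2+\e}$, so the whole argument reduces to estimating $\fr Y_i\fr$ sharply. I would extract a block of consecutive factors in operator norm via Lemma~\ref{traceBound} (each $\|T_n^{-\ft}(f)T_n^{\ft}(f_{2l})\| \lesssim M^{(2l)} n^{(d_{2l}-d)_+ + \e}$, supplying the factors $M^{(2l)}$ and, collectively, the global power $n^{2q}$), and keep the remaining factors — together with a half matrix $T_n^{\ft}(f_{2p})$, which is the origin of $\sqrt{M_{2p}}$ and of the first power of $|f_{2p}|$ — inside the Frobenius norm. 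The squared Frobenius norm of this block equals, up to the approximation error of Lemma~\ref{ProductBound}, $n$ times the integral of the symbol of $M^t M$, i.e. of the form $n\int_{-\pi}^\pi \frac{|f_{2p}|}{f}\prod_{l=1}^{j}\frac{f_{2l}^2}{f^2}\,dx$; taking the square root turns this into $\sqrt{n}\,(\int\cdots)^{\ft}$, and the extra $\sqrt{n}$ is precisely what upgrades $n^{(1-\rho)/2}$ to $n^{1-\rho/2}$. Summing over $i$ (relabelled as $j$) reproduces the main sum, and the boundary term where only $f_2$ is retained in Frobenius form gives the isolated $\prod_{l=2}^p M^{(2l)}\big(\int_{-\pi}^\pi \frac{f_2^2}{f^2}\,dx\big)^{\ft}$ contribution.

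The two pieces of $error$ arise as follows. The Lipschitz error in Lemma~\ref{ProductBound} enters $\fr Y_i\fr^2$ additively, of size $L^{(2j)} n^{-\rho_{2j}} M^{(2j)}\prod_{l\ne j}(M^{(2l)})^2 n^{2q}$; sitting under the square root defining $\fr Y_i\fr$, it must be square--rooted, yielding $\sum_j \sqrt{L^{(2j)} n^{-\rho_{2j}} M^{(2j)}}\prod_{l\ne j} M^{(2l)}$. The term $L^{3/4} n^{(1-3\rho)/4}\prod_l M^{(2l)}$ comes from the higher--order residual in which a further $T_n^{-1}(f)$ inside $Y_i$ must itself be replaced by $T_n(\tilde f)$, generating a second defect; bounding it again through Lemmas~\ref{inversionBound} and~\ref{traceBound} produces a fractional power of $\fr\Delta_s\fr$ which, combined with the Frobenius estimate, gives the stated exponent. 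The hypotheses $\rho>1/2$ and $L>1$ guarantee that this residual is genuinely of lower order.

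The main obstacle is the bookkeeping in the second step: for each telescoping index one must choose the split between operator--norm factors and the Frobenius block so that the symbol of $M^tM$ is exactly $\frac{|f_{2p}|}{f}\prod_{l=1}^{j}\frac{f_{2l}^2}{f^2}$ — with $f_{2p}$ to the first power and the leading $f_{2l}$ squared — and then verify via Lemma~\ref{ProductBound} that this symbol is reproduced with a controllable error. Keeping the $L^\infty$ and $L^2$ contributions separate, as in \citet{LRR09} by splitting each $f_{2j}$ into its positive and negative parts so that no factor is ever bounded more crudely than necessary, is exactly what makes \eqref{quotientResult3} sharper than \eqref{quotientResult2}.
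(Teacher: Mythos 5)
Your proposal is correct and follows essentially the same route as the paper's own proof: a telescoping decomposition with exactly one symmetrised defect $R=I_n-T_n^{\ft}(f)T_n(\tilde f)T_n^{\ft}(f)$ per term (bounded in Frobenius norm by Lemma 2.2, yielding the single $\sqrt{L}\,n^{(1-\rho)/2+\e}$), the remaining factors split between operator-norm bounds via Lemma 2.3 (producing the $M^{(2l)}$'s, $\sqrt{M^{(2p)}}$ and $n^{2q}$) and a Frobenius block evaluated via Lemma 2.1 (producing $\sqrt{n}$ times the square root of the stated integrals), with the error terms arising exactly as you describe from the Lipschitz remainders and from the second defect needed to evaluate the Frobenius block, where $\rho>1/2$ and $L>1$ are used. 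The only difference is cosmetic: the paper builds the half-powers $T_n^{\ft}(f_{2j})$ into its definition of $A_j$ and $B_j$ from the start, whereas you introduce them through the factorization $T_n^{-1}(f)-T_n(\tilde f)=T_n^{-\ft}(f)\,\Delta_s\,T_n^{-\ft}(f)$; the two bookkeepings are algebraically equivalent.
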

\begin{rem}
The constant appearing on the right hand side of \eqref{quotientResult3} depends on $M$ and $m$,
but in all our applications of Lemma \ref{alternative_quotient_bound}, the constants $M$ and $m$ will be bounded and of no consequence.
\end{rem}

\begin{proof}
Following the construction of \citet{d1}, equation (13), we  write 
$| \tr\{\prod_{j=1}^p T_n^{-1}(f) T_n(f_{2j}) \} - \tr\{\prod_{j=1}^p T_n(\tilde f) T_n(f_{2j})\}|$ as
\begin{equation}\label{decomp}
\begin{split}
& \left| \tr \left\{\prod_{j=1}^p A_j - \prod_{j=1}^p B_j \right\}\right| \\ & =
\left| \tr \left\{(A_1 - B_1) \prod_{l=2}^p A_l + \sum_{j=2}^p  \left(\prod_{l=1}^{j-1} B_l \right) (A_j - B_j)\prod_{l=j+1}^p A_l \right\}\right|,
\end{split}
\end{equation}
where $A_j = T_n^{\ft}(f_{2j-2})T_n^{-1}(f)T_n^{\ft}(f_{2j})$, $B_j = T_n^{\ft}(f_{2j-2})T_n(\tilde f) T_n^{\ft}(f_{2j})$ and $f_{0} := f_{2p}$ (similarly for $\rho_0$, $L^{(0)}$ and $M^{(0)}$).
When $j=p$, the factor $\prod_{l=j+1}^p A_l$ is understood to be the identity.
Without loss of generality,  the functions $f_{2j}$ are assumed  to be positive (it suffices to write $f_{2j} = f_{2j+} - f_{2j-}$).
Lemma \ref{traceBound} implies that for each $j$,
\begin{equation} \label{lemma2.3.bound}
\|T_n^{-\ft}(f) T_n^{\ft}(f_{2j})\| \lesssim \frac{M^{(2j)}}{m} n^{(d_{2j}-d)_{+} + \e}.
\end{equation}
Using the relations in (1.6) (main paper) it then follows that
\begin{equation} \label{AlBound}
\begin{split}
\Big\| \prod_{l=j+1}^p A_l \Big\| &\leq \prod_{l=j+1}^p \| T_n^{\ft}(f_{2l}) T_n^{-\ft}(f)\|^2 \\
 &\lesssim  \left(\prod_{l=j+1}^{p-1} M^{(2l)}  \right)  n^{2\sum_{l=j+1}^{p-1} (d_{2l}-d)_{+} + (d_{2j}-d)_+ + (d_{2p}-d)_+} \sqrt{M^{(2p)}M^{(2j)}}.
 \end{split}
\end{equation}
First we treat the term $(A_1 - B_1) \prod_{l=2}^p A_l$ on the right in \eqref{decomp}. Writing $R =I_n- T_n^{\ft}(f) T_n(\tilde f) T_n^{\ft}(f)$, it follows that
\begin{equation}  \label{A1B1}
\begin{split}
& \left|\tr\left[ (A_1-B_1) \prod_{l= 2}^p A_l\right]\right| \\
&\quad = \left|\tr\left[T_n^{\ft}(f_{2p}) T_n^{-\ft}(f) R T_n^{-\ft}(f) T_n(f_{2}) T_n^{-\ft}(f)  T_n^{-\ft}(f) T_n^{\ft}(f_{4})  \prod_{l=3}^p A_l \right] \right|  \\
 &\quad \leq  \fr R \fr  \fr T_n^{-\ft}(f) T_n(f_{2}) T_n^{-\ft}(f)\fr  \|T_n^{\ft}(f_{2p}) T_n^{-\ft}(f) \| \| T_n^{-\ft}(f) T_n^{\ft}(f_{4}) \| \Big\|\prod_{l=3}^p A_l\Big\| \\ 
 &\quad \lesssim  L^{\ft}n^{(1 - \rho)/2+\e+\ft +2q} \prod_{l=2}^p M^{(2l)} \left( \int_{-\pi}^{\pi} \frac{ f_2^2(x) }{ f^2(x)}dx + \mbox{error} \right)^{\ft} \\
  &\quad \lesssim  L^{\ft}n^{(1 - \rho)/2+\e+\ft +2q}  \prod_{l=2}^p M^{(2l)}\times \\
   & \quad \quad  \left( \int_{-\pi}^{\pi} \frac{ f_2^2(x) }{ f^2(x)}dx + n^{\e + 2q} \left(L^{3/2}\left( M^{(2)}\right)^2 n^{(1-3\rho)/2} +  M^{(2)} L^{(2)}n^{-\rho_2} \right) \right)^{\ft}.
 \end{split}
 \end{equation}
The first inequality follows from the relations in (1.6) (main paper). The second inequality follows after writing $\fr T_n^{-\ft}(f) T_n(f_{2}) T_n^{-\ft}(f)\fr$ as the sum of a limiting integral and an approximation error; in addition we use \eqref{lemma2.3.bound} and Lemma \ref{inversionBound}, by which
\begin{equation}\label{R2bound}
\fr R \fr^2 \leq KL (M/m)^2 n^{1-\rho+\e}\lesssim L n^{1-\rho +\e}.
\end{equation}
This follows from Lemma \ref{QuotientBound}, which we use to bound the approximation error. The second term within the brackets in \eqref{A1B1} constitutes  part of the term $error$.

Next we bound the term $(\prod_{l=1}^{j-1} B_l )(A_j - B_j)\prod_{l=j+1}^p A_l$ in \eqref{decomp} for $j=2$.
Similar to the preceding decomposition, we have
\begin{equation*} \label{A2B2}
\begin{split}
 \left|\tr\left[ B_1(A_2-B_2) \prod_{l=3}^p A_l\right]\right| &= \left|\tr\left[ B_1 T_n^{\ft}(f_{2}) T_n^{-\ft}(f) R T_n^{-\ft}(f) T_n^{\ft}(f_{4})\prod_{l=3}^p A_l \right] \right| \\
 &\leq \fr B_1 T_n^{\ft}(f_{2}) T_n^{-\ft}(f) \fr \fr R\fr \| T_n^{-\ft}(f) T_n^{\ft}(f_{4}) \| \Big\|\prod_{l=3}^p  A_l\Big\|.
\end{split}
\end{equation*}
The terms $\fr R\fr$, $\| T_n^{-\ft}(f) T_n^{\ft}(f_{4}) \|$ and $\| \prod_{l=3}^p A_l\|$ are bounded as in \eqref{lemma2.3.bound}, \eqref{AlBound} and \eqref{R2bound}. For the term $\fr B_1 T_n^{\ft}(f_{2}) T_n^{-\ft}(f) \fr$ we have the decomposition
\begin{equation*}
\begin{split}
& \fr B_1 T_n^{\ft}(f_{2}) T_n^{-\ft}(f) \fr^2 = \tr\left[T_n^{-\ft}(f) T_n^{\ft}(f_{2}) B_1^t B_1  T_n^{\ft}(f_{2}) T_n^{-\ft}(f) \right] \\
&=  \tr\left[B_1^t B_1  T_n^{\ft}(f_{2}) T_n^{-1}(f) T_n^{\ft}(f_{2}) \right] = \tr\left[B_1^t B_1  T_n^{\ft}(f_{2}) T_n(\tilde f) T_n^{\ft}(f_{2}) \right] \\
& \qquad \quad + \tr\left[B_1^t B_1  T_n^{\ft}(f_{2}) T_n^{-\ft}(f) R T_n^{-\ft}(f) T_n^{\ft}(f_{2}) \right] \\
&\leq \fr B_1 T_n^{\ft}(f_{2}) T_n^{\ft}(\tilde f) \fr^2 + \fr B_1^t B_1\fr    \fr R\fr  \| T_n^{\ft}(f_2) T_n^{-\ft}(f) \|^2.
\end{split}
\end{equation*}
Using again Lemmas \ref{ProductBound}, \ref{inversionBound} and \ref{traceBound}, we find that the first term on the right is bounded by
\begin{equation*}
n \left\{ \int_{-\pi}^{\pi} \frac{ |f_{2p}(x)| f_2^2(x) }{ f^3(x)} dx  + M^{(2p)}M^{(2)} n^{4(d_2 - d)_+ + 2(d_{2p}-d)_++ \e}   \left( M^{(2)} L n^{-\rho} +  L^{(2)}n^{-\rho_2} \right)\right\}
\end{equation*}
and the second term by
\begin{equation*}
\begin{split}
& n^{\ft-\frac{\rho}{2}+(d_2-d)_{+} + \e} \sqrt{L} M^{(2)}  \left[ n \int_{-\pi}^{\pi} \frac{ f_{2p}^2(x) f_2^2(x)}{ f^4(x)} dx + M^{(2p)}  M^{(2)} n^{4(d_2 - d)_+ + 2(d_{2p}-d)_++ \e} \times  \right. \\
   & \qquad \left. \times   \left( (M^{(2)})^2(M^{(2p)})^2  L n^{1-\rho} + (M^{(2p)})^2M^{(2)}  L^{(2)}n^{1-\rho_2}  + (M^{(2)})^2M^{(2p)}L^{(2p)} n^{1-\rho_{2p}}  \right)\right]^{\ft}.
\end{split}
\end{equation*}
Consequently, 
\begin{equation*} \label{A2B2}
\begin{split}
& \left|\tr\left[ B_1(A_2-B_2) \prod_{l=3}^p A_l\right]\right| \\
& \quad \lesssim
 L ^{\ft}n^{\ft + (1 - \rho)/2+\e+2q}\sqrt{M^{(2p)}}\prod_{l=2}^{p-1}  M^{(2l)}\left[\int_{-\pi}^{\pi} \frac{  f_2^2(x) |f_{2p}(x)| }{ f^3(x)}dx + M^{(2p)}(M^{(2)})^2 Ln^{-\rho} \right.  \\
& \quad +  n^{-\rho/2}( L)^{1/2} \sqrt{M^{(2)}}  \left( \int_{-\pi}^{\pi} \frac{ f_2^2(x)f_{2p}^2(x) }{ f^4(x)} dx \right)^{\frac{1}{2}}+ L^{(2)}M^{(2)}M^{(2p)} n^{-\rho_2}  \\
& \quad \left.+(LL^{(2)})^{1/2} M^{(2p)}(M^{(2)})^{3/2}  n^{-(\rho + \rho_2)/2} +(LL^{(2p)}M^{(2p)})^{1/2}(M^{(2)})^{2}  n^{-(\rho + \rho_{2p})/2} \right]^{\ft}.
  \end{split}
 \end{equation*}
Note that
$$ \left( \int_{-\pi}^{\pi} \frac{ f_2^2(x)f_{2p}(x)^2 }{ f^4(x)} dx \right)^{\frac{1}{2}} \lesssim M^{(2)}M^{(2p)},$$
$$(LL^{(2)}M^{(2p)})^{1/2} (M^{(2)})^{3/2}  n^{-(\rho + \rho_2)/2} \leq L(M^{(2)})^2 n^{-\rho} + L^{(2)}M^{(2)}M^{(2p)}n^{-\rho_2}$$
and $L n^{-\rho}  \lesssim L^{3/2} n^{(1-3\rho)/2}$. Therefore the terms on the right are of the same order as the right hand side of \eqref{A1B1}.
A similar argument applies to the term $(LL^{(2p)}M^{(2p)})^{1/2}(M^{(2)})^{2}  n^{-(\rho + \rho_{2p})/2}$.

Finally, we bound the term $(\prod_{l=1}^{j-1} B_l ) (A_j - B_j)\prod_{l=j+1}^p A_l$ in \eqref{decomp} for $j\geq 3$.
For $j \geq 3$, Lemma  \ref{ProductBound} implies that  
\begin{equation*}
\fr \prod_{l=1}^{j-1}  B_l \fr^2 =
n \left(  \int_{-\pi}^{\pi} \frac{f_{2p}(x)f_{2j-2} (x)}{f^2(x) }\prod_{l=1}^{j-2}\frac{ f_{2l}^2 (x)}{ f^2 (x)} dx + error_j \right),
\end{equation*}
where
$$ error_j \lesssim n^{ \e +2 \sum_{l=1}^{j-1}(d_{2l} - d)_+} \prod_{l=1}^{j-1}  M^{(2l)}M^{(2l-2)} \left(  L n^{-\rho} + \sum_{l=1}^{j-1}
\frac{L^{(2l)}}{M^{(2l)}} n^{ - \rho_{2l}})  \right). $$
Consequently, we have for all $j\geq 2$
\begin{equation*}
 \begin{split}
 &\left|\tr\left[ \left(\prod_{l=1}^{j-1} B_l \right)(A_j-B_j)  \prod_{l=j+1}^p A_l\right]\right| \\
 &\quad \leq
 \fr \prod_{l=1}^{j-1} B_l  \fr \fr R\fr \prod_{l=j+1}^p \| A_l\| \|T_n^{\ft}( f_{2j} )T_n^{-\ft }(f)\|  \|T_n^{\ft}( f_{(2j-2)})T_n^{-\ft }(f)\| \\
 &\quad \lesssim  L ^{\ft}n^{\ft + (1 - \rho)/2+2q +\e}\sqrt{M^{(2p)}} \prod_{l= j+1}^{p-1}  M^{(2l)}\left(   \int_{-\pi}^{\pi} \frac{f_{2p}(x)f_{2j} (x)}{f^2(x)}  \prod_{l=1}^{j-1}\frac{ f_{2l}^2 (x)}{ f^2 (x)} dx + error_j \right)^{\ft}.
 \end{split}
\end{equation*}
for all $j \geq 3$, which finishes the proof of Lemma \ref{alternative_quotient_bound}.
\end{proof}

%
\section{H\"older constants of various functions} \label{holderConstants}
%
%
%
\begin{lem} \label{constantsLemma}
Let $\ty_o \in \Theta(\be,L_o)$. Then $f_o$ satisfies condition \eqref{fbound1} with
$\rho=1$ when $\be>\frac{3}{2}$, and with any $\rho < \be-\ft$ when $\be\leq\frac{3}{2}$. The H\"older-constant only depends on $L_o$. When $\ty \in \Theta_k(\be,L)$, $f_{d,k,\ty}$ satisfies \eqref{fbound1} with $\rho=1$, regardless of $\be$. The H\"older-constant is of order $k^{\frac{3}{2}-\be}$. The  function $-\log(2-2\cos(x)) f_{d,k,\ty}$ satisfies condition \eqref{fgbound} with $\rho=1$ and H\"older-constant of order $k^{\frac{3}{2}-\be}$.  The functions $G_k f_{d,k,\ty}$ and $H_k f_{d,k,\ty}$, with $G_k$ and $H_k$ as in \eqref{HkGkDef}, satisfy  \eqref{fgbound} with $\rho=1$ and H\"older-constant of order $k$.
\end{lem}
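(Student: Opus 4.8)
The plan is to reduce every assertion to two elementary facts about the function that multiplies the singular factor $|x|^{-2d}$: a uniform two-sided bound, and a modulus-of-continuity estimate of the type appearing in \eqref{fbound1} and \eqref{fgbound}. The common starting point is the factorization $|1-e^{ix}| = |x|\,s(x)$ with $s(x) = 2\sin(x/2)/x$ smooth, even, and bounded away from $0$ and $\infty$ on $[-\pi,\pi]$; hence $|1-e^{ix}|^{-2d} = |x|^{-2d}\,s(x)^{-2d}$, and $s(x)^{-2d}$ is a bounded Lipschitz function whose constants are uniform for $d$ in the fixed compact interval $[-\ft+t,\ft-t]$. Consequently each function in the statement has the form $|x|^{-2d}g(x)$, and it remains to analyse $g$, a product of $s(x)^{-2d}$, the exponential $\exp\{\sum_j \ty_j\cos(jx)\}$, and (in the last two cases) one of $-\log(2-2\cos x)$, $G_k$, $H_k$. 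The single analytic tool used throughout is Cauchy--Schwarz against the Sobolev weight, e.g. $\sum_{j=1}^k j^a|\ty_j| \le \sqrt{L}\,(\sum_{j=1}^k j^{2a-2\be})^{1/2}$.

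First I would treat $f_o$ and $f_{d,k,\ty}$, which genuinely satisfy \eqref{fbound1}. Boundedness of the exponential part follows from $\sum_{j=0}^k|\ty_j| \le \sqrt{L}\,(\sum_{j\ge 0}(1+j)^{-2\be})^{1/2} = O(1)$ (finite since $\be>1$; cf. \eqref{s1}), so $m\le |g|\le M$ with $m,M$ depending only on $L$ (resp. $L_o$) and the compact range of $d$. For the modulus of continuity, the derivative of the exponent is $-\sum_j j\ty_j\sin(jx)$, and for $f_{d,k,\ty}$ Cauchy--Schwarz gives $\sum_{j=1}^k j|\ty_j| \le \sqrt{L}\,(\sum_{j=1}^k j^{2-2\be})^{1/2}$, which is $O(k^{3/2-\be})$ for $\be\le 3/2$ and $O(1)$ otherwise; multiplying the bounded Lipschitz factor $s^{-2d}$ by the bounded exponential (whose Lipschitz constant is this sum times $e^{\|E\|_\infty}$) yields \eqref{fbound1} with $\rho=1$ and H\"older constant of order $k^{3/2-\be}$, as claimed. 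For $f_o$ the same computation applies, but $\sum_j j|\ty_{o,j}|$ converges only when $\be>3/2$, giving $\rho=1$; for $1<\be\le 3/2$ one splits $\sum_j \ty_{o,j}(\cos(jx)-\cos(jy))$ at $N=|x-y|^{-1}$, bounding the low frequencies by $|x-y|\sum_{j\le N}j|\ty_{o,j}| = O(|x-y|^{\be-\ft})$ and the high frequencies by $2\sum_{j>N}|\ty_{o,j}| = O(|x-y|^{\be-\ft})$ (both via Cauchy--Schwarz), so that $E_o$, and hence $g_o$, is H\"older of every order $\rho<\be-\ft$, with constant depending only on $L_o$.

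Next I would handle the three products with logarithmic factors, which satisfy \eqref{fgbound}. The key observation is the splitting $-\log(2-2\cos x) = -2\log|x| + \phi(x)$ with $\phi$ smooth on $[-\pi,\pi]$: the singular piece $-2\log|x|$ has $|\log|x|-\log|y|| \le |x-y|/(|x|\wedge|y|)$ (mean value theorem), i.e. exactly the first term in \eqref{fgbound} with $M=O(1)$, while $\phi$ contributes only an $O(1)$ Lipschitz term. Multiplying by the bounded Lipschitz-$O(k^{3/2-\be})$ function $g$ of the previous paragraph gives \eqref{fgbound} with $\rho=1$ and constant $O(k^{3/2-\be})$ for $-\log(2-2\cos x)f_{d,k,\ty}$. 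For $G_k$ and $H_k$ I would exploit $G_k+H_k=-\log(2-2\cos x)$: since $G_k$ is a trigonometric polynomial with $G_k'(x)=2\sum_{j=1}^k\sin(jx)$ and $|\sum_{j=1}^k\sin(jx)|\le k$, the function $G_k$ is globally Lipschitz with constant $O(k)$, whence $G_kf_{d,k,\ty}$ satisfies \eqref{fgbound} with $L=O(k)$; and $H_k=-\log(2-2\cos x)-G_k$ inherits the singular $-2\log|x|$ part (constant $M=O(1)$, compatible with \eqref{Hbound0}--\eqref{Hbound}) together with the $O(k)$ Lipschitz part of $G_k$, giving $H_kf_{d,k,\ty}$ the same constant $O(k)$.

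The main obstacle is the bookkeeping near the origin for $H_k$. Unlike $f_o$ and $f_{d,k,\ty}$, the factor $H_k$ is unbounded (logarithmically) at $x=0$, so it does not literally meet the boundedness requirement of \eqref{fgbound}; what makes the argument go through is that the logarithmic singularity is exactly of the type absorbed by the difference term $M|x-y|/(|x|\wedge|y|)$, and that its magnitude (entering both the sup bound and the Lipschitz cross-term $|H_k|,|G_k| \lesssim |\log x|+\log k$) can be dominated via $|\log|x||\le C_\veps|x|^{-\veps}$, which only inflates the exponents $d_i$, and hence $q$ in Lemma \ref{ProductBound}, by an arbitrarily small amount already hidden in the $n^\e$ factors of the applications. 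The second delicate point is that the sharper constant $O(k^{3/2-\be})$ for $-\log(2-2\cos x)f_{d,k,\ty}$, as opposed to the individual $O(k)$ for $G_kf_{d,k,\ty}$ and $H_kf_{d,k,\ty}$, is genuinely due to the cancellation $G_k+H_k=-\log(2-2\cos x)$, whose Lipschitz part away from the singularity is only $O(1)$; tracking this cancellation rather than bounding the two pieces separately is what delivers the rate needed downstream.
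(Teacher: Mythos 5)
Your proposal is correct, and its computational core coincides with the paper's own proof: Cauchy--Schwarz against the Sobolev weight to bound $\sum_{j\le k} j|\ty_j|$ (hence the $k^{3/2-\be}$-type Lipschitz constants), the elementary bound $|G_k(x)-G_k(y)|\le 2k|x-y|$, and the identity $H_k=-\log(2-2\cos x)-G_k$ to carry the constants over to $H_k f_{d,k,\ty}$. The differences are that you are more complete on points the paper's (very terse) proof passes over in silence. For $f_o$ with $1<\be\le 3/2$, the paper only checks $\sum_j j^\rho|\ty_{o,j}|<\infty$ for $\rho<\be-\ft$, while your split of the cosine series at $N=|x-y|^{-1}$ yields the endpoint exponent $\be-\ft$, a marginally stronger statement. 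More importantly, the paper never addresses the fact that $H_k$ and $-\log(2-2\cos x)$ are unbounded at the origin, so the bounded-$g$ requirement in \eqref{fgbound} cannot hold literally; your resolution --- the log increment is exactly of the admissible form $M|x-y|/(|x|\wedge|y|)$, and the sup-norm is controlled via $|\log|x||\le C_\veps|x|^{-\veps}$ at the price of an arbitrarily small inflation of the exponents, which the $n^\e$ slack in Lemmas \ref{ProductBound} and \ref{QuotientBound} absorbs --- is the standard and correct way to make the statement usable downstream, and is consistent with how the lemma is actually invoked. You also make explicit the two-sided bounds $m\le g\le M$ and their uniformity over $d\in[-\ft+t,\ft-t]$, which the paper leaves implicit. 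One caveat applying equally to your write-up and to the paper's statement: at $\be=3/2$ the sum $\sum_{j\le k}j^{2-2\be}$ is $O(\log k)$, so the constant acquires a $\sqrt{\log k}$ factor, and for $\be>3/2$ the Lipschitz constant of $f_{d,k,\ty}$ is $O(1)$ rather than $O(k^{3/2-\be})$; the clean claim is $O(k^{(3/2-\be)_+})$ (up to the log at the boundary), which is how the paper itself cites this lemma elsewhere, e.g.\ in Appendix \ref{appD}.
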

\begin{proof}
The function $\sum_{j=0}^\infty \ty_{o,j} \cos(j x)$ (i.e. the logarithm of the short-memory part of $f_o$), has smoothness $\rho < \be - \ft$, since
\begin{equation*}
\begin{split}
\sum_{j=0}^\infty |\ty_{o,j}| |\cos(jx) - \cos(jy)| &\leq \left(\sum_{j=0}^\infty |\ty_{o,j}| j^\rho \right) |x-y|^\rho \\
&\leq \left(\sum_{j=0}^\infty \ty_{o,j}^2 j^{2 \be} \right)^\ft \left(\sum_{j=0}^\infty j^{-2(\be - \rho)} \right)^\ft |x-y|^\rho,
\end{split}
\end{equation*} \noindent
which is finite only when $\rho < \be - \ft$.
Since  $\sum_{j=0}^\infty |\ty_j| \lesssim \sqrt{L}$ when $\ty \in \Theta(\be -1/2, L)$ and $\be >1$, the functions $\sum_{j=0}^\infty \ty_j \cos(j x)$ and $\exp\{\sum_{j=0}^\infty \ty_j \cos(j x)\}$ have the same smoothness; only the values of $L$ and $M$ differ.
The same calculation can be made when the FEXP-expansion is finite:
when $\ty \in \Theta_k(\be,L)$, then for all $x,y \in [-\pi,\pi]$,
\begin{equation} \label{smoothnessCalculation}
\begin{split}
& \left|\sum_{j = 0}^k \ty_j (\cos(jx) - \cos(jy))\right| \leq |x-y| \sum_{j=0}^k j |\ty_j| \lesssim \sqrt{L} k^{\frac{3}{2}-\be} |x-y|.
\end{split}
\end{equation} \noindent
Since
\begin{equation} \label{ghol}
|G_k(x) - G_k(y)| \leq 2 \sum_{j=1}^k \eta_j |\cos(jx)-\cos(jy)| = O(k) |x-y|,
\end{equation}
$G_k f_{d,k,\ty}$ has H\"older-smoothness $\rho=1$, its H\"older-constant being $O(k)$.
The same result holds for $H_k f_{d,k,\ty}$, since $H_k(x)= -\log(2-2\cos(x))-G_k(x)$ (see \eqref{GkHkRelation}) and $k^{\frac{3}{2}-\be} = o(k)$ for all $\be>1$.
\end{proof}
\section{Proof of Lemma B.2} \label{mixedderivsection}
For easy reference we first restate the result.
Let $W_{\s}(d)$ denote any of the quadratic forms
\[  X^t T_n^{-1}(f_{d,k})B_{\s}(d,\bd) X - \tr\left[T_n(f_o) T_n^{-1}(f_{d,k})B_{\s}(d,\bd)\right]\] \noindent
in (B.2) (in the main paper). 
Then for any $j \leq J$, $(l_1,\ldots,l_j)\in\{0,\ldots,k\}^j$ and $\s \in \mathcal S(l_1,\ldots,l_j)$, we have
\begin{equation} \label{stochTerm}
 |W_\s(d) - W_\s(d_o)| = \op(|d-d_o| n^{\ft +\e} k^{-\ft}),
\end{equation}
\begin{equation} \label{detTerm1}
\begin{split}
& \tr\left[ B_{\s}(d,\bd)\right] - \tr\left[ B_{\s}(d_o,\bar{\ty}_{d_o})\right] \\
&\quad = (d-d_o) \tr[T_{1,\s}(d_o,k)] + (d-d_o)^2 \od( n^{\e + \ft} k^{-\ft + (1 - \be /2)_+})\\
&\quad = (d-d_o) \tr[T_{1,\s}(d_o,k)] + (d-d_o)^2 \od( n^{1 -\delta} /k),
\end{split}
\end{equation}
\begin{equation}\label{detTerm2}
\begin{split}
&  \tr\left[( T_n(f_o) T_n^{-1}(f_{d,k}) - I_n)B_{\s}(d,\bd) \right]-\tr\left[( T_n(f_o) T_n^{-1}(f_{d_o,k}) - I_n)B_{\s}(d_o,\bz) \right] \\
&\quad= (d-d_o) \tr[T_{2,\s}(d_o,k)] + (d-d_o)^2 \od( n/k)+(d-d_o) \od( n^{\e + \ft}k^{-\ft}).
\end{split}
\end{equation}
\begin{proof}[Proof of Lemma B.2]

 We first prove \eqref{detTerm1}.
Developing the left-hand side in $d$ we obtain, for all $j$, $(l_1,\ldots,l_j)\in\{0,\ldots,k\}^j$ and $\s \in \mathcal S_j$,
\begin{equation} \label{taylorB}
\begin{split}
&\tr\left[ B_{\s}(d,\bd)\right] - \tr\left[ B_{\s}(d_o,\bar{\ty}_{d_o})\right] \\
&\quad =(d-d_o ) \tr\left[  B'_{\s}(d_o,\bz)\right] + \frac{(d-d_o )^2}{2} \tr\left[  B^{''}_{\s}(\bar{d},\bar{\ty}_{\bar{d}})\right],
\end{split}
\end{equation}
where $\bar{d} \in (d,d_o)$, and $B'$ and $B^{''}$ denote the first and second derivative with respect to $d$, respectively.
Writing
\begin{equation*}
\begin{split}
\tilde{B}_{\sigma(i)}(d,k) & = T_n(H_k \nabla_{\s(i)} f_{d,k}) T_n^{-1}(f_{d,k}) \\
& \quad - T_n(\nabla_{\s(i)} f_{d,k}) T_n^{-1}(f_{d,k}) T_n(H_k f_{d,k}) T_n^{-1}(f_{d,k}),
\end{split}
\end{equation*}
it follows that $B_\s^{'}(d,\bd)$ equals
\begin{equation*}
 \begin{split}
 B_\s^{'}(d,\bd)&=\sum_{i=1}^{|\s|} \prod_{j < i} T_n(\nabla_{\s(j)} f_{d,k}) T_n^{-1}(f_{d,k})
   \tilde{B}_{\sigma(i)}(d,k) \prod_{j > i} T_n(\nabla_{\s(j)} f_{d,k}) T_n^{-1}(f_{d,k}).
  \end{split}
\end{equation*}
We recall the definition of $T_{1,\sigma}$ in Lemma B.1 (main paper), and conclude that $ B_\s^{'}(d,\bd) = T_{1,\sigma}(d,k)$. Consequently, the first term on the right in \eqref{taylorB} equals $(d-d_o)\tr[T_{1,\sigma}(d_o,k)]$.

The second derivative $B_\s^{''}(d,\bd)$ equals
\begin{equation*} \label{Bsigma2nd}
\begin{split}
  &2\sum_{i_1<i_2}^{|\s|} \prod_{j < i_1} T_n(\nabla_{\s(j)} f_{d,k}) T_n^{-1}(f_{d,k})\tilde{B}_{\sigma(i_1)}(d,k)
 \prod_{i_1<j < i_2} T_n(\nabla_{\s(j)} f_{d,k}) T_n^{-1}(f_{d,k})  \\
 & \qquad \times \tilde{B}_{\sigma(i_2)}(d,k) \prod_{i_2<j } T_n(\nabla_{\s(j)} f_{d,k}) T_n^{-1}(f_{d,k}) \\
  & \quad +
\sum_{i=1}^{|\s|} \prod_{j < i} T_n(\nabla_{\s(j)} f_{d,k}) T_n^{-1}(f_{d,k})\tilde{B}_{\sigma(i)}^{'}(d,k)
 \prod_{i<j} T_n(\nabla_{\s(j)} f_{d,k}) T_n^{-1}(f_{d,k}).
\end{split}
\end{equation*}
We now show that $\tr\left[  B_\s^{''}(d,\bd)\right] = \mathbf{o}(n^{\e+ \ft} k^{-\ft + (1 - \be/2)_+})$.
From Lemma \ref{QuotientBound} and the above expression for $B_\s^{''}(d,\bd)$, it can be seen that $\tr\left[ B_\s^{''}(d,\bd)\right]$ converges to zero.
To bound the approximation error, we cannot use directly  Lemma \ref{QuotientBound} 
because the bound in  \eqref{quotientResult2} becomes too large when $\be < 2$ and $|\sigma|$ is larger than 1. We therefore use Lemmas \ref{ProductBound} and \ref{alternative_quotient_bound}.  Let $A^{''}_{\s}(d,\bd)$ be the matrix obtained after replacing every factor $T_n^{-1}(f_{d,k})$ in $B^{''}_{\s}(d,\bd)$ by $T_n(\tilde f_{d,k})$, for $\tilde f_{d,k} = f^{-1}_{d,k}/(4 \pi^2)$.
We recall from Lemma \ref{constantsLemma} that the Lipschitz constant of $f_{d,k}$ is $O(k^{(2-\be)_+})$, and for $H_k^j f_{d,k}$  and $H_k^j \nabla_{\sigma(m)} f_{d,k}$ ($m\leq |\sigma|$, $j=1,2$)  it is $O(k \log k)$. Consequently, Lemma \ref{ProductBound} implies that
\begin{eqnarray*} \label{B2:A2}
\left| \tr\left[  A^{''}_{\s}(d,\bd)\right]\right| &=& O(kn^\e )= \od ( n^{\e + \ft}k^{-\ft + (1 - \be/2)_+} )
\end{eqnarray*}
when $k \leq k_n$ and $\be>1$. It follows from Lemma  \ref{alternative_quotient_bound} that
\begin{equation*}
\begin{split}
\left| \tr\left[  A^{''}_{\s}(d,\bd)\right] - \tr\left[  B^{''}_{\s}(d,\bd)\right]\right| &= O(n^{1/2+\e} k^{(1 - \be/2)_+} ) \left(\int_{-\pi}^{\pi} H_k^2(x) dx\right)^{\ft} \\
& =\od ( n^{\e + \ft}k^{-\ft + (1 - \be/2)_+} ) .
\end{split}
\end{equation*}
Note that in the case where $B^{''}_{\s}(d,\bd)$ contains a Toeplitz matrix of the form $T_n(H_k^2 f_{d,k}) $ or $T_n(H_k^2 \nabla_{\sigma(m)} f_{d,k})$ then it contains no other Toeplitz matrix involving $H_k$ and we can set $f_2=H_k^2 f_{d,k}$ or $f_2=H_k^2 \nabla_{\sigma(m)} f_{d,k}$ and use Remark 2.1;  this leads to the above error rate.
Combining the preceding results for $| \tr[  A^{''}_{\s}(d,\bd)]|$ and $| \tr[  A^{''}_{\s}(d,\bd)] - \tr[  B^{''}_{\s}(d,\bd)]|$ we obtain that
\begin{eqnarray*}
\left|\tr\left[  B^{''}_{\s}(d,\bd)\right] \right| &=& \od ( n^{\e + \ft}k^{-\ft + (1 - \be/2)_+} )  = \od( n^{1-\delta} /k),
\end{eqnarray*}
which completes the proof of \eqref{detTerm1}.

Next, we prove \eqref{detTerm2}.
Writing 
$f_o - f_{d,k}=    f_o-f_{d_o,k} + f_{d_o,k} - f_{d,k}$, it follows that the left-hand side of  \eqref{detTerm2} equals
\begin{equation*} \label{deterministicterm:1b}
\begin{split}
& \tr\left[T_n( f_o-f_{d,k}) T_n^{-1}(f_{d,k})B_{\s}(d,\bd) \right] - \tr\left[T_n(f_o - f_{d_o,k}) T_n^{-1}(f_{d_o,k}) B_{\s}(d_o,\bz) \right] \\
&= \tr\left[T_n( f_o-f_{d_o,k}) \left\{T_n^{-1}(f_{d,k})B_{\s}(d,\bd)- T_n^{-1}(f_{d_o,k}) B_{\s}(d_o,\bz) \right\}\right] \\
&+ \tr\left[T_n( f_{d_o,k}-f_{d,k}) T_n^{-1}(f_{d,k})B_{\s}(d,\bd) \right]\\
 & := C_1 + C_2.
\end{split}
\end{equation*}
Using (\ref{fdDef}) we write $f_{d,k}= f_{d_o,k} e^{(d-d_o)H_k}$ and $f_o= f_{d_o,k}e^{ \Delta_{d_o,k}}$, and we develop $C_{\s}(d,\bd) = T_n^{-1}(f_{d,k})B_{\s}(d,\bd)$ around $d=d_o$. It follows that
\begin{equation*}
\begin{split}
C_1  &=  \tr\left[T_n( f_o-f_{d_o,k}) \left\{T_n^{-1}(f_{d,k})B_{\s}(d,\bd)- T_n^{-1}(f_{d_o,k}) B_{\s}(d_o,\bz) \right\}\right]   \\
&= (d-d_o)  \tr\left[T_n( f_o-f_{d_o,k}) C_\sigma^{'}(d_o,\bz)\right] \\
 & \quad + (d-d_o)^2 \int_0^1 (1 - u)  \tr\left[T_n( f_o-f_{d_o,k}) C_\sigma^{''}(d_u,\bar{\theta}_{d_u,k})\right]du,
\end{split}
\end{equation*}
with $d_u = u d + (1 - u) d_o$. For the first term on the right, we write,  using Lemmas \ref{ProductBound} and \ref{alternative_quotient_bound},
\begin{equation*}
\begin{split}
& \tr\left[T_n( f_o-f_{d_o,k}) C_\sigma^{'}(d_o,\bz)\right]  \\ &\quad = \frac{n }{2\pi } \int_{-\pi}^{\pi} \frac{ f_o - d_{d_o,k} }{ f_{d_o,k} } H_k(x) \cos(l_1x) \ldots \cos (l_{|\sigma|}x) dx +\mbox{error},
\end{split}
\end{equation*}
 where $\sigma$ is a partition of $\{1,...,j\}$ and the error term is
 $$O\left( \|\Delta_{d_o,k}\|_\infty n^\e ( k + k^{0.5( 3/2 - \be)_+} \frac{\sqrt{n}}{\sqrt{k}} + k^{0.5 (3/2 - \be)_+}\left( \frac{n}{ k^{2\be} } + k \|\Delta_{d_o,k}\|_\infty\right)^{\ft}  \right),
 $$
which is $o(k^{-1/2} n^{1/2-\delta})$.
Similarly, Lemmas \ref{ProductBound} and \ref{alternative_quotient_bound} imply that there exists $c\in \R$ such that  for al $d \in (d_o - \vb , d_o + \vb)$
\begin{equation*}
\begin{split}
& \tr\left[T_n( f_o-f_{d_o,k}) C_\sigma^{''}(d,\bar{\theta}_{d,k})\right] \\
& \quad = \frac{ c n}{  2 \pi }  \int_{-\pi}^{\pi} \frac{ f_o - d_{d_o,k} }{ f_{d_o,k} } H_k^2(x) \cos(l_1x) \ldots \cos (l_{|\sigma|}x) dx +\mbox{error},
\end{split}
\end{equation*}
 where the error term is of order
 $$O\left( \|\Delta_{d_o,k}\|_\infty n^\e ( k + k^{\ft( 2 - \be)_+} \frac{\sqrt{n}}{\sqrt{k}} + k^{\ft (2 - \be)_+}( \frac{n}{ k^{2\be} } + k \|\Delta_{d_o,k}\|_\infty)^{\ft}  \right) = o\left(\frac{n^{1-\delta}}{k}\right).
 $$
This implies that $C_1 = O(S_n(d))$.

Using a Taylor expansion of $C_\sigma(d,\bd)$ and of $e^{-(d-d_o)H_k}$ around $d_o$, it follows that
\begin{equation*} 
\begin{split}
C_2 &= -(d -d_o) \tr\left[T_n( f_{d_o,k}H_k ) T_n^{-1}(f_{d_o,k}) B_{\s}(d_o,\bz) \right] \\
& -\ft(d-d_o)^2 \tr\left[T_n(f_{d_o,k}H_k^2e^{- t(d^{'}-d_o)H_k } )C_\sigma(d',\bar{\theta}_{d',k}) +2 T_n( f_{d_o,k}H_k )C_\sigma^{'}(d',\bar{\theta}_{d',k})\right],
\end{split}
\end{equation*}
for some $d^{'}$ between $d$ and $d_o$.
The first term equals $\tr[T_{2,\sigma}]$. The second equals
\begin{equation*}
\begin{split}
&-\ft(d-d_o)^2 \tr\left[T_n(f_{d',k}H_k^2)C_\sigma(d', \bar{\theta}_{d',k}) + 2T_n( f_{d_o,k}H_k )C_\sigma^{'}(d^{'}, \bar{\theta}_{d',k})  \right]  \\
 &\qquad = -\frac{n (d-d_o)^2}{ 2 \pi} \int_{-\pi}^{\pi} H_k^2 (x)  \cos(l_1x) \ldots \cos (l_{|\sigma|}x) dx +\mbox{error},
\end{split}
\end{equation*}
where the error term is $O\left(n^\e \left( k + k^{0.5 (2 -\be)_+} ( nk^{-1} + kn^\e)^{1/2}\right)\right) =  o(k^{-1} n^{1-\delta})$. Therefore
\begin{equation*}
C_2 = (d-d_o) \tr[T_{2,\sigma}] + O(n/k).
\end{equation*}

Finally, to prove \eqref{stochTerm}, let $Z = T_n^{-\ft}(f_o)X$ and let
$A_d = T_n^{\ft}(f_o) T_n^{-1}(f_{d,k}) B_{\s}(d,\bd) T_n^{\ft}(f_o)$.
Then for any $|d-d_o| \leq \vb$, we have
\[W_{\s}(d) - W_{\s}(d_o) = Z^t   (A_d- A_{d_o})  Z - \tr\left( A_d- A_{d_o}  \right).\]
Writing $A_d'$ for the derivative of $A_d$ with respect to $d$, it follows that
\begin{equation} \label{aDerivative}
A_d - A_{d_o} = (d-d_o) A_{\bar{d}}',
\end{equation}
for some $\bar{d}$ between $d$ and $d_o$.
Using \eqref{fdkDer}, we find that
  \begin{equation*}
  \begin{split}
  A_d' &=  T_n^{\ft}(f_o) T_n^{-1}(f_{d,k}) T_n(H_k f_{d,k}) T_n^{-1}(f_{d,k}) T_n(B_{\s}(d,\bd)) T_n^{\ft}(f_o) \\
  & + T_n^{\ft}(f_o) T_n^{-1}(f_{d,k}) B_{\s}'(d,\bd) T_n^{\ft}(f_o).
  \end{split}
  \end{equation*}
Therefore, Lemma 2 of \citet{LRR09} and the inequalities in (1.6) (main paper)
imply that
\begin{equation} \label{aPrimeBound}
\begin{split}
& \fr A_d - A_{d_o} \fr \leq |d-d_o| \fr A_{\bar{d}}^{'} \fr \\
& \quad \leq  C |d-d_o| \| T_n^{\ft}(f_o) T_n^{-\ft}(f_{\bar{d},k}) \|^2 \prod_{i=1}^{|\s|}  \|T_n^{-\ft}(f_{\bar{d},k}) B_{\s(i)}(\bar{d},\bar{\ty}_{\bar{d},k})  T_n^{\ft}(f_{\bar{d},k})\| \\
 & + \fr T_n^{-\ft}(f_{\bar{d},k}) T_n(H_k \nabla_{\s(i)} f_{\bar{d},k}) T_n^{-\ft}(f_{\bar{d},k})\fr  \\
& = |d-d_o| n^\e O\left( \fr T_n^{-\ft}(f_{\bar{d},k}) T_n(H_k f_{\bar{d},k}) T_n^{-\ft}(f_{\bar{d},k})\fr  + \fr T_n^{-\ft}(f_{\bar{d},k}) T_n(H_k \nabla_{\s(i)} f_{\bar{d},k}) T_n^{-\ft}(f_{\bar{d},k})\fr\right),
\end{split}
\end{equation}
where $\s(i)$ can also be the empty set, in which case $ \nabla_{\s(i)} f_{\bar{d},k} =  f_{\bar{d},k}$.
We bound the terms between brackets using Lemma \ref{QuotientBound}, with $p=2$, $f=f_{\bar{d},k}$ and $g_1=g_2$ equalling either $H_k f_{\bar{d},k}$ or $H_k \nabla_{\s(i)} f_{\bar{d},k}$. The H\"older constants of these functions are given by Lemma \ref{constantsLemma}. Hence we find that
\begin{equation} \label{frobBound1}
\begin{split}
& \fr T_n^{-\ft}(f_{\bar{d},k}) T_n(H_k f_{\bar{d},k}) T_n^{-\ft}(f_{\bar{d},k})\fr^2  =\tr\left[  \left(T_n^{-1}(f_{\bar{d},k}) T_n(H_k f_{\bar{d},k})\right)^2\right] \\
  &\quad = \frac{n}{2\pi} \int_{-\pi}^{\pi} H_k^2(x) dx +  O( n^\e ( k + k^{2 - \be} ))
= O( n^{1 -1/(2\be)} (\log n)^{1/(2\be)}).
\end{split}
\end{equation}
The last inequality follows from equation \eqref{i1} in Lemma \ref{integralLemma} and the fact that $k=k_n$ and $\be > 1$. Similarly, it follows  that
\begin{equation} \label{frobBound2}
\fr T_n^{-\ft}(f_{\bar{d},k}) T_n(H_k \nabla_{\s(i)} f_{\bar{d},k}) T_n^{-\ft}(f_{\bar{d},k})\fr^2 = O( n^{1 -1/(2\be) } (\log n)^{1/(2\be)}).
\end{equation}
Inserting \eqref{aPrimeBound}, \eqref{frobBound1} and \eqref{frobBound2} in \eqref{aDerivative}, we find that $\fr A_d-A_{d_o}\fr \leq |d-d_o| n^{1/2-1/(4\be) + \e} $, for all $|d-d_o| \leq \vb$ and all $\e>0$, when $n$ is large enough.
Consequently, we can apply Lemma 1.3 with $A = (A_d-A_{d_o})/\fr A_d - A_{d_o} \fr$, so that when $n$ is large enough
\begin{eqnarray} \label{chain:1}
\sup_{|d-d_o| \leq \vb} P_o\left( |W_{\s}(d) - W_{\s}(d_o)|  > |d-d_o| n^{2\e + \frac{1}{2}- \frac{1}{4\be}} \right) \leq e^{-n^{\e}/8}.
\end{eqnarray}
Using the above computations with $|d-d'| \leq n^{-2}$, we obtain
$$| W_{\s}(d) - W_{\s}(d') |\leq n^{-2+\e} \left( n + Z^t Z \right).$$
Hence, for all $\e<\ft$ and $c>0$,
\begin{equation}\label{chain:2}
P_o \left( \sup_{|d'-d|\leq n^{-2}} | W_{\s}(d) - W_{\s}(d') | > n^{-\e} \right) \leq
P_o\left( Z^t Z > n^{2-2\e} \right) \leq e^{-c n},
\end{equation}
provided $n$ is large enough. Hence, we obtain \eqref{stochTerm} by combining (\ref{chain:1}) and (\ref{chain:2}) in a simple chaining argument over the interval $(d_o-\vb,d_o+\vb)$.

\end{proof}

\end{document}